\documentclass[reqno, 11pt]{amsart}  %arXiv version

\usepackage{amssymb}
\usepackage{mathtools}

\usepackage{graphicx}
\usepackage[usenames,dvipsnames,svgnames]{xcolor}
\usepackage[shortlabels]{enumitem} % Can use enumerate

%----references by bibtex
%\usepackage[backend=biber, style=alphabetic, sorting=nty]{biblatex}
%\addbibresource{references.bib}
\usepackage[colorlinks, urlcolor=Navy, linkcolor=Navy, citecolor=Navy]{hyperref} % If you use hyperref, don't insert indention in \if()...\fi

%Suppress amsrefs warnings about{}*{}
%\usepackage{silence}
%\WarningFilter{amsrefs}{The form}

% Use amsrefs but without MR numbers, and always printing authors.
%\usepackage[abbrev,nobysame]{amsrefs}
%\renewcommand{\MR}[1]{}

%----XY and Tikz package
\usepackage[all]{xy}
%\input xy
%\xyoption{all}
\usepackage{tikz}
\usepackage{tikz-cd}
\usetikzlibrary{arrows,shapes,chains,matrix,positioning,scopes,cd,patterns}
\pgfdeclarelayer{bg}    % declare background layer
\pgfsetlayers{bg,main}

%----Edit comments
%\usepackage{pstricks} % Need to use \red, \blue, \green
%\usepackage{comment} % Can use like \if() \fi
%\newcommand{\comment}[1]{{\color{green} #1}}
%\newcommand{\old}[1]{{\color{red} #1}}
%\newcommand{\new}[1]{{\color{blue} #1}}
%\newcommand{\nnew}[1]{{\color{cyan} #1}}
%\usepackage{todonotes}
%\newcommand{\YKeditcomment}[1]{\mbox{}\todo[inline, color=green!10]{YK: #1}}

%-----Use widehat, widecheck
\DeclareFontFamily{U}{mathx}{\hyphenchar\font45}
\DeclareFontShape{U}{mathx}{m}{n}{
      <5> <6> <7> <8> <9> <10>
      <10.95> <12> <14.4> <17.28> <20.74> <24.88>
      mathx10
      }{}
\DeclareSymbolFont{mathx}{U}{mathx}{m}{n}
\DeclareFontSubstitution{U}{mathx}{m}{n}
\DeclareMathAccent{\widecheck}{0}{mathx}{"71}
\DeclareMathAccent{\wideparen}{0}{mathx}{"75}

  % ligne non necessaire dans ton document

%----margin
\usepackage[top=30truemm,bottom=30truemm,left=20truemm,right=20truemm]{geometry}

%------Mathematical commands
%----theoremstyle
\theoremstyle{plain}
\newtheorem{thm}{Theorem}[section]
\newtheorem{lem}[thm]{Lemma}

\newtheorem{propdef}[thm]{Definition-Proposition}
\newtheorem{prop}[thm]{Proposition}
\newtheorem{cor}[thm]{Corollary}
\newtheorem{thm*}{Theorem}
\theoremstyle{definition}
\newtheorem{dfn}[thm]{Definition}
\newtheorem{exa}[thm]{Example}
\newtheorem{rem}[thm]{Remark}

\newtheorem{assum}[thm]{Assumption}
\newtheorem{ques}[thm]{Question}
\numberwithin{equation}{section}

%\usepackage[inline, left]{showlabels} % reference labels appear in pdf

%------hom space
\DeclareMathOperator{\Hom}{Hom}
\DeclareMathOperator{\Aut}{Aut}
\DeclareMathOperator{\End}{End}
\DeclareMathOperator{\Ext}{Ext}

\DeclareMathOperator{\Tor}{Tor}

%------module category
\renewcommand{\mod}{\mathop{\mathsf{mod}}}
\DeclareMathOperator{\Mod}{\mathsf{Mod}}

\DeclareMathOperator{\Add}{\mathsf{Add}}
\DeclareMathOperator{\add}{\mathsf{add}}
\DeclareMathOperator{\Gen}{\mathsf{Gen}}
\DeclareMathOperator{\gen}{\mathsf{gen}}
\DeclareMathOperator{\proj}{\mathsf{proj}}
\DeclareMathOperator{\Proj}{\mathsf{Proj}}

\DeclareMathOperator{\Fac}{\mathsf{gen}}

\DeclareMathOperator{\fp}{\mathsf{fp}}
\DeclareMathOperator{\Fl}{\mathsf{fl}}
\DeclareMathOperator{\Filt}{\mathsf{Filt}}
%\DeclareMathOperator{\modu}{\mathsf{mod}}
%\DeclareMathOperator{\lmod}{\text{-}\modu}
%\DeclareMathOperator{\rmod}{\modu\text{-}}
%\DeclareMathOperator{\lMod}{\text{-}\Modu}
%\DeclareMathOperator{\rMod}{\Modu\text{-}}

%------morphism and arrow

\newcommand{\xto}[1]{\xrightarrow{#1}}

%------operations on rings, ideals, modules and morphisms

\DeclareMathOperator{\rad}{rad}

\DeclareMathOperator{\Tr}{Tr}
\DeclareMathOperator{\Ker}{Ker} %\ker = ker
\DeclareMathOperator{\Cok}{Cok}
\renewcommand{\Im}{\mathop{\mathrm{Im}}}

\DeclareMathOperator{\ann}{Ann}
\DeclareMathOperator{\Gal}{Gal}

%----dimensions

%------derived category

\DeclareMathOperator{\thick}{\mathsf{thick}}

%------derived functors

%----commutative ring
\DeclareMathOperator{\Spec}{Spec}
\newcommand{\ass}{\operatorname{Ass}\nolimits}
\newcommand{\supp}{\operatorname{Supp}\nolimits}
\newcommand{\MSpec}{\operatorname{Max}\nolimits}

%------Linear algebra

 % rank

%----bracket {} <> || overline
\newcommand{\ideal}[1]{\left\langle #1 \right\rangle}

\newcommand{\ov}[1]{\overline{#1}}

%----Letter preference

\DeclareMathOperator{\id}{id} % identity map
\renewcommand{\epsilon}{\varepsilon}
\renewcommand{\tilde}{\widetilde}

%----Greek letters

\newcommand{\la}{\lambda}

\renewcommand{\L}{\Lambda}

%----doubled letters

\newcommand{\bF}{\mathbb{F}}

\newcommand{\bQ}{\mathbb{Q}}

\newcommand{\bS}{\mathbb{S}}
\newcommand{\bT}{\mathbb{T}}

\newcommand{\bZ}{\mathbb{Z}}

%----mathfrak alphabet, small letters
\newcommand{\m}{\mathfrak{m}}
\newcommand{\n}{\mathfrak{n}}
\newcommand{\p}{\mathfrak{p}}
\newcommand{\q}{\mathfrak{q}}

\newcommand{\mfm}{\mathfrak{m}}

\newcommand{\mfp}{\mathfrak{p}}
\newcommand{\mfq}{\mathfrak{q}}

%----mathcal alphabet, capital letters
\newcommand{\cA}{\mathcal{A}}
\newcommand{\cB}{\mathcal{B}}
\newcommand{\cC}{\mathcal{C}}

\newcommand{\cF}{\mathcal{F}}

\newcommand{\cS}{\mathcal{S}}
\newcommand{\cT}{\mathcal{T}}
\newcommand{\cU}{\mathcal{U}}

\newcommand{\cX}{\mathcal{X}}
\newcommand{\cY}{\mathcal{Y}}
\newcommand{\cZ}{\mathcal{Z}}

%----mathsf alphabet, capital letters

\newcommand{\sD}{\mathsf{D}}

\newcommand{\sK}{\mathsf{K}}

\newcommand{\sM}{\mathsf{M}}

\newcommand{\sP}{\mathsf{P}}

\newcommand{\sT}{\mathsf{T}}

%----Specific for each studies
%--Quasi-hereditary

%--Deformed preprojective algebra

%--the set of subcategories, special objects.
\newcommand{\serre}{\mathop{\mathsf{serre}}}
\newcommand{\tors}{\mathop{\mathsf{tors}}}
\newcommand{\torf}{\mathop{\mathsf{torf}}}
\newcommand{\ftors}{\mathop{\mathsf{f}\text{-}\mathsf{tors}}}
\newcommand{\stors}{\mathop{\mathsf{s}\text{-}\mathsf{tors}}}

\newcommand{\simple}{\mathop{\mathsf{sim}}}
\newcommand{\Simple}{\mathsf{Sim}}

\newcommand{\psilt}{\mathop{\mathsf{psilt}}}
\newcommand{\silt}{\mathop{\mathsf{silt}}}
\newcommand{\siltm}{\mathop{\mathsf{m}\text{-}\mathsf{silt}}}
\newcommand{\twopsilt}{\mathop{\mathsf{2}\text{-}\mathsf{psilt}}}
\newcommand{\twosilt}{\mathop{\mathsf{2}\text{-}\mathsf{silt}}}

%--Begin Document--------------------------------
%-----------------Begin Document-----------------
%--------------------------------Begin Document--

\begin{document}
\title[Classifying subcategories of modules over Noetherian algebras]{Classifying subcategories of \\ modules over Noetherian algebras}
\dedicatory{Dedicated to Professor Henning Krause on the occasion of his 60th birthday.}
\author[O. Iyama]{Osamu Iyama}
\author[Y. Kimura]{Yuta Kimura}
%Fakulta\CID{3}t fu\CID{3}r Mathematik, Universit\CID{3}at Bielefeld, 33501 Bielefeld, Germany
\address{Osamu Iyama : Graduate School of Mathematical Sciences, The University of Tokyo, 3-8-1 Komaba Meguro-ku Tokyo 153-8914, Japan}
\email{iyama@ms.u-tokyo.ac.jp}
\address{Yuta Kimura : Graduate School of Mathematical Sciences, The University of Tokyo, 3-8-1 Komaba Meguro-ku Tokyo 153-8914, Japan}
\email{ykimura@ms.u-tokyo.ac.jp}
%\date{\today}
\keywords{Noetherian algebra, torsion class, torsionfree class, Serre subcategory, silting complex, silting module.}
\makeatletter
\@namedef{subjclassname@2020}{%
  \textup{2020} Mathematics Subject Classification}
\makeatother
\subjclass[2020]{Primary 16G30, Secondary 13D30, 16E35, 16S90, 18G80.}
\begin{abstract}
The aim of this paper is to unify classification theories of torsion classes of finite dimensional algebras and commutative Noetherian rings.
For a commutative Noetherian ring $R$ and a module-finite $R$-algebra $\L$, we study the set $\tors \L$ (respectively, $\torf\L$) of torsion (respectively, torsionfree) classes of the category of finitely generated $\L$-modules.
We construct a bijection from $\torf\L$ to $\prod_\p \torf(\kappa(\p)\otimes_R \L)$, and an embedding $\Phi_{\rm t}$ from $\tors\L$ to $\bT_R(\L):=\prod_\p \tors(\kappa(\p)\otimes_R \L)$, where $\p$ runs all prime ideals of $R$.
When $\L=R$, these give classifications of torsionfree classes, torsion classes and Serre subcategories of $\mod R$ due to Takahashi, Stanley-Wang and Gabriel.
To give a description of $\Im\Phi_{\rm t}$, we introduce the notion of compatible elements in $\bT_R(\L)$, and prove that all elements in $\Im\Phi_{\rm t}$ are compatible.
We give a sufficient condition on $(R, \L)$ such that all compatible elements belong to $\Im\Phi_{\rm t}$ (we call $(R, \L)$ compatible in this case).
For example, if $R$ is semi-local and $\dim R\leq 1$, then $(R, \L)$ is compatible.
We also give a sufficient condition in terms of silting $\L$-modules.
As an application, for a Dynkin quiver $Q$, $(R, RQ)$ is compatible and we have a poset isomorphism $\tors RQ \simeq \Hom_{\rm poset}(\Spec R, \mathfrak{C}_Q)$ for the Cambrian lattice $\mathfrak{C}_Q$ of $Q$.
\end{abstract}
\maketitle
\tableofcontents
%%%%%%%%%%%%%%%%%%%%%%%%%%%%%%%%%%%%%%%%%%%%%%%%%%
\section{Introduction}\label{section-introduction}

A \emph{torsion class} (respectively, \emph{torsionfree class}) is a full subcategory of an abelian category $\cA$ which is closed under extensions and factor objects (respectively, subobjects).
It appears naturally in several branches of mathematics and plays important roles.
Among others, torsion classes in $\cA$ are closely related to derived equivalences of $\cA$.
In fact, a torsion class often gives rise to a \emph{torsion pair} $(\cT, \cF)$, which corresponds bijectively with \emph{intermediate $t$-structure} \cite{BBD, BY} in the derived category $\sD^{\rm b}(\cA)$.

When $\cA$ is the category of (finitely generated) modules over a finite dimensional algebra $A$ over a field, then a connection between torsion classes and classical tilting modules was well understood in the last century, see \cite[Chapter VI. 6]{ASS}.
The recent development of tilting theory is often called as \emph{silting theory}, where the notion of $\tau$-tilting modules (also called as \emph{silting modules}) plays a central role as a completion of the classical tilting modules from a point of view of mutation, and various applications have been found, see e.g. \cite{AHMW, Adachi-Iyama-Reiten, AHIKM, ALS, Asai-semi, Asai-Iyama, BY, CD, Koshio-Kozakai, Ingalls-Thomas, MP, STV, Sentieri, Yurikusa}.
Silting theory was developed also for arbitrary rings \cite{Iyama-Jorgensen-Yang} even in the setting of infinitely generated modules \cite{AMV, An}.

The notion of classical tilting modules is important for a number of rings, e.g. preprojective algebras and Calabi-Yau algebras \cite{Iyama-Reiten, BIRS}, and plays a crucial role in the study of cluster tilting for Cohen-Macaulay modules and non-commutative crepant resolutions of singularities \cite{Iyama-Wemyss-maximal}.
Therefore it will be very important to generalize silting theory to a more general class of rings.
In this paper we continue to develop silting theory for Noetherian algebras recently studied in \cite{Gnedin, GIK, Kimura}.

The main aim of this paper is to unify classification theories of torsion classes of finite dimensional algebras and commutative Noetherian rings by studying torsion classes of Noetherian algebras.
Recall that a pair $(R, \L)$ is a \emph{Noetherian algebra} if  $R$ is a commutative Noetherian ring and $\L$ is (not necessary commutative) an $R$-algebra which is finitely generated as an $R$-module.
We study the set $\tors\L$ (respectively, $\tors(\Fl\L)$) of torsion classes of the category $\mod\L$ of finitely generated $\L$-modules (respectively, the category $\Fl\L$ of finite length $\L$-modules),
and $\torf\L$ (respectively, $\torf(\Fl\L)$) the set of all torsionfree classes of $\mod\L$ (respectively, $\Fl\L$).
These sets are partially ordered by inclusion.

Our first result gives an isomorphism from $\torf\L$ to a product of $\torf(\Fl\L_\p)$, and an embedding of $\tors\L$ into a product of $\tors(\Fl\L_\p)$, where $\p$ runs all prime ideals of $R$.
Let $\kappa(\p):=R_\p/\p R_\p$ be the residue field of $R$ at $\p\in\Spec R$, and we consider a finite dimensional $\kappa(\p)$-algebra
\[\L(\p):=\L_\p/\p\L_\p=\kappa(\p)\otimes_{R_\p}\L_\p=\kappa(\p)\otimes_R\L.\]
By \cite[Theorem 5.4]{Kimura} (see Proposition \ref{prop-torsion-fl}), there exist isomorphisms of posets
\[
\tors(\Fl\L_\p) \simeq \tors\L(\p), \qquad \torf(\Fl\L_\p) \simeq \torf\L(\p).
\]
Therefore in results below, we can describe the statements by using both of $\tors(\Fl\L_\p)$ and $\tors\L(\p)$ (respectively, $\torf(\Fl\L_\p)$ and $\torf\L(\p)$).
We use the former one.
%\old{There are isomorphisms of posets
%\[\tors(\Fl\L_\p) \simeq \tors(\kappa(\p)\otimes_R \L), \qquad \torf(\Fl\L_\p) \simeq \torf(\kappa(\p)\otimes_R \L),\]
%for finite dimensional $\kappa(\p)$-algebras $\kappa(\p)\otimes_R \L$ \cite[Theorem 5.4]{Kimura} (see Proposition \ref{prop-torsion-fl}). Therefore in results below, we can describe the statements by using both of $\tors(\Fl\L_\p)$ and $\tors(\kappa(\p)\otimes_R\L)$ (respectively, $\torf(\Fl\L_\p)$ and $\torf(\kappa(\p)\otimes_R\L)$). We use the former one.}
%%%%%%%%%%%%%%%
\begin{thm}[Theorem \ref{thm-torsion-free-iso}]\label{intro-thm-embedding}
For a Noetherian algebra $(R, \L)$, there is a commutative diagram
\[
\begin{tikzcd}
\tors\L \arrow[rr, hookrightarrow, "\Phi_{\rm t}"] \arrow[d, hookrightarrow, "(-)^{\perp}"] & & \bT_R(\L)  :=\prod_{\p\in\Spec R}\tors(\Fl \L_\p) \arrow[d, "(-)^{\perp}"]\\
\torf\L \arrow[rr, "\Phi_{\rm f}", "\simeq"'] & & \bF_R(\L):=\prod_{\p\in\Spec R}\torf(\Fl \L_\p),
\end{tikzcd}
\]
where $\Phi_{\rm t}$ and the left $(-)^{\perp}$ are anti-embeddings of posets, $\Phi_{\rm f}$ and the right $(-)^{\perp}$ are anti-isomorphisms of posets.
Both of $\Phi_{\rm t}$ and $\Phi_{\rm f}$ are given by $\cC \mapsto (\cC_\p\cap\Fl\L_\p)_{\p\in\Spec R}$, where $\cC_\p:=\{X_\p \mid X\in\cC\}$.
The map $(-)^{\perp}$ means taking perpendicular categories with respect to homomorphisms.
\end{thm}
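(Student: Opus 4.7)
The plan is to build the commutative square starting from the bottom-right, establishing four items in sequence: well-definedness of $\Phi_{\rm t}$, $\Phi_{\rm f}$ together with the right-vertical anti-isomorphism; bijectivity of $\Phi_{\rm f}$ (the main step); commutativity of the square; and the properties of $\Phi_{\rm t}$ and the left-vertical $(-)^{\perp}$ by diagram chase. Well-definedness follows because the localization $(-)_\p \colon \mod\L \to \mod\L_\p$ is exact and $\Fl\L_\p$ is a Serre subcategory of $\mod\L_\p$, so the operation $\cC \mapsto \cC_\p \cap \Fl\L_\p$ preserves closure under subobjects, quotients, and extensions. The right $(-)^{\perp}$ is an anti-isomorphism by the classical Galois correspondence in the length abelian category $\Fl\L_\p$, applied componentwise (and composed with the poset isomorphism $\tors(\Fl\L_\p) \simeq \tors\L(\p)$ recalled before the theorem).

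The heart of the proof is bijectivity of $\Phi_{\rm f}$, which I would handle using an associated-prime filtration: every $M \in \mod\L$ admits a finite filtration $0 = M_0 \subset M_1 \subset \cdots \subset M_n = M$ whose successive quotients $M_i/M_{i-1}$ are $\p_i$-coprimary over $R$ for some $\p_i \in \Spec R$, and each such quotient becomes a finite-length $\L_{\p_i}$-module after localization at $\p_i$. For injectivity, suppose $\cC, \cC' \in \torf\L$ have equal images and let $M \in \cC$; closure of $\cC, \cC'$ under subobjects and extensions reduces the question of whether $M \in \cC'$ to the behavior of the localized finite-length pieces $(M_i/M_{i-1})_{\p_i}$, on which $\cC$ and $\cC'$ agree by hypothesis. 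For surjectivity, given $(\cF_\p)_{\p} \in \bF_R(\L)$ I define
\[
\cC := \bigl\{\, M \in \mod\L \,\bigm|\, \forall \p \in \Spec R,\ \text{every finite-length submodule of } M_\p \text{ lies in } \cF_\p \,\bigr\},
\]
which is closed under subobjects and extensions by exactness of localization, and whose image satisfies $\Phi_{\rm f}(\cC) = (\cF_\p)_{\p}$ by another application of the filtration argument.

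Commutativity of the square reduces to the identity $(\cC^{\perp})_\p \cap \Fl\L_\p = (\cC_\p \cap \Fl\L_\p)^{\perp}$ inside $\Fl\L_\p$, which follows from the standard compatibility $\Hom_\L(X,Y)_\p \cong \Hom_{\L_\p}(X_\p, Y_\p)$ for $X \in \mod\L$, combined with the Serre property of $\Fl\L_\p$. The left-vertical $(-)^{\perp}$ is injective because every torsion class in $\mod\L$ gives rise to a torsion pair: the maximum among submodules of $M$ lying in $\cT$ exists by the Noetherian property (the family is closed under finite sums via extension closure, and $M$ is Noetherian), yielding a decomposition $0 \to tM \to M \to M/tM \to 0$ with $tM \in \cT$ and $M/tM \in \cT^{\perp}$, from which one extracts $\cT = {}^{\perp}(\cT^{\perp})$. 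Injectivity and order behavior of $\Phi_{\rm t}$ then follow from commutativity together with the bijectivity of $\Phi_{\rm f}$ and the right $(-)^{\perp}$.

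The main obstacle is bijectivity of $\Phi_{\rm f}$, particularly surjectivity. Constructing a single torsionfree subcategory $\cC \subseteq \mod\L$ with prescribed local behavior at every prime of $R$ requires that the associated-prime filtration, localization, and torsionfree closure interact harmoniously, and one must verify that the globally defined $\cC$ recovers exactly the prescribed data componentwise.
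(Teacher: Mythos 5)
Your overall scaffolding is sound and your surjectivity construction is essentially correct: the candidate class
\[
\cC := \bigl\{\, M \in \mod\L \,\bigm|\, \forall \p \in \Spec R,\ \text{every finite-length submodule of } M_\p \text{ lies in } \cF_\p \,\bigr\}
\]
is a torsionfree class, and one shows $\Phi_{\rm f}(\cC)_\q = \cF_\q$ by realizing any $Y \in \cF_\q$ as $X_\q$ for some $X \in \mod\L$ with $\ass_R X \subseteq \{\q\}$ (the paper's Lemma \ref{lem-local-global}(b)), after observing that $X_\p$ has no nonzero finite-length submodules for $\p \supsetneq \q$. (The filtration argument you invoke there is actually unnecessary.) However, your injectivity argument has a real gap.

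The problem is that the generic associated-prime filtration you invoke --- where each successive quotient $M_i/M_{i-1}$ is $\p_i$-coprimary --- does not produce pieces that lie in $\cC$. Each $M_i$ is a submodule of $M$ and hence in $\cC$, but $M_i/M_{i-1}$ is a quotient of $M_i$, and torsionfree classes are not closed under quotients. Without knowing $M_i/M_{i-1} \in \cC$, you have no way to conclude $(M_i/M_{i-1})_{\p_i} \in \cC_{\p_i}\cap\Fl\L_{\p_i}$, so the hypothesis that $\cC$ and $\cC'$ ``agree'' on the localized pieces cannot be brought to bear. The paper's Proposition \ref{prop-torsion-free-ass} constructs a very specific filtration whose pieces \emph{do} stay in the torsionfree class: for $X \in \cF$ nonzero, take a maximal element $\p$ of $\ass_R X$ and set $Y := \Hom_R(R/\p, X)$; this is a $\L$-submodule (hence in $\cF$) with $\ass_R Y \subseteq \{\p\}$, and crucially $X/Y$ embeds into $X^{\oplus\ell}$ (apply $\Hom_R(-, X)$ to a presentation of $R/\p$), so $X/Y \in \cF$ as well. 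Iterating by Noetherianity gives a filtration whose subquotients lie in $\cF$ and have a single associated prime. A second ingredient you omit is the lifting mechanism: even once a piece $W$ with $\ass_R W \subseteq \{\p\}$ is known to satisfy $W_\p \simeq N_\p$ for some $N \in \cC'$, you need to lift the local inclusion to a genuine inclusion $W \hookrightarrow N$ of $\L$-modules, which is the content of the paper's Lemma \ref{lem-local-injective} (using that $\ass_R(\Ker f) \subseteq \ass_R W \subseteq \{\p\}$ forces the kernel to vanish once its localization does). Without both ingredients your injectivity proof does not close.
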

%%%%%%%%%%%%%%%
%\old{We often use the fact that the following equality holds for $\cT\in\tors\L$ and $\cF\in\torf\L$:
%\begin{align*}
%\cT_\p\cap\mod\L(\p) &= \kappa(\p)\otimes_{R_\p}\cT_\p:=\{\kappa(\p)\otimes_{R_\p}X_\p\mid X\in\cT\},\\
%\cF_\p\cap\mod\L(\p) &= \Hom_{R_\p}(\kappa(\p),\cF_\p):=\{\Hom_{R_\p}(\kappa(\p),X_\p)\mid X\in\cF\}
%\end{align*}}
We apply Theorem \ref{intro-thm-embedding} to a classification of Serre subcategories of $\mod\L$.
We denote by $\simple\L_\p$ the set of isomorphism classes of simple $\L_\p$-modules and regard
\[\Simple_R\L:=\bigsqcup_{\p\in\Spec R}\simple\L_\p\]
as a poset, where for $S\in\simple\L_\p$ and $T\in\simple\L_\q$, we write $S\leq T$ if and only if $\p \supseteq \q$ and $S$ is a subfactor of $T$ as a $\L_\p$-module.
For a set $X$, we denote by $\sP(X)$ the poset of all subsets of $X$ ordered by inclusion. When $X$ is a poset, we denote by $\sP_{\rm down}(X)$ the subposet of $\sP(X)$ of all down-sets of $X$.
%%%%%%%%%%%%%%%
\begin{cor}[Theorem \ref{thm-serre-poset-bijection}]\label{intro-cor-serre}
For a Noetherian algebra $(R, \L)$, there is an isomorphism of posets
\[
\serre \L \simeq \sP_{\rm down}(\Simple_R\L)\ \mbox{ given by }\ \cS \mapsto \bigsqcup_{\p\in\Spec R}(\cS_\p \cap \simple \L_\p).
\]
\end{cor}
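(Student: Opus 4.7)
My plan is to deduce this corollary from Theorem~\ref{intro-thm-embedding} combined with the classical description of Serre subcategories of a finite length abelian category. First observe that $\serre\L = \tors\L \cap \torf\L$: a full subcategory closed under extensions is Serre if and only if it is both a torsion class and a torsionfree class. Using the commutativity of the diagram in Theorem~\ref{intro-thm-embedding} to align the images of $\Phi_{\rm f}$ and $\Phi_{\rm t}$, a Serre subcategory $\cS \subseteq \mod\L$ is sent to the tuple $(\cS_\p\cap\Fl\L_\p)_\p$, and each component is itself a Serre subcategory of $\Fl\L_\p$, since localization is exact and preserves subobjects, quotients, and extensions.

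Next I would use the fact that in the finite length abelian category $\Fl\L_\p$, Serre subcategories are in bijection with $\sP(\simple\L_\p)$ via $\cT \mapsto \cT \cap \simple\L_\p$, with inverse $U \mapsto \Filt(U)$ (the subcategory of modules whose composition factors lie in $U$). Composing with $\Phi_{\rm f}$ yields the order-preserving injection
\[
\serre\L \hookrightarrow \sP(\Simple_R\L),\qquad \cS\mapsto\bigsqcup_{\p\in\Spec R}(\cS_\p\cap\simple\L_\p).
\]
Order-preservation is immediate, so only surjectivity onto $\sP_{\rm down}(\Simple_R\L)$ remains.

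For the image being contained in $\sP_{\rm down}(\Simple_R\L)$, suppose $T\in\cS_\q\cap\simple\L_\q$ with $T=M_\q$ for some $M\in\cS$, and let $S\leq T$ with $S\in\simple\L_\p$ and $\p\supseteq\q$. Then $S$ is a $\L_\p$-subfactor of $T$; but $T$ is a further localization of $M_\p\in\cS_\p$, so $S$ lifts to a $\L_\p$-subfactor of $M_\p$ itself. Since $\cS_\p$ is a Serre subcategory of $\mod\L_\p$, this forces $S\in\cS_\p\cap\simple\L_\p$. Conversely, given $U\in\sP_{\rm down}(\Simple_R\L)$, I would form the tuple $(\Filt(U\cap\simple\L_\p))_\p\in\bF_R(\L)$ and set $\cS:=\Phi_{\rm f}^{-1}((\Filt(U\cap\simple\L_\p))_\p)\in\torf\L$; the down-set hypothesis should translate precisely into $\cS$ also being closed under quotients, so $\cS\in\serre\L$ and maps back to $U$.

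The main obstacle is this final verification: showing that the down-set condition on $U$ is exactly what is needed to force the torsionfree class $\cS$ reconstructed from $(U_\p)_\p$ to be closed under quotients. This requires analysing how simple composition factors of a quotient of $M$ at one prime $\q$ are reflected, via further localization from $\L_\p$ to $\L_\q$ for $\p\supseteq\q$, among the simple composition factors of $M$ at the larger prime $\p$, and it is precisely here that the direction of the order on $\Simple_R\L$ becomes essential.
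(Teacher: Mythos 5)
Your set-up and the first three quarters of the argument track the paper's own proof quite closely: the map $\serre\L\to\sP(\Simple_R\L)$ is obtained by restricting $\Phi_{\rm t}=\Phi_{\rm f}$ to Serre subcategories and composing with the termwise bijection $\serre(\Fl\L_\p)\simeq\sP(\simple\L_\p)$, injectivity comes from Theorem~\ref{prop-injective-localization}, and the verification that the image consists of down-sets uses exactly the lifting argument of Lemma~\ref{lem-comp-local} (your ``$S$ lifts to a $\L_\p$-subfactor of $M_\p$'' is precisely that lemma). Up to this point the proposal is correct and in line with the paper.

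The gap is where you yourself flag it: the surjectivity step. You propose to produce a preimage by taking $\Psi_{\rm f}$ (the $\Filt$-construction inverse to $\Phi_{\rm f}$) of the tuple $\bigl(\Filt(U\cap\simple\L_\p)\bigr)_\p$ and then to argue that the down-set hypothesis on $U$ forces the resulting torsionfree class to be closed under factor objects. This last verification is genuinely nontrivial and you give no argument for it. The difficulty is real: $\Psi_{\rm f}$ produces a torsionfree class via a $\Filt$ of the subcategories $\psi_{\rm f}(\cY^\p)=\{X\mid\ass_RX\subseteq\{\p\},\ X_\p\in\cY^\p\}$, and none of the defining conditions ($\ass_RX\subseteq\{\p\}$, or membership of a localization in a Serre subcategory of $\Fl\L_\p$) passes to quotients in a controlled way; a quotient of a module filtered by such pieces does not inherit a compatible filtration. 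I do not see a direct route from ``$U$ is a down-set'' to ``$\Psi_{\rm f}$ of the corresponding tuple is closed under quotients'' that does not essentially replicate the paper's argument.

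The paper sidesteps this entirely. For surjectivity it does not run $\Psi_{\rm f}$ on the tuple and check closure under quotients; instead, given a down-set $\cS$, it directly writes down the subcategory $\cC$ of all $X\in\mod\L$ such that for every $\p$ every composition factor of $X_\p$ lies in $\cS$. This $\cC$ is \emph{manifestly} Serre (it is defined by a condition on composition factors of localizations, which is stable under subobjects, quotients and extensions), so the only thing left to check is that $\iota\circ\Phi_{\rm s}(\cC)=\cS$, and the down-set hypothesis is what makes the ``$\supseteq$'' inclusion work, via Lemma~\ref{lem-local-global}(b). If you want to keep your framework, the cleanest fix is to adopt this construction of $\cC$, verify $\Phi_{\rm f}(\cC)=\bigl(\Filt(U\cap\simple\L_\p)\bigr)_\p$, and then conclude from injectivity of $\Phi_{\rm f}$ that your $\Psi_{\rm f}$-image coincides with $\cC$; but at that point you are running the paper's argument, not a shortcut around it.
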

%%%%%%%%%%%%%%%
This result also follows from Kanda's theory of atom spectrum of an abelian category, see Remark \ref{rem-serre-kanda}.

As an application of Theorem \ref{intro-thm-embedding} and Corollary \ref{intro-cor-serre}, we prove the following result, where we denote by $\sP_{\rm spcl}(\Spec R)$ the subposet of $\sP(\Spec R)$ of all specialization closed subsets of $\Spec R$.
%%%%%%%%%%%%%%%
\begin{cor}[Corollaries \ref{cor-local-free-subset}, \ref{cor-local-sp-closed}, Proposition \ref{prop-tors-serre}]\label{intro-cor-Gabriel}
For a Noetherian algebra $(R, \L)$, assume that $\L_\p$ is Morita equivalent to a local ring for each $\p\in\Spec R$.
Then
\begin{enumerate}[{\rm (a)}]
\item $\tors\L=\serre \L$ holds, and there is a canonical isomorphism $\tors\L\simeq\sP_{\rm spcl}(\Spec R)$ of posets.
\item There is a canonical isomorphism $\torf\L\simeq\sP(\Spec R)$ of posets.
\end{enumerate}
\end{cor}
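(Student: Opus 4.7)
The plan is to combine Theorem \ref{intro-thm-embedding} and Corollary \ref{intro-cor-serre}, using the Morita-local hypothesis to collapse each local building block to a single point. Since $\L_\p$ is Morita equivalent to a local ring, so is the finite-dimensional quotient $\L(\p)=\L_\p/\p\L_\p$, and hence $\L(\p)$ has a unique simple module $S_\p$ up to isomorphism. Any nonzero torsion or torsionfree class of $\Fl\L_\p$ must then contain $S_\p$, and since every finite-length $\L(\p)$-module is an iterated extension of copies of $S_\p$ we obtain $\tors(\Fl\L_\p)=\torf(\Fl\L_\p)=\{0,\Fl\L_\p\}$. Under indicator functions, $\bT_R(\L)$ and $\bF_R(\L)$ are then canonically identified with $\sP(\Spec R)$, and part (b) follows at once from the identification $\torf\L\simeq\bF_R(\L)$ of Theorem \ref{intro-thm-embedding}.

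For part (a), the inclusion $\serre\L\subseteq\tors\L$ is automatic, so two further points suffice. First, I would apply Corollary \ref{intro-cor-serre}: the bijection $\Simple_R\L\leftrightarrow\Spec R$ sending $S_\p\mapsto\p$ is immediate, and the main task is to identify the induced order as $S_\p\le S_\q\iff\p\supseteq\q$. The nontrivial direction, that $\p\supseteq\q$ implies $S_\p\le S_\q$, asks that $S_\p$ occur as a subfactor of $S_\q$ viewed as a $\L_\p$-module via the natural map $\L_\p\to\L_\q$; for any $0\ne s\in S_\q$, the cyclic $\L_\p$-submodule $\L_\p\, s$ is finitely generated and therefore admits a simple top quotient, which is necessarily $S_\p$. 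The down-sets for this order are precisely the specialization closed subsets of $\Spec R$, giving $\serre\L\simeq\sP_{\rm spcl}(\Spec R)$, and unwinding the formulas shows that this identification agrees with the restriction of $\Phi_{\rm t}$ to $\serre\L$.

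The second and main step---which I expect to be the principal obstacle---is showing $\Phi_{\rm t}(\tors\L)\subseteq\sP_{\rm spcl}(\Spec R)$; combined with the previous paragraph and the injectivity of $\Phi_{\rm t}$, this will force $\tors\L=\serre\L$. Given $\cC\in\tors\L$, set $X_\cC:=\{\p:\cC_\p\cap\Fl\L_\p\ne0\}$, fix $\p\in X_\cC$ and $\q\supseteq\p$, and pick any $X\in\cC$ with $X_\p\ne0$. Since $X$ is finitely generated over the Noetherian ring $R$, its support $\supp X=V(\ann_R X)$ is closed in $\Spec R$ and hence contains $\q$, so $X_\q\ne0$. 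The quotient $D:=X/\q X$ lies in $\cC$ by closure under quotients, and $D_\q\cong X_\q/\q X_\q$ is nonzero by Nakayama's lemma applied to the finitely generated $R_\q$-module $X_\q$, and is a finitely generated module over the finite-dimensional $\kappa(\q)$-algebra $\L(\q)$, hence of finite length over $\L_\q$. Therefore $\q\in X_\cC$, and $X_\cC$ is specialization closed.
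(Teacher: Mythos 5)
Your proposal is correct, and while part (b) follows essentially the same route as the paper, your argument for part (a) is genuinely different in both of its halves. The paper's proof (Corollary \ref{cor-local-sp-closed}) deduces the containment $\Phi_{\rm t}(\tors\L)\subseteq\sP_{\rm spcl}(\Spec R)$ as a special case of the general compatibility statement $\Im\Phi_{\rm t}\subseteq\bT_R^{\rm c}(\L)$ (Proposition \ref{prop-wcomp-comp-easy}), translating compatibility into specialization-closedness via the computation ${\rm r}_{\p,\q}(0)=0$ and ${\rm r}_{\p,\q}(\Fl\L_\p)=\Fl\L_\q$; you instead give a direct, self-contained argument from closedness of support and Nakayama's lemma, never touching the ${\rm r}_{\p,\q}$ formalism. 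For surjectivity, the paper exhibits an explicit preimage $\cT_\cS=\{X\in\mod\L : \supp_R X\subseteq\cS\}$ for each specialization-closed $\cS$ and notes this $\cT_\cS$ is Serre, which simultaneously yields $\tors\L=\serre\L$; you instead invoke the Serre classification (Corollary \ref{intro-cor-serre}) to get $\Phi_{\rm t}(\serre\L)=\sP_{\rm spcl}(\Spec R)$, then squeeze $\sP_{\rm spcl}(\Spec R)=\Phi_{\rm t}(\serre\L)\subseteq\Phi_{\rm t}(\tors\L)\subseteq\sP_{\rm spcl}(\Spec R)$ and use injectivity of $\Phi_{\rm t}$ to force $\tors\L=\serre\L$. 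Both routes are sound; the paper's construction of $\cT_\cS$ keeps the argument independent of Theorem \ref{thm-serre-poset-bijection}, whereas your approach is more elementary in step (i) but outsources surjectivity to the Serre classification and requires the small verification (which you do carry out) that the order on $\Simple_R\L$ collapses to reverse inclusion on $\Spec R$, via a simple-top argument on the cyclic $\L_\p$-module $\L_\p s$.
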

%%%%%%%%%%%%%%%
Applying Corollary \ref{intro-cor-Gabriel} to the special case $\Lambda=R$, we recover the famous classification results of Serre subcategories by Gabriel \cite{Gabriel}, and its analogues for torsion classes\footnote{In \cite{Hovey} and \cite{GP}, the authors called Serre subcategories closed under coproducts  torsion classes, and studied them in $\Mod R$. 
In our language, they are {\it hereditary} torsion classes (also known as \emph{localizing subcategories}), thus they did not study torsion classes in our sense.}
by Stanley and Wang \cite{Stanley-Wang} (see also \cite[Proposition 2.5]{APST}), and for torsionfree classes by Takahashi \cite{Takahashi}.
See also \cite{Enomoto, IMST, Saito} for more recent results.

For a general $\L$, the structure of $\tors\L$ is much richer and hence harder to understand.
For example, when $\L$ is the path algebra $kQ$ of a Dynkin quiver $Q$ over a field $k$, $\tors kQ$ parametrizes the clusters in the corresponding cluster algebra.
The second aim of this paper is to characterize the elements of $\bT_R(\L)$ corresponding to elements in $\tors\L$.
For each pair $\p\supseteq\q$ in $\Spec R$ and $\cX^\p\in\tors(\Fl\L_\p)$, we consider
\[
\psi_\p(\cX^\p):=\{X\in\mod\L_\p \mid X/\p X\in \cX^\p\} \in \tors\L_\p
\]
and the map
\[
{\rm r}_{\p, \q} : \tors(\Fl\L_\p) \longrightarrow \tors(\Fl\L_\q), \qquad \cX^\p \mapsto \psi_\p(\cX^\p)_\q\cap\Fl\L_\q.
\]
An element $(\cX^\p)_{\p\in\Spec R}$ in $\bT_R(\L)=\prod_{\p\in\Spec R}\tors(\Fl\L_\p)$ is called \emph{compatible} if ${\rm r}_{\p, \q}(\cX^\p)\supseteq \cX^\q$ holds for each pair $\p\supseteq \q$ in $\Spec R$.
We denote by $\bT_R^{\rm c}(\L)$ the subposet of $\bT_R(\L)$ consisting of all compatible elements in $\bT_R(\L)$, that is,
\[
\bT_R^{\rm c}(\L)=\left\{(\cX^\p)_\p\in \prod_{\p\in\Spec R}\tors(\Fl\L_\p) \ \middle| \ \forall\p \supseteq \q \in \Spec R,\ {\rm r}_{\p, \q}(\cX^\p) \supseteq \cX^\q\right\}.
\]
%%%%%%%%%%%%%%%
\begin{prop}[Proposition \ref{prop-wcomp-comp-easy}]\label{intro-prop-comp}
For a Noetherian algebra $(R, \L)$, we have $\Im \Phi_{\rm t}\subseteq\bT_R^{\rm c}(\L)$. Thus we have an embedding of posets
\begin{align}\label{intro-map-comp}
	\Phi_{\rm t} : \tors\L \hookrightarrow \bT_R^{\rm c}(\L).
\end{align}
\end{prop}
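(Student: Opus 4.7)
The plan is to unwind the definitions of $\Phi_{\rm t}$, $\psi_\p$, and $\mathrm{r}_{\p,\q}$, and verify the compatibility inequality $\mathrm{r}_{\p,\q}(\cC_\p\cap\Fl\L_\p)\supseteq\cC_\q\cap\Fl\L_\q$ directly, for an arbitrary torsion class $\cC\in\tors\L$ and an arbitrary containment $\p\supseteq\q$ in $\Spec R$. Once this is done, the embedding statement follows immediately, since $\Phi_{\rm t}$ is already known from Theorem \ref{intro-thm-embedding} to be an embedding of posets; we are only restricting the codomain to $\bT_R^{\rm c}(\L)$.

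First I would fix $Y\in\cC_\q\cap\Fl\L_\q$ and, by definition of $\cC_\q$, choose $X\in\cC$ with $Y\cong X_\q$. The natural candidate for a witness in $\psi_\p(\cC_\p\cap\Fl\L_\p)$ whose localization at $\q$ recovers $Y$ is simply $X_\p\in\mod\L_\p$: since $\p\supseteq\q$ the transitivity of localization gives $(X_\p)_\q\cong X_\q\cong Y$, so it suffices to verify that $X_\p$ lies in $\psi_\p(\cC_\p\cap\Fl\L_\p)$, i.e.\ that $X_\p/\p X_\p\in\cC_\p\cap\Fl\L_\p$.

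The two conditions are checked separately. For the torsion-class part, since $\cC\in\tors\L$ is closed under quotients, we have $X/\p X\in\cC$, hence $X_\p/\p X_\p\cong(X/\p X)_\p\in\cC_\p$. For the finite-length part, the quotient $X_\p/\p X_\p$ is a finitely generated module over the finite-dimensional $\kappa(\p)$-algebra $\L(\p)=\L_\p/\p\L_\p$, and is therefore of finite $\kappa(\p)$-dimension; in particular it lies in $\Fl\L_\p$. Combining the two inclusions gives $X_\p/\p X_\p\in\cC_\p\cap\Fl\L_\p$, so $X_\p\in\psi_\p(\cC_\p\cap\Fl\L_\p)$, and localising at $\q$ yields $Y\in\psi_\p(\cC_\p\cap\Fl\L_\p)_\q\cap\Fl\L_\q=\mathrm{r}_{\p,\q}(\cC_\p\cap\Fl\L_\p)$, which is exactly the compatibility relation.

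There is no substantial obstacle here: the argument is a direct verification. The only point that might look subtle is the automatic finiteness of the length of $X_\p/\p X_\p$ as a $\L_\p$-module, but this is immediate from the Noetherian-algebra hypothesis since $\L(\p)$ is finite dimensional over the field $\kappa(\p)$. Having established $\Im\Phi_{\rm t}\subseteq\bT_R^{\rm c}(\L)$, the map in \eqref{intro-map-comp} is then an embedding of posets because $\Phi_{\rm t}\colon\tors\L\to\bT_R(\L)$ already is one by Theorem \ref{intro-thm-embedding} and $\bT_R^{\rm c}(\L)$ is a subposet of $\bT_R(\L)$.
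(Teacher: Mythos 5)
Your argument is correct and follows essentially the same route as the paper: the paper's proof invokes the packaged inclusion $\cT_\p\subseteq\psi_\p(\cT_\p\cap\Fl\L_\p)$ from \eqref{psi XT} (Definition-Proposition \ref{dfn-bar}) and then applies $(-)_\q\cap\Fl\L_\q$, whereas you re-derive that inclusion at the level of modules (noting $X/\p X\in\cC$ by closure under quotients and $X_\p/\p X_\p\in\Fl\L_\p$ since $\L(\p)$ is finite dimensional), which is exactly the content the paper labels ``clear.'' Both then cite the already established fact that $\Phi_{\rm t}:\tors\L\to\bT_R(\L)$ is an embedding to conclude.
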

%%%%%%%%%%%%%%%
We call $(R, \L)$ \emph{compatible} if $\Im \Phi_{\rm t}=\bT_R^{\rm c}(\L)$, that is, we have an isomorphism of posets
\[
\Phi_{\rm t} :\tors\L \simeq \bT_R^{\rm c}(\L),
\]
which translates the classification of torsion classes of Noetherian algebras $\L$ into the classification of torsion classes of finite dimensional algebras $\L(\p)$.
%\old{since we have $\tors(\Fl\L_\p) \simeq \tors(\kappa(\p)\otimes_R \L)$ (see Proposition \ref{prop-torsion-fl}).}
We give sufficient conditions for $(R, \L)$ to be compatible.
%%%%%%%%%%%%%%%
\begin{thm}[Corollary \ref{cor-semi-local}]\label{intro-thm-semilocal}
Let $(R, \L)$ be a Noetherian algebra.
If $R$ is semi-local and $\dim R=1$, then $(R, \L)$ is compatible.
Thus we have an isomorphism $\Phi_{\rm t} : \tors\L \simeq \bT_R^{\rm c}(\L)$ of posets.
\end{thm}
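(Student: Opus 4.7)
To establish surjectivity of $\Phi_{\rm t}$, I shall construct, for each compatible tuple $(\cX^\p)_\p \in \bT^{\rm c}_R(\L)$, an explicit $\cT \in \tors \L$ with $\Phi_{\rm t}(\cT) = (\cX^\p)_\p$. The natural candidate is
\[
\cT := \{ X \in \mod\L \mid X_\q \in \psi_\q(\cX^\q) \text{ for every } \q \in \Spec R \};
\]
this is a torsion class, since each $\psi_\q(\cX^\q)$ is one and localization is exact. It then suffices to verify $\cT_\p \cap \Fl\L_\p = \cX^\p$ for each $\p \in \Spec R$. The easy inclusion $\cT_\p \cap \Fl\L_\p \subseteq \cX^\p$ follows from the general identity $\psi_\p(\cX^\p) \cap \Fl\L_\p = \cX^\p$: any $M \in \Fl\L_\p$ with $M/\p M \in \cX^\p$ is an iterated extension of the successive quotients $\p^i M / \p^{i+1} M$ of its $\p$-adic filtration, each of which is a quotient of $(M/\p M)^{d_i}$ with $d_i = \dim_{\kappa(\p)}(\p^i/\p^{i+1})$, so $M \in \cX^\p$ by extension closure.

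For the reverse inclusion $\cX^\p \subseteq \cT_\p \cap \Fl\L_\p$, given $Y \in \cX^\p$ I construct some $X \in \cT$ with $X_\p \simeq Y$ by splitting on the height of $\p$. When $\p$ is maximal, the equality $\L/\p^n\L = \L_\p/\p^n\L_\p$ combined with $\p^n Y = 0$ lets me view $Y$ itself as an object of $\mod \L$; then prime avoidance (pick $r \in \p \setminus \q$ for each $\q \neq \p$, so that $r^n$ annihilates $Y$ while being a unit in $R_\q$) gives $Y_\q = 0$ for $\q \neq \p$, hence $Y \in \cT$. When $\p$ is minimal, compatibility supplies, for each of the finitely many (by semi-locality) maximal ideals $\mfm \supseteq \p$, some $W^\mfm \in \psi_\mfm(\cX^\mfm) \subseteq \mod \L_\mfm$ with $W^\mfm_\p \simeq Y$. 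I then choose a finitely generated $\L$-submodule $V^\mfm \subseteq W^\mfm$ with $V^\mfm_\mfm = W^\mfm$, let $\tilde V^\mfm \subseteq Y$ denote the image of $V^\mfm$ under the canonical map $V^\mfm \hookrightarrow W^\mfm \to W^\mfm_\p = Y$, and set
\[
X := \sum_{\mfm \supseteq \p} \tilde V^\mfm \subseteq Y.
\]
Then $X$ is finitely generated over $\L$, satisfies $X_\p = Y$ (since each $\tilde V^\mfm$ already generates $Y$ over $R_\p$, because $V^\mfm_\p = W^\mfm_\p = Y$), and vanishes at every $\q \not\supseteq \p$ (as $X \subseteq Y$, which is annihilated by a power of $\p$ and so vanishes at such $\q$).

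\textbf{The main obstacle} is verifying $X_\mfm / \mfm X_\mfm \in \cX^\mfm$ for each maximal $\mfm \supseteq \p$. The diagonal contribution $\tilde V^\mfm_\mfm / \mfm \tilde V^\mfm_\mfm$ is a quotient of $W^\mfm / \mfm W^\mfm \in \cX^\mfm$, hence is in $\cX^\mfm$; the real difficulty is the cross-contributions $\tilde V^{\mfm'}_\mfm$ for $\mfm' \neq \mfm$, which also feed into $X_\mfm$ and must likewise reduce into $\cX^\mfm$ modulo $\mfm$. This is where the hypotheses are used decisively: $\dim R = 1$ forces any two distinct primes to be incomparable unless one is maximal and the other a minimal prime contained in it, so the only compatibility constraints needed are exactly the ones between the fixed minimal $\p$ and each maximal $\mfm \supseteq \p$ -- precisely the constraints supplied by the definition of $\bT_R^{\rm c}(\L)$. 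Combined with semi-locality and the Chinese-remainder isomorphism $R/\mathfrak{J}^n \simeq \prod_\ell R_{\mfm_\ell}/\mfm_\ell^n R_{\mfm_\ell}$ (with $\mathfrak{J}$ the Jacobson radical) together with Artin--Rees, this allows one to confine the cross-contributions modulo $\mfm$ to subquotients of the diagonal contribution, so that $X_\mfm/\mfm X_\mfm \in \cX^\mfm$ and $X \in \cT$, completing the proof.
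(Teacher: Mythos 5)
Your approach differs genuinely from the paper's. The paper obtains Corollary~\ref{cor-semi-local} by applying Theorem~\ref{thm-cofinite-compatible} with $\cT=\mod\L$ and $V=\Spec R$, which is finite by Proposition~\ref{Spec R finite}; the engine behind that theorem is the iterative torsion-pair refinement of Lemma~\ref{lem-spec-lower-set}, where one repeatedly replaces a candidate lift by its torsion part with respect to $(\psi(\cX^\p),\psi(\cX^\p)^\perp)$ for a minimal bad prime $\p$ and checks that this does not disturb the localizations already under control. You instead try to build the lift directly as a sum $X=\sum_{\mfm\supseteq\p}\tilde V^\mfm$ of submodules of $Y$. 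The paper's route sidesteps interference between different maximal ideals; yours has to confront it head-on, and that is exactly where your argument has a genuine gap.

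The sentence that does not survive scrutiny is that CRT and Artin--Rees ``confine the cross-contributions modulo $\mfm$ to subquotients of the diagonal contribution, so that $X_\mfm/\mfm X_\mfm\in\cX^\mfm$.'' Torsion classes are closed under factor objects but not under subobjects, so even if the image of $(\tilde V^{\mfm'})_\mfm$ in $X_\mfm/\mfm X_\mfm$ were a subquotient of something in $\cX^\mfm$, that would not put $X_\mfm/\mfm X_\mfm$ into $\cX^\mfm$. What is actually required is the inclusion $(\tilde V^{\mfm'})_\mfm\subseteq(\tilde V^{\mfm})_\mfm$ for all $\mfm'\neq\mfm$, which forces $X_\mfm=(\tilde V^\mfm)_\mfm$ and makes the cross-contributions vanish at $\mfm$ outright. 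This can be arranged, but only after rescaling the $\tilde V^{\mfm'}$: for each pair $\mfm'\neq\mfm$ over $\p$ the quotient $(\tilde V^{\mfm'})_\mfm/\bigl((\tilde V^{\mfm'})_\mfm\cap(\tilde V^{\mfm})_\mfm\bigr)$ has trivial localization at $\p$ and is supported only at $\mfm$, hence is killed by $\mfm^N$ for $N\gg0$; picking by CRT scalars $s_{\mfm'}\in R$ with $s_{\mfm'}\equiv1\pmod{\mfm'^{N}}$ and $s_{\mfm'}\in\mfm^{N}$ for all other $\mfm\supseteq\p$, and replacing $\tilde V^{\mfm'}$ by $s_{\mfm'}\tilde V^{\mfm'}$, leaves the $\p$- and $\mfm'$-localizations unchanged while pushing the cross-contributions at $\mfm$ inside the diagonal. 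Without some such rescaling (or the paper's torsion-pair refinement, which avoids the problem entirely), the construction as written does not close.
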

%%%%%%%%%%%%%%%
We give other sufficient conditions of quite different type in terms of silting theory.
In Appendix \ref{subsection-silting-ch}, we give characterizations of silting modules (that is, support $\tau$-tilting pairs in \cite{Iyama-Jorgensen-Yang}, and finitely presented silting modules in \cite{AMV}).
We denote by $\twosilt\L$ the set of additive equivalence classes of two-term silting complexes of $\L$, and by $\siltm\L$ the set of additive equivalence classes of silting $\L$-modules (Definition \ref{dfn-two-term-silting}).
Each $M\in\siltm\L$ gives a torsion class $\Fac M\in\tors\L$, and let
\begin{align}\label{intro-stors-storsfl}\notag
	\stors(\Fl\L)&:=\{\Fac M \cap \Fl\L \mid M \in \siltm\L\} \subseteq \tors(\Fl\L)
\end{align}
%\old{$\stors(\kappa(\p)\otimes_R \L):=\{\Fac (\kappa(\p)\otimes_R M) \mid M \in \siltm\L\} \subseteq \tors(\kappa(\p)\otimes_R \L)$}
and give the following criterion for compatibility.
%%%%%%%%%%%%%%%
\begin{thm}[Theorem \ref{thm-r-isom}]\label{thm-intro-r-isom}
Let $(R, \L)$ be a Noetherian algebra such that $R$ is ring indecomposable and the following conditions are satisfied.
\begin{enumerate}[\rm(i)]
	\item $(-)_\p : \twosilt\L \to \twosilt \L_\p$ is an isomorphism of posets for any $\p\in\Spec R$.
	\item $\stors(\Fl\L_\p)=\tors(\Fl\L_\p)$ holds for any $\p$.
\end{enumerate}
Then $(R, \L)$ is compatible and we have an isomorphism of posets
\[
\tors\L \simeq \Hom_{\rm poset}(\Spec R, \twosilt\L).
\]
\end{thm}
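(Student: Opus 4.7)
The plan is to translate $\bT_R^{\rm c}(\L)$ into $\Hom_{\rm poset}(\Spec R, \twosilt\L)$ using hypotheses (i) and (ii), and then establish surjectivity of $\Phi_{\rm t}$ by constructing an explicit preimage.

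\emph{Translation step.} Under (ii), $\tors(\Fl\L_\p) = \stors(\Fl\L_\p)$, and the silting-to-torsion map $\siltm\L_\p \to \stors(\Fl\L_\p)$, $M_\p \mapsto \Fac M_\p \cap \Fl\L_\p$, is bijective (from silting theory developed in the Appendix). Combined with the standard bijection $\twosilt\L_\p \leftrightarrow \siltm\L_\p$ and the poset iso from (i), this gives a poset isomorphism $\tors(\Fl\L_\p) \simeq \twosilt\L$ for each $\p$, sending $\Fac M_\p \cap \Fl\L_\p$ to $M$. Next, for $\p \supseteq \q$, I claim $r_{\p,\q}$ acts as the identity on $\twosilt\L$ under this identification. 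Indeed, a Nakayama-type argument (valid because $\p\L_\p \subseteq J(\L_\p)$) shows $\psi_\p(\Fac M_\p \cap \Fl\L_\p) = \Fac M_\p$, and then localizing at $\q$ using the natural iso $(M_\p)_\q = M_\q$ (valid for $\q \subseteq \p$) yields
\[
r_{\p,\q}(\Fac M_\p \cap \Fl\L_\p) = \Fac M_\q \cap \Fl\L_\q,
\]
which corresponds to the same element $M \in \twosilt\L$. Hence the compatibility condition $r_{\p,\q}(\cX^\p) \supseteq \cX^\q$ becomes $f(\p) \geq f(\q)$ in $\twosilt\L$ whenever $\p \supseteq \q$, i.e., the poset homomorphism condition. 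This gives $\bT_R^{\rm c}(\L) \simeq \Hom_{\rm poset}(\Spec R, \twosilt\L)$.

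\emph{Surjectivity of $\Phi_{\rm t}$.} Given $f \in \Hom_{\rm poset}(\Spec R, \twosilt\L)$, define
\[
\cC_f := \{X \in \mod\L \mid X_\p \in \Fac f(\p)_\p \text{ for all } \p \in \Spec R\},
\]
which is a torsion class since each $\Fac f(\p)_\p \in \tors\L_\p$ is. The key verification is $(\cC_f)_\p \cap \Fl\L_\p \supseteq \Fac f(\p)_\p \cap \Fl\L_\p$ (the opposite inclusion being immediate). Given $Y \in \Fac f(\p)_\p \cap \Fl\L_\p$ with a surjection $f(\p)_\p^m \twoheadrightarrow Y$, let $X \subseteq Y$ be the image of the composite $\L$-homomorphism $f(\p)^m \to f(\p)_\p^m \twoheadrightarrow Y$. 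Then $X$ is finitely generated over $\L$, lies in $\Fac f(\p)$, satisfies $X_\p = Y$, and is annihilated by the same power of $\p$ that annihilates $Y$; hence $X_\q = 0$ for $\q \not\supseteq \p$. For $\q \supseteq \p$, monotonicity of $f$ gives $f(\p) \leq f(\q)$, so by (i) we have $f(\p)_\q \leq f(\q)_\q$ in $\twosilt\L_\q$, whence $\Fac f(\p)_\q \subseteq \Fac f(\q)_\q$ and thus $X_\q \in \Fac f(\q)_\q$. So $X \in \cC_f$ and $Y = X_\p \in (\cC_f)_\p \cap \Fl\L_\p$, as required.

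\emph{Main obstacle.} The central technical point is the Nakayama-type equality $\psi_\p(\Fac M_\p \cap \Fl\L_\p) = \Fac M_\p$ in the translation step, which is what forces $r_{\p,\q}$ to behave as the identity with respect to the silting identification. The role of ring-indecomposability of $R$ presumably enters here or in the silting identification of Step~1 to guarantee a clean bijection $\twosilt\L \simeq \twosilt\L_\p$ without splitting into blocks. Once this is in place, the construction of $X$ in the surjectivity step is a natural consequence, with (i) playing a critical role in transferring the order from $\twosilt\L$ to $\twosilt\L_\q$ after localization.
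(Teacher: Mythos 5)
Your proposal follows essentially the same architecture as the paper's proof: first translate $\bT_R^{\rm c}(\L)$ into $\Hom_{\rm poset}(\Spec R, \twosilt\L)$ via the silting identifications, then establish compatibility by an explicit preimage construction. Your $\cC_f$ agrees with the paper's $\Psi_{\rm t}$ applied to the compatible tuple corresponding to $f$, and your verification that $(\cC_f)_\p\cap\Fl\L_\p \supseteq \Fac H^0(f(\p))_\p\cap\Fl\L_\p$ (descend the surjection $H^0(f(\p))_\p^m\twoheadrightarrow Y$ to a submodule $X\subseteq Y$ with $X_\p=Y$, then check $X_\q\in\Fac H^0(f(\q))_\q$ for each $\q$) is the same idea as the paper's Proposition~\ref{prop-element-image} specialized to this context, just unwound directly.

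There is one genuine gap. You correctly identify that the translation step hinges on the equality $\psi_\p(\Fac M_\p\cap\Fl\L_\p)=\Fac M_\p$ for a silting module $M$, which is what makes $r_{\p,\q}$ act as the identity under the silting identifications (this is Corollary~\ref{cor-psilt-rpq}). But you attribute it to a ``Nakayama-type argument valid because $\p\L_\p$ lies in the Jacobson radical,'' and that is not sufficient. A straightforward Nakayama argument, combined with Ext-projectivity of $M_\p$ in $\Fac M_\p$, shows only that a surjection $M_\p^n\to X_\p/\p X_\p$ lifts step by step to surjections onto $X_\p/\p^i X_\p$ for every finite $i$; it does not produce a surjection onto $X_\p$ itself. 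The paper closes this gap with Guralnick's lifting-number lemma (Lemma~\ref{lem-lifting-number}), which furnishes an integer $e$ such that any homomorphism $M/\m^{e+i}M\to N/\m^{e+i}N$ agrees modulo $\m^i$ with one that lifts to $M\to N$; combining this with the tower of surjections and Nakayama then yields an honest surjection $M_\p^n\to X_\p$. This is precisely Theorem~\ref{thm-tors-closure-stable}(a), and without it the equality you rely on is unproved and the translation step does not close.
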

%%%%%%%%%%%%%%%
Notice that in view of Theorem \ref{thm-intro-r-isom}, it is important to understand when the condition $\stors(\Fl\L_\p)=\tors(\Fl\L_\p)$ is satisfied.
The following result shows that this condition is a generalization of $\tau$-tilting finiteness \cite{DIJ} for finite dimensional algebras over a field.
%%%%%%%%%%%%%%%
\begin{thm}[Theorem \ref{thm-silting-finite}]\label{intro-thm-silting-finite}
Let $(R, \L)$ a Noetherian algebra such that $\L$ is semi-perfect (e.g.\ $R$ is complete local).
Then the following conditions are equivalent.
\begin{enumerate}[{\rm (i)}]
	\item $\sharp\twosilt\L < \infty$.
	\item $\sharp\tors(\Fl\L) < \infty$.
	\item $\stors(\Fl\L)=\tors(\Fl\L)$.
\end{enumerate}
\end{thm}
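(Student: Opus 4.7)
The plan is to adapt the $\tau$-tilting finiteness theorem of Demonet--Iyama--Jasso for finite dimensional algebras to the semi-perfect Noetherian setting, proving the cycle (i)$\Rightarrow$(iii)$\Rightarrow$(ii)$\Rightarrow$(i). A preliminary step, valid throughout, is to use Appendix~\ref{subsection-silting-ch} to identify $\twosilt\L\simeq\siltm\L$ (via $H^0$) when $\L$ is semi-perfect, and to establish that $M\mapsto \Fac M\cap\Fl\L$ defines a bijection $\siltm\L\simeq\stors(\Fl\L)$. Surjectivity is by definition; for injectivity one shows that a silting module is determined up to additive equivalence by the set of simples appearing in $\Fac M$, all of which lie in $\Fl\L$ by semi-perfectness.

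For (i)$\Rightarrow$(iii): if $|\twosilt\L|<\infty$, the bijection gives $|\stors(\Fl\L)|<\infty$. Suppose for contradiction that some $\cT\in\tors(\Fl\L)\setminus\stors(\Fl\L)$, and choose a cover $\cT_1\lessdot\cT_2$ in the finite poset $\stors(\Fl\L)$ with $\cT_1\subsetneq\cT\subsetneq\cT_2$. By mutation of silting modules (Appendix~\ref{subsection-silting-ch}), adjacent elements of $\stors(\Fl\L)$ differ by a single mutation, and a brick labeling of the Hasse quiver of $\tors(\Fl\L)$, adapted to the semi-perfect setting, forces the interval $[\cT_1,\cT_2]$ in $\tors(\Fl\L)$ to contain no intermediate torsion class, contradicting the existence of $\cT$.

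For (iii)$\Rightarrow$(ii): the bijection upgrades to $\tors(\Fl\L)\simeq\siltm\L$. If $|\tors(\Fl\L)|=\infty$, brick labeling yields an infinite antichain of bricks in $\Fl\L$; but every such brick is a finite-length $\L$-module with top in the finite set $\simple\L$ and length controlled by the silting structure, a contradiction. Hence $|\tors(\Fl\L)|<\infty$. For (ii)$\Rightarrow$(i): $|\tors(\Fl\L)|<\infty$ gives $|\stors(\Fl\L)|<\infty$, so $|\twosilt\L|=|\siltm\L|=|\stors(\Fl\L)|<\infty$ by the bijection.

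The main obstacle is extending the brick labeling and mutation theory from finite-dimensional algebras to the semi-perfect Noetherian setting; specifically, one must verify that bricks in $\Fl\L$ retain enough combinatorial rigidity (finite tops, length bounds coming from the ambient silting) to preclude infinite antichains once every torsion class is silting-induced. The semi-perfect hypothesis is essential here, as it guarantees $|\simple\L|<\infty$ and a well-behaved silting mutation theory.
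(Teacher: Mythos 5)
Your approach diverges fundamentally from the paper's, and it has a genuine gap. The paper does not attempt to re-prove the Demonet--Iyama--Jasso theorem in the Noetherian setting; instead it \emph{reduces} to the finite-dimensional case. After observing (Proposition \ref{prop-sp-local}) that semi-perfectness of $\L$ forces $R$ to be a finite product of local rings, so one may assume $R$ local with maximal ideal $\m$, the paper invokes Proposition \ref{prop-silting-ftors} to get an isomorphism $\siltm\L\simeq\siltm\L(\m)$ (this is where semi-perfectness enters), and Proposition \ref{prop-torsion-fl} to get $\tors(\Fl\L)\simeq\tors\L(\m)$. Both are order isomorphisms, so the statement for $\L$ is transported wholesale to the finite-dimensional $\kappa(\m)$-algebra $\L(\m)$, where DIJ (Proposition \ref{thm-DIJ}) is applied as a black box. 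No brick labeling in $\Fl\L$ is needed.

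Your argument for (i)$\Rightarrow$(iii) has a concrete gap independent of the brick-labeling issue. From a torsion class $\cT\in\tors(\Fl\L)\setminus\stors(\Fl\L)$ you want a cover $\cT_1\lessdot\cT_2$ in the finite poset $\stors(\Fl\L)$ with $\cT_1\subsetneq\cT\subsetneq\cT_2$. Taking $\cT_1$ to be the largest silting torsion class contained in $\cT$ and $\cT_2$ the smallest containing it does give $\cT_1\subsetneq\cT\subsetneq\cT_2$, but there is nothing forcing $\cT_1\lessdot\cT_2$ to be a cover: there can be an intermediate silting torsion class $\cT_3$ with $\cT_1\subsetneq\cT_3\subsetneq\cT_2$ that is simply incomparable to $\cT$. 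So the rigidity you want to invoke (no intermediate torsion class in a silting cover interval) does not apply to the pair $(\cT_1,\cT_2)$ you constructed. Closing this gap is precisely the nontrivial content of the DIJ argument, and you explicitly flag the extension of brick labeling and mutation to $\Fl\L$ as an unverified ``main obstacle,'' so the proposal is incomplete on its own terms.

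Two further points: your step (iii)$\Rightarrow$(ii) does not need brick labeling or semi-perfectness at all --- it is part of the paper's general chain $(\rm vi)\Rightarrow(\rm v)\Leftrightarrow(\rm iv)$ in Theorem \ref{thm-silting-finite}, valid for any Noetherian algebra. And your justification of the bijection $\siltm\L\simeq\stors(\Fl\L)$ (``a silting module is determined by the set of simples appearing in $\Fac M$'') is not the right reason; the injectivity comes from $\gen M=\psi(\gen M\cap\Fl\L)$, which is Theorem \ref{thm-tors-closure-stable}(a), combined with Proposition \ref{prop-ap-twosilt-silt-stors}.
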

%%%%%%%%%%%%%%%
As a corollary of Theorems \ref{thm-intro-r-isom} and \ref{intro-thm-silting-finite} we have the following one.
%%%%%%%%%%%%%%%
\begin{cor}[Corollary \ref{cor-r-isom}]\label{intro-cor-r-isom}
Let $k$ be a field and let $A$ be a finite dimensional $k$-algebra.
Assume that $A$ is $\tau$-tilting finite and any simple $A$-module is $k$-simple.
Then for arbitrary commutative Noetherian ring $R$ which contains the field $k$, we have an isomorphism of posets
\[
\tors(R\otimes_k A) \simeq \Hom_{\rm poset}(\Spec R, \tors A).
\]
\end{cor}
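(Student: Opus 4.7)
The plan is to apply Theorem \ref{thm-intro-r-isom} to the Noetherian algebra $(R,\L)$ with $\L := R\otimes_k A$. First I would reduce to the case that $R$ is ring indecomposable: since $R$ is Noetherian, $\Spec R$ has only finitely many connected components, so $R\simeq R_1\times\cdots\times R_n$ with each $R_i$ ring indecomposable. Both sides of the desired isomorphism decompose compatibly as products over $i$,
\[
\tors\L \simeq \prod_{i}\tors(R_i\otimes_k A), \qquad \Hom_{\rm poset}(\Spec R,\tors A) \simeq \prod_i\Hom_{\rm poset}(\Spec R_i,\tors A),
\]
so we may assume $R$ is ring indecomposable.

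For each $\p\in\Spec R$ one has $\L(\p)=\kappa(\p)\otimes_k A$. The splitting hypothesis $\End_A(S)=k$ for every simple $A$-module $S$ implies that, for any field extension $k\hookrightarrow K$, the functor $K\otimes_k-$ induces a bijection between $\simple A$ and $\simple(K\otimes_k A)$ and preserves $\tau$-rigid pairs and their supports. This should yield natural poset isomorphisms $\twosilt A\simeq \twosilt(K\otimes_k A)$ and $\tors A\simeq\tors(K\otimes_k A)$. Applied with $K=\kappa(\p)$, it shows that $\L(\p)$ is $\tau$-tilting finite with $\twosilt\L(\p)\simeq\twosilt A$; since $\L(\p)$ is finite-dimensional over $\kappa(\p)$ (hence semi-perfect), Theorem \ref{intro-thm-silting-finite} then gives $\stors(\Fl\L(\p))=\tors(\Fl\L(\p))$. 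Transferring this through the canonical identification $\tors(\Fl\L_\p)\simeq\tors\L(\p)$ (and the analogous identification for $\stors$ coming from Proposition \ref{prop-torsion-fl}) verifies condition (ii) of Theorem \ref{thm-intro-r-isom}.

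For condition (i), the strategy is to show that scalar extension induces compatible bijections $\twosilt A\simeq\twosilt\L\simeq\twosilt\L_\p$, so that $(-)_\p:\twosilt\L\to\twosilt\L_\p$ is a poset isomorphism for every $\p$. Once both conditions are verified, Theorem \ref{thm-intro-r-isom} yields $\tors\L\simeq\Hom_{\rm poset}(\Spec R,\twosilt\L)$, and combining with $\twosilt\L\simeq\twosilt A\simeq\tors A$ (the last by $\tau$-tilting finiteness of $A$) gives the claim. The main obstacle is verifying the compatibility of two-term silting complexes across scalar extension: one must show that the $\tau$-rigidity and total-support conditions defining a two-term silting complex are detected locally on $\Spec R$ under the splitting hypothesis, so that a two-term silting complex over $R\otimes_k A$ is determined by, and can be reconstructed from, its stalks at primes of $R$. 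Without the splitting assumption, simple modules may split further after base change, allowing local silting complexes with no global counterpart, and the bijection would fail.
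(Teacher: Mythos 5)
Your overall plan follows the paper's proof exactly: reduce to ring indecomposable $R$, verify hypotheses (i) and (ii) of Theorem \ref{thm-intro-r-isom} for $\L=R\otimes_k A$, then chain the isomorphisms $\tors\L\simeq\Hom_{\rm poset}(\Spec R,\twosilt\L)\simeq\Hom_{\rm poset}(\Spec R,\tors A)$. However, there is a concrete gap in your verification of hypothesis (ii), and it is the same missing ingredient that you flag (correctly) as the obstacle to hypothesis (i): you need $\L_\p$ --- the \emph{localization}, not the residue algebra $\L(\p)$ --- to be semi-perfect for every $\p\in\Spec R$. You observe only that $\L(\p)=\kappa(\p)\otimes_kA$ is semi-perfect (automatic, being finite-dimensional over a field), and then ``transfer'' $\stors\L(\p)=\tors\L(\p)$ back to $\stors(\Fl\L_\p)=\tors(\Fl\L_\p)$ citing Proposition \ref{prop-torsion-fl}. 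That proposition only identifies $\tors(\Fl\L_\p)\simeq\tors\L(\p)$ and the analogous $\torf$; it says nothing about $\stors$. The $\stors$-identification you need is Proposition \ref{prop-silting-ftors}, whose hypothesis is precisely that $\L_\p$ is semi-perfect, and Remark \ref{rem-not-sp} shows that without this hypothesis the map $\siltm\L_\p\to\siltm\L(\p)$ can fail to be surjective, so the transfer would break. (Equivalently, applying Theorem \ref{thm-silting-finite}(iii)$\Rightarrow$(vi) directly to $(R_\p,\L_\p)$ also requires $\L_\p$ semi-perfect.)

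The paper supplies this in Proposition \ref{prop-ftors-bij}(a): if $A=\bigoplus_iP_i$ is an indecomposable decomposition, then because each $\Top P_i$ is $k$-simple, Lemma \ref{lem-k-K-proj} guarantees each $\kappa(\p)\otimes_kP_i$ is indecomposable, and then Lemma \ref{lem-semi-perfect} shows $\End_{\L_\p}(R_\p\otimes_kP_i)$ is local; this is the point where the $k$-simple hypothesis is actually used. Once $\L_\p$ is semi-perfect, Proposition \ref{prop-silting-ftors} gives $\twosilt\L_\p\simeq\twosilt\L(\p)$, Theorem \ref{thm-twosilt-k-K} (again via the $k$-simple hypothesis and Lemma \ref{lem-k-K-proj}) gives $\twosilt A\simeq\twosilt\L(\p)$, and Proposition \ref{prop-local-iso} upgrades the resulting surjectivity of $(-)_\p:\twosilt\L\to\twosilt\L_\p$ to an isomorphism --- that is Proposition \ref{prop-ftors-bij}(b), which closes both hypotheses simultaneously. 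So your sketch has the right shape, but the central technical point (semi-perfectness of $\L_\p$) is absent and cannot be bypassed. A small remark: you do not need the intermediate claim $\tors A\simeq\tors(K\otimes_kA)$; after $\twosilt A\simeq\twosilt A_K$ is established, $\tau$-tilting finiteness gives $\tors\simeq\twosilt$ on both sides by Proposition \ref{thm-DIJ}.
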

%%%%%%%%%%%%%%%
We have the following example by using Corollary \ref{intro-cor-r-isom}.
%%%%%%%%%%%%%%%
\begin{exa}[Example \ref{example-Dynkin}]\label{intro-example-Dynkin}
Let $R$ be a commutative Noetherian ring which contains a field $k$ and $Q$ a Dynkin quiver.
Since $\tors kQ$ is isomorphic to $\mathfrak{C}_Q$, there is an isomorphism of posets
\[
\tors RQ \simeq \Hom_{\rm poset}(\Spec R, \mathfrak{C}_Q),
\]
where $\mathfrak{C}_Q$ is the Cambrian lattice of $Q$.
\end{exa}
%%%%%%%%%%%%%%%
\begin{rem}
Our Example \ref{intro-example-Dynkin} is analogous to a result of \cite[Corollary 5.11]{AS}.
For a commutative Noetherian ring $R$ and a Dynkin quiver $Q$, they constructed an isomorphism of posets between the set of thick subcategories of $\sK^{\rm b}(\proj RQ)$ and $\Hom_{\rm poset}(\Spec R, \mathrm{NC}(Q))$, where $\mathrm{NC}(Q)$ is the poset of non-crossing partitions of $Q$.
Since there is an order preserving bijection $\mathrm{NC}(Q) \to \mathfrak{C}_Q$ which does not reflect the order, we obtain an order preserving map from the set of thick subcategories of $\sK^{\rm b}(\proj RQ)$ to $\tors RQ$, which does not reflect the order.
\end{rem}
%%%%%%%%%%%%%%%
We pose the following question.
%%%%%%%%%%%%%%%
\begin{ques}\label{intro-ques-comp}
Characterize Noetherian algebras which are compatible.
\end{ques}
%%%%%%%%%%%%%%%
Theorems \ref{intro-thm-semilocal} and \ref{thm-intro-r-isom} give partial answers to this question.
So far we do not know any Noetherian algebra which is \emph{not} compatible.

We end the introduction with the following remark about a connection between our results and classifications of torsion classes of $\Mod\L$.
%%%%%%%%%%%%%%%
\begin{rem}
Let $\L$ be a Noetherian $R$-algebra. A torsion class $\cT$ of $\Mod\L$ is said to be \emph{finitely generated} if there exists a subcategory $\cC$ of $\mod\L$ such that $\cT=\Gen \cC$ holds.
This is equivalent to that there exists a cosilting module $U$ such that $\cT={}^{\perp}U$ \cite{AH-com, BL}.
By \cite[(4.4) Lemma]{CB-locally}, it is easy to see that there exists a bijection
\[\lim:\tors\L\simeq\{\mbox{finitely generated torsion classes of $\Mod\L$}\}:(-)\cap(\mod\L).\]
Therefore our result gives a classification of finitely generated torsion classes of $\Mod\L$. Moreover the bijection above clearly restricts to a bijection
\[\lim:\serre\L\simeq\{\mbox{finitely generated Serre subcategories of $\Mod\L$}\}:(-)\cap(\mod\L).\]
Note that not all torsion classes are finitely generated, e.g.\ the category of injective $\bZ$-modules.
\end{rem}
%%%%%%%%%%%%%%%
%%%%%%%%%%%%%%%
\subsection{Notations and conventions}\label{subsection-notation}
In this paper, all categories are assumed to be essentially small and all subcategories are assumed to be full and closed under isomorphisms. The composition of morphisms $f:X\to Y$ and $g:Y\to Z$ is denoted by $gf:X\to Z$.

Let $A$ be a ring.
We denote by $\Mod A$ (respectively, $\mod A$, $\fp A$, $\proj A$, $\Fl A$) the category of (respectively, finitely generated, finitely presented, finitely generated projective, finite length) left $A$-modules and denote by $\sK^{\rm b}(\proj A)$ the bounded homotopy category of $\proj A$.
Let $\sD(\L)=\sD(\Mod\L)$.
% and $\per\L$ be the isomorphism closure of $\sK^{\rm b}(\proj\L)$ in $\sD(\L)$. In this paper, $P\in\per\L$ means $P\in\sK^{\rm b}(\proj\L)$ unless stated otherwise. 

Let $\cA$ be an additive category.
For a subcategory  $\cB$ (or a collection of objects) of $\cA$, we denote by $\add\cB$ the full subcategory of $\cA$ consisting of direct summands of finite direct sums of objects in $\cB$.
Let $\cB^{\perp_{\cA}}=\{X\in\cA \mid \Hom_{\cA}(B,X)=0,\,{}^{\forall}B\in\cB\}$.
Similarly, ${}^{\perp_\cA}\cB$ is defined.
If the category $\cA$ is clear from the context, then we use the notion $\cB^{\perp}$ and ${}^{\perp}\cB$ for simplicity.
Two objects $X, Y$ in an additive category are said to be \emph{additive equivalent} if $\add X=\add Y$ holds.

Let $\cA$ be an abelian category and $\cB$ a subcategory (or a collection of objects) of $\cA$.
We denote by $\Fac\cB$ the full subcategory of $\cA$ consisting of factor objects of objects in $\add\cB$.
We denote by $\Filt\cB$ the subcategory of $\cA$ consisting objects $X$ such that there exists a finite filtration $0\subseteq X_1 \subseteq\dots\subseteq X_{\ell}=X$ and $X_i/X_{i-1}$ is an object of $\add\cB$ for each $i$.

Let $\cC$ be an additive category and $\cB$ be a subcategory of $\cC$.
For an object $C$ of $\cC$, a \emph{right $\cB$-approximation of $C$} is a morphism $f:B_0 \to C$ with $B_0\in\cB$ such that the map $\Hom_{\cC}(B,f)$ is surjective for any object $B$ of $\cB$,
and a \emph{left $\cB$-approximation of $C$} is a morphism $g:C \to B^0$ with $B^0\in\cB$ such that the map $\Hom_{\cC}(g, B)$ is surjective for any object $B$ of $\cB$.
%\old{A \emph{left $\cB$-approximation} is defined dually.}

Let $\cA$ be an abelian category and $\cC$ a subcategory of $\cA$.
We say that $\cC$ is \emph{closed under extensions} in $\cA$ if for each short exact sequence $0 \to X \to Y \to Z \to 0$ in $\cA$ with $X, Z\in\cC$, we have $Y\in\cC$.
We say that $\cC$ is \emph{closed under factor objects (respectively, subobjects)} in $\cA$ if for each objects $C$ of $\cC$, any factor objects of $C$ in $\cA$ (respectively, any subobjects of $C$ in $\cA$) belongs to $\cC$.

Let $R$ be a commutative Noetherian ring.
An $R$-algebra $\L$ is a ring $\L$ with a ring homomorphism $\phi : R \to \L$ such that the image of $\phi$ is contained in the center of $\L$.
A \emph{Noetherian $R$-algebra} (or, \emph{module-finite $R$-algebra}, \emph{Noetherian algebra}) is an $R$-algebra $\L$ which is finitely generated as an $R$-module.
For a prime ideal $\p$ of $R$, let $\kappa(\p)=R_\p/\p R_\p$.

A ring $\L$ is said to be \emph{semi-perfect} if it admits a decomposition $\L=P_1\oplus\dots\oplus P_n$ as a left $\L$-module such that each $P_i$ has a local endomorphism ring.
It is well-known that $\L$ is semi-perfect if and only if $\proj \L$ is a Krull-Schmidt category.
For more details of semi-perfect rings, see \cite{Curtis-Reiner, Krause}.

For a posets $X, Y$, we denote by $\Hom_{\rm poset}(X, Y)$ the set of all morphisms of posets, that is, a map $f : X \to Y$ such that $a\leq b$ in $X$ always implies $f(a)\leq f(b)$.
A morphism $f : X \to Y$ of posets is called an \emph{embedding of posets} if $f$ is injective and a partial order on $X$ coincides with the restriction of a partial order on $Y$.
Namely, $f$ is an embedding of posets if and only if $f(a) \leq f(b)$ implies $a\leq b$ for any $a, b\in X$.

The \emph{Hasse quiver} of a poset $X$ has vertices corresponding to elements of $X$, and we draw an arrow $a \to b$ between two elements $a,b$ if $a> b$ and there are no intermediate element between $a$ and $b$.

For a background of Noetherian algebras we refer \cite{Curtis-Reiner}.
For a background of commutative ring theory we refer \cite{Eisenbud, Matsumura}.
%%%%%%%%%%%%%%%
%%%%%%%%%%%%%%%
%%%%%%%%%%%%%%%
\section{Noetherian algebras and silting complexes}
%%%%%%%%%%%%%%%
%%%%%%%%%%%%%%%
\subsection{Noetherian algebras}
Let $R$ be a commutative Noetherian ring and $\L$ a Noetherian $R$-algebra.
Let $S$ be a multiplicative subset of $R$. We denote by $R_S$ and $\L_S:=R_S\otimes_R\L$ the localizations of $R$ and $\L$ by $S$, where $\L_S$ is a Noetherian $R_S$-algebra. For a $\L$-module $M$, 
%let $S^{-1}M:=S\otimes_R M \simeq \L_S \otimes_\L M$.
we have an exact functor
\[
(-)_S = R_S\otimes_{R}(-) : \mod\L \to \mod\L_S.
\]
Since the functor is exact, we have $\Im f_S=(\Im f)_S$, $\Cok f_S=(\Cok f)_S$ and $\Ker f_S=(\Ker f)_S$ for a morphism $f$ in $\mod\L$.
Let $M, N\in\mod\L$ and $n\geq 0$ an integer. By \cite[(8.18) Corollary]{Curtis-Reiner}, the functor $(-)_S$ induces an isomorphism
\begin{align}\label{iso-localization}
\Ext_{\L}^{n}(M, N)_S \simeq \Ext_{\L_S}^{n}(M_S, N_S).
\end{align}
For $n=0$, it sends an element $1_{R_S}\otimes f$ in the left term to a morphism $[M_S\ni 1_{R_S}\otimes m \mapsto 1_{R_S}\otimes f(m)\in N_S]$. 
For $n=1$, it sends an element $1_{R_S}\otimes g$ in the left term corresponding to a short exact sequence $g=[0\to N\xto{a} L\xto{b} M\to0]$ to its localization $[0\to N_S\xto{a_S} L_S\xto{b_S} M_S\to0]$.

We denote by $\Spec R$ (respectively, $\MSpec R$) the set of all prime (respectively, maximal) ideals of $R$.
For an ideal $I$ of $R$, we denote by $V(I)$ the set of all prime ideals of $R$ containing $I$.
The set $\Spec R$ is a poset by inclusion, that is, $\q \leq \p$ if $\q \subseteq \p$.
If $S=R \setminus \p$ for $\p\in\Spec R$, then we write 
\[\L_{\mfp}:=\L_S=R_{\mfp}\otimes_{R}\L,\]
which is a Noetherian $R_\p$-algebra. For $X\in\mod\L$, let $\add X_\p:=\add(X_\p)$. Then $\add X_\p\supseteq(\add X)_\p$ holds, and the equality does not hold in general (e.g. $X:=\L$ in Example \ref{exa-L-local}).
%\old{We apply the same convention to other operators.}

For a $\L$-module $M$, we denote by $\supp_R M$ the support of $M$ regarded as an $R$-module, that is,
\begin{align*}
\supp_R M=\{\mfp\in\Spec R \mid M_{\mfp}\neq 0\}.
\end{align*}
We also recall that an \emph{associated prime ideal} of $M\in\mod R$ is a prime ideal $\mfp$ such that $\p=\ann(x):=\{a \in R \mid ax=0\}$ holds for some $x\in M$.
Thus $\p$ is an associated prime ideal of $M$ if and only if there exists an injective morphism from $R/\mfp$ to $M$.
We denote by $\ass_R M$ the set of all associated prime ideals of $M$.
It is known that $\ass_R M \subseteq \supp_R M$ holds, and the minimal elements of $\ass_R M$ coincide with the minimal elements of $\supp_R M$, see \cite{Matsumura} for instance.

We collect basic facts in commutative ring theory, which will be used frequently without reference.
%%%%%%%%%%%%%%%
\begin{prop}\label{prop-comm-basic}
Let $R$ be a commutative Noetherian ring, and $M\in\mod R$.
\begin{enumerate}[\rm(a)]
\item $\supp_RM=V(R/\ann_RM)$ holds.
\item The set of minimal elements of $\supp_RM$ and of $\ass_RM$ coincide.
\item $M=0$ if and only if $\supp_RM=\emptyset$ if and only if $\ass_RM=\emptyset$.
\item $M$ has finite length if and only if $\supp_RM\subseteq\MSpec R$.
\item Any $M\in\mod R$ has a finite filtration such that each subfactor module is isomorphic to $R/\p$ for some $\p\in\Spec R$.
\item $\supp_{R_\p}M_\p=\supp_RM\cap\Spec R_\p$ and $\ass_{R_\p}M_\p=\ass_RM\cap\Spec R_\p$.
\end{enumerate}
\end{prop}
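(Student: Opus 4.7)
The plan is to observe that all six statements are standard facts found in any text on commutative algebra, and the reference \cite{Matsumura, Eisenbud} (cited at the end of the notations section) covers them. So the bulk of the ``proof'' would be a pointer to those references. For expository completeness, I would provide a short argument or citation for each item, carefully noting which items are used to derive which, so that the chain of dependencies is transparent.

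First I would prove (a) directly from the definition: $M_\p = 0$ if and only if every element of $M$ is annihilated by some $s\in R\setminus\p$ if and only if $\ann_R M \not\subseteq \p$ (using finite generation of $M$, so that $\ann_R M$ kills all of $M$ at once after a common denominator). Item (f) for $\supp$ is immediate from transitivity of localization: $(M_\p)_{\q R_\p} = M_\q$ for $\q\subseteq\p$, so $\q\in\supp_{R_\p} M_\p$ iff $\q\in\supp_R M$ and $\q\subseteq\p$. For the $\ass$ part of (f) one uses that $\Spec R_\p\hookrightarrow\Spec R$ identifies associated primes, which is a direct computation from $\Hom_{R_\p}(R_\p/\q R_\p, M_\p) = \Hom_R(R/\q, M)_\p$.

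Next I would handle (b) by the classical argument: the minimal primes of $\supp_RM$ lie in $\ass_RM$ because after localizing at such a minimal $\p$, the module $M_\p$ is nonzero with support $\{\p R_\p\}$, hence has an associated prime which must be $\p R_\p$, and one pulls back via (f). Conversely any associated prime is in the support, so minimal elements coincide. Item (c) then follows from (a) together with the standard fact that a nonzero finitely generated module over a Noetherian ring has a nonempty $\ass$, which in turn yields a nonempty $\supp$. Item (e) is the standard prime filtration obtained by Noetherian induction: if $M\neq 0$, pick $\p\in\ass_RM$, take an injection $R/\p\hookrightarrow M$, and apply induction to $M/(R/\p)$.

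Finally (d) follows from (e) together with (a): if $\supp_R M \subseteq \MSpec R$ then in any prime filtration of $M$ all the primes $\p$ appearing must be maximal (otherwise $V(\p)$ would be contained in $\supp_RM$ and would contain a nonmaximal prime), so each $R/\p$ is a field and $M$ has finite length; conversely finite length forces all composition factors to be simple, hence $\supp_RM\subseteq\MSpec R$. I do not expect any real obstacle: each assertion is a short, textbook-level argument, and the main care is just to state them in the exact form (allowing $M$ to be arbitrary finitely generated) that the rest of the paper will rely on.
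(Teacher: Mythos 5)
Your proposal is correct and matches the paper's treatment: the paper gives no proof of this proposition at all, presenting it as a collection of standard facts to be used without reference (with \cite{Matsumura}, \cite{Eisenbud} as the general background sources), and the sketches you supply for (a)--(f) are exactly the textbook arguments one would cite. The only thing worth flagging is that in (a) the paper's ``$V(R/\ann_RM)$'' should be read as $V(\ann_RM)$ (equivalently $\supp_R(R/\ann_RM)$), which is what your argument proves.
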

%%%%%%%%%%%%%%%
In the rest, let $(R,\L)$ be a Noetherian algebra.
It is clear that a $\L$-module $X$ is finitely generated if and only if it is finitely generated as an $R$-module. We observe some properties of simple and length finite $\L$-modules.
%%%%%%%%%%%%%%%
\begin{lem}\label{lem-finite-length}
Let $M\in\mod\L$. The following statements hold.
\begin{enumerate}[{\rm (a)}]
\item
$M$ has finite length as a $\L$-module if and only if	$M$ has finite length as an $R$-module.
\item
If $(R, \m)$ is a local ring and $M$ has finite length as a $\L$-module, then there exists a positive integer $\ell$ such that $\mfm^{\ell}M=0$.
\item $\Fl\L$ is a product of $\Fl\L_\m$ for all $\m\in\MSpec R$.
\end{enumerate}
\end{lem}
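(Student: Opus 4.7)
The plan is to establish (a) first — the core of the lemma — and then derive (b) and (c) by decomposition arguments in commutative algebra.

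For (a), the implication ``finite $R$-length $\Rightarrow$ finite $\L$-length'' is immediate since every $\L$-submodule of $M$ is also an $R$-submodule. For the converse, I will reduce via a $\L$-composition series of $M$ to showing that every simple $\L$-module $S$ has finite $R$-length, and for this I will prove the stronger claim that $\p := \ann_R S$ is a maximal ideal of $R$. Since $S$ is cyclic over $\L$, it is finitely generated over $R$. Centrality of $R$ in $\L$ makes multiplication by $r \in R$ a $\L$-endomorphism of $S$, yielding a ring embedding $R/\p \hookrightarrow \End_\L(S)$; Schur's lemma gives that the target is a division ring, so $\p$ is prime. Setting $R' := R/\p$ and using centrality again, both $\ker(r \cdot)$ and $rS$ are $\L$-submodules of $S$ for each $r \in R'$; simplicity of $S$ and faithfulness over $R'$ force them to be $0$ and $S$ respectively when $r \neq 0$, so every nonzero $r \in R'$ acts bijectively. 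Hence $S$ carries the structure of a vector space over $K := \operatorname{Frac}(R')$. Fixing $0 \neq s \in S$, the map $k \mapsto ks$ embeds $K$ into $S$ as an $R'$-submodule; since $S$ is Noetherian over $R'$, $K$ is finitely generated over $R'$. Clearing denominators in a finite generating set for $K$ then forces $R' = K$, so $\p$ is maximal. Consequently $S$ is a module over the finite-dimensional $\kappa(\p)$-algebra $\L/\p\L$, hence a finite-dimensional $\kappa(\p) = R/\p$-vector space, of finite $R$-length.

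Part (b) is immediate from (a): $M$ has finite $R$-length, and in the local ring $(R,\m)$ every simple $R$-module is $R/\m$; for a composition series $0 = M_0 \subset \cdots \subset M_\ell = M$ of length $\ell$, the iteration $\m M_i \subseteq M_{i-1}$ yields $\m^\ell M = 0$.

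For (c), by (a), any $M \in \Fl\L$ satisfies $\supp_R M \subseteq \MSpec R$ (Proposition \ref{prop-comm-basic}(d)), and this set is finite. Enumerating $\supp_R M = \{\m_1, \ldots, \m_n\}$ and applying (b) to each localization $M_{\m_i} \in \Fl\L_{\m_i}$ gives exponents $n_i$ with $\m_i^{n_i} M_{\m_i} = 0$; then $M$ is killed by $\prod_i \m_i^{n_i}$, and the Chinese Remainder decomposition of $R / \prod_i \m_i^{n_i}$ produces a canonical splitting
\[
M \;\simeq\; \bigoplus_{\m \in \MSpec R} M_\m, \qquad M_\m \in \Fl\L_\m,
\]
with only finitely many nonzero summands. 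The assignment $M \mapsto (M_\m)_\m$ and the finite-direct-sum functor $(N_\m)_\m \mapsto \bigoplus_\m N_\m$ (each $N_\m$ is killed by a power of $\m$, hence of finite $R$-length since $R/\m$ is cyclic) are mutually inverse, giving the product decomposition of $\Fl\L$. The main obstacle is the commutative-algebra step in (a) showing that $\ann_R S$ is not merely prime but maximal; it requires carefully combining non-commutative simplicity of $S$, centrality of $R$ in $\L$, and finite generation of $S$ over $R$. Parts (b) and (c) are then routine decomposition arguments.
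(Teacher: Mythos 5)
Your proof is correct, and for part~(a) it takes a genuinely different route from the paper. The paper passes through the center $Z$ of the division ring $\End_\Lambda(S)$: it observes $\overline{R}:=R/\ann_R S\hookrightarrow Z$, that $Z$ is module-finite over $\overline{R}$, and then invokes Matsumura's result (a field which is module-finite over a subdomain forces the subdomain to be a field) to conclude $\ann_R S$ is maximal. You bypass $\End_\Lambda(S)$ altogether: centrality of $R$ in $\Lambda$ makes each $\ker(r\cdot)$ and $rS$ a $\Lambda$-submodule of $S$, so simplicity and faithfulness over $R'=R/\ann_R S$ force every nonzero $r\in R'$ to act bijectively; hence $S$ is a vector space over $K=\operatorname{Frac}(R')$, and the orbit map $k\mapsto ks$ (injective because nonzero elements of $R'$ act injectively) plants a copy of $K$ as an $R'$-submodule inside the Noetherian $R'$-module $S$, so $K$ is module-finite over $R'$. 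Your denominator-clearing step is a hands-on reproof of the special case of Matsumura's lemma that you need (write the generators over a common denominator $q$; then $qK\subseteq R'$ and $q^{-2}\in K$ give $q^{-1}\in R'$, whence $K\subseteq R'$), and it is correct. In short, the paper's argument is shorter because it cites a textbook lemma applied to the center of a division ring, while yours moves the weight to the module $S$ itself and is self-contained. For (b), the paper invokes Nakayama on the stabilizing chain $\m^i M$, you iterate $\m M_i\subseteq M_{i-1}$ along a composition series; both are routine and equivalent. For (c), the paper gives no argument; your CRT decomposition (finite support in $\MSpec R$, kill by $\prod\m_i^{n_i}$, split by idempotents of $R/\prod\m_i^{n_i}$) is the standard proof and is correct, including the check that each $N_\m$ in the reverse direction has finite length over $\Lambda$ because it is a finitely generated module over the Artinian ring $R/\m^k$.
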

%%%%%
\begin{proof}
(a)
We show that a simple $\L$-module $M$ has finite length as an $R$-module.
The center $Z$ of the division ring $\End_{\L}(M)$ is a field containing $\overline{R}:=R/\ann_RM$. Since $Z$ is a finitely generated $\overline{R}$-module, $\overline{R}$ is also a field by \cite[\S9, Lemma 1]{Matsumura}, and $\ann_{R}M$ is a maximal ideal of $R$. Thus $M\in\mod\overline{R}$, and hence $M$ has finite length as an $R$-module.

(b)
By (a), $M$ has finite length as an $R$-module.
In particular, if $(R, \m)$ is local, then there exists a positive integer $\ell$ such that $\mfm^{\ell}M=0$ by Nakayama's lemma.
\end{proof}
%%%%%%%%%%%%%%%
\begin{lem}\label{lem-local-global}
Let $S$ be a multiplicative subset of $R$, and $X\in\mod\L_S$.
\begin{enumerate}[\rm(a)]
\item There exists a $\L$-submodule $M\in\mod\L$ of $X$ such that $M_S\simeq X$. Moreover, for a $\L_S$-submodule $Y$ of $X$, there exists a $\L$-submodule $N$ of $M$ such that $M_S\simeq X$ induces $N_S\simeq Y$.
\item Assume $S=R\setminus\p$ for $\p\in\Spec R$. If $X$ has finite length, then $M$ in \emph{(a)} satisfies $\supp_R M \subseteq V(\p)$ and $\ass_R M \subseteq \{\p\}$.
\end{enumerate}
\end{lem}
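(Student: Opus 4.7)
The plan is to produce $M$ by choosing finitely many $\L_S$-generators of $X$ and taking the $\L$-span. Concretely, pick $x_1, \ldots, x_n \in X$ generating $X$ as a $\L_S$-module, and set
\[
M := \L x_1 + \cdots + \L x_n \subseteq X,
\]
which is a $\L$-submodule and finitely generated as a $\L$-module (hence as an $R$-module). The canonical map $M_S \to X$, $m/s \mapsto s^{-1}m$, is well-defined since $X$ carries a $\L_S$-action. Surjectivity follows by clearing denominators in the expression $\sum (a_i/s_i) x_i$ with a common $s \in S$; injectivity follows because multiplication by $s \in S$ is invertible on the $\L_S$-module $X$, so $m/s = 0$ in $X$ forces $m = 0$ in $M$.

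For the second assertion of (a), given $Y \le X$ in $\mod\L_S$, I would set $N := M \cap Y \subseteq X$. The inclusions $N \hookrightarrow M$ and $N \hookrightarrow Y$ localize (by exactness of $(-)_S$) to injections $N_S \hookrightarrow M_S = X$ and $N_S \hookrightarrow Y_S = Y$. Surjectivity of $N_S \to Y$ is the key step: for $y \in Y$, use the already-established isomorphism $M_S \simeq X$ to write $y = m/s$ with $m \in M$; then $m = sy \in Y$ (as $Y$ is a $\L_S$-submodule), hence $m \in M \cap Y = N$, and $y$ is the image of $m/s \in N_S$.

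For (b), specialise to $S = R \setminus \p$. Since $X$ has finite length as a $\L_\p$-module, Lemma~\ref{lem-finite-length}(b) applied to the local ring $R_\p$ yields an integer $\ell$ with $(\p R_\p)^\ell X = 0$, hence $\p^\ell x_i = 0$ in $X$ for each generator $x_i$. Because $R$ lies in the centre of $\L$, this gives $\p^\ell M = 0$, whence
\[
\supp_R M \;=\; V(\ann_R M) \;\subseteq\; V(\p^\ell) \;=\; V(\p).
\]
For the assertion on $\ass_R M$, let $\q \in \ass_R M$ and pick $0 \neq x \in M$ with $\ann_R(x) = \q$. On the one hand, $\q \supseteq \ann_R M \supseteq \p^\ell$ gives $\q \supseteq \p$. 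On the other hand, every $s \in S = R \setminus \p$ acts invertibly on $X$ (and hence injectively on $M \subseteq X$), so $sx \neq 0$, i.e.\ $s \notin \q$; this shows $\q \subseteq \p$, and the two inclusions force $\q = \p$.

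There is no serious obstacle here; the argument is essentially a packaging of exactness of localization together with Lemma~\ref{lem-finite-length}. The only point requiring a little care is checking that the candidate maps $M_S \to X$ and $N_S \to Y$ really are isomorphisms (as opposed to merely being defined), which reduces in each case to invertibility of the action of $S$ on the ambient $\L_S$-module.
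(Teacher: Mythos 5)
Your proof is correct. For (a), the paper simply cites Propositions~(23.13) and~(23.12) of Curtis--Reiner; your explicit construction of $M$ as the $\L$-span of finitely many $\L_S$-generators of $X$, and of $N$ as $M\cap Y$, is exactly the content of that reference. (You should add a word that $N$ is finitely generated over $\L$: it is a submodule of the finitely generated module $M$, and $\L$ is a Noetherian ring since it is module-finite over the commutative Noetherian ring $R$.) For (b), you take a genuinely different route from the paper's. The paper first proves $\ass_R M\subseteq\{\p\}$: for $\q\in\ass_R M$ with $R/\q\simeq Rx$, the injection $M\hookrightarrow M_\p\simeq X$ gives $(R/\q)_\p\neq0$ and hence $\q\subseteq\p$, and then the fact that $(R/\q)_\p$ has finite length over $R_\p$ forces $\q=\p$; the inclusion $\supp_R M\subseteq V(\p)$ is afterwards deduced via Proposition~\ref{prop-comm-basic}(b). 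You instead invoke Lemma~\ref{lem-finite-length}(b) to obtain $(\p R_\p)^\ell X=0$, hence $\p^\ell M=0$, which yields $\supp_R M\subseteq V(\p)$ immediately and also $\p\subseteq\q$ for every $\q\in\ass_R M$, with the reverse inclusion $\q\subseteq\p$ coming from injectivity of the $S$-action on $M\subseteq X$. Both routes are valid; yours cleanly reuses Lemma~\ref{lem-finite-length}(b), proved just above in the paper, while the paper's argument avoids that lemma by analyzing the $R_\p$-length of $(R/\q)_\p$ directly.
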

%%%%%
\begin{proof}
(a) The assertions follow from \cite[(23.13), (23.12) Proposition]{Curtis-Reiner}.

(b) For any $\q\in\ass_R M$, take $x\in M$ such that $R/\q\simeq Rx$. Since $x$ is non-zero in $M_\p$, we have $(R/\q)_\p\neq0$ and hence $\p\supseteq \q$.
Since $(R/\q)_\p\subseteq M_\p\simeq X$ has finite length as an $R_\p$-module, $\p=\q$ holds. Thus $\ass_R M\subseteq\{\p\}$ and hence $\supp_RM\subseteq V(\p)$.
\end{proof}
%%%%%%%%%%%%%%%
\begin{lem}\label{lem-local-gen}
Let $M\in\mod\L$. The following statements hold.
\begin{enumerate}[{\rm (a)}]
\item $(\gen M)_\p=\gen(M_\p)$ holds for any $\p\in\Spec R$. Thus we denote it by $\gen M_\p$ for simplicity.
\item For $X\in\mod\L$, $X\in \gen M$ if and only if $X_\m\in\gen M_\m$ for any maximal ideal $\m$ of $R$.
\end{enumerate}
\end{lem}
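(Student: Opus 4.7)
The plan is to handle (a) by direct checks of both inclusions using the localization isomorphism \eqref{iso-localization}, and then deduce (b) via a trace argument combined with the standard commutative-algebra principle that a finitely generated module vanishes iff it vanishes at every maximal ideal.

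For (a), the inclusion $(\gen M)_\p \subseteq \gen(M_\p)$ is straightforward: given $Y \in \gen M$ with a surjection $M^n \twoheadrightarrow Y$, exactness of $(-)_\p$ (recalled at the beginning of the section) produces a surjection $M_\p^n \twoheadrightarrow Y_\p$, so $Y_\p \in \gen(M_\p)$. The substantive inclusion is $(\gen M)_\p \supseteq \gen(M_\p)$. Start with $Z \in \gen(M_\p)$ and fix a surjection $f: M_\p^n \twoheadrightarrow Z$. Use Lemma \ref{lem-local-global}(a) to choose $Y \in \mod\L$ with an isomorphism $Y_\p \simeq Z$, and regard $f$ as an element of $\Hom_{\L_\p}(M_\p^n, Y_\p)$. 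By the $n=0$ case of \eqref{iso-localization}, this element is of the form $(1/s)\, g_\p$ for some $g: M^n \to Y$ and $s \in R\setminus \p$. Since $s$ is a unit in $R_\p$, we have $\im(g_\p)=\im(sf)=\im(f)=Y_\p$, and therefore $Y':=\im g$ is a submodule of $Y$ lying in $\gen M$ with $(Y')_\p = \im(g_\p) = Y_\p \simeq Z$. Hence $Z \in (\gen M)_\p$, and (a) follows; from now on we write $\gen M_\p$ unambiguously.

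For (b), one direction is immediate from (a): if $X \in \gen M$, then $X_\m = (X)_\m \in (\gen M)_\m = \gen M_\m$ for every $\m \in \MSpec R$. For the converse, I would work with the trace submodule
\[
\tr_M(X) := \sum_{f \in \Hom_\L(M, X)} \im f \ \subseteq \ X,
\]
and observe that $X \in \gen M$ if and only if $\tr_M(X) = X$ (using that $X$ is Noetherian, so finitely many $f_i$ already cover the sum once it equals $X$). Localization commutes with arbitrary sums and with images, so $\tr_M(X)_\m = \sum_{f} \im(f_\m)$; and by \eqref{iso-localization} at $n=0$, every $g: M_\m \to X_\m$ has the form $(1/s) f_\m$ with $f: M \to X$ and $s \in R\setminus\m$, so $\im g = \im(f_\m)$ and hence $\tr_M(X)_\m = \tr_{M_\m}(X_\m)$. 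By hypothesis $X_\m \in \gen M_\m$, so $\tr_{M_\m}(X_\m) = X_\m$ for all maximal $\m$. Thus the finitely generated $\L$-module $C := X/\tr_M(X)$ satisfies $C_\m = 0$ for every $\m \in \MSpec R$, and therefore $C = 0$, giving $X \in \gen M$.

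The main obstacle is the lifting step in (a): we must arrange not just that some module over $\L$ localizes to $Z$, but that the lift lies in $\gen M$. The trick of replacing the ambient lift $Y$ by the image of an actual $\L$-linear map $g: M^n \to Y$ (constructed from $f$ by clearing a denominator $s \in R\setminus\p$) is the key point; everything else is a clean combination of exactness of $(-)_\p$, the Hom-localization isomorphism \eqref{iso-localization}, and the Noetherian/local-global principle for finitely generated modules.
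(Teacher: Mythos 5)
Your proof is correct, and it takes a somewhat different route than the paper's on both parts, though the underlying ideas are closely related.

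For (a), the paper proves $\gen(M_\p) \subseteq (\gen M)_\p$ by taking a surjection $M_\p^{\oplus n} \twoheadrightarrow Z$, lifting its \emph{kernel} $K \subseteq M_\p^{\oplus n}$ to a submodule $N \subseteq M^{\oplus n}$ via Lemma \ref{lem-local-global}(a), and then observing $(M^{\oplus n}/N)_\p \simeq Z$. You instead lift $Z$ itself to some $Y \in \mod\L$, clear a denominator in the Hom-localization isomorphism \eqref{iso-localization} to produce an honest $\L$-linear $g : M^{\oplus n} \to Y$ with $\im(g_\p) = Y_\p$, and take $Y' = \im g \in \gen M$. Both are short and correct; yours trades the "lift a submodule" use of Lemma \ref{lem-local-global} for a "lift a module + lift a morphism" combination. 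The key worry with your version — that the lifted module might not lie in $\gen M$ — is handled exactly by replacing $Y$ with $\im g$, as you point out.

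For (b), the paper takes a right $(\add M)$-approximation $f : M' \to X$, shows $f_\m$ is a right $(\add M_\m)$-approximation via \eqref{iso-localization}, concludes $f_\m$ is surjective from the hypothesis, and then uses $(\Cok f)_\m = 0$ for all $\m$ to force $\Cok f = 0$. Your trace-submodule argument is a reformulation: the image of a right $(\add M)$-approximation of $X$ is precisely $\tr_M(X)$, and your identity $\tr_M(X)_\m = \tr_{M_\m}(X_\m)$ is the content of "$f_\m$ is a right approximation." The conclusions by local–global are identical (a finitely generated cokernel vanishing at every maximal ideal is zero). Your version is slightly more self-contained in that it avoids invoking the existence of right approximations explicitly, using only Hom-localization and exactness of $(-)_\m$, at the minor cost of verifying that the (a priori infinite) sum defining the trace commutes with localization — which is fine since $(-)_\m$ is exact and the trace of a Noetherian module is achieved by finitely many maps.

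Both parts are correct and cleanly argued; nothing is missing.
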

%%%%%
\begin{proof}
(a) It is clear that $(\gen M)_\p \subseteq \gen(M_\p)$ holds.
The converse holds by Lemma \ref{lem-local-global}(a), since each submodule of $M_\p^{\oplus n}$ is isomorphic to $N_\p$ for some submodule $N$ of $M^{\oplus n}$.

(b) It suffices to show ``if'' part.
Let $f : M' \to X$ be a right $(\add M)$-approximation of $X$, that is, %$\Hom_\L(M, M')\to\Hom_\L(M, X)$ is surjective. 
$\Hom_\L(M, f)$ is surjective.
For each maximal ideal $\m$ of $R$, 
$\Hom_{\L_\m}(M_\m, f_\m) \simeq \Hom_\L(M, f)_\m$ holds by \eqref{iso-localization}, and hence $\Hom_{\L_\m}(M_\m, f_\m)$ is surjective.
%$\Hom_{\L_\m}(M_\m, M'_\m)\to \Hom_{\L_\m}(M_\m, X_\m)$ is surjective.
So $f_\m$ is a right $(\add M_\m)$-approximation.
Since $X_\m\in\gen M_\m$, $f_m$ is surjective and $(\Cok f)_\m=\Cok(f_\m)=0$.
Thus $\Cok f=0$ and $f$ is surjective.
\end{proof}
%%%%%%%%%%%%%%%
\begin{lem}\label{rxAx}
Let $X\in\mod\L$. For each $x\in X$, we have $\supp_R(Rx)=\supp_R(\L x)$.
\end{lem}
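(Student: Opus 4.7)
The plan is to reduce both supports to annihilators via Proposition \ref{prop-comm-basic}(a) and then show the two annihilators coincide, using centrality of $R$ in $\L$.

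First I would observe that $Rx$ and $\L x$ are both finitely generated as $R$-modules: $Rx$ is cyclic, and $\L x$ is generated over $R$ by $\lambda_1 x,\dots,\lambda_n x$ for any $R$-module generators $\lambda_1,\dots,\lambda_n$ of $\L$ (which exist since $(R,\L)$ is a Noetherian algebra). Therefore Proposition \ref{prop-comm-basic}(a) applies to both and yields
\[
\supp_R(Rx)=V(\ann_R(Rx)),\qquad \supp_R(\L x)=V(\ann_R(\L x)).
\]

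Next, I would verify that these two annihilators are the same ideal of $R$. Clearly $\ann_R(Rx)=\ann_R(x)=\{r\in R\mid rx=0\}$. The inclusion $\ann_R(\L x)\subseteq\ann_R(x)$ is immediate, since $x=1\cdot x\in\L x$. For the reverse inclusion, suppose $rx=0$; because the image of $R$ lies in the center of $\L$, for every $\lambda\in\L$ we have $r(\lambda x)=(r\lambda)x=(\lambda r)x=\lambda(rx)=0$, so $r$ annihilates all of $\L x$. Hence $\ann_R(x)\subseteq\ann_R(\L x)$, giving equality. Taking $V(-)$ of both sides concludes the proof.

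There is essentially no obstacle: the argument is a direct consequence of centrality of $R$ in $\L$ together with the standard identification of support with $V$ of the annihilator for finitely generated modules. The only point worth being careful about is verifying finite generation of $\L x$ over $R$ before applying Proposition \ref{prop-comm-basic}(a), but this is automatic from the module-finiteness assumption on $\L$.
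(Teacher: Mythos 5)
Your proof is correct, and it takes a genuinely different route from the one in the paper. You reduce both supports to $V$ of the respective annihilators via Proposition \ref{prop-comm-basic}(a), and then prove the sharper statement $\ann_R(Rx)=\ann_R(\L x)$ directly: the nontrivial inclusion is obtained from the centrality of the image of $R$ in $\L$, since $rx=0$ forces $r(\lambda x)=\lambda(rx)=0$ for all $\lambda\in\L$. The paper instead argues at the level of modules: taking an $R$-linear surjection $R^{\oplus\ell}\to\L$ and tensoring with $Rx$, it produces a surjection $(Rx)^{\oplus\ell}\to\L x$, which together with the inclusion $Rx\subseteq\L x$ gives equality of supports (since support can only shrink under quotients and submodules). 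Your version buys a slightly stronger conclusion (equality of annihilators, not just of their vanishing loci) and makes the use of centrality explicit, whereas the paper's tensor-product argument is self-contained and avoids appealing to the support-equals-$V(\ann)$ dictionary. Both are short and valid; the choice is a matter of taste.
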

\begin{proof}
Since $Rx\subseteq\L x$, it suffices to prove ``$\supseteq$''.
Take a surjection $R^{\oplus\ell}\to\L$ of $R$-modules. Applying $-\otimes_RRx$, we have a surjection $(Rx)^{\oplus\ell}\to\L\otimes_RRx$. Composing with the surjection  $\L\otimes_RRx\to\Lambda x$ given by $\lambda\otimes rx\mapsto \lambda rx$, we have a surjection $(Rx)^{\oplus\ell}\to\L x$ of $R$-modules. Thus the assertion follows.
\end{proof}
%%%%%%%%%%%%%%%
\begin{prop}\label{prop-sp-local}
Let $(R, \L)$ be a Noetherian algebra such that $\L$ is a faithful $R$-module.
\begin{enumerate}[\rm(a)]
\item We have a surjection $\{\mbox{maximal left ideals of $\L$}\}\to\MSpec R$ given by $I\mapsto I\cap R$. 
%\item $R\cap\rad\L=\rad R$.
\item If $\L$ is local, then $R$ is local
\item If $\L$ is semi-perfect, then $R$ is a finite product of local rings.
\end{enumerate}
\end{prop}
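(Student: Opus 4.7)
The plan is to prove the three parts in sequence, using (a) as the fundamental input, (b) as an immediate consequence, and (c) as a reduction to (a) via a block decomposition of the semi-perfect ring $\L$.

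For (a), I first identify $I\cap R$ with $\ann_R(\L/I)$ using centrality of $R$ in $\L$: the inclusion $I\cap R\subseteq\ann_R(\L/I)$ holds because any central $r\in I$ satisfies $r\L=\L r\subseteq I$, while the reverse is clear from $r=r\cdot 1$. Since $\L/I$ is simple, it has finite $R$-length by Lemma \ref{lem-finite-length}(a), and the argument in the proof of that lemma shows $\ann_R(\L/I)$ is maximal in $R$, proving well-definedness. For surjectivity, given $\m\in\MSpec R$, I first check $\m\L\neq\L$: faithfulness prevents $\L_\m=0$ (otherwise some $s\notin\m$ would kill $1_\L$ and hence $\L$, forcing $s=0$), and then Nakayama's lemma applied to the finitely generated $R_\m$-module $\L_\m$ yields $\m R_\m\L_\m\neq\L_\m$, so $\m\L\neq\L$. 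Thus $\L/\m\L$ is a nonzero finite-dimensional $\kappa(\m)$-algebra and admits a maximal left ideal; its preimage $I\subseteq\L$ is a maximal left ideal containing $\m$, and maximality of $\m$ in $R$ forces $I\cap R=\m$.

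Part (b) is an immediate consequence: a local ring has a unique maximal left ideal, so the surjection in (a) forces $\MSpec R$ to be a singleton.

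For (c), I exploit the standard structural result that a semi-perfect ring admits a decomposition into blocks by \emph{central} primitive idempotents. Lifting the central primitive idempotents of $\L/\rad\L=\prod_{i}M_{n_i}(D_i)$ to central primitive idempotents of $\L$ yields a ring decomposition $\L=\prod_{i=1}^{m}\L_i$ in which each $\L_i/\rad\L_i$ is simple artinian; equivalently, each $\L_i$ has a unique simple module up to isomorphism. Setting $\mfa_i:=\ann_R(\L_i)$, the quotient $R/\mfa_i$ acts faithfully on $\L_i$, so applying (a) to the pair $(R/\mfa_i,\L_i)$ shows every maximal ideal of $R/\mfa_i$ is the annihilator of the unique simple $\L_i$-module; hence $R/\mfa_i$ is local, with unique maximal ideal $\m_i/\mfa_i$ for some $\m_i\in\MSpec R$. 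Grouping the blocks by their associated maximal ideal, for each $\m$ in the finite set $\{\m_i\}$ I set $\mfb_\m:=\bigcap_{i\,:\,\m_i=\m}\mfa_i$. Then $R/\mfb_\m$ is again local, since any maximal ideal of $R$ containing $\mfb_\m$ must, by primality, contain some $\mfa_i$ with $\m_i=\m$, forcing it to equal $\m$. Distinct $\mfb_\m,\mfb_{\m'}$ are coprime (otherwise their sum lies in a common maximal ideal $\n$, which by the previous observation satisfies $\n=\m=\m'$, a contradiction), and faithfulness of $\L$ gives $\bigcap_\m\mfb_\m=\ann_R(\L)=0$. The Chinese Remainder Theorem then delivers the desired isomorphism $R\simeq\prod_\m R/\mfb_\m$ as a finite product of local rings.

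The main obstacle I anticipate is the structural input in (c): verifying that the central primitive idempotents of $\L/\rad\L$ lift to \emph{central} primitive idempotents of $\L$, so that the decomposition $\L=\prod_i\L_i$ is genuine as rings (and not merely of the underlying module). Once this lifting — a standard feature of semi-perfect rings — is in hand, everything else reduces to bookkeeping around the surjection of (a) and a Chinese Remainder argument for a finite pairwise coprime family.
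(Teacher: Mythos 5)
Parts (a) and (b) of your argument are sound. In (a), your route to well-definedness — identifying $I\cap R$ with $\ann_R(\L/I)$ and invoking the proof of Lemma \ref{lem-finite-length}(a) to conclude maximality — is a slight variant of the paper's support-theoretic argument, which picks $\m\in\supp_R(\L/I)\cap\MSpec R$ and uses Nakayama to force $\m\L\subseteq I$. Your surjectivity argument is essentially the same as the paper's. Part (b) follows from (a) in both treatments.

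Part (c), however, rests on a false structural claim. You assert that the central primitive idempotents of $\L/\rad\L\simeq\prod_i M_{n_i}(D_i)$ lift to \emph{central} primitive idempotents of $\L$, producing a ring decomposition $\L=\prod_i\L_i$ with each $\L_i/\rad\L_i$ simple. Semi-perfectness guarantees that idempotents lift modulo $\rad\L$, but it does not guarantee that central idempotents lift to central ones, and in general the block decomposition of $\L$ is strictly coarser than that of $\L/\rad\L$. Already the ring of upper-triangular $2\times 2$ matrices $\L=T_2(k)$ over a field $k=R$ is semi-perfect with $\L/\rad\L\simeq k\times k$, yet $\L$ is ring-indecomposable, so the asserted decomposition does not exist. (The conclusion of (c) holds trivially in this example, but your method breaks on the first step.) Without that decomposition, the claim that each $\L_i$ has a unique simple module — the ingredient you need to conclude that $R/\mfa_i$ is local — has no foundation.

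The paper sidesteps this by working with primitive (not central) idempotents: for any primitive idempotent $e$ of a semi-perfect ring, $e\L e\simeq\End_\L(\L e)$ is automatically local, so (b) applied to the faithful pair $(R/\ann_R(e\L e),\ e\L e)$ yields a unique maximal ideal of $R$ above $K_e:=\ann_R(e\L e)$. Fixing a decomposition $1_\L=\sum_{i,j}e_{ij}$ into primitive orthogonal idempotents and grouping the ideals $K_{ij}$ by their associated maximal ideal into $K_i:=\bigcap_j K_{ij}$, one obtains pairwise comaximal ideals with $\bigcap_i K_i=0$; for the last equality one uses $\ann_R(e\L e)=\ann_R(\L e)$ (if $r(e\L e)=0$ then $re=0$, and centrality of $r$ forces $r\L e=\L(re)=0$), so that $\bigcap_{i,j}K_{ij}=\ann_R\bigl(\bigoplus_{i,j}\L e_{ij}\bigr)=\ann_R\L=0$. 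The Chinese Remainder Theorem then gives $R\simeq\prod_i R/K_i$ with each factor local. Your final coprimality-and-CRT bookkeeping is essentially what the paper does; the error is only in where the local rings come from. Replace your blocks $\L_i$ by the local corner rings $e_{ij}\L e_{ij}$ and the argument goes through.
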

%%%%%
\begin{proof}
(a) We show that the map is well-defined.
For a maximal left ideal $I$ of $\L$, take $\m\in\MSpec R$ such that $\m\in\supp_R(\L/I)$. Then $I\subseteq \m\L+I\subseteq\L$. If the right equality holds, then $\m\L_\m+I_\m=\L_\m$ implies $I_\m=\L_\m$ by Nakayama's Lemma, a contradiction. Thus the maximality of $I$ implies $\m\L+I=I$ and $\m\L\subseteq I$. Hence $\m\subseteq I\cap R$ and hence $\m=I\cap R$ holds.

We show that the map is surjective.
For $\m\in\MSpec R$, we have $\m\L\subsetneq\L$. Otherwise, we have $\m\L_\m=\L_\m$ and hence $\L_\m=0$ by Nakayama's Lemma. This contradicts to the faithfulness of the $R$-module $\L$. Take a maximal right ideal $I$ of $\L$ containing $\m\L$. Then $\m\subseteq\m\L\cap R\subseteq I\cap R\subsetneq R$ implies $\m=I\cap R$.

(b) Immediate from (a).

(c) 
%Without loss of generality, we can assume that $R$ is ring-indecomposable.
For each primitive idempotent $e\in\L$, since $e\L e$ is local, there exists unique $\m\in\MSpec R$ containing $\ann_R(e\L e)$ by (c). 
Thus there exist pairwise distinct maximal ideals $\m_1,\ldots,\m_n$ of $R$ and primitive orthogonal idempotents $1_\L=\sum_{i=1}^n\sum_{j=1}^{\ell_i}e_{ij}$ such that $\m_i$ is a unique maximal ideal of $R$ containing $K_{ij}:=\ann_R(e_{ij}\L e_{ij})$.
Then $\m_i$ is a unique maximal ideal of $R$ containing $K_i:=\bigcap_{j=1}^{\ell_i}K_{ij}$. We have $K_i+K_j=R$ for each $1\le i\neq j\le n$ since $\m_i\neq\m_j$.
Moreover $\bigcap_{i=1}^nK_i=0$ holds since $\L$ is a faithful $R$-module. By \cite[Theorem 1.4]{Matsumura}, we have $R\simeq\prod_{i=1}^n(R/K_i)$, as desired.
\end{proof}
%%%%%%%%%%%%%%%
The following is a basic lemma.
%%%%%%%%%%%%%%%
\begin{lem}\label{lem-Spec-conn}
Let $R$ be a commutative Noetherian ring which is ring indecomposable.
\begin{enumerate}[\rm(a)]
\item\cite[Exercise 2.25]{Eisenbud} $\Spec R$ is connected in the Zariski topology.
\item Let $\emptyset \neq \cS\subseteq\Spec R$.
If $\cS$ and $\cS^{\rm c}$ are specialization closed, then $\cS = \Spec R$.
\end{enumerate}
\end{lem}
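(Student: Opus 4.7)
The plan is to show that $\cS$ is both open and closed in the Zariski topology on $\Spec R$; then, since part (a) gives connectedness of $\Spec R$ and $\cS\neq\emptyset$, connectedness will force $\cS=\Spec R$.

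First, I would observe that the hypothesis is symmetric in a useful way: because $\cS^{\rm c}$ is specialization closed, the contrapositive reads ``if $\q\in\cS$ and $\p\subseteq\q$, then $\p\in\cS$'', i.e.\ $\cS$ is closed under \emph{generalization}. Symmetrically, $\cS^{\rm c}$ is closed under generalization. So each of $\cS$ and $\cS^{\rm c}$ is stable under both specialization and generalization.

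Next, I would exploit Noetherianity: $R$ has only finitely many minimal primes $\p_1,\ldots,\p_n$, and every prime of $R$ contains some $\p_i$, so $\Spec R=V(\p_1)\cup\cdots\cup V(\p_n)$. For each $i$, either $\p_i\in\cS$, in which case $V(\p_i)\subseteq\cS$ by specialization closure, or $\p_i\in\cS^{\rm c}$, in which case $V(\p_i)\subseteq\cS^{\rm c}$. Using closure under generalization, any prime $\q\in\cS$ must contain some $\p_i\in\cS$, so
\[
\cS=\bigcup_{\p_i\in\cS}V(\p_i),
\]
a \emph{finite} union of Zariski-closed sets, hence Zariski-closed. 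The same argument applied to $\cS^{\rm c}$ shows $\cS^{\rm c}$ is Zariski-closed, so $\cS$ is clopen in $\Spec R$.

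Finally, by part (a), $\Spec R$ is connected, so its only clopen subsets are $\emptyset$ and $\Spec R$. Since $\cS\neq\emptyset$, we conclude $\cS=\Spec R$. I do not foresee any real obstacle here; the only substantive point is the reduction to finitely many minimal primes that upgrades ``specialization closed'' to ``Zariski-closed'' on both sides.
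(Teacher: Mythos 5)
Your proof is correct and follows essentially the same strategy as the paper's: both decompose $\cS$ and $\cS^{\rm c}$ as finite unions of $V(\p_i)$ over minimal primes (made possible by Noetherianity) and then invoke connectedness from part (a). Your write-up is a bit more explicit—in particular you carefully explain why closure under generalization gives the reverse inclusion $\cS\subseteq\bigcup_{\p_i\in\cS}V(\p_i)$—but the underlying argument is identical.
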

%%%%%
\begin{proof}
(b) Let $\{\p_1, \dots, \p_m\}$ (respectively, $\{\q_1,\dots,\q_n\}$) be the minimal prime ideals such that $\p_i \in\cS$ (respectively, $\q_i\in\cS^{\rm c}$).
Then $\cS=\bigcup_{i=1}^m V(\p_i)$ and $\cS^{\rm c}=\bigcup_{i=1}^n V(\p_i)$ hold.
Thus the assertion holds since $\Spec R$ is connected by (a).
\end{proof}
%%%%%%%%%%%%%
The following fact is well-known.
%%%%%%%%%%%%%
\begin{prop}\label{Spec R finite}
$\Spec R$ is a finite set if and only if $\dim R\le 1$ and $R$ is semi-local.
\end{prop}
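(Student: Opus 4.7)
The plan is to prove the two implications separately. I expect the direction $(\Leftarrow)$ to be essentially formal, while the direction $(\Rightarrow)$ will reduce, via a quotient and a localization, to the classical fact that a Noetherian local domain of dimension at least two has infinitely many height-one primes.

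For $(\Leftarrow)$, assuming $\dim R\le 1$ and $R$ semi-local, I would first observe that every prime of $R$ must be either minimal or maximal: a non-minimal prime $\p$ strictly contained in some maximal ideal $\m$, together with a minimal prime below $\p$, would produce a chain of length $\ge 2$, contradicting $\dim R\le 1$. Since $R$ is Noetherian the set of minimal primes is finite, and since $R$ is semi-local the set of maximal ideals is finite, so $\Spec R$ is finite.

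For $(\Rightarrow)$, assume $\Spec R$ is finite; then $\MSpec R\subseteq\Spec R$ is finite, so $R$ is semi-local. To show $\dim R\le 1$ I would argue by contradiction: suppose there exists a chain $\p_0\subsetneq\p_1\subsetneq\p_2$ in $\Spec R$. Set $R':=(R/\p_0)_{\p_2}$; the natural map $R\to R'$ induces an injection $\Spec R'\hookrightarrow\Spec R$ whose image is the set of primes of $R$ containing $\p_0$ and contained in $\p_2$, so $\Spec R'$ is also finite. Note that $R'$ is a Noetherian local domain with maximal ideal $\m'$ and $\dim R'\ge 2$. Now list all height-one primes of $R'$ as $\q_1,\dots,\q_n$; each is properly contained in $\m'$, so by prime avoidance there exists $x\in\m'\setminus(\q_1\cup\cdots\cup\q_n)$. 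Since $R'$ is a domain, $x\ne 0$, and Krull's principal ideal theorem forces any minimal prime over $(x)$ to have height exactly one, hence to coincide with some $\q_i$, contradicting $x\notin\q_i$.

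The only genuinely nontrivial step will be this final contradiction, which hinges on the interplay of prime avoidance with Krull's Hauptidealsatz; the rest is just bookkeeping about how quotients and localizations preserve finiteness of $\Spec$.
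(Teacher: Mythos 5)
Your proof is correct and follows essentially the same route as the paper: the ``if'' direction is the same observation that $\dim R\le 1$ forces every prime to be minimal or maximal, with finiteness coming from Noetherianity and semi-locality, and the ``only if'' direction rests on Krull's principal ideal theorem, which is exactly what the paper invokes (via the citation to Kaplansky, Theorem 144). The only difference is that you unpack that citation into a self-contained argument (pass to the local domain $(R/\p_0)_{\p_2}$, avoid the finitely many height-one primes, and contradict the Hauptidealsatz), which is precisely the standard proof of the cited result, so nothing is gained or lost beyond making the paper's one-line reference explicit.
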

%%%%%
\begin{proof}
The ``if'' part is clear since $\dim R=1$ implies that each prime ideal of $R$ is either minimal or maximal, and there are only finitely many minimal prime ideals in general \cite[Theorem 6.5]{Matsumura}. 
The ``only if part'' follows from Krull's principal ideal theorem, see \cite[Theorem 144]{Kaplansky}.
\end{proof}
%%%%%%%%%%%%%%%
%%%%%%%%%%%%%%%
\subsection{Silting complexes}
%%%%%%%%%%%%%%%
%%%%%%%%%%%%%%%
In this subsection, we recall the definition of silting complexes and modules, and observe when base changes preserve silting complexes.
A \emph{thick closure} $\thick_{\cT} P$ of an object $P$ in a triangulated category $\cT$ is the smallest triangulated subcategory of $\cT$ which is closed under direct summands and contains $P$.
For a ring $\L$, we have $\sK^{\rm b}(\proj\L)=\thick_{\sD(\L)}\L$. Thus for $\cT:=\sK^{\rm b}(\proj\L)$ and $P\in\cT$, $\cT=\thick_{\cT}P$ holds if and only if $\L\in\thick_{\cT}P$ holds.
%%%%%%%%%%%%%%%
\begin{dfn}\label{dfn-presilting}
Let $P$ be an object in a triangulated category $\cT$.
\begin{enumerate}[{\rm (1)}]
\item We say that $P$ is \emph{presilting} if $\Hom_{\cT}(P, P[i])=0$ holds for any $i>0$.
\item We say that $P$ is \emph{silting} if $P$ is presilting and satisfies $\thick P =\cT$.
\end{enumerate}
\end{dfn}
%%%%%%%%%%%%%%%
%%%%%%%%%%%%%%%
\begin{dfn}\label{dfn-two-term-silting}
Let $\L$ be a ring.
\begin{enumerate}[{\rm (1)}]
	\item An object in $\sK^{\rm b}(\proj\L)$ is said to be \emph{two-term} if it is isomorphic to an object $P=(P^i,d^i)$ %(\cdots \to P^{i-1} \xto{d^{i-1}} P^{i} \xto{d^i} P^{i+1} \to \cdots)$
in $\sK^{\rm b}(\proj\L)$ such that $P^i=0$ for $i\neq -1, 0$.
	\item A (pre)silting complex of $\sK^{\rm b}(\proj\L)$ which is two-term is called a \emph{two-term (pre)silting complex} of $\L$.
	We denote by $\twosilt \L$ (respectively, $\twopsilt\L$) the set of additive equivalence classes of two-term silting (respectively, presilting) complexes of $\L$.
	\item A $\L$-module $M$ is said to be \emph{silting module} (respectively, \emph{presilting module}) if there is a two-term silting (respectively, presilting) complex $P$ of $\L$ such that $H^0(P)=M$.
	We denote by $\siltm \L$ the set of additive equivalence classes of silting $\L$-modules.
\end{enumerate}
\end{dfn}
%%%%%%%%%%%%%%%
For $P, Q\in\twosilt\L$, we write $Q \leq P$ if $\Hom_{\sD(\L)}(P, Q[i])=0$ for any $i >0$.
Then $(\twosilt \L, \leq)$ is a poset by \cite{AI}.
For $M, N\in\siltm\L$, we write $M\leq N$ if $\gen M \subseteq \gen N$. Then $(\siltm \L, \leq)$ is a poset which is isomorphic to $(\twosilt \L, \leq)$, see Proposition \ref{prop-ap-twosilt-silt-stors}.

We first see that taking factor ring preserves two-term silting complexes.
For each factor ring $\ov{\L}$ of $\L$, $\ov{\L}\otimes_{\L}(-)$ induces a triangle functor
\begin{align}\label{tri-func-L-ovL}
\ov{(-)}=\ov{\L} \otimes_{\L}(-) :\sK^{\rm b}(\proj\L) \to\sK^{\rm b}(\proj\ov{\L}).
\end{align}
%%%%%%%%%%%%%%%
\begin{lem}\label{lem-twosilt-fac}
Let $\L$ be a ring, $\ov{\L}$ a factor ring of $\L$ and $P, Q$ two-term complexes in $\sK^{\rm b}(\proj\L)$.
\begin{itemize}
\item[{\rm (a)}]
There exists a canonical surjective map $\Hom_{\sD(\L)}(P, Q[1]) \to \Hom_{\sD(\ov{\L})}(\ov{P}, \ov{Q}[1])$.
\item[{\rm (b)}]
The functor $\ov{(-)}$ give maps, where the latter is a morphism of posets:
\[\twopsilt\L \to \twopsilt\ov{\L}, \qquad  \twosilt\L \to \twosilt\ov{\L}.\]
\end{itemize}
\end{lem}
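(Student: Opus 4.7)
The plan is to obtain part (a) by computing both Hom spaces explicitly at the cochain level, and then to deduce (b) essentially formally from (a) together with the fact that $\ov{(-)}$ is a triangle functor.

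For part (a), since $P$ is a complex of projective $\L$-modules, $\Hom_{\sD(\L)}(P,Q[1])=\Hom_{\sK(\L)}(P,Q[1])$, and similarly over $\ov{\L}$. Writing $P=(P^{-1}\xto{d_P}P^0)$ and $Q=(Q^{-1}\xto{d_Q}Q^0)$ concentrated in degrees $-1,0$, a chain map $P\to Q[1]$ amounts to an arbitrary morphism $f^{-1}\colon P^{-1}\to Q^0$ (the commutativity constraints in the remaining degrees are vacuous because $(Q[1])^0=0$ and $P^{-2}=0$), and chain homotopy identifies $f^{-1}$ with $f^{-1}+d_Q\circ s^{-1}+s^0\circ d_P$ for arbitrary $s^{-1}\in\Hom_\L(P^{-1},Q^{-1})$ and $s^0\in\Hom_\L(P^0,Q^0)$. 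This gives a presentation
\[
\Hom_{\sD(\L)}(P,Q[1])=\Hom_\L(P^{-1},Q^0)/U,
\]
where $U$ is the sum of the images of $d_Q\circ(-)$ and $(-)\circ d_P$, and the analogous presentation holds over $\ov{\L}$ with components $\ov{P^{-1}},\ov{Q^{-1}},\ov{Q^0}$. The canonical map is induced by $f\mapsto\ov{f}:=\ov{\L}\otimes_\L f$, and is compatible with $U$. Thus surjectivity reduces to surjectivity of $\Hom_\L(P^{-1},Q^0)\to\Hom_{\ov{\L}}(\ov{P^{-1}},\ov{Q^0})$. By tensor--hom adjunction the target is $\Hom_\L(P^{-1},\ov{Q^0})$, and by projectivity of $P^{-1}$ over $\L$ every such morphism lifts through the surjection $Q^0\surjto\ov{Q^0}$; this gives the desired surjectivity.

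For part (b), the functor $\ov{(-)}$ evidently preserves two-termness and commutes with direct sums, so it is well-defined on additive equivalence classes. If $P$ is presilting, then $\Hom_{\sD(\ov{\L})}(\ov{P},\ov{P}[i])$ vanishes for $i\ge 2$ by the two-term property and for $i=1$ by part (a) (as a surjective image of $0$); hence $\ov{P}$ is presilting, yielding $\twopsilt\L\to\twopsilt\ov{\L}$. When $P$ is silting we have $\L\in\thick P$ in $\sK^{\rm b}(\proj\L)$, and applying the triangle functor $\ov{(-)}$ yields $\ov{\L}\in\thick\ov{P}$, so $\ov{P}$ generates $\sK^{\rm b}(\proj\ov{\L})$ and is silting. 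For the order-preserving property, if $Q\le P$ in $\twosilt\L$ then $\Hom_{\sD(\L)}(P,Q[i])=0$ for all $i>0$, and the same argument applied to the pair $(P,Q)$ gives $\Hom_{\sD(\ov{\L})}(\ov{P},\ov{Q}[i])=0$, i.e., $\ov{Q}\le\ov{P}$.

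The only substantive step is establishing the surjectivity in part (a); everything else is a direct consequence of the explicit two-term description together with $\ov{(-)}$ being a triangle functor.
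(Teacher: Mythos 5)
Your proof is correct and follows essentially the same route as the paper's: the paper writes down a commutative diagram of exact sequences whose top row is precisely your quotient presentation $\Hom_{\sD(\L)}(P,Q[1])\cong\Hom_\L(P^{-1},Q^0)/U$, and its surjectivity argument for the middle vertical map is the same projectivity-of-$P^{-1}$ argument you give via tensor–hom adjunction. Your part (b) is also the paper's argument, with a bit more detail spelled out (the vanishing for $i\ge2$ from two-termness, and the explicit order-preservation check).
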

%%%%%
\begin{proof}
(a)
Let $P=(P^{-1} \xto{d_P} P^0)$ and $Q=(Q^{-1} \xto{d_Q} Q^0)$.
We have the following commutative diagram
\[
\begin{tikzcd}
\Hom_{\L}(P^{-1}, Q^{-1})\oplus\Hom_{\L}(P^0, Q^0) \arrow[r, "\alpha"] \arrow[d] & \Hom_{\L}(P^{-1}, Q^0) \arrow[d] \arrow[r] & \Hom_{\sD(\L)}(P, Q[1]) \arrow[d] \arrow[r] & 0 \\
\Hom_{\ov{\L}}(\ov{P^{-1}}, \ov{Q^{-1}})\oplus\Hom_{\ov{\L}}(\ov{P^0}, \ov{Q^0}) \arrow[r, "\ov{\alpha}"] & \Hom_{\ov{\L}}(\ov{P^{-1}}, \ov{Q^0}) \arrow[r] & \Hom_{\sD(\ov{\L})}(\ov{P}, \ov{Q}[1]) \arrow[r] & 0,
\end{tikzcd}
\]
where each horizontal sequence is exact, vertical maps are given by the functor $\ov{(-)}$, and $\alpha(f, g)=d_Q\circ f + g \circ d_P$ (similar for $\ov{\alpha}$).
Since $P^{-1}$ is a projective $\L$-module, the middle vertical map is surjective.
Thus the right vertical map is surjective.

(b)
By (a) the functor $\ov{(-)}$ gives a map from $\twopsilt\L$ to $\twopsilt\ov{\L}$.
If $P$ is a silting complex, then since (\ref{tri-func-L-ovL}) is a triangle functor, $\L\in\thick P$ implies $\ov{\L} \in \thick \ov{P}$.
Thus $\ov{(-)}$ induces a map from $\twosilt\L$ to $\twosilt\ov{\L}$.
\end{proof}
%%%%%%%%%%%%%%%%%
In general $\ov{(-)}$ does not induce a map between presilting complexes with length more than two as the following example shows.
%%%%%%%%%%%%%%%%%
\begin{exa}
Let $k$ be a field and $A=kQ$ the path algebra of a quiver $Q=(1\to 2)$.
Let $I$ be the ideal of $A$ generated by the arrow of $Q$ and $B=A/I=k\times k$.
We denote by $P(i)$ and $S(i)$ a projective (respectively, simple) $A$-module associated to a vertex $i=1,2$.
Let $X=(P(2) \to P(1))$ be a minimal projective resolution of a simple $A$-module $S(1)$ and we regard $X$ as a two-term complex.
Then we have a silting complex $P=X\oplus (S(2)[2])$ which is concentrated in degree $0$, $-1$ and $-2$.
For $i=1,2$, $\ov{P(i)}=S_B(i)$ holds, where $S_B(i)$ is the $i$-th simple $B$-module.
We have $\ov{P}=S(2)[1] \oplus S(1) \oplus (S(2)[2])$, which is not presilting.
\end{exa}
%%%%%%%%%%%%%%%%%
Let $(R, \L)$ be a Noetherian algebra and $S$ a commutative Noetherian ring with a morphism of rings $R\to S$.
We have a Noetherian algebra $(S, S\otimes_R \L)$ and have a triangle functor
\[S\otimes_R (-) : \sK^{\rm b}(\proj\L) \to \sK^{\rm b}(\proj S\otimes_R \L).\]

First we give one basic lemma.
%%%%%%%%%%%%%%%
\begin{lem}\label{lem-base-hom}
Assume that $\Tor_i^R(S, \L)=0$ for any $i>0$.
For $P, Q\in\sK^{\rm b}(\proj\L)$, 
\begin{enumerate}[\rm(a)]
\item If $\Hom_{\sD(\L)}(P, Q[i])=0$ for any $i>0$, then we have an isomorphism
\[
\Hom_{\sD(S\otimes_R \L)}(S\otimes_R P, S\otimes_R Q) \simeq S\otimes_R \Hom_{\sD(\L)}(P, Q).
\]
In particular,  we have $\Hom_{\sD(S\otimes_R \L)}(S\otimes_R P, S\otimes_R Q[i])=0$ for any $i>0$.
\item Assume that each $0\neq X\in\mod R$ satisfies $S\otimes_RX\neq0$. If $\Hom_{\sD(S\otimes_R \L)}(S\otimes_R P, S\otimes_R Q[i])=0$ for any $i>0$, then $\Hom_{\sD(\L)}(P, Q[i])=0$ for any $i>0$.
\end{enumerate}
\end{lem}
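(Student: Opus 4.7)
The plan is to reduce both parts to a comparison of cohomologies of the explicit complex of $R$-modules $C:=\Hom_\L^\bullet(P,Q)$ after base change. First I would note that, since each $P^j$ is a finitely generated projective $\L$-module, the natural termwise morphism
\[S\otimes_R\Hom_\L(P^j,Q^k)\longrightarrow\Hom_{S\otimes_R\L}(S\otimes_R P^j,S\otimes_R Q^k)\]
is an isomorphism. Assembling these gives an isomorphism of complexes $S\otimes_R C\simeq\Hom_{S\otimes_R\L}^\bullet(S\otimes_R P,S\otimes_R Q)$, and since $S\otimes_R P$ is a bounded complex of projectives over $S\otimes_R\L$, the latter computes $\RHom_{S\otimes_R\L}(S\otimes_R P,S\otimes_R Q)$, with $i$-th cohomology equal to $\Hom_{\sD(S\otimes_R\L)}(S\otimes_R P,S\otimes_R Q[i])$.

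Next I would identify $S\otimes_R C$ with $S\otimes_R^{\mathbf{L}}\RHom_\L(P,Q)$. Each term $\Hom_\L(P^j,Q^k)$ is, as an $R$-module, a direct summand of a finitely generated free $\L$-module, so the hypothesis $\Tor_i^R(S,\L)=0$ for $i>0$ yields $\Tor_i^R(S,\Hom_\L(P^j,Q^k))=0$ for $i>0$. Hence every term of the bounded complex $C$ is $\Tor$-acyclic for $S\otimes_R(-)$, so $S\otimes_R C$ represents $S\otimes_R^{\mathbf{L}}C\simeq S\otimes_R^{\mathbf{L}}\RHom_\L(P,Q)$ in $\sD(R)$. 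For any integer $n$ such that $H^j(C)=0$ for all $j>n$, the distinguished triangle $\tau^{\leq n-1}C\to C\to H^n(C)[-n]\to$ combined with the fact that $S\otimes_R^{\mathbf{L}}(-)$ preserves cohomological boundedness from above then yields
\[H^i(S\otimes_R^{\mathbf{L}}C)=0\text{ for }i>n,\qquad H^n(S\otimes_R^{\mathbf{L}}C)\cong S\otimes_R H^n(C).\]

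For (a) we take $n=0$, and the two previous steps directly give both assertions. For (b), suppose there exists $i>0$ with $\Hom_{\sD(\L)}(P,Q[i])\neq 0$ and let $i_0$ be the largest such index, which exists since $C$ has bounded cohomology. Applying the displayed isomorphism with $n=i_0$ produces
\[\Hom_{\sD(S\otimes_R\L)}(S\otimes_R P,S\otimes_R Q[i_0])\cong S\otimes_R\Hom_{\sD(\L)}(P,Q[i_0]),\]
and the left side is zero by hypothesis. Since $\Hom_{\sD(\L)}(P,Q[i_0])$ is a finitely generated $R$-module, being a subquotient of the finitely generated $R$-module $\bigoplus_{k-j=i_0}\Hom_\L(P^j,Q^k)$, the assumption that $S\otimes_R X\neq 0$ for every nonzero finitely generated $R$-module $X$ forces $\Hom_{\sD(\L)}(P,Q[i_0])=0$, contradicting the choice of $i_0$. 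The main technical hurdle is the $\Tor$-acyclicity inheritance established in the second paragraph, which is precisely what lets the ordinary tensor $S\otimes_R C$ represent the derived tensor and thereby enables truncation arguments on $C$ to control $\RHom_{S\otimes_R\L}(S\otimes_R P, S\otimes_R Q)$.
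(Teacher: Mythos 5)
Your proof is correct and follows essentially the same route as the paper's: both rest on the termwise identification $S\otimes_R\mathcal{H}om_\L(P,Q)\simeq\mathcal{H}om_{S\otimes_R\L}(S\otimes_R P,S\otimes_R Q)$ and the observation that the terms $\Hom_\L(P^j,Q^k)$ are Tor-acyclic for $S\otimes_R(-)$ because $\Tor_i^R(S,\L)=0$, so that the top nonvanishing cohomology of the Hom complex commutes with $S\otimes_R(-)$. The paper phrases this by directly showing $Z^0$ commutes with the tensor when $H^{>0}=0$, while you package it as a derived-tensor and truncation argument, but the substance and the maximal-index contradiction in part (b) are identical.
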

%%%%%
\begin{proof}
(a) Let $\mathcal{H}om_{\L}(P, Q)$ be the complex of morphisms from $P$ to $Q$.
We have the following isomorphism of complexes of $S$-modules
\[
\mathcal{H}om_{S\otimes_R \L}(S\otimes_R P, S\otimes_R Q) \simeq S\otimes_R\mathcal{H}om_{\L}(P, Q).
\]
Because $H^i(\mathcal{H}om_{\L}(P, Q))=0$ and $\Tor_i^R(S, \L)=0$ hold for any $i>0$, $Z^0(S\otimes_R\mathcal{H}om_{\L}(P, Q))=S\otimes_R Z^0(\mathcal{H}om_{\L}(P, Q))$ holds.
Thus we have
\[
H^0(S\otimes_R\mathcal{H}om_{\L}(P, Q)) \simeq S\otimes_R H^0(\mathcal{H}om_{\L}(P, Q)) \simeq S\otimes_R\Hom_{\sD(\L)}(P, Q).
\]
Therefore we have the desired isomorphism.
By applying the isomorphism for $P$ and $Q[j]$, we have the latter assertion.

(b) Since $P$ is a bounded complex, $I:=\{i\in\bZ\mid\Hom_{\sD(\L)}(P, Q[i])\neq0\}$ is a finite set. It suffices to show that $\ell:=\max I$ satisfies $\ell\le0$. Assume $\ell>0$. Applying (a) to $(P,Q):=(P,Q[\ell])$, we obtain
\[0=\Hom_{\sD(S\otimes_R \L)}(S\otimes_R P, S\otimes_R Q[\ell]) \simeq S\otimes_R \Hom_{\sD(\L)}(P, Q[\ell]).\]
By our assumption on $S$, we have $\Hom_{\sD(\L)}(P, Q[\ell])=0$, a contradiction to $\ell\in I$.
\end{proof}
%%%%%%%%%%%%%%%
Now we are ready to prove the following result.
%%%%%%%%%%%%%%%
\begin{thm}\label{prop-base-silt}
Let $(R,\L)$ be a Noetherian algebra such that $\Tor_i^R(S, \L)=0$ for any $i>0$.
\begin{enumerate}[\rm(a)]
\item The functor $S\otimes_R(-)$ gives a map $\psilt\L \to \psilt(S\otimes_R \L)$ and a morphism of posets $\silt\L \to \silt(S\otimes_R \L)$ which is restricted to $\twosilt\L \to \twosilt(S\otimes_R \L)$.
\item Assume that each $0\neq X\in\mod R$ satisfies $S\otimes_RX\neq0$.
\begin{enumerate}[\rm(i)]
\item $S\otimes_R(-):\silt\L \to \silt(S\otimes_R \L)$ is an embedding of posets.
\item Let $P\in\sK^{\rm b}(\proj\L)$. If $S\otimes_RP\in \psilt(S\otimes_R\L)$, then $P\in\psilt\L$.
\end{enumerate}
\end{enumerate}
\end{thm}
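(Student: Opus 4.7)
The plan is to reduce every claim in the theorem to Lemma \ref{lem-base-hom}, whose hypothesis $\Tor_i^R(S,\L)=0$ for $i>0$ is in force throughout. No new Ext-calculation is needed; the lemma already does all the homological bookkeeping.

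For part (a), I would first handle the presilting property: if $P\in\psilt\L$, then $\Hom_{\sD(\L)}(P,P[i])=0$ for all $i>0$, and applying Lemma \ref{lem-base-hom}(a) with $Q:=P$ transfers this vanishing to $S\otimes_R P$, so $S\otimes_R P\in\psilt(S\otimes_R\L)$. For the silting statement I would invoke the triangulatedness of $S\otimes_R(-):\sK^{\rm b}(\proj\L)\to\sK^{\rm b}(\proj(S\otimes_R\L))$: $\L\in\thick_{\sK^{\rm b}(\proj\L)}P$ implies $S\otimes_R\L\in\thick(S\otimes_R P)$, and since $\sK^{\rm b}(\proj(S\otimes_R\L))=\thick(S\otimes_R\L)$, we conclude $\sK^{\rm b}(\proj(S\otimes_R\L))=\thick(S\otimes_R P)$. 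For order preservation, the relation $Q\leq P$ in $\silt\L$ reads $\Hom_{\sD(\L)}(P,Q[i])=0$ for $i>0$, and Lemma \ref{lem-base-hom}(a) applied to the pair $(P,Q)$ yields the same vanishing after base change, so $S\otimes_R Q\leq S\otimes_R P$. Preservation of the two-term property is immediate from additivity of the functor.

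For (b)(i), a morphism of posets is an embedding precisely when it reflects the order (antisymmetry then forces injectivity automatically), so I only need to show that $S\otimes_R Q\leq S\otimes_R P$ implies $Q\leq P$. This is exactly Lemma \ref{lem-base-hom}(b) applied to the pair $(P,Q)$, whose additional hypothesis that $S\otimes_R X\neq 0$ for any nonzero $X\in\mod R$ is now available. Part (b)(ii) is the same lemma applied to $(P,P)$: vanishing of $\Hom_{\sD(S\otimes_R\L)}(S\otimes_R P,S\otimes_R P[i])$ for $i>0$ forces vanishing of $\Hom_{\sD(\L)}(P,P[i])$, so $P\in\psilt\L$.

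There is no real obstacle once Lemma \ref{lem-base-hom} is established. The only delicacy is matching the direction of the poset order $Q\leq P\iff \Hom_{\sD(\L)}(P,Q[{>}0])=0$ to the correct slot of Lemma \ref{lem-base-hom}, and keeping separate the purely homological content (presilting, order) from the thick-closure content (silting); the latter is the only place the full generating property of $P$ gets used.
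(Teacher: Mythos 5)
Your proof is correct and takes essentially the same route as the paper: part (a) combines Lemma \ref{lem-base-hom}(a) (presilting, order) with the fact that $S\otimes_R(-)$ is a triangle functor (for $\thick$), and part (b) follows directly from Lemma \ref{lem-base-hom}(b). Your additional remarks — spelling out that order preservation uses \ref{lem-base-hom}(a) on the pair $(P,Q)$, and that order reflection alone already gives injectivity — are accurate elaborations of what the paper leaves implicit.
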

%%%%%
\begin{proof}
(a) If $P$ is presilting, then $S\otimes_R P$ is presilting by Lemma \ref{lem-base-hom}(a).
Assume that $P$ is silting.
Since $S\otimes_R(-)$ is a triangle functor, $\L \in\thick P$ implies $S\otimes_R\L\in\thick (S\otimes_R P)$. So $S\otimes_R P$ is silting.
Clearly $S\otimes_R(-)$ preserves two-term complexes. So the last assertion holds.

(b) Both assertions are immediate from Lemma \ref{lem-base-hom}(b).
%(i) For $P,Q\in\silt\L$, assume $S\otimes_RP\ge S\otimes_RQ$. Then $\Hom_{\sD(S\otimes_R \L)}(S\otimes_R P, S\otimes_R Q[i])=0$ for any $i>0$. By Lemma \ref{lem-base-hom}(b), $\Hom_{\sD(\L)}(P, Q[i])=0$ for any $i>0$. Thus $P\ge Q$.
\end{proof}
%%%%%%%%%%%%%%%
The following special case will be used later.
Note that without any assumption, for an arbitrary factor algebra $\ov{\L}$ of $\L$, we have a morphism of posets $\twosilt\L \to \twosilt\ov{\L}$ by Lemma \ref{lem-twosilt-fac}.
%\new{For $P\in\sK^{\rm b}(\proj\L)$ and $n\in\bZ$ , let
%\begin{align*}
%P^{\perp_{>n}}=\{X\in \sK^{\rm b}(\proj\L) \mid \Hom(P, X[i])=0, \, \forall i >n\}.
%\end{align*}}
%%%%%%%%%%%%%%%
\begin{prop}\label{prop-quo-silt}
Let $(R,\L)$ be a Noetherian algebra such that $\L$ is projective as an $R$-module. For an ideal $I$ of $R$ contained in the Jacobson radical of $R$, let $\ov{(-)}:=(R/I)\otimes_R(-):\sK^{\rm b}(\proj\L)\to\sK^{\rm b}(\proj\ov{\L})$.
%The following statements hold.
\begin{enumerate}[{\rm (a)}]
\item The functor $\ov{(-)}$ gives a map $\psilt\L\to\psilt\ov{\L}$ and an embedding of posets $\silt\L\to\silt\ov{\L}$ which restricted to $\twosilt\L \to \twosilt\ov{\L}$.
\item Let $P\in\sK^{\rm b}(\proj\L)$. If $\ov{P}\in\psilt\ov{\L}$, then $P\in\psilt\L$.
\end{enumerate}
\end{prop}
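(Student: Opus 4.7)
The plan is to apply Theorem \ref{prop-base-silt} with $S := R/I$, viewed as a Noetherian $R$-algebra via the canonical projection. Both assertions of the proposition will then be direct corollaries of the corresponding parts of that theorem, so the main task is to verify that the two hypotheses needed to invoke Theorem \ref{prop-base-silt} are satisfied in the present setting.

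First, I would verify that $\Tor_i^R(R/I, \L) = 0$ for all $i > 0$. This is immediate from the hypothesis that $\L$ is projective (hence flat) as an $R$-module, regardless of any property of $I$. Second, I would verify the stronger condition needed for part (b) of Theorem \ref{prop-base-silt}, namely that $(R/I) \otimes_R X \neq 0$ for every $0 \neq X \in \mod R$. Since $X$ is finitely generated as an $R$-module and $I \subseteq \rad R$, this is precisely Nakayama's lemma: $X = IX$ would force $X = 0$.

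With both hypotheses in hand, part (a) follows by combining Theorem \ref{prop-base-silt}(a), which yields the map $\silt\L \to \silt\ov{\L}$ and its restriction to presilting and to silting objects, with Theorem \ref{prop-base-silt}(b)(i), which upgrades this to an embedding of posets. The further restriction to the two-term subposets is automatic, since $(R/I)\otimes_R -$ is right exact and so sends a complex concentrated in degrees $-1, 0$ of $\sK^{\rm b}(\proj\L)$ to a complex concentrated in degrees $-1, 0$ of $\sK^{\rm b}(\proj\ov{\L})$. Part (b) is exactly Theorem \ref{prop-base-silt}(b)(ii) under the verified hypotheses.

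There is no real obstacle to this proof; the proposition is essentially a specialization of Theorem \ref{prop-base-silt} to the case where $S$ is a residue ring by a Jacobson-radical ideal. The only point requiring any thought is recognizing that the projectivity hypothesis on $\L$ furnishes the required Tor-vanishing and that the Jacobson-radical hypothesis on $I$ is exactly what makes Nakayama's lemma give the needed non-vanishing condition on tensor products.
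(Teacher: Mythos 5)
Your proposal is correct and coincides with the paper's own proof: the paper also checks that projectivity of $\L$ over $R$ gives $\Tor_i^R(R/I,\L)=0$ for $i>0$ and that Nakayama's lemma gives $(R/I)\otimes_R X\neq 0$ for all $0\neq X\in\mod R$, and then invokes Theorem \ref{prop-base-silt} for both parts. No differences worth noting.
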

%%%%%
\begin{proof}
Since $\L$ is projective as an $R$-module, $\Tor_i^R(R/I, \L)=0$ holds for any $i>0$. By Nakayama's Lemma, each $0\neq X\in\mod R$ satisfies $(R/I)\otimes_RX\neq0$.
Thus each assertion follows from Theorem \ref{prop-base-silt}.
\end{proof}
%%%%%%%%%%%%%%%
Next we observe some properties of silting complexes with respect to localization functors.
We denote by $\sD^{\rm b}(\mod\L)$ the bounded derived category of $\mod\L$. It is elementary that $\Hom_{\sD(\L)}(X,Y)$ is a finitely generated $R$-module for each $X,Y\in\sD^{\rm b}(\mod\L)$ (e.g.\ \cite[Lemma 2.3]{Kimura}).

Let $S$ be a multiplicative subset of $R$.
Since $(-)_S$ is exact, it induces a triangle functor
\[
(-)_S : \sD^{\rm b}(\mod \L) \to \sD^{\rm b}(\mod\L_S)
\]
and this restricts to homotopy categories: $\sK^{\rm b}(\proj \L) \to \sK^{\rm b}(\proj\L_S)$.
Let $X, Y\in\sD^{\rm b}(\mod\L)$, $i$ an integer and put $\sD(\L)=\sD(\Mod\L)$.
We have
\begin{align}\label{iso-localization-derived}
\Hom_{\sD(\L)}(X, Y[i])_S \simeq \Hom_{\sD(\L_S)}(X_S, Y_S[i]),
\end{align}
since $\Hom_{\sD(\L)}(X, Y[i])= H^i(\mathcal{H}{\rm om}_{\L}(X, Y))$, $(-)_S$ is exact and (\ref{iso-localization}).
%%%%%%%%%%%%%%%
\begin{prop}\label{prop-presilt-local}
Let $(R, \L)$ be a Noetherian algebra. The following statements hold.
\begin{enumerate}[{\rm (a)}]
\item For any multiplicative subset $S$ of $R$, the functor $(-)_S$ gives a map $\psilt\L\to\psilt\L_S$ and a morphism of posets $\silt\L\to\silt\L_S$ which restricted to $\twosilt\L \to \twosilt\L_S$.
\item 
\begin{enumerate}[\rm(i)]
\item We have an embedding of posets \[\silt\L \to \prod_{\p\in\Spec R}\silt\L_\p, \quad P \mapsto (P_\p)_{\p\in\Spec R}.\]
\item Let $P\in\sK^{\rm b}(\proj\L)$. If $P_\p \in\psilt\L_\p$ for any $\p\in\Spec R$, then $P\in\psilt\L$.
\end{enumerate}
\end{enumerate}
\end{prop}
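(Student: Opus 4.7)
The plan is to apply the localization isomorphism (\ref{iso-localization-derived}) systematically, combined with the standard local-global fact that an $R$-module $M$ is zero if and only if $M_\p=0$ for every $\p\in\Spec R$ (equivalently, at every maximal ideal).

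For part (a), I would start with $P\in\psilt\L$. By (\ref{iso-localization-derived}) applied to $X=Y=P$ and $i>0$, we have
\[
\Hom_{\sD(\L_S)}(P_S,P_S[i])\simeq\Hom_{\sD(\L)}(P,P[i])_S=0,
\]
so $P_S\in\psilt\L_S$. If moreover $P\in\silt\L$, then $\L\in\thick P$ inside $\sK^{\rm b}(\proj\L)$; applying the triangle functor $(-)_S$ to this thick closure yields $\L_S\in\thick P_S$, so $P_S\in\silt\L_S$. Since $(-)_S$ preserves concentration in degrees $-1$ and $0$, the map restricts to $\twosilt\L\to\twosilt\L_S$. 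Order-preservation is handled by the same token: if $Q\le P$ in $\silt\L$, meaning $\Hom_{\sD(\L)}(P,Q[i])=0$ for all $i>0$, then localizing with (\ref{iso-localization-derived}) gives $\Hom_{\sD(\L_S)}(P_S,Q_S[i])=0$, i.e.\ $Q_S\le P_S$.

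For part (b)(i), well-definedness and order-preservation of the product map $P\mapsto(P_\p)_\p$ follow from (a) applied at each $\p$ separately. To verify that it is an embedding of posets, I would, per the definition in Section \ref{subsection-notation}, just check that the map reflects the order. So suppose $P_\p\le Q_\p$ in $\silt\L_\p$ for every $\p\in\Spec R$, i.e.\ $\Hom_{\sD(\L_\p)}(Q_\p,P_\p[i])=0$ for all $i>0$ and all $\p$. By (\ref{iso-localization-derived}), the $R$-module $\Hom_{\sD(\L)}(Q,P[i])$ localizes to zero at every prime ideal and is therefore zero; hence $P\le Q$ in $\silt\L$, as required. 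For part (b)(ii), the argument is identical with $Q=P$: the hypothesis gives $\Hom_{\sD(\L_\p)}(P_\p,P_\p[i])=0$ for each $i>0$ and each $\p$, forcing $\Hom_{\sD(\L)}(P,P[i])=0$ by the same local-global principle, so $P\in\psilt\L$.

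There is essentially no serious obstacle here once (\ref{iso-localization-derived}) and the localization-exactness of $(-)_S$ are in hand; the proof is a direct application of the local-global principle for $R$-modules together with the fact that $(-)_S$ is a triangle functor on $\sK^{\rm b}(\proj\L)$. The only mild point to be careful about is that, in (b)(i), one does not need to verify injectivity directly: the reflection of the order combined with the antisymmetry of the poset $\silt\L$ automatically upgrades the morphism of posets to an embedding.
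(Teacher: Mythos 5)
Your proof is correct and follows essentially the same route as the paper: (a) rests on the localization isomorphism \eqref{iso-localization-derived} together with $(-)_S$ being a triangle functor (the paper packages this by invoking the flatness of $R_S$ and the general base-change result, Theorem \ref{prop-base-silt}(a), but the content is the same), and (b) is precisely the paper's local-global claim that $\Hom_{\sD(\L)}(P,Q[i])=0$ for all $i>0$ iff $\Hom_{\sD(\L_\p)}(P_\p,Q_\p[i])=0$ for all $i>0$ and all $\p$. Your closing remark — that order-reflection plus antisymmetry already gives injectivity, so no separate injectivity check is needed for the embedding — is a correct and clean observation.
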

%%%%%%%%%%%%%%%
We prove the silting version of (b)(ii) in Theorem \ref{thm-silting-local}.
%%%%%
\begin{proof}
(a) Since $R_S$ is a flat $R$-module, $\Tor_i^R(R_S, \L)=0$ for any $i>0$. Thus the assertion follows from Theorem \ref{prop-base-silt}(a).

(b) Both assertions follow from the following claim: For $P, Q\in\sK^{\rm b}(\proj\L)$, $\Hom_{\sD(\L)}(P, Q[i])=0$ holds for any $i>0$ if and only if $\Hom_{\sD(\L_\p)}(P_\p, Q_\p[i])=0$ holds for any $i>0$ and any $\p\in\Spec R$.
This follows from Proposition \ref{prop-comm-basic}(c) and \eqref{iso-localization-derived} since $\Hom_{\sD(\L)}(P, Q[i])$ is a finitely generated $R$-module.
%(i) Let $P, Q\in\silt\L$ and assume that $P_\p\leq Q_\p$ for any $\p\in\Spec R$. By the above claim, $P \leq Q$ holds. Thus we have the assertion.
\end{proof}
%%%%%%%%%%%%%%%
As an application of the results above, we obtain the following observation.
%We have the following property of silting complexes, where $R$ is not necessarily local.
%%%%%%%%%%%%%%%
\begin{prop}
Let $(R, \L)$ be a Noetherian algebra such that $\L$ is projective as an $R$-module.
\begin{enumerate}[{\rm (a)}]
\item For any $\p\in\Spec R$, the functor $\kappa(\p)\otimes_R(-)$ gives a map $\psilt\L\to\psilt\L(\p)$ and a morphism of posets $\silt\L\to\silt\L(\p)$ which restricted to $\twosilt\L \to \twosilt\L(\p)$.
\item
\begin{enumerate}[\rm(i)]
\item We have an embedding of posets
\[\silt\L \to \prod_{\p\in\Spec R}\silt\L(\p), \quad P \mapsto (\kappa(\p)\otimes_R P)_{\p\in\Spec R}.\]
\item Let $P\in\sK^{\rm b}(\proj\L)$. If $\kappa(\p)\otimes_RP \in\psilt\L(\p)$ for any $\p\in\Spec R$, then $P\in\psilt\L$.
\end{enumerate}
\end{enumerate}
\end{prop}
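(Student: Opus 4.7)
The plan is to realize $\kappa(\p)\otimes_R(-)$ as a composition of a localization at $\p$ followed by a reduction modulo the maximal ideal of the local ring $R_\p$. Concretely, the functor factors through $\sK^{\rm b}(\proj\L_\p)$ as
\[
\sK^{\rm b}(\proj\L)\xto{(-)_\p}\sK^{\rm b}(\proj\L_\p)\xto{(R_\p/\p R_\p)\otimes_{R_\p}(-)}\sK^{\rm b}(\proj\L(\p)).
\]
Since $\L$ is projective over $R$, its localization $\L_\p$ is projective over $R_\p$, so Proposition \ref{prop-quo-silt} applies to the second factor (with the ideal $\p R_\p$, which sits inside the Jacobson radical of the local ring $R_\p$); and Proposition \ref{prop-presilt-local} applies to the first factor.

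For part (a), I will simply combine Proposition \ref{prop-presilt-local}(a) and Proposition \ref{prop-quo-silt}(a): each of the two factors preserves presilting complexes, silting complexes and two-term complexes, and each induces a morphism of posets on $\silt$, so the composition does as well. For part (b)(i), I will first apply Proposition \ref{prop-presilt-local}(b)(i) to obtain the embedding $\silt\L\hookrightarrow\prod_\p\silt\L_\p$, and then post-compose with the product over $\p$ of the componentwise embeddings $\silt\L_\p\hookrightarrow\silt\L(\p)$ furnished by Proposition \ref{prop-quo-silt}(a). Since a product of embeddings of posets is automatically an embedding of posets, the composite is the embedding we want.

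For part (b)(ii), I will argue in two stages. Given $\kappa(\p)\otimes_RP\in\psilt\L(\p)$ for every $\p\in\Spec R$, apply Proposition \ref{prop-quo-silt}(b) to the ring $\L_\p$ and the complex $P_\p$ (noting $\kappa(\p)\otimes_RP\simeq(R_\p/\p R_\p)\otimes_{R_\p}P_\p$) to conclude $P_\p\in\psilt\L_\p$ for every $\p$. Then Proposition \ref{prop-presilt-local}(b)(ii) promotes this local information to $P\in\psilt\L$.

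There is no real obstacle in this proof: the substantive content already lives in Propositions \ref{prop-presilt-local} and \ref{prop-quo-silt}, and the current statement is essentially their formal composition. The only routine point worth verifying is the natural identification $\kappa(\p)\otimes_R(-)\simeq(R_\p/\p R_\p)\otimes_{R_\p}((-)_\p)$ of triangle functors on $\sK^{\rm b}(\proj\L)$, which follows immediately from $\kappa(\p)=R_\p/\p R_\p$ together with associativity of the tensor product.
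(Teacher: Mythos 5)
Your proof is correct and follows essentially the same route as the paper, whose own proof simply cites Propositions \ref{prop-quo-silt} and \ref{prop-presilt-local}; you have just made the implicit factorization $\kappa(\p)\otimes_R(-)\simeq(R_\p/\p R_\p)\otimes_{R_\p}((-)_\p)$ and the verification of the hypotheses (projectivity of $\L_\p$ over $R_\p$, $\p R_\p\subseteq\rad R_\p$) explicit.
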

%%%%%
\begin{proof}
All assertions follow from Propositions \ref{prop-quo-silt} and \ref{prop-presilt-local}.
%If $(R, \m)$ is local, by \cite[Theorem 4.3(a)]{Kimura}, a map $(R/\m)\otimes_R(-) : \twosilt\L \to \twosilt(\L/\m\L)$ is an embedding of posets. Thus the composite of this embedding and the embedding in Proposition \ref{prop-presilt-local}(c) is the desired one.
\end{proof}
%%%%%%%%%%%%%%%
We give one proposition giving a Noetherian algebra with trivial silting complexes.
%%%%%%%%%%%%%%%
\begin{prop}\label{prop-comm-silt}
Let $(R, \L)$ be a Noetherian algebra.
Assume that $R$ is ring indecomposable and that $\L_\p$ is Morita equivalent to a local ring for any prime ideal $\p$ of $R$.
Then any silting complex $T$ satisfies $\add T=\add \L[i]$ in $\sK^{\rm b}(\proj \L)$ for some integer $i$.
\end{prop}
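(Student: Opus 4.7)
The proof proceeds in three steps: a local classification of silting complexes over rings Morita equivalent to a local ring, a connectedness argument on $\Spec R$, and an application of a global poset embedding.

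\emph{Step 1: Local classification.} I claim that, for any ring $S$ Morita equivalent to a local ring, every silting complex $U \in \sK^{\rm b}(\proj S)$ satisfies $\add U = \add S[i]$ for some $i \in \bZ$. The key observation is that $\proj S$ has a single indecomposable $P_0$ up to isomorphism (coming from the regular module of the Morita-equivalent local ring), and $S$ is semi-perfect. Hence every object of $\sK^{\rm b}(\proj S)$ admits a minimal representative: a bounded complex whose terms are $P_0$-powers and whose differentials have entries in $J := \rad \End_S(P_0)$. Take such a minimal representative of $U$, say $U^a \to \cdots \to U^b$ with $U^a, U^b \neq 0$. If $b > a$, choose $\phi \in \Hom_S(U^a, U^b)$ represented by a matrix with at least one entry outside $J$; this exists since $\End_S(P_0)$ is local and $U^a, U^b$ are nonzero. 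Placing $\phi$ in degree $a$ yields a chain map $U \to U[b-a]$, as the boundary conditions at the two ends of the complex make the chain-map compatibilities vacuous. Any null-homotopy would express $\phi$ as $(-1)^{b-a} d_U^{b-1}\circ h^a + h^{a+1}\circ d_U^a$ for certain $h$; by minimality each summand has entries in $J$, contradicting our choice. Hence $\Hom_{\sK^{\rm b}(\proj S)}(U, U[b-a]) \neq 0$, contradicting $U$ being presilting. So $a = b$, and $\add U = \add S[-a]$.

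\emph{Step 2: Connectedness.} By Proposition \ref{prop-presilt-local}(a), each $T_\p \in \sK^{\rm b}(\proj \L_\p)$ is silting, and Step 1 yields a unique integer $i(\p)$ with $\add T_\p = \add \L_\p[i(\p)]$. Define $V_i := \{\p \in \Spec R : i(\p) = i\}$, a partition of $\Spec R$ with only finitely many parts nonempty (since $T$ is bounded). I claim
\[
V_i = \Spec R \setminus \bigcup_{m \neq -i}\supp_R H^m(T),
\]
which is open because $T$ is bounded and each $H^m(T)$ is a finitely generated $R$-module. The inclusion $\subseteq$ is immediate from $T_\p \sim \L_\p[i]$. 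For $\supseteq$, vanishing of $H^m(T)_\p$ for all $m \neq -i$ means the silting complex $T_\p$ has cohomology concentrated in degree $-i$; comparing with $T_\p \sim \L_\p[i(\p)]$ from Step 1 forces $i(\p) = i$. Since $R$ is ring indecomposable, $\Spec R$ is connected by Lemma \ref{lem-Spec-conn}(a), so a partition of $\Spec R$ into finitely many open subsets has exactly one nonempty member, giving a unique $i \in \bZ$ with $V_i = \Spec R$.

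\emph{Step 3: Gluing.} Now $\add T_\p = \add \L_\p[i] = \add (\L[i])_\p$ for every $\p \in \Spec R$. The embedding of posets $\silt\L \hookrightarrow \prod_{\p\in\Spec R}\silt\L_\p$ from Proposition \ref{prop-presilt-local}(b)(i) is in particular injective on additive equivalence classes, so $\add T = \add \L[i]$, as required.

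\textbf{Main obstacle.} The most delicate point is the non-null-homotopy verification in Step 1: one must confirm that a $\phi$ with a unit matrix entry cannot be rewritten as a sum of compositions with minimal (hence radical-valued) differentials, using that $J$ is a two-sided ideal absorbing matrix entries from both sides. Once this is in place, Step 2's openness argument runs smoothly because the silting hypothesis on each stalk ties the cohomological degree of $T_\p$ to its additive equivalence class, and Step 3 is a direct application of the already-established embedding.
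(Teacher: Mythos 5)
Your proof is correct, and it deviates from the paper's argument in two of the three steps, so a comparison is worthwhile. Step 1 reproves from scratch the classification of silting complexes over a ring Morita equivalent to a local ring, via minimal complexes and the radical of the local endomorphism ring; the paper simply cites Aihara--Iyama, Theorem~2.26, for exactly this fact, so your argument is self-contained but longer. Step 2 is essentially the paper's argument in a different guise: the paper shows $i_\p = i_\q$ for $\p\subseteq\q$ (so that the sets $V_i$ and their complements are specialization closed and Lemma~\ref{lem-Spec-conn} applies), whereas you identify $V_i$ as the complement of $\bigcup_{m\neq -i}\supp_R H^m(T)$ and hence open, then invoke connectedness for a finite open partition. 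Both are correct; your phrasing makes the topological input more visible. Step 3 is genuinely different and, in my view, cleaner: you invoke the poset embedding $\silt\L\hookrightarrow\prod_\p\silt\L_\p$ of Proposition~\ref{prop-presilt-local}(b)(i), so that agreement of $\add T_\p=\add\L_\p[i]=\add(\L[i])_\p$ at every prime immediately forces $\add T=\add\L[i]$. The paper instead deduces $H^j(T)=0$ for $j\neq -i$ from the local data and writes $T\simeq H^{-i}(T)[i]$, then asserts $\add H^{-i}(T)[i]=\add\L[i]$; this last step tacitly uses that $H^{-i}(T)$ is a finitely generated projective $\L$-module with $\add H^{-i}(T)=\add\L$, which requires a further local-to-global argument (projectivity of a f.g.\ $\L$-module detected locally, and $\gen H^{-i}(T)=\mod\L$ by Lemma~\ref{lem-local-gen}(b)). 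Your route via the already-established embedding sidesteps those verifications entirely, at the price of a redundant chain map computation in Step 1 that the paper outsources to the literature.
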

%%%%%
\begin{proof}
For each $\p$, $T_\p$ is a silting complex by Proposition \ref{prop-presilt-local}.
Since $\L_\p$ is Morita equivalent to a local ring, there is an integer $i_\p$ such that $\add T_\p = \add \L_\p[i_\p]$ holds \cite[Theorem 2.26]{AI}.
For two prime ideals $\p\subseteq\q$ of $R$, we have $i_\p = i_\q$.
This implies that $i_\p$ is constant for any $\p$ since $\Spec R$ is connected by Lemma \ref{lem-Spec-conn}(a).
Namely, there is an integer $i$ such that $i=i_\p$ for any $\p$.
Therefore if $j\neq -i$, then $H^{j}(T)=0$ and so $T\simeq H^{-i}(T)[i]$ holds.
We have $\add T = \add H^{-i}(T)[i] = \add \L[i]$.
\end{proof}

%%%%%%%%%%%%%%%
%%%%%%%%%%%%%%%
\subsection{Bass-type Theorem on silting/perfect complexes}
%%%%%%%%%%%%%%%
%%%%%%%%%%%%%%%
The reader can skip this subsection since the main result will not be used in other parts of this paper. We will prove the result below, where the part (a) for the case $\L=R$ is a generalization of a result due to Bass, see Corollary \ref{bass result}.
%%%%%%%%%%%%%%%
\begin{thm}\label{thm-silting-local}
Let $(R, \L)$ be a Noetherian algebra.
For a complex $P\in\sD^{\rm b}(\mod\L)$, the following statements hold.
\begin{enumerate}[{\rm (a)}]
	\item $P\in\sK^{\rm b}(\proj\L)$ if and only if $P_\p\in\sK^{\rm b}(\proj\L_\p)$ for any $\p\in\Spec R$.
	\item $P\in\silt\L$ if and only if $P_\p\in\silt\L_\p$ for any $\p\in\Spec R$.
\end{enumerate}
\end{thm}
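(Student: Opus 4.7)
The plan is to prove (a) by a Bass-type syzygy argument exploiting the topological Noetherianity of $\Spec R$, and to deduce (b) from (a) by combining Proposition \ref{prop-presilt-local}(b)(ii) with the standard characterization of classical generators of $\sK^{\rm b}(\proj\L)$ as compact generators of $\sD(\Mod\L)$.

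For (a), the ``only if'' direction is immediate since the exact functor $(-)_\p$ preserves finitely generated projective modules. For ``if'', first replace $P$ by a quasi-isomorphic bounded-above complex of finitely generated projectives $\cdots\to P^{-2}\to P^{-1}\to P^0$, and consider the syzygy modules $Z^{-n}:=\Ker(P^{-n}\to P^{-n+1})\in\mod\L$ for $n$ larger than the cohomological range of $P$, so that $P\in\sK^{\rm b}(\proj\L)$ is equivalent to $Z^{-n}\in\proj\L$ for some (equivalently, all sufficiently large) $n$. Introduce
\[
U_n:=\{\p\in\Spec R \mid Z^{-n}_\p\in\proj\L_\p\}.
\]
Three facts then combine to finish: first, $U_n$ is open in $\Spec R$, because the projectivity of $Z^{-n}_\p$ is equivalent to the vanishing of $\Ext^1_\L(Z^{-n},K)_\p\simeq\Ext^1_{\L_\p}(Z^{-n}_\p,K_\p)$ (with $K$ the subsequent syzygy) via \eqref{iso-localization}, and $\Ext^1_\L(Z^{-n},K)$ is a finitely generated $R$-module (since $\L$ is module-finite over $R$) whose support is closed; second, $U_n\subseteq U_{n+1}$, since the localized short exact sequence $0\to Z^{-n-1}_\p\to P^{-n-1}_\p\to Z^{-n}_\p\to 0$ splits whenever $Z^{-n}_\p$ is projective, realizing $Z^{-n-1}_\p$ as a summand of a projective; third, $\Spec R=\bigcup_n U_n$ by the hypothesis. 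As $\Spec R$ is a Noetherian topological space, the chain $U_0\subseteq U_1\subseteq\cdots$ stabilizes at some $U_N=\Spec R$, so $Z^{-N}$ is locally projective at every prime, and the same global Ext-vanishing criterion then forces $Z^{-N}\in\proj\L$.

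For (b), the ``only if'' direction is Proposition \ref{prop-presilt-local}(a). For ``if'', part (a) yields $P\in\sK^{\rm b}(\proj\L)$, and Proposition \ref{prop-presilt-local}(b)(ii) upgrades this to $P\in\psilt\L$, so it suffices to prove $\L\in\thick_{\sK^{\rm b}(\proj\L)}P$. The cleanest route is via the standard equivalence: for $P\in\sK^{\rm b}(\proj\L)$, $\L\in\thick P$ if and only if $P$ is a compact generator of $\sD(\Mod\L)$, equivalently if and only if $\RHom_\L(P,X)=0$ forces $X=0$ for every $X\in\sD(\Mod\L)$. Given such an $X$ and any $\p\in\Spec R$, the finite generation and projectivity of each $P^i$ yield the localization formula $\RHom_\L(P,X)_\p\simeq \RHom_{\L_\p}(P_\p,X_\p)$, which therefore vanishes. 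The local silting hypothesis on $P_\p$ then forces $X_\p=0$ in $\sD(\Mod\L_\p)$, so $H^n(X)_\p=H^n(X_\p)=0$ for all $n$ and all $\p$. The local-global principle for arbitrary modules over a commutative Noetherian ring (any nonzero module has nonzero localization at some maximal ideal) then gives $H^n(X)=0$, hence $X=0$. I expect the technical heart of the proof to lie in the openness/stabilization step of part (a); once that is in hand, part (b) follows cleanly through the compact-generator viewpoint.
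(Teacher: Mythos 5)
Your proposal is correct, but it follows a genuinely different route from the paper. The paper proves a single reusable statement, Lemma \ref{lem-thick-local}: for a presilting $P$ and $X\in\sD^{\rm b}(\mod\L)$, membership $X\in\thick P$ can be checked at all primes; the proof builds a tower of right $(\add P[n])$-approximations and cones and shows the loci $I_n$ where the approximation fails to split are closed and decreasing with empty intersection, so Noetherianity of $\Spec R$ finishes. Both parts of the theorem then follow by applying this lemma twice (with $P:=\L$ for (a), and with $X:=\L$ for (b), after using Proposition \ref{prop-presilt-local} to get presilting globally). Your part (a) is instead the classical Bass-style syzygy argument: the open loci $U_n$ where the $n$-th syzygy localizes to a projective, openness via the support of $\Ext^1_\L(Z^{-n},Z^{-n-1})$ and \eqref{iso-localization}, monotonicity via splitting, and stabilization in the Noetherian space $\Spec R$; this is fine, though you silently use the standard (two-sided) fact that a bounded-above complex of finitely generated projectives with bounded cohomology is perfect if and only if a sufficiently high syzygy is projective — the ``only if'' direction over $\L_\p$ deserves a word (e.g.\ homotopy-equivalent projective resolutions have stably isomorphic syzygies). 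Your part (b) replaces the paper's self-contained argument inside $\sD^{\rm b}(\mod\L)$ by the Neeman--Ravenel--Thomason compact-generation theorem in $\sD(\Mod\L)$: the implication ``$\RHom_\L(P,-)$ detects zero $\Rightarrow$ $\L\in\thick P$'' is exactly that theorem, so your route is correct but imports heavy big-category machinery, whereas the paper's approximation lemma stays with finitely generated modules and, as a by-product, also yields the module-theoretic Corollary \ref{bass result}; on the other hand, your localization of $\RHom$ plus the local-global principle for arbitrary modules is clean and needs the easy direction of generation only locally. In short: same Noetherian-support skeleton for (a), but for (b) you trade the paper's elementary, uniform lemma for a standard but substantial external theorem.
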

%%%%%%%%%%%%%%%

To show the theorem, we recall the following notion.

Let $\cC$ be an additive category and $\cB$ be a subcategory of $\cC$.
We say that $\cB$ is \emph{contravariantly finite} in $\cC$ if every object of $\cC$ admits a right $\cB$-approximation.
A \emph{covariantly finite} subcategory is defined dually.
A subcategory of $\cC$ is said to be \emph{functorially finite} if it is both contravariantly and covariantly finite in $\cC$.
For instance, if $\cC=\mod\L, \sK^{\rm b}(\proj \L)$ or $\sD^{\rm b}(\mod\L)$, then $\add C$ is a functorially finite subcategory of $\cC$ for any object $C$ \cite[Lemmas 2.2,  2.3]{Kimura}.

For two subcategories (or collections of objects) $\cX, \cY$ of a triangulated category $\cT$, we denote by $\cX\ast\cY$ a subcategory of $\cT$ consisting of objects $T$ admitting a triangle $X \to T \to Y \to X[1]$ with $X\in\cX$, $Y\in\cY$.
The operator $\ast$ is associative by the octahedral axiom: $(\cX \ast \cY) \ast \cZ = \cX \ast (\cY \ast \cZ)$.
%%%%%%%%%%%%%%%
\begin{assum}\label{ass-silt}
Let $P$ be a presilting object in a triangulated category $\cT$ such that $\add P$ is a contravariantly finite subcategory of $\cT$.
For $X=X_0\in\cT$ such that $\Hom_{\cT}(P[i], X)=0$ for $i<0$, define $X_{n+1}$ ($n \geq 0$) by taking a right $(\add P[n])$-approximation $f_n : P_n[n] \to X_n$ and $X_{n+1}:=\mathrm{cone}(f_n)$.
\end{assum}
%%%%%%%%%%%%%%%
\begin{lem}\label{lem-silting-app}
For objects $P$ and $X_n$ in $\cT$ as Assumption \ref{ass-silt}, the following statements hold.
\begin{enumerate}[{\rm (a)}]
	\item If $X_n\in\add P[n]$, then $X_{n+1}\in\add P[n+1]$ holds.
	\item $X$ belongs to $\thick P$ if and only if $X_n\in\add P[n]$ holds for some integer $n$.
\end{enumerate}
\end{lem}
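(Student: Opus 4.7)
Since $X_n\in\add P[n]$ lies in the source category of the right $\add P[n]$-approximation $f_n:P_n[n]\to X_n$, the identity $\mathrm{id}_{X_n}$ factors through $f_n$, so $f_n$ is a split epimorphism. Writing $P_n[n]\simeq X_n\oplus K$ with $K\in\add P[n]$, the defining triangle $P_n[n]\to X_n\to X_{n+1}\to P_n[n+1]$ rotates to identify $X_{n+1}\simeq K[1]\in\add P[n+1]$.

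\textbf{Part (b), easy direction.} If $X_n\in\add P[n]\subseteq\thick P$, then downward induction on $k$ using the defining triangles $P_k[k]\to X_k\to X_{k+1}\to P_k[k+1]$ and closure of $\thick P$ under triangles and shifts yields $X=X_0\in\thick P$.

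\textbf{Part (b), hard direction.} The plan is in two steps. First, I would induct on $n$ to establish the vanishing $\Hom_{\cT}(P[i],X_n)=0$ for all $i<n$. Applying $\Hom_{\cT}(P[i],-)$ to the defining triangle for $X_{n+1}$: when $i<n$ the inductive hypothesis kills $\Hom(P[i],X_n)$ and the presilting property gives $\Hom(P[i],P_n[n+1])=\Hom(P,P_n[n+1-i])=0$ (since $n+1-i\ge 2$); when $i=n$, surjectivity of $\Hom(P[n],f_n)$ (right-approximation property) together with $\Hom(P[n],P_n[n+1])=\Hom(P,P_n[1])=0$ (presilting) force $\Hom(P[n],X_{n+1})=0$. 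Second, I would prove termination under the hypothesis $X\in\thick P$ by filtering $\thick P=\bigcup_k\cT^{(k)}$ with $\cT^{(0)}=\add\{P[i]:i\in\bZ\}$ and $\cT^{(k+1)}=\mathrm{smd}(\cT^{(k)}\ast\cT^{(0)})$, inducting on $k$. In the base case, the vanishing from the first step forces any such $X$ to be of the form $\bigoplus_{j\ge 0}Q_j[j]$ with $Q_j\in\add P$, and one computes directly that $X_n\in\add P[n]$ once $n$ exceeds the maximal nonzero shift. In the inductive step, for $X$ a summand of an extension of $Y\in\cT^{(k)}$ by $Z\in\cT^{(0)}$, the octahedral axiom allows one to build the approximation sequence of the extension from those of $Y$ and $Z$ and to bound its termination index by the sum of theirs; the conclusion for the summand $X$ then follows from part (a) applied to the complementary summand.

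\textbf{Main obstacle.} The termination step is the crux, and two points are delicate: taking direct summands does not obviously commute with the approximation procedure, so one may need to replace $X$ by $X\oplus X'$ for a complementary summand and afterwards extract the conclusion for $X$; and combining the approximation sequences of $Y$ and $Z$ into one for an extension $T$ requires an octahedral construction that simultaneously respects both the presilting vanishing from step one and the inductive bookkeeping on $k$. Once this is set up cleanly, the termination index is controlled and the proof closes.
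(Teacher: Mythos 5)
Parts (a) and the easy direction of (b) match the paper's proof, and your Step~1 of the hard direction (the vanishing $\Hom_\cT(P[i],X_n)=0$ for $i<n$, proved by induction on $n$) is correct and is indeed the kind of observation the paper also uses, though only implicitly via $\Hom_\cT(P_0,Y)=0$. The problem is Step~2, which as written is a plan rather than a proof, and I do not believe the plan closes as stated.

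The central difficulty with your filtration $\cT^{(0)}\subseteq\cT^{(1)}\subseteq\cdots$ is that the inductive hypothesis is only about objects satisfying the boundary condition $\Hom_\cT(P[i],-)=0$ for $i<0$, but when you write $X\oplus X'\simeq T$ with a triangle $Y\to T\to Z\to Y[1]$, $Y\in\cT^{(k)}$, $Z\in\cT^{(0)}$, there is no reason for $Y$ (or for $X'$, or for $T$) to satisfy that boundary condition; the inductive hypothesis simply does not apply to $Y$. You would need to strengthen the statement being proved so that it is stable under the operations that produce $\cT^{(k+1)}$, and it is not clear how to do this while keeping Assumption~\ref{ass-silt} meaningful. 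Beyond that, the two points you flag as ``delicate'' are genuine: the approximation sequence is not canonical (right approximations are not unique), so ``the approximation sequence of $T$'' is not a well-defined object one can assemble from those of $Y$ and $Z$ by an octahedron; and extracting the conclusion for a direct summand $X$ of $T$ from a terminating sequence for $T$ is exactly the kind of thing that requires a structural input, not just part~(a) applied to the complement.

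The paper sidesteps all of this by using a known structural fact: since $P$ is presilting, $(\add P)\ast(\add P[1])\ast\cdots\ast(\add P[n])$ is closed under direct summands (this is from \cite{Iyama-Yang}), and by \cite[Lemma 2.15]{AI} together with the boundary condition one gets that $X\in\thick P$ implies $X\in Q_0\ast Q_1[1]\ast\cdots\ast Q_n[n]$ for some $n$ and $Q_i\in\add P$. From there the proof is a clean finite descent: using the defining triangle for $X_1$, the octahedron produces a triangle $Y\to X_1\oplus Q_0[1]\to P_0[1]\to Y[1]$, which splits because $\Hom_\cT(P_0,Y)=0$ (here your vanishing observation enters), giving $X_1\in(\add P[1])\ast\cdots\ast(\add P[n])$, and one repeats $n$ times. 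In effect, everything you were trying to establish by the filtration induction is packaged in the cited lemma. If you want to make your route rigorous, you would essentially have to reprove that structural result, which is considerably more work than the descent argument it enables.
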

%%%%%
\begin{proof}
(a)
If $X_n$ belongs $\add P[n]$, then $f_n$ is a split epimorphism.
Thus $\mathrm{cone}(f_n)\in \add P_n[n+1]$.

(b)
The "if" part is clear since $X \in P_0 \ast P_1[1] \ast \dots \ast P_{n-1}[n-1] \ast X_n$ holds.
Assume that $X$ belongs to $\thick P$.
Since $P$ is presilting, $(\add P) \ast (\add P[1]) \ast \dots \ast (\add P[n])$ is closed under direct summands \cite{Iyama-Yang}.
Thus there is an integer $n$ and $Q_i\in\add P$ such that $X\in Q_0 \ast Q_1[1] \ast \dots \ast Q_n[n]$ \cite[Lemma 2.15]{AI}.
Then there is $Y\in Q_1[1] \ast \dots \ast Q_n[n]$ with the following commutative diagram
\[
\begin{tikzcd}
Q_0 \arrow[r] \arrow[d] & X \arrow[r] \arrow[d, equal] & Y \arrow[r] \arrow[d] & Q_0[1] \arrow[d] \\
P_0 \arrow[r] & X \arrow[r] & X_1 \arrow[r] & P_0[1]
\end{tikzcd}
\]
where each horizontal sequence is a triangle.
The right side square gives a triangle $Y \to X_1\oplus Q_0[1] \to P_0[1] \to Y[1]$ \cite[proof of Lemma 1.4.3]{N}, which splits by $\Hom_{\cT}(P_0, Y)=0$.
Therefore we have $X_1\in\add(Y\oplus P_0[1]) \subseteq (\add P[1]) \ast \dots \ast (\add P[n])$.
Repeat the same argument, we have $X_n\in\add P[n]$.
\end{proof}
%%%%%%%%%%%%%%%
\begin{lem}\label{lem-thick-local}
Let $P \in \psilt\L$ and $X\in\sD^{\rm b}(\mod\L)$.
Then $X \in \thick P$ if and only if $X_\p\in\thick P_\p$ for any $\p\in\Spec R$.
\end{lem}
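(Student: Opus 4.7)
\emph{The plan is} to reduce to the ``if'' direction, since ``only if'' is immediate from the fact that $(-)_\p : \sD^{\rm b}(\mod\L) \to \sD^{\rm b}(\mod\L_\p)$ is a triangle functor and hence preserves thick closures. For ``if'', I would first replace $X$ by a sufficiently deep shift so that $\Hom(P[i], X) = 0$ for all $i < 0$ (possible because $\Ext^i(P, X)$ vanishes outside a finite range, and $\thick P$ is shift-invariant), and then apply Assumption~\ref{ass-silt} in $\cT := \sD^{\rm b}(\mod\L)$ to build iterates $X_0 = X, X_1, X_2, \ldots$ via right $(\add P[n])$-approximations $f_n : P_n[n] \to X_n$ with $X_{n+1} := \mathrm{cone}(f_n)$. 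By Lemma~\ref{lem-silting-app}(b), it then suffices to exhibit an integer $N$ with $X_N \in \add P[N]$.

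The first observation is that the construction commutes with localization: using the isomorphism~\eqref{iso-localization-derived} and the exactness of $(-)_\p$, the localized morphism $(f_n)_\p$ is still a right $(\add P_\p[n])$-approximation of $(X_n)_\p$ in $\sD^{\rm b}(\mod\L_\p)$, and $(X_{n+1})_\p = \mathrm{cone}((f_n)_\p)$. Combined with the hypothesis $X_\p \in \thick P_\p$ and Lemma~\ref{lem-silting-app}(b) applied in $\sD^{\rm b}(\mod\L_\p)$, this yields, for each $\p$, an integer $n_\p$ with $(X_{n_\p})_\p \in \add P_\p[n_\p]$; by Lemma~\ref{lem-silting-app}(a) applied locally, the property persists for all $n \geq n_\p$.

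The main obstacle is to obtain a \emph{single} $N$ that works simultaneously for every $\p \in \Spec R$. My plan is to introduce, for each $n$, the $R$-linear composition map
\[
\mu_n : \Hom(X_n[-n], P) \otimes_R \Hom(P, X_n[-n]) \to \End(X_n[-n]), \qquad \phi \otimes \psi \mapsto \psi\phi,
\]
and to let $\alpha_n$ denote the class of $\mathrm{id}_{X_n[-n]}$ in $\End(X_n[-n])/\Im\mu_n$. A direct unwinding shows that $(X_n)_\p \in \add P_\p[n]$ is equivalent to $(\alpha_n)_\p = 0$, since the condition $\mathrm{id}_Y = \sum_i \psi_i\phi_i$ with $\phi_i : Y \to P$ and $\psi_i : P \to Y$ is precisely the condition that $Y$ be a direct summand of some $P^m$. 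Hence $Z_n := \{\p \mid (X_n)_\p \in \add P_\p[n]\}$ is the open complement of the closed subset $\supp_R \alpha_n = V(\ann_R \alpha_n)$ of $\Spec R$. Since $\{Z_n\}_n$ is an increasing chain with $\bigcup_n Z_n = \Spec R$ and $\Spec R$ is Noetherian (hence quasi-compact), some $Z_N$ equals $\Spec R$, so $\alpha_N = 0$. Equivalently, $\mathrm{id}_{X_N[-N]} \in \Im \mu_N$, which exhibits $X_N[-N]$ as a direct summand of $P^m$ for some $m$. Therefore $X_N \in \add P[N]$, and Lemma~\ref{lem-silting-app}(b) concludes $X \in \thick P$.
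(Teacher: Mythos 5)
Your proposal is correct and follows essentially the same route as the paper. The paper measures the obstruction by $C_n := \Cok\bigl(f_n^\ast : \Hom(X_n, P_n[n]) \to \End(X_n)\bigr)$ and argues that the closed sets $I_n = \supp_R C_n$ form a decreasing chain with empty intersection, which stabilizes at $\emptyset$ by Noetherianity of $\Spec R$; since $f_n$ is a right $(\add P[n])$-approximation, $\Im f_n^\ast$ consists exactly of the endomorphisms of $X_n$ factoring through $\add P[n]$, so $\Im f_n^\ast = \Im\mu_n$ and your cyclic module $R\alpha_n$ has the same support as $C_n$, making the two constructions and the subsequent Noetherian stabilization argument interchangeable.
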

%%%%%
\begin{proof}
It is suffices to show "if" part.
Let $\cT=\sD^{\rm b}(\mod\L)$.
Without loss of generality, we may assume that $\Hom_{\cT}(P[i], X)=0$ for $i<0$.
Then $P, X$ and $\cT$ fit into Assumption \ref{ass-silt}.
Let
\[
I_n=\{\p\in\Spec R \mid (X_n)_\p \notin \add P_\p[n] \}.
\]
Consider the morphism of $R$-modules $f_n^{\ast} : \Hom_{\cT}(X_n, P_n[n]) \to \End_{\cT}(X_n)$ and let $C_n=\Cok(f_n^{\ast})$.
Since $X_n$ belongs to $\sD^{\rm b}(\mod\L)$, $\End_{\cT}(X_n)$ and $C_n$ belong to $\mod R$.
Then $I_n=\supp_R C_n$ holds.
In fact for a prime ideal $\p$, $(f_n)_\p$ is a right $(\add P_\p[n])$-approximation of $(X_n)_\p$. Thus $(C_n)_\p=0$ if and only if $(f_n)_\p$ is a split epimorphism if and only if $(X_n)_\p\in\add P_\p[n]$ holds.
Therefore $I_n$ is a Zariski closed subset of $\Spec R$.
By applying Lemma \ref{lem-silting-app}(b) to $P_\p$ and $X_\p$, we have $\bigcap_{n\geq 0}I_n = \emptyset$.
Lemma \ref{lem-silting-app}(a) implies that $I_{n+1} \subseteq I_n$ for any $n\geq 0$.
Since $\Spec R$ is a Noetherian space, there is an integer $n$ such that $I_n=\emptyset$.
Thus $C_n=0$ and $f_n$ is a split epimorphism.
We have $X_n\in\add P[n]$ and again by Lemma \ref{lem-silting-app}(b), the assertion holds.
\end{proof}
%%%%%%%%%%%%%%%
\begin{proof}[Proof of Theorem \ref{thm-silting-local}]
(a)
By applying Lemma \ref{lem-thick-local} to a silting complex $\L\in\sK^{\rm b}(\proj\L)$ and a complex $P\in\sD^{\rm b}(\mod\L)$, we have that $P\in\thick \L$ if and only if $P_\p\in\thick\L_\p$ for any prime ideal $\p$ of $R$.

(b)
Let $P\in\sK^{\rm b}(\proj\L)$.
If $P$ is silting, then by Proposition \ref{prop-presilt-local}(a), $P_\p$ is silting for any $\p\in\Spec R$.
Assume that $P_\p$ is silting for any $\p\in\Spec R$.
So $\L_\p\in\thick P_\p$ holds for any $\p\in\Spec R$.
By Proposition \ref{prop-presilt-local}(b), $P$ is presilting.
By applying Lemma \ref{lem-thick-local} to $P$ and a complex $\L\in\sD^{\rm b}(\mod\L)$, we have $\L\in\thick P$.
Thus $P$ is silting.
\end{proof}
%%%%%%%%%%%%%%%
As a corollary, we get the following result \cite[Theorem 2.16]{Iyama-Wemyss-sing} which is a non-commutative analogue of a result due to Bass.
%%%%%%%%%%%%%%%
\begin{cor}\label{bass result}
For $X\in\mod\L$, $X$ has a finite projective dimension over $\L$ if and only if $X_\p$ has a finite projective dimension over $\L_\p$ for any prime ideal $\p$ of $R$.
\end{cor}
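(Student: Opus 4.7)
The plan is to derive this as an immediate consequence of Theorem \ref{thm-silting-local}(a) by identifying ``finite projective dimension'' with ``belongs to $\sK^{\rm b}(\proj\L)$'' inside $\sD^{\rm b}(\mod\L)$. The first step is the following basic translation: for $X\in\mod\L$, we have $\projdim_\L X<\infty$ if and only if $X$ is isomorphic in $\sD^{\rm b}(\mod\L)$ to an object of $\sK^{\rm b}(\proj\L)$. The ``only if'' direction is clear: truncating a projective resolution of length $\projdim_\L X$ gives a bounded complex of finitely generated projective $\L$-modules (which exists because $\L$ is left Noetherian as a module-finite algebra over the Noetherian ring $R$) quasi-isomorphic to $X$. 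For the converse, if $X$ is quasi-isomorphic to $P\in\sK^{\rm b}(\proj\L)$, then $\Ext^i_\L(X,-)\simeq\Hom_{\sD(\L)}(P,-[i])=0$ for $i$ larger than the length of $P$, giving finite projective dimension.

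The same equivalence holds verbatim for any $\p\in\Spec R$ with $(R,\L)$ replaced by $(R_\p,\L_\p)$, since $\L_\p$ is again a Noetherian $R_\p$-algebra and $X_\p\in\mod\L_\p$.

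Combining these, the statement ``$X$ has finite projective dimension over $\L$ (respectively, $X_\p$ has finite projective dimension over $\L_\p$ for all $\p$)'' is precisely ``$X\in\sK^{\rm b}(\proj\L)$ (respectively, $X_\p\in\sK^{\rm b}(\proj\L_\p)$ for all $\p$)'' as objects of the respective bounded derived categories. Theorem \ref{thm-silting-local}(a), applied to $P:=X$ viewed as a stalk complex in $\sD^{\rm b}(\mod\L)$, then gives the equivalence directly; note that the localization functor $(-)_\p$ on $\sD^{\rm b}(\mod\L)$ sends the stalk complex $X$ to the stalk complex $X_\p$ because $(-)_\p$ is exact.

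There is no real obstacle beyond setting up the dictionary: the nontrivial content, namely the local-to-global principle for membership in $\sK^{\rm b}(\proj\L)$ based on the Noetherianness of $\Spec R$ and the descending chain of supports $I_n=\supp_RC_n$, has already been established in Lemma \ref{lem-thick-local} and Theorem \ref{thm-silting-local}(a). Thus the corollary is essentially a restatement of Theorem \ref{thm-silting-local}(a) for modules.
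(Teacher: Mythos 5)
Your proof is correct and follows exactly the paper's approach: the paper also deduces the corollary directly from Theorem~\ref{thm-silting-local}(a) applied to $X$ viewed as a stalk complex. You have simply made explicit the standard dictionary (finite projective dimension of a finitely generated module over a Noetherian ring $\Leftrightarrow$ membership in $\sK^{\rm b}(\proj)$ inside $\sD^{\rm b}(\mod)$) that the paper leaves implicit in the phrase ``immediate.''
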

%%%%%
\begin{proof}
The assertion is immediate from Theorem \ref{thm-silting-local}(a).
\end{proof}
%%%%%%%%%%%%%%%
We use the following lemma in Subsection \ref{subsection-preliminaries-subcategory}.
%%%%%%%%%%%%%%%
\begin{lem}\label{lem-derived-ext-local}
For subcategories $\cX, \cY$ of $\sD^{\rm b}(\mod\L)$ and a multiplicative subset $S$ of $R$, we have $(\cX \ast \cY)_S = \cX_S \ast \cY_S$.
\end{lem}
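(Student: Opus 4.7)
The plan is to prove the two inclusions separately. The inclusion $(\cX \ast \cY)_S \subseteq \cX_S \ast \cY_S$ is immediate because $(-)_S:\sD^{\rm b}(\mod\L)\to\sD^{\rm b}(\mod\L_S)$ is a triangle functor: any defining triangle $X\to T\to Y\to X[1]$ with $X\in\cX$, $Y\in\cY$ localizes to a triangle $X_S\to T_S\to Y_S\to X_S[1]$ with $X_S\in\cX_S$, $Y_S\in\cY_S$.

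For the reverse inclusion $\cX_S \ast \cY_S \subseteq (\cX \ast \cY)_S$, take $T'\in\cX_S\ast\cY_S$ witnessed by a triangle $X_S\to T'\to Y_S\xrightarrow{\alpha} X_S[1]$ with $X\in\cX$ and $Y\in\cY$. The key step is to remove denominators from $\alpha$. By the isomorphism \eqref{iso-localization-derived},
\[
\Hom_{\sD(\L_S)}(Y_S,X_S[1])\simeq \Hom_{\sD(\L)}(Y,X[1])_S,
\]
so we can write $\alpha = (1/s)\otimes g$ for some $s\in S$ and some $g\in\Hom_{\sD(\L)}(Y,X[1])$. Multiplication by $s$ is an automorphism of $Y_S$ in $\sD^{\rm b}(\mod\L_S)$ (its inverse is multiplication by $1/s$), so replacing $\alpha$ by $\alpha\circ(s\cdot \id_{Y_S})=1\otimes g$ only changes $T'$ up to isomorphism in $\sD^{\rm b}(\mod\L_S)$. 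Hence we may assume $\alpha=1\otimes g$.

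Now form the triangle $X\to U\to Y\xrightarrow{g} X[1]$ in $\sD^{\rm b}(\mod\L)$; by construction $U\in\cX\ast\cY$. Applying the triangle functor $(-)_S$ produces a triangle
\[
X_S\to U_S\to Y_S\xrightarrow{1\otimes g} X_S[1],
\]
and comparing with the original triangle for $T'$ via the $(3\times 3)$-lemma (or simply the fact that the third term in a triangle is determined up to isomorphism by the connecting morphism) gives $U_S\simeq T'$. Therefore $T'\in(\cX\ast\cY)_S$, completing the proof.

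The only genuinely non-formal step is the denominator-clearing argument in the second paragraph; the rest is just exactness of $(-)_S$ together with the standard fact that isomorphic connecting morphisms yield isomorphic cones. Provided we agree that the notation $(\cX\ast\cY)_S$ (and the subcategories $\cX_S$, $\cY_S$) is taken up to isomorphism in $\sD^{\rm b}(\mod\L_S)$, there are no further subtleties.
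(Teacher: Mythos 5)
Your proof is correct and follows essentially the same route as the paper's: both directions are handled identically, and the key step in the nontrivial inclusion is clearing the denominator in the connecting morphism via the isomorphism $\Hom_{\sD(\L_S)}(Y_S,X_S[1])\simeq \Hom_{\sD(\L)}(Y,X[1])_S$ and then observing that multiplication by $s$ gives an isomorphism of cones. The paper states the last step more tersely ("$s$ induces an isomorphism $E\simeq C_S$"), but the mechanism is the same; your invocation of the $(3\times 3)$-lemma is unnecessary since, as you also note, the cone is determined up to isomorphism once the connecting morphism is fixed.
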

%%%%%
\begin{proof}
It suffices to show that $(\cX \ast \cY)_S \supseteq \cX_S \ast \cY_S$ holds.
Let $X_S \to E \to Y_S \xto{g} X_S[1]$ be a triangle in $\sD^{\rm b}(\mod\L_S)$.
By the isomorphism (\ref{iso-localization-derived}), there exist $f\in\Hom_{\sD^{\rm b}(\mod\L)}(Y, X[1])$ and $s\in S$ such that $s^{-1}f=g$.
Then we have a triangle $X \to C \to Y \xto{f} X[1]$ with $C\in\cX\ast\cY$, and $s$ induces an isomorphism $E \simeq C_S \in (\cX \ast \cY)_S$.
\end{proof}
%%%%%%%%%%%%%%%
%%%%%%%%%%%%%%%
\subsection{A Theorem on field extensions and (pre)silting complexes}
%%%%%%%%%%%%%%%
In the rest of this subsection, we give some properties of two-term presilting complexes of finite dimensional algebras with respect to a field extension. The reader can skip this subsection until the main Theorem \ref{thm-K-k-presilt} will be used in the proof of Proposition \ref{prop-ftors-bij}.

Let $k$ be a field, $K$ a field extension of $k$ and $A$ a finite dimensional $k$-algebra.
For an $A$-module $M$, let $M_K=K\otimes_k M$.
We have a finite dimensional $K$-algebra $A_K=K\otimes_k A$.
We denote by \emph{$\Aut_k(K)$} the group of $k$-algebra automorphisms of $A$.
Any $g\in\Aut_k(K)$ gives an automorphism $g\otimes\id_A$ of $A_K$.
Then for an $A_K$-module $X$, we denote by ${}_gX$ an $A_K$-module with an action $b \cdot_g x = (g\otimes\id_A)(b)x$ for $b \in A_K$ and $x\in X$.

For a complex $X$ of $A$-modules, we have a complex $X_K$ of $A_K$-modules by applying the tensor functor $K\otimes_k(-)$.
For two complexes $P, Q$ in $\sK^{\rm b}(\proj A)$, we have $\Hom_{\sD(A_K)}(P_K, Q_K)=K\otimes_k\Hom_{\sD(A)}(P, Q)$.
Thus $K\otimes_k (-)$ induces a map from the set of presilting complexes over $A$ to the set of presilting complexes over $A_K$.
Moreover this map restricts to silting complexes.
We denote by $\sK_0(\proj A)$ the Grothendieck group of $\proj A$.
For a two-term complex $P=(P^{-1} \to P^0)$ in $\sK^{\rm b}(\proj A)$, we call $g(P)=[P^0]-[P^{-1}]\in\sK_0(\proj A)$ the \emph{g-vector} of $P$.

We first observe the following lemma.
%%%%%%%%%%%%%%%
\begin{lem}\label{lem-K-k-inj}
Let $K/k$ be an extension of fields and $A$ a finite dimensional $k$-algebra.
\begin{enumerate}[{\rm (a)}]
	\item The morphism $K\otimes_k(-) : \sK_0(\proj A) \to \sK_0(\proj A_K)$ is injective.
	\item For complexes $X, Y\in\sD^{\rm b}(\mod A)$, if $\add X \cap \add Y =0$, then $\add X_K \cap \add Y_K =0$ holds.
\end{enumerate}
\end{lem}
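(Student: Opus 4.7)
The plan is to use the non-degenerate bilinear pairing
\[
\langle -, -\rangle_k : \sK_0(\proj A)\times\sK_0(\Fl A)\to\bZ,\qquad \bigl([P],[M]\bigr)\mapsto \dim_k\Hom_A(P,M),
\]
which is well-defined because $P$ is projective (so $\Hom_A(P,-)$ is exact), and non-degenerate in the first variable since a nonzero projective $P$ admits a simple quotient via $P\to P/\rad P$. Given $\alpha\in\sK_0(\proj A)$ mapping to zero under $K\otimes_k(-)$, I would pair $K\otimes_k\alpha$ against $[S_K]\in\sK_0(\Fl A_K)$ for each simple $A$-module $S$. The natural isomorphism $\Hom_{A_K}(P_K,S_K)\simeq K\otimes_k\Hom_A(P,S)$ equates $K$-dimensions with $k$-dimensions, giving $\langle K\otimes_k\alpha,[S_K]\rangle_K = \langle\alpha,[S]\rangle_k$; the left side vanishes, so $\langle\alpha,[S]\rangle_k=0$ for every simple $S$. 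Since the simples generate $\sK_0(\Fl A)$, non-degeneracy forces $\alpha=0$.

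\textbf{Part (b).} The plan is to pass to the endomorphism algebra. Set $E=\End_{\sD^{\rm b}(\mod A)}(X\oplus Y)$, a finite-dimensional $k$-algebra, with orthogonal idempotents $e_X,e_Y\in E$ satisfying $e_X+e_Y=1$ and corresponding to $X,Y$, and let $J=\rad E$. The indecomposable direct summands of $X\oplus Y$ correspond via Krull--Schmidt to equivalence classes of primitive idempotents of $E$, hence to simples of $\bar E = E/J = \prod_i S_i$ (a product of simple Artinian algebras). Since in each $S_i$ one has $\bar e_X^{\,i}\, S_i\, \bar e_Y^{\,i} = \Hom_{S_i}(\bar e_Y^{\,i}S_i,\bar e_X^{\,i}S_i)$, which is nonzero whenever both idempotents are (any two nonzero right ideals of a simple Artinian ring share simple summands), one obtains
\[
\add X\cap\add Y = 0 \ \iff\  e_X E e_Y \subseteq J.
\]
Base-changing, nilpotency of $J$ forces $J_K\subseteq\rad E_K$, and therefore
\[
e_X E_K e_Y \,=\, K\otimes_k(e_X E e_Y) \,\subseteq\, J_K \,\subseteq\, \rad E_K.
\]
If some nonzero $Z\in\add X_K\cap\add Y_K$ existed, Krull--Schmidt in $\sD^{\rm b}(\mod A_K)$ would supply primitive idempotents $e\leq e_X$ and $f\leq e_Y$ in $E_K$ together with elements $a\in e_X E_K e_Y$ and $b\in e_Y E_K e_X$ satisfying $ab=e$ and $ba=f$, contradicting $ab\in(e_X E_K e_Y)(e_Y E_K e_X)\subseteq \rad E_K$ versus $e$ being a nonzero idempotent.

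The main obstacle to anticipate is that $E_K$ need not be semisimple modulo $J_K$: when $E/J$ fails to be separable over $k$, the algebra $(E/J)\otimes_k K$ acquires a new Jacobson radical and $\rad E_K$ strictly contains $J_K$. The argument avoids the issue by using only the elementary containment $J_K\subseteq\rad E_K$, which rests purely on nilpotency and requires no separability hypothesis.
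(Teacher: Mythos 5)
Your proof is correct. For part (b), your argument is essentially the same in spirit as the paper's: the paper works with the trace ideal $[Y]\subseteq\End_{\sD(A)}(X)$ of morphisms factoring through $\add Y$ (and symmetrically $[X]\subseteq\End_{\sD(A)}(Y)$), observes that $\add X\cap\add Y=0$ holds iff these ideals are nilpotent, and notes that nilpotency is preserved by $K\otimes_k(-)$. You package the same nilpotency observation into the single algebra $E=\End(X\oplus Y)$ and the corner $e_XEe_Y$; the reformulation via primitive idempotents and simple factors of $E/J$ is equivalent but somewhat longer to justify, and your worry about $E/J$ failing to be separable is handled by the same device the paper uses implicitly (only $J_K\subseteq\rad E_K$ is needed, not equality). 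For part (a), however, you take a genuinely different route: the paper deduces (a) as a corollary of (b) by cancelling common projective summands and invoking Krull--Schmidt together with the freeness of $\sK_0(\proj A)$, whereas you give a self-contained argument via the nondegenerate Euler pairing $\sK_0(\proj A)\times\sK_0(\Fl A)\to\bZ$ and the compatibility $\langle K\otimes_k\alpha,[S_K]\rangle_K=\langle\alpha,[S]\rangle_k$. Your (a) is more elementary and does not depend on (b); the paper's derivation is shorter but relies on part (b) and Krull--Schmidt uniqueness. Both are valid; yours has the advantage of decoupling the two statements.
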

%%%%%
\begin{proof}
The statement (a) directly follows from (b).
Thus we show (b).
All morphisms in this proof are taken in the bounded derived category.
We denote by $[Y]$ the ideal of $\End_{\sD(A)}(X)$ consisting of morphisms factoring through an object in $\add Y$. Similarly we define the ideal $[X]$ of $\End_{\sD(A)}(Y)$. Then $\add X \cap \add Y =0$ holds if and only if $[Y]\subseteq\rad\End_{\sD(A)}(X)$ and $[X]\subseteq\rad\End_{\sD(A)}(Y)$ hold if and only if the ideals $[Y]$ and $[X]$ are nilpotent. In this case, the ideals $[Y_K]$ of $\End_{\sD(A_K)}(X_K)$ and $[X_K]$ of $\End_{\sD(A_K)}(Y_K)$ are nilpotent, and hence $\add X_K \cap \add Y_K =0$ holds, as desired.
\end{proof}
%%%%%%%%%%%%%%%
For projective $A$-modules $P, Q$, we say that \emph{$\Hom_A(P, Q)$ contains a presilting complex} if there is a morphism $f : P\to Q$ such that the two-term complex $(f : P \to Q)$ is a presilting complex.

The following is the main result in this subsection.
%%%%%%%%%%%%%%%
\begin{thm}\label{thm-K-k-presilt}
Let $A$ be a finite dimensional $k$-algebra, $K/k$ be a field extension, and $P, Q\in\proj A$.
Then $\Hom_A(P, Q)$ contains a presilting complex if and only if so does $\Hom_{A_K}(P_K, Q_K)$.
\end{thm}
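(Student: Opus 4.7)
The forward direction is immediate from base change: for $T=(P\xrightarrow{f}Q)\in\sK^{\rm b}(\proj A)$, flatness of $K$ over $k$ together with projectivity of $P$ and $Q$ give
\[
\Hom_{\sD(A_K)}(T_K,T_K[1])\simeq K\otimes_k\Hom_{\sD(A)}(T,T[1]),
\]
so $T$ presilting forces $T_K$ presilting.

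The converse is the substance. A chain-homotopy computation (analogous to the one already used in the proof of Lemma \ref{lem-twosilt-fac}(a)) shows that $T=(P\xrightarrow{f}Q)$ is presilting if and only if the $k$-linear map
\[
\phi_f:\End_A(P)\oplus\End_A(Q)\longrightarrow V:=\Hom_A(P,Q),\qquad(\alpha,\beta)\mapsto f\alpha-\beta f,
\]
is surjective; note that $\phi_f$ is $k$-linear in $f$. Fixing a $k$-basis $f_1,\dots,f_n$ of $V$ and writing $f=\sum\lambda_if_i$, the matrix of $\phi_f$ has entries that are $k$-linear in $\lambda=(\lambda_1,\dots,\lambda_n)$, so surjectivity of $\phi_f$ is equivalent to the non-vanishing of at least one fixed maximal minor $p(\lambda)\in k[\lambda_1,\dots,\lambda_n]$. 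Thus the presilting locus $U\subseteq V$ is a Zariski-open subscheme defined over $k$, and the same polynomials base-changed to $K$ describe $U_K$.

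The hypothesis $U(K)\neq\emptyset$ forces at least one such minor $p$ to be a \emph{non-zero} element of $k[\lambda_1,\dots,\lambda_n]$. When $k$ is infinite a non-zero polynomial cannot vanish on all of $k^n$, so there exists $\lambda\in k^n$ with $p(\lambda)\neq 0$, and the corresponding $f\in V$ gives the required presilting complex.

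The main obstacle is the finite-field case, where a non-zero polynomial may vanish identically on $k^n$. To handle it, I would first reduce to $K/k$ finitely generated: expanding the given $g\in V_K$ in the basis $\{f_i\}$ uses only finitely many elements of $K$, so $g\in V_L$ for the finitely generated subfield $L\subseteq K$ they generate, and $T_L$ remains presilting over $A_L$ because
\[
K\otimes_L\Hom_{\sD(A_L)}(T_L,T_L[1])\simeq\Hom_{\sD(A_K)}(T_K,T_K[1])=0
\]
and $K/L$ is faithfully flat. Next, factor $L/k$ as a finite algebraic extension of a purely transcendental extension $k(t_1,\dots,t_m)/k$ and treat the two pieces separately: for the transcendental step, clear denominators in $g$ to get a morphism in $V_{k[t_1,\dots,t_m]}$ and specialize along a generic $k$-point, passing to a finite base-change first and then descending if needed; for the finite algebraic step, use Galois descent together with the isomorphism $T_g\simeq T_{\lambda g}$ for $\lambda\in K^\times$ (the scalar $\lambda$ yields an isomorphism of two-term complexes) to obtain enough freedom so that some trace-type average lies in $U(k)$. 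This finite-field descent is the delicate part of the argument; the Zariski-open argument itself is the clean conceptual core.
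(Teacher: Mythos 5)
Your forward direction and your infinite-field argument are exactly the paper's: the surjectivity criterion for $\phi_f$, the matrix whose entries are $k$-linear in the coefficients $\lambda_i$, and the non-vanishing maximal minor giving a non-zero polynomial over $k$ all appear verbatim in the paper's proof, and for $k$ infinite one specializes to a $k$-point of the open locus. (Your detour through a finitely generated subfield and a purely transcendental intermediate extension is unnecessary: since presilting-ness only involves the finitely many coefficients $\lambda_i$, the paper simply picks a non-vanishing point over the algebraic closure $\overline{k}$, which is infinite, and passes to the finite subextension $k''$ these values generate; this reduces everything to a finite, hence Galois, extension of the finite field $k$.)

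The genuine gap is the finite-field case, which you yourself flag as "delicate" but only sketch. A "trace-type average" over the Galois group cannot work as stated: presilting-ness is an open, non-linear condition (precisely the complement of the vanishing of the minors), so averaging the Galois conjugates ${}_{\sigma}f$ gives a morphism defined over $k$ with no reason to be presilting -- indeed the whole problem is that the open locus may have no $k$-rational point reachable by specialization or linear combinations. The paper's resolution uses a different mechanism: restrict scalars along the finite Galois extension to get $T=S|_A=(h:P^{\oplus d}\to Q^{\oplus d})$ with $d=[K:k]$; Proposition \ref{prop-Galois-ext} gives $T_K\simeq S^{\oplus d}$ (using $S\simeq{}_{\sigma}S$, checked via $g$-vectors), whence $T$ is presilting over $A$ with $g$-vector $d([Q]-[P])$; then complete $T$ to a two-term silting complex $U=\bigoplus_i U_i$ by \cite[Proposition 2.16]{Aihara} and use that the $[U_i]$ form a $\bZ$-basis of $\sK_0(\proj A)$ \cite[Theorem 2.27]{AI} to divide by $d$ and conclude $[Q]-[P]\in\sum_i\bZ_{\geq0}[U_i]$, which yields a presilting complex of the form $(g:P\to Q)$. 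This K-theoretic "divide the $g$-vector inside the basis given by a silting completion" step is the key idea your proposal is missing; without it (or a genuine substitute), the converse over finite fields is not proved.
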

%%%%%%%%%%%%%%%
The natural morphism $A \to A_K$ induces a functor $(-)|_A : \mod A_K \to \mod A$.
To prove the theorem, we remark the following lemma, we refer \cite[Lemma 2.5]{Li} for instance.
%%%%%%%%%%%%%%%
\begin{prop}\label{prop-Galois-ext}
Let $K/k$ be a finite Galois extension and $A$ a finite dimensional $k$-algebra.
\begin{enumerate}[{\rm (a)}]
	\item There exists a functorial isomorphism $(K \otimes_k X)|_A \simeq X^{\oplus [K : k]}$ for each $X\in\mod A$.
	\item There exists a functorial isomorphism $K \otimes_k (Y|_A) \simeq \bigoplus_{\sigma\in\Gal(K/k)}{}_{\sigma}Y$ for each $Y\in\mod A_K$.
\end{enumerate}
\end{prop}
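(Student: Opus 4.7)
The plan is to prove (a) by elementary linear algebra, and (b) by Galois descent using the standard isomorphism $K\otimes_k K\simeq\prod_{\sigma\in\Gal(K/k)}K$.

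For part (a), I would observe that $K$ is a free $k$-module of rank $[K:k]$, so any choice of $k$-basis $\{e_1,\dots,e_n\}$ of $K$ (with $n=[K:k]$) yields a $k$-linear isomorphism $K\otimes_k X\simeq\bigoplus_{i=1}^n X$, $e_i\otimes x\mapsto (0,\dots,x,\dots,0)$. Since the restriction of the $A_K$-action to $A$ acts only on the right tensor factor (the element $1\otimes a\in A_K$ sends $e_i\otimes x$ to $e_i\otimes ax$), this isomorphism is $A$-linear. Functoriality in $X\in\mod A$ is immediate from the fact that the map $e_i\otimes x\mapsto(0,\dots,x,\dots,0)$ is natural in $X$ once the basis is fixed.

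For part (b), the idea is to define the map
\[
\phi_Y : K\otimes_k (Y|_A)\longrightarrow\bigoplus_{\sigma\in\Gal(K/k)}{}_\sigma Y,\qquad a\otimes y\longmapsto(\sigma(a)y)_\sigma,
\]
where $\sigma(a)y$ denotes the original $K$-action on $Y$ coming from the $A_K$-module structure. I would first verify that $\phi_Y$ is $A_K$-linear: for $b\otimes c\in K\otimes_k A=A_K$, the source sends $(b\otimes c)(a\otimes y)=(ba)\otimes(cy)$ to the tuple whose $\sigma$-component is $\sigma(ba)cy=\sigma(b)\sigma(a)cy$, while the target $A_K$-action on ${}_\sigma Y$ gives $(b\otimes c)\cdot_\sigma(\sigma(a)y)=\sigma(b)c\sigma(a)y$; these agree because the $A$- and $K$-actions on $Y$ commute. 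Functoriality of $\phi_Y$ in $Y$ is clear from the formula.

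To show $\phi_Y$ is an isomorphism I would first do the case $Y=A_K$, where the claim reduces to the $K$-algebra isomorphism $K\otimes_k K\xrightarrow{\sim}\prod_{\sigma\in\Gal(K/k)}K$, $a\otimes b\mapsto(\sigma(a)b)_\sigma$, which holds because $K/k$ is a finite Galois extension (this is the main input from field theory). By taking direct sums, $\phi_{A_K^{\oplus r}}$ is then an isomorphism for every $r$. For general $Y$, choose a presentation $A_K^{\oplus s}\to A_K^{\oplus r}\to Y\to 0$ in $\mod A_K$; the functors $K\otimes_k(-|_A)$ and $\bigoplus_\sigma{}_\sigma(-)$ are both exact (the former because $K$ is $k$-flat and restriction is exact, the latter because twisting by an automorphism is exact), so naturality of $\phi$ and the five lemma (or a direct diagram chase) give that $\phi_Y$ is an isomorphism.

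The only real obstacle is setting up the map $\phi_Y$ so that the twist by $\sigma$ on the $K$-action is placed on the correct side — once the sign conventions are fixed, $A_K$-linearity is a short check and the Galois-theoretic isomorphism $K\otimes_k K\simeq\prod_\sigma K$ does the rest.
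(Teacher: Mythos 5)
Your proof is correct. The paper does not actually spell out an argument for Proposition \ref{prop-Galois-ext}; it states the result and refers the reader to \cite[Lemma 2.5]{Li}. So there is no in-paper proof to compare against, but your argument is the standard one and is complete: part (a) is the elementary observation that a choice of $k$-basis of $K$ identifies $(K\otimes_k X)|_A$ with $X^{\oplus[K:k]}$, and part (b) is Galois descent, with the key input being the $K$-algebra isomorphism $K\otimes_k K\simeq\prod_{\sigma\in\Gal(K/k)}K$, $a\otimes b\mapsto(\sigma(a)b)_\sigma$. Your verification of $A_K$-linearity of $\phi_Y$ against the twisted action $b\cdot_\sigma y=(\sigma\otimes\id_A)(b)y$ checks out (both sides yield $(\sigma(b)\sigma(a)\otimes c)y$), the reduction to $Y=A_K$ is exactly where the Galois hypothesis enters, and the passage to general $Y$ via a finite free presentation, exactness of both functors, and the five lemma is sound. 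One small presentational note: it is worth stating explicitly that $\phi_{A_K}$ is literally $(\text{Galois iso})\otimes_k\id_A$ under the identifications $K\otimes_k(A_K|_A)\simeq (K\otimes_k K)\otimes_k A$ and $\bigoplus_\sigma{}_\sigma A_K\simeq\bigl(\prod_\sigma K\bigr)\otimes_k A$, so that the base case really does reduce to the field-theoretic statement; but this is what you meant and the argument is correct as written.
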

%%%%%%%%%%%%%%%
\begin{proof}[Proof of Theorem \ref{thm-K-k-presilt}]
The ``only if" part is clear.
We show the ``if" part, that is, if there exists a presilting complex $(f : P_K \to Q_K)$, then there exists a presilting complex $(g : P \to Q)$.

Let $\tilde{f} : \End_{A_K}(P_K)\oplus\End_{A_K}(Q_K) \to \Hom_{A_K}(P_K, Q_K)$ be a map defined as $\tilde{f}(\alpha, \beta)=\beta\circ f + f\circ \alpha$.
Since $(f :P_K\to Q_K)$ is presilting, $\tilde{f}$ is surjective.
Since $\Hom_{A_K}(P_K, Q_K)=K\otimes_k\Hom_A(P, Q)$ holds, there are finitely many $g_i \in \Hom_A(P, Q)$ and $\la_i\in K$ such that $f=\sum \la_i\otimes g_i$.
We have $\tilde{f}=\sum \la_i \otimes \tilde{g_i}$, where $\tilde{g_i}$ is a morphism $\End_{A}(P)\oplus\End_{A}(Q) \to \Hom_{A}(P,Q)$ defined in the same way as $\tilde{f}$.

By fixing $k$-bases, we denote by $G_i$ a matrix with coefficients in $k$ which represents $\tilde{g_i}$.
Then $\tilde{f}$ is represented by a matrix $\sum \la_i\otimes G_i$.
Since $\tilde{f}$ is surjective, there is a maximal minor $\sM$ with $\sM(\sum \la_i\otimes G_i)\neq 0$.
Then we have a non-zero polynomial $\sM(\sum t_i G_i)$ in variables $t_i$ with coefficients in $k$.

We divide into two cases (i) $k$ is an infinite field, (ii) $k$ is a finite field.

(i) Assume that $k$ is an infinite field.
Then there are $\mu_i \in k$ such that $\sM(\sum \mu_i G_i)\neq 0$.
Let $g=\sum \mu_i  g_i$.
Then a two-term complex $(g : P \to Q)$ is presilting, since $\tilde{g}$ is surjective.

(ii) Assume that $k$ is a finite field.
Let $k'$ be an algebraic closure of $k$ and $\mu_i \in k'$ such that $\sM(\sum \mu_i G_i)\neq 0$.
Let $k''$ be a field generated by all $\mu_i$ over $k$, which is a finite extension of $k$.
Then we have a two-term presilting complex $(\sum \mu_i g_i : P_{k''} \to Q_{k''})$.
By replacing $K$ to $k''$, we may assume that $K|k$ is a finite extension.

Assume that $K/k$ is a finite extension.
Since $k$ is a finite field, this extension is a finite Galois extension.
Let $G=\Gal(K/k)$ be the Galois group of $K/k$ and $S:=(f : P_K \to Q_K)$.
We have a two-term presilting complex ${}_{\sigma}S=({}_{\sigma}f : {}_{\sigma}P_K \to {}_{\sigma}Q_K)$ for each $\sigma\in G$.
For each idempotent $e\in A$, $\sigma(1_K \otimes e)=1_K \otimes e$ holds.
Therefore by calculating the $g$-vectors, $S \simeq {}_{\sigma}S$ in $\sK^{\rm b}(\proj A)$ holds for each $\sigma\in G$.
Let $h:=f|_A$.
By Proposition \ref{prop-Galois-ext}(a), we have a two-term complex $T=S|_A=(h : P^{\oplus d} \to Q^{\oplus d})$ of $A$, where $d=[K : k]$.
By Proposition \ref{prop-Galois-ext}(b), we have $T_K \simeq S^{\oplus d}$ in $\sK^{\rm b}(\proj A_K)$.
This implies that $T$ is a presilting complex.
In fact, consider the map $\tilde{h} : \End_{A}(P^{\oplus d})\oplus\End_{A}(Q^{\oplus d}) \to \Hom_{A}(P^{\oplus d}, Q^{\oplus d})$.
Since $T_K$ is a presilting complex, the cokernel of $\tilde{h_K}$ is zero.
Therefore the cokernel of $\tilde{h}$ is also zero.
Namely, $T$ is a presilting complex.
By taking a completion \cite[Proposition 2.16]{Aihara}, there is a basic two-term silting complex $U=\bigoplus_{i=1}^n U_i$ of $A$ such that $d([Q]-[P])=[T] \in \sum_{i=1}^n\bZ_{\geq 0}[U_i]$.
Since $\{[U_i] \mid i=1,\dots,n\}$ is a $\bZ$-basis of $\sK_0(\proj A) \simeq \bZ^{\oplus n}$ \cite[Theorem 2.27]{AI}, $[Q]-[P]\in\sum_{i=1}^n\bZ_{\geq 0}[U_i]$ holds.
Therefore there exists a morphism $g : P \to Q$ such that $(g : P\to Q)$ is a presilting complex of $A$.
\end{proof}
%%%%%%%%%%%%%%%
For a finite dimensional algebra $A$, we denote by $\twopsilt_{\oplus} A$ the set of isomorphic classes of all (not necessarily basic) presilting complexes of $A$.
%%%%%%%%%%%%%%%
\begin{thm}\label{thm-twosilt-k-K}
Let $K/k$ be an extension of fields and $A$ a finite dimensional $k$-algebra.
Assume that $K\otimes_k(-) : \proj A \to \proj A_K$ preserves indecomposability.
Then $K\otimes_k(-)$ induces the following bijections and an isomorphism.
\begin{enumerate}[{\rm (a)}]
	\item A bijection from $\twopsilt_{\oplus} A$ to $\twopsilt_{\oplus} A_K$, and a bijection from $\twopsilt A$ to $\twopsilt A_K$.
	\item An isomorphism of posets from $\twosilt A$ to $\twosilt A_K$.
\end{enumerate}
The bijections above preserve the number of non-isomorphism indecomposable direct summands.
\end{thm}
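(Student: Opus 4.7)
The plan is to establish (a) in four conceptual steps; (b) then follows with little extra work.

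First, I would observe that $K\otimes_k-$ induces a bijection between the indecomposable objects of $\proj A$ and of $\proj A_K$. Writing $A=\bigoplus_i P_i^{e_i}$ and applying $K\otimes_k-$ gives $A_K=\bigoplus_i (P_i)_K^{e_i}$; by hypothesis each $(P_i)_K$ is indecomposable, and by Lemma~\ref{lem-K-k-inj}(b) they are pairwise non-isomorphic, so Krull--Schmidt identifies the $(P_i)_K$ with all indecomposable projectives of $A_K$. In particular $A$ and $A_K$ have the same number $n$ of indecomposable projectives.

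Second, I would show that base change preserves indecomposability within $\twopsilt$. For indecomposable $T\in\twopsilt A$, extend $T$ by Bongartz completion to a basic two-term silting complex $U=T\oplus T_2\oplus\cdots\oplus T_n\in\twosilt A$; then $U_K\in\twosilt A_K$ and its basic form carries exactly $n$ indecomposable summands. Since Lemma~\ref{lem-K-k-inj}(b) makes the $(T_i)_K$ pairwise non-isomorphic, counting forces each $(T_i)_K$ (in particular $T_K$) to be indecomposable. For injectivity of $(-)_K$ on $\twopsilt_{\oplus}A$, Krull--Schmidt together with the previous observation and Lemma~\ref{lem-K-k-inj}(b) reduces the problem to indecomposables $X,Y\in\twopsilt A$ with $X_K\simeq Y_K$. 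Their g-vectors then agree in $K_0(\proj A_K)$, hence in $K_0(\proj A)$ by Lemma~\ref{lem-K-k-inj}(a); since the g-vector is injective on indecomposable two-term presilting complexes (each such complex is a summand of a basic two-term silting complex, whose indecomposable summands carry distinct g-vectors forming a $\bZ$-basis of $K_0(\proj A)$), we get $X\simeq Y$.

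The main obstacle is surjectivity. Given indecomposable $S\in\twopsilt A_K$, I would write $S\simeq(f:P_K\to Q_K)$ with $P,Q\in\proj A$ via the first step, and invoke Theorem~\ref{thm-K-k-presilt} to obtain a presilting $T=(g:P\to Q)\in\sK^{\rm b}(\proj A)$, necessarily with $g(T)=g(S)$ in $K_0(\proj A)$. The challenge is that $T$ need not itself be indecomposable, and distinct decomposable presilting complexes of $A$ can share the g-vector of $S$ without any summand matching $S$ after base change. To close this gap I would inspect the Bongartz-completion step in the proof of Theorem~\ref{thm-K-k-presilt}, which realises $g(S)$ as a non-negative integer combination of g-vectors of indecomposable summands of a basic silting of $A$; combining this with the fact (from the indecomposability and injectivity steps) that each such summand base-changes to an indecomposable object of $\twopsilt A_K$ with injective g-vector, the primitivity of $g(S)$ as the g-vector of the indecomposable $S$ forces exactly one summand to carry the g-vector $g(S)$, and this summand is the required indecomposable lift. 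Additive extension by Krull--Schmidt then gives surjectivity on $\twopsilt_{\oplus}A$, and the bijection restricts to basic presilting complexes and to silting complexes, since the $\thick$ condition is preserved by $K\otimes_k-$ and the basic form has the correct count of indecomposable summands.

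Finally, for (b) the inequality $T\leq T'$ on $\twosilt A$ is characterised by the vanishing of $\Hom_{\sD(A)}(T',T[i])$ for $i>0$, and faithful flatness of $K\otimes_k-$ preserves and reflects this vanishing; combined with (a) restricted to silting, this upgrades the bijection to an isomorphism of posets. Preservation of the number of indecomposable summands follows immediately from Step~2 and Lemma~\ref{lem-K-k-inj}(b).
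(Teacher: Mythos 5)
Your overall strategy runs on the same ingredients as the paper (the $\sK_0$-bijection from the indecomposability hypothesis, Lemma~\ref{lem-K-k-inj}, Theorem~\ref{thm-K-k-presilt}, and injectivity of the $g$-vector), but there are several real gaps in the execution. The central one is your parenthetical justification that the $g$-vector is injective on indecomposable two-term presilting complexes: observing that the indecomposable summands of a \emph{single} basic two-term silting complex have distinct $g$-vectors forming a $\bZ$-basis only gives distinctness within that silting complex, not across different ones. Injectivity of the $g$-vector on $\twopsilt_\oplus A$ is a genuinely nontrivial theorem and must be cited as such (the paper invokes \cite[Theorem 6.5]{DIJ}); it cannot be deduced from the basis observation alone. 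This gap propagates into your surjectivity step, since once you have $T\in\twopsilt A$ with $g(T)=g(S)$ the only way to conclude $T_K\simeq S$ is by $g$-vector injectivity over $A_K$. Once that is in place, the ``challenge'' you raise evaporates: $T_K\simeq S$ forces $T$ to be indecomposable whenever $S$ is, because $K\otimes_k(-)$ takes nonzero complexes to nonzero complexes. The ``primitivity'' argument you propose to resolve the phantom challenge is in fact false: if $g(S)=\sum n_i\,g(U_i)$ with $n_i\ge 0$ and $\{g(U_i)\}$ a $\bZ$-basis, primitivity of $g(S)$ means $\gcd(n_i)=1$, which does not force a single $n_i$ to be nonzero. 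So that paragraph should be deleted rather than repaired.

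Your Bongartz-completion argument for preservation of indecomposability also has a gap that the paper avoids by establishing (a) first and deducing indecomposability afterwards. Counting the non-isomorphic indecomposable summands of $U_K=\bigoplus_i (T_i)_K$ and using Lemma~\ref{lem-K-k-inj}(b) only shows that each $\add (T_i)_K$ contains a unique indecomposable $W_i$; it does not exclude $(T_i)_K\simeq W_i^{\oplus m_i}$ with $m_i\ge 2$. To finish you would need to observe that $g(T_i)=m_i\,g(W_i)$ and that $g(T_i)$, being part of a $\bZ$-basis of $\sK_0(\proj A)\simeq\sK_0(\proj A_K)$, is primitive, hence $m_i=1$. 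The paper's route is cleaner: prove the bijection on $\twopsilt_\oplus$ first, and then indecomposability of $X_K$ for indecomposable $X$ follows formally, because any nontrivial decomposition $X_K\simeq Y_K\oplus Z_K$ would lift to $X\simeq Y\oplus Z$ by the bijection.
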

%%%%%
\begin{proof}
For an indecomposable decomposition $A=\bigoplus_{i=1}^{\ell} P_i$ of $A$ as a left module, $A_K=\bigoplus_{i=1}^{\ell} (P_i)_K$ is an indecomposable decomposition of $A_K$ by assumption.
Thus $K\otimes_k(-)$ induces a surjective map $\sK_0(\proj A) \to \sK_0(\proj A_K)$.
By Lemma \ref{lem-K-k-inj}, this is injective.

(a)
Since there is a bijection from $\tau$-rigid pairs of $A$-modules to two-term presilting complexes of $A$ \cite[Section 3]{Adachi-Iyama-Reiten}, the map $g_A:\twopsilt_{\oplus} A\to\sK_0(\proj A)$ (respectively $g_{A_K}:\twopsilt_{\oplus} A_K\to\sK_0(\proj A_K)$) taking the $g$-vectors is an injection by \cite[Theorem 6.5]{DIJ}.
We have the following commutative diagram
\begin{align*}
\begin{tikzpicture}[baseline=30]
\node(twosiltL)at(0,1.5){$\twopsilt_{\oplus} A$};
\node(twosiltLI)at(4,1.5){$\sK_0(\proj A)$};
\node(sttiltL)at(0,0){$\twopsilt_{\oplus} A_K$};
\node(sttiltLI)at(4,0){$\sK_0(\proj A_K)$};
\draw[thick, ->] (sttiltL)--(sttiltLI) node[midway, above]{$g_{A_K}$};
%\draw[thick, ->] (ftorsL)--(ftorsLI) node[midway, above]{$(-)\cap\mod(\L/\mfm\L)$};
\draw[thick, ->] (twosiltL)--(twosiltLI) node[midway, above]{$g_A$};
%\draw[thick, ->] (sttiltL)--(ftorsL) node[midway, left]{$\Fac(-)$};
%\draw[thick, ->] (sttiltLI)--(ftorsLI) node[midway, left]{$\Fac(-)$};
\draw[thick, ->] (twosiltL)--(sttiltL) node[midway, left]{$K\otimes_k(-)$};
\draw[thick, ->] (twosiltLI)--(sttiltLI) node[midway, right]{$K\otimes_k(-)$};
\end{tikzpicture}
\end{align*}
where the right vertical map is a bijection.
Therefore, the left vertical map is injective.
By Theorem \ref{thm-K-k-presilt}, the left vertical map is surjective.
Thus $K\otimes_k(-)$ induces a bijection from $\twopsilt_{\oplus} A$ to $\twopsilt_{\oplus} A_K$.
This bijection induces a bijection from $\twopsilt A$ to $\twopsilt A_K$.

Let $X, Y\in\twopsilt_{\oplus} A$.
By Lemma \ref{lem-K-k-inj}(b), if $X$ and $Y$ are not isomorphic, then $X_K$ and $Y_K$ are not isomorphic.
We show that $X$ is indecomposable if and only if so is $X_K$.
If $X_K$ is decomposable, then by (a), there exist non-zero $Y,Z\in\twopsilt_{\oplus}A$ such that $X_K \simeq Y_K\otimes Z_K$.
Again by (a) we have $X \simeq Y\oplus Z$, that is, $X$ decomposable.
Conversely, if $X_K$ is indecomposable, then clearly $X$ is indecomposable.
Therefore the bijections preserve the number of non-isomorphism indecomposable direct summands.

(b)
It is well-known that $X\in\twopsilt_{\oplus} A$ is silting if and only if the number of isomorphism classes of indecomposable direct summands of $X$ is equal to the rank of $\sK_0(\proj A)$.
Since the bijections in (a) preserve the number, $K\otimes_k(-)$ induces a bijection from $\twosilt A$ to $\twosilt A_K$.
We show that this bijection is an isomorphism of posets.
Let $X, Y\in\twosilt A$.
Since $X, Y$ are two-term complexes and by Proposition \ref{prop-base-silt}, $\Hom_{\sD(A)}(X, Y[1])=0$ if and only if $\Hom_{\sD(A_K)}(X_K, Y_K[1])=0$.
Namely the bijection is an isomorphism of posets.
\end{proof}
%%%%%%%%%%%%%%%
The following gives a sufficient condition for $A$ to satisfy an assumption of Theorem \ref{thm-twosilt-k-K}.
%%%%%%%%%%%%%%%
\begin{lem}\label{lem-k-K-proj}
Let $K/k$ be an extension of fields and $A$ a finite dimensional $k$-algebra.
For an indecomposable projective $A$-module $P$ with the top $S$, assume that $S_K$ is a simple $A_K$-module.
Then $P_K$ is an indecomposable $A_K$-module.
\end{lem}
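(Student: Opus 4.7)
The strategy is to pass from the indecomposability of $P_K$ to the fact that $\End_{A_K}(P_K)$ is local, and then identify this endomorphism ring via base change.

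Since $P$ is finitely generated projective over $A$, the natural map $K\otimes_k\End_A(P)\to \End_{A_K}(P_K)$ is an isomorphism; likewise $K\otimes_k\End_A(S)\cong\End_{A_K}(S_K)$ because $S$ is finite dimensional. Write $R:=\End_A(P)$ and $D:=\End_A(S)$. Since $P$ is indecomposable, $R$ is a finite dimensional local $k$-algebra, and since $P$ has top $S$ we have $R/\rad R\cong D$ as $k$-algebras (the division ring at $S$). By hypothesis $S_K$ is simple, so $K\otimes_k D\cong\End_{A_K}(S_K)$ is a division ring.

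Set $E:=\End_{A_K}(P_K)\cong K\otimes_k R$ and $J:=\rad R$. Tensoring the exact sequence $0\to J\to R\to D\to 0$ with $K$ gives an exact sequence of two-sided $E$-ideals
\[
0\longrightarrow K\otimes_k J\longrightarrow E\longrightarrow K\otimes_k D\longrightarrow 0.
\]
Because $R$ is a finite dimensional $k$-algebra, $J$ is nilpotent, so $K\otimes_k J$ is a nilpotent two-sided ideal of $E$; hence $K\otimes_k J\subseteq\rad E$. The quotient $E/\rad E$ is therefore a quotient of $E/(K\otimes_k J)\cong K\otimes_k D$. Since $K\otimes_k D$ is a division ring it has no non-trivial two-sided ideals, and $E/\rad E\neq 0$, so $E/\rad E\cong K\otimes_k D$ is a division ring. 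Thus $E$ is local, which means $P_K$ is indecomposable.

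The only step that needs any care is checking that $K\otimes_k J$ lies in $\rad E$; this reduces to finite dimensionality of $R$ over $k$, which ensures nilpotence of $J$. Everything else is a standard base-change calculation for endomorphism rings of finitely generated projectives.
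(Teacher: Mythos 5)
Your proof is correct, but it takes a genuinely different route from the paper's. The paper argues at the level of modules: by the Curtis--Reiner theorem $(\rad A)_K=\rad A_K$, tensoring the sequence $0\to\rad P\to P\to S\to 0$ with $K$ shows that $P_K$ is a projective module with simple top $S_K$, hence indecomposable. You instead argue at the level of endomorphism rings: $\End_{A_K}(P_K)\cong K\otimes_k\End_A(P)$, you quotient by the nilpotent ideal $K\otimes_k\rad\End_A(P)$, and identify the result with $\End_{A_K}(S_K)$, which is a division ring by Schur's lemma since $S_K$ is simple; locality of $\End_{A_K}(P_K)$ follows. Each approach has a payoff. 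The paper's is shorter once one accepts the compatibility of radicals with scalar extension, a fact it simply cites. Your argument avoids that citation entirely and only uses elementary base change of $\Hom$ together with nilpotence of $\rad\End_A(P)$; it is also more in the spirit of how the rest of the paper detects indecomposability in Krull--Schmidt settings (an object is indecomposable iff its endomorphism ring is local). One small point worth making explicit: the identification $\End_A(P)/\rad\End_A(P)\cong\End_A(S)$ is the standard fact that for an indecomposable projective over an Artinian ring the top of the endomorphism ring matches the endomorphism ring of the top; it is used without comment but is precisely the bridge your computation needs.
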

%%%%%
\begin{proof}
Clearly, $P_K$ is a projective $A_K$-module.
By \cite[(7.9) Theorem]{Curtis-Reiner} $(\rad A)_K=\rad A_K$ holds.
Then by applying the functor $K\otimes_k(-)$ to the short exact sequence $0 \to \rad P \to P \to P/\rad P \to 0$, we have that $P_K$ has a simple top.
Thus $P_K$ is an indecomposable projective $A_K$-module.
\end{proof}
%%%%%%%%%%%%%%%
%%%%%%%%%%%%%%%
%%%%%%%%%%%%%%%
%%%%%%%%%%%%%%%
\section{Classification of subcategories}
\subsection{Preliminaries}\label{subsection-preliminaries-subcategory}
We start with recalling the definition of torsion and torsionfree classes of abelian categories.
%%%%%%%%%%%%%%%
\begin{dfn}\label{dfn-torsion-serre}
Let $\cA$ be an abelian category and $\cC$ a subcategory of $\cA$.
\begin{enumerate}[{\rm (1)}]
\item
A pair $(\cT, \cF)$ of subcategories of $\cA$ is called a \emph{torsion pair} of $\cA$ if $\Hom_{\cA}(T, F)=0$ for any $T\in\cT$ and $F\in\cF$, and there exists a short exact sequence $0\to T \to X \to F \to 0$ with $T\in\cT$ and $F\in\cF$ for any $X\in\cA$.
\item
We say that $\cC$ is a \emph{torsion class} (respectively, \emph{torsionfree class}) of $\cA$ if $\cC$ is closed under factor objects (respectively, subobjects) and extensions.
\item
We say that $\cC$ is a \emph{Serre subcategory} of $\cA$ if it is closed under factor objects, subobjects and extensions in $\cA$.
\end{enumerate}
We denote by $\tors\cA$ (respectively, $\torf\cA$, $\serre\cA$) the set of all torsion classes (respectively, torsionfree classes, Serre subcategories) of $\cA$.
\end{dfn}
%%%%%%%%%%%%%%%
The sets $\tors\cA$, $\torf\cA$ and $\serre\cA$ are posets by inclusion, that is, $\cT\leq \cU$ if $\cT \subseteq \cU$.
For a Noetherian algebra $(R, \L)$, let $\tors\L:=\tors(\mod\L)$, $\torf\L:=\torf(\mod\L)$ and $\serre\L:=\serre(\mod\L)$ for simplicity.

For a subcategory $\cC$ of $\mod \L$, we denote by $\Filt_\L\cC$ the subcategory of $\mod\L$ consisting modules $X$ such that there exists a finite filtration $0\subseteq X_1 \subseteq\dots\subseteq X_{\ell}=X$ and $X_i/X_{i-1}\in\cC$ for each $i$.
We often write $\Filt \cC = \Filt_{\L}\cC$ if the algebra $\L$ is clear.

Then $\tors\cA$ is a complete lattice. In fact, for a family $\{\cT^i\}_{i\in I}$ of elements of $\tors\cA$, the meet $\bigwedge_{i\in I}\cT^i$ is given by the intersection $\bigcap_{i\in I}\cT^i$, and the join $\bigvee_{i\in I}\cT^i$ is given by the intersection of all $\cT\in\tors\L$ which contain all $\cT^i$. Alternatively we have $\bigvee_{i\in I}\cT^i=\Filt(\cT^i\mid i\in I)$
%It can be described as the subcategory of $\mod\L$ consisting of objects having a finite filtration such that each subfactor module belongs to some $\cT^i$
(e.g.\ \cite[Proposition 3.3]{DIJ}).
%%%%%%%%%%%%%%%
\begin{lem}\label{lem-exact-lift}
Let $F : \cA \to \cB$ be a functor of abelian categories.
For a subcategory $\cC\subseteq \cB$, let $F^{-1}\cC=\{X \in \cA \mid FX\in\cC\}$.
\begin{enumerate}[{\rm (a)}]
\item If $F$ is right exact and $\cC\in\tors\cB$, then $F^{-1}\cC\in\tors\cA$ holds.
\item If $F$ is left exact and $\cC\in\torf\cB$, then $F^{-1}\cC\in\torf\cA$ holds.
\item If $F$ is exact and $\cC\in\serre\cB$, then $F^{-1}\cC\in\serre\cA$ holds.
\end{enumerate}
\end{lem}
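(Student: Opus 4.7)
The plan is to verify in each case the defining closure conditions of the subcategory type, transporting them from $\cA$ to $\cB$ through the exactness hypothesis on $F$.

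For (a), I would check closure under factor objects and under extensions. The factor case is immediate from right exactness: an epimorphism $X \twoheadrightarrow Y$ in $\cA$ becomes an epimorphism $FX \twoheadrightarrow FY$ in $\cB$, so $FY$ is a factor of $FX \in \cC$ and hence lies in $\cC$. For extensions, given a short exact sequence $0 \to X \to Y \to Z \to 0$ with $X, Z \in F^{-1}\cC$, right exactness of $F$ only guarantees an exact sequence $FX \to FY \to FZ \to 0$, not a short exact one. The key device is to pass to the image $K := \im(FX \to FY)$: as a factor of $FX \in \cC$, it lies in $\cC$, and then the genuine short exact sequence $0 \to K \to FY \to FZ \to 0$, together with the extension-closure of $\cC$, forces $FY \in \cC$.

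For (b), I would run the formally dual argument, with monomorphisms replacing epimorphisms and subobjects replacing factor objects. Closure under subobjects follows because a left exact $F$ preserves monomorphisms. For the extension case, applying $F$ to $0 \to X \to Y \to Z \to 0$ yields $0 \to FX \to FY \to FZ$; here the auxiliary object to extract is $C := \im(FY \to FZ)$, a subobject of $FZ \in \cC$, so $C \in \cC$, and then $0 \to FX \to FY \to C \to 0$ exhibits $FY$ as an extension of two objects of $\cC$. For (c), I would simply combine (a) and (b): an exact functor is simultaneously left and right exact, and a Serre subcategory is simultaneously a torsion class and a torsionfree class, so $F^{-1}\cC$ inherits both sets of closures.

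There is no real obstacle. The only mildly nontrivial point is the passage to the image in (a) and its dual in (b), which extracts a short exact sequence in $\cB$ from the weaker exactness produced by $F$; everything else is a direct unwinding of Definition \ref{dfn-torsion-serre}.
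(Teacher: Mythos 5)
Your proof is correct. The paper does not give its own argument for this lemma but simply cites Proposition 5.1 of Demonet--Iyama--Reading--Reiten--Thomas, so there is no in-text proof to compare against; what you have written is precisely the standard direct verification that such a citation stands in for. The one point that deserves care — extracting a genuine short exact sequence $0 \to K \to FY \to FZ \to 0$ with $K := \im(FX \to FY)$ from the merely right-exact sequence $FX \to FY \to FZ \to 0$, and observing that $K$ lies in $\cC$ because it is a factor of $FX$ — you handle correctly, and the dual step in (b) is likewise sound. Part (c) does indeed follow by combining (a) and (b), since an exact functor is both left and right exact and a Serre subcategory is simultaneously a torsion class and a torsionfree class.
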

%%%%%
\begin{proof}
See \cite[Proposition 5.1]{DIRRT}.
\end{proof}
%%%%%%%%%%%%%%%
An object $X$ of an abelian category is called a \emph{Noetherian object} (respectively, \emph{Artinian object}) if each ascending (respectively, descending) chain of subobjects of $X$ is stationary.
A \emph{Noetherian} (respectively, \emph{Artinian})  \emph{abelian category} is an abelian category such that every object is a Noetherian (respectively, Artinian) object.
For example, $\mod\L$ is a Noetherian abelian category, and $\Fl\L$ is Noetherian as well as Artinian abelian category for a Noetherian algebra $(R, \L)$.
%%%%%%%%%%%%%%%
\begin{prop}\label{prop-tors-perp}
Let $\cA$ be a Noetherian (respectively, Artinian) abelian category and $\cC$ a subcategory of $\cA$.
Then $\cC$ is a torsion class (respectively, torsionfree class) of $\cA$ if and only if $(\cC$, $\cC^{\perp})$ (respectively, $({}^{\perp}\cC$, $\cC)$) is a torsion pair of $\cA$.
\end{prop}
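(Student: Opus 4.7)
The plan is to prove both implications, with the nontrivial content lying in the ``only if'' direction. The ``if'' direction is immediate from the definition: if $(\cC,\cC^\perp)$ is a torsion pair, then $\cC$ is closed under extensions and factor objects by a standard argument (a quotient $\cC \ni C \twoheadrightarrow C'$ admits a decomposition $C' = T \oplus F$ relative to the torsion pair, and the composite $C \twoheadrightarrow C' \twoheadrightarrow F$ is zero, forcing $F = 0$, so $C' \in \cC$; closure under extensions is similar). The torsionfree case is dual.

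For the ``only if'' direction in the torsion class case, let $\cA$ be Noetherian and $\cC \in \tors\cA$. Given $X \in \cA$, I would consider the set
\[
\cS_X := \{\, T \subseteq X \mid T \in \cC \,\},
\]
which is nonempty since $0 \in \cS_X$. Since $\cA$ is Noetherian (and essentially small, so $\cS_X$ is a set), every ascending chain in $\cS_X$ stabilizes, so $\cS_X$ admits a maximal element $T_0$. Set $F := X/T_0$, giving a short exact sequence $0 \to T_0 \to X \to F \to 0$. It remains to verify $F \in \cC^\perp$. Suppose for contradiction that some $f \colon C \to F$ with $C \in \cC$ is nonzero. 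Let $g \colon X \twoheadrightarrow F$ denote the quotient and form the pullback $Y := g^{-1}(\Im f) \subseteq X$, which fits into a short exact sequence
\[
0 \to T_0 \to Y \to \Im f \to 0.
\]
Since $\Im f$ is a factor object of $C \in \cC$, closure under factors gives $\Im f \in \cC$; then closure under extensions gives $Y \in \cC$. Since $\Im f \neq 0$, we have $T_0 \subsetneq Y \in \cS_X$, contradicting the maximality of $T_0$. Together with $\Hom_\cA(T_0, F) = 0$ (since $T_0 \in \cC$ and $F \in \cC^\perp$), this shows $(\cC, \cC^\perp)$ is a torsion pair.

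The Artinian/torsionfree case is exactly dual: for $\cC \in \torf\cA$ and $X \in \cA$, consider
\[
\cS'_X := \{\, T \subseteq X \mid X/T \in \cC \,\},
\]
which contains $X$. Using the Artinian hypothesis on $X$, $\cS'_X$ admits a minimal element $T_0$. One checks $T_0 \in {}^\perp\cC$ by a dual pushout argument: a nonzero $f \colon T_0 \to C$ with $C \in \cC$ has kernel $K \subsetneq T_0$, and the extension $0 \to T_0/K \to X/K \to X/T_0 \to 0$ has both ends in $\cC$ (using $T_0/K \simeq \Im f \hookrightarrow C$ and closure under subobjects), so $X/K \in \cC$, contradicting the minimality of $T_0$.

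I do not expect any serious obstacle: the argument is a routine application of the maximal/minimal subobject construction. The only subtlety is ensuring $\cS_X$ (resp.\ $\cS'_X$) is genuinely a set and that one really has a maximal (resp.\ minimal) element; this is where the essential smallness of $\cA$ together with the Noetherian (resp.\ Artinian) hypothesis enter, via the ascending (resp.\ descending) chain condition on subobjects.
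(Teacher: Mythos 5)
Your proof is correct and follows essentially the same route as the paper: take a maximal subobject of $X$ lying in $\cC$ (which exists by the Noetherian hypothesis), and use closure under factors and extensions to show the quotient lies in $\cC^{\perp}$, the Artinian/torsionfree case being dual. The only difference is cosmetic — you spell out the dual argument and the set-theoretic/ACC point explicitly, where the paper simply says the Artinian case is similar.
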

%%%%%
\begin{proof}
The ``if" part is clear.
Conversely, assume that $\cC$ is a torsion class of a Noetherian abelian category $\cA$.
For any $X\in\cA$, let $\cS=\{\mbox{subobjects of $X$ contained in $\cC$}\}$, which is non-empty since $0\in\cS$.
Since $X$ is a Noetherian object, $\cS$ has a maximal element $T$ by inclusion.
We claim $X/T\in\cC^{\perp}$. Take a morphism $f:Y\to X/T$ with $Y\in\cC$, and take a subobject $Z$ of $X$ such that $\Im f=Z/T$. Since $\cC$ is a torsion class, we have $\Im f\in\cC$ and  $Z\in\cC$. By maximality of $T$, we have $Z=T$ and $f=0$, as desired. Thus $(\cC, \cC^{\perp})$ is a torsion pair.
By the similar argument, we can show the assertion in the case where $\cA$ is an Artinian abelian category.
\end{proof}
%%%%%%%%%%%%%%%
%%%%%%%%%%%%%%%
In the rest of this section, let $(R, \L)$ be a Noetherian algebra.
%\new{All modules are assumed to be finitely generated.}
For a subcategory $\cC$ of $\mod\L$ and a multiplicative subset $S$ of $R$, we denote by $\cC_S$ a subcategory of $\mod\L_S$ defined as follows:
\[
\cC_S := \{ M_S \in\mod\L_S \mid M \in\cC \}.
\]
We give series of basic facts.
%%%%%%%%%%%%%%%
\begin{lem}\label{lem-torsion-local-lift}
If $\cC$ is a torsion class (respectively, torsionfree class, Serre subcategory) of $\mod\L_S$, then $\cC':=\{X\in\mod\L \mid X_S \in\cC\}$ is a torsion class (respectively, torsionfree class, Serre subcategory) of $\mod\L$ such that $\cC'_S=\cC$.
\end{lem}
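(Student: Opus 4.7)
The plan is to recognize that this lemma is essentially a formal consequence of the exactness of the localization functor together with the lifting property already established in Lemma \ref{lem-local-global}(a). There are two claims to verify: that $\cC'$ inherits the relevant closure properties, and that the equality $\cC'_S = \cC$ holds.

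First I would observe that $(-)_S = R_S \otimes_R (-) : \mod\L \to \mod\L_S$ is an exact functor of abelian categories, as recorded at the start of Section 2 of the paper. In the notation of Lemma \ref{lem-exact-lift}, we have precisely $\cC' = F^{-1}\cC$ for $F = (-)_S$. Since $F$ is both left and right exact, each of the three closure statements (torsion, torsionfree, Serre) is immediate from the corresponding item of Lemma \ref{lem-exact-lift}.

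Next I would address the identity $\cC'_S = \cC$. The inclusion $\cC'_S \subseteq \cC$ follows from the very definition of $\cC'$, since every object of $\cC'_S$ is of the form $X_S$ with $X_S \in \cC$. For the reverse inclusion $\cC \subseteq \cC'_S$, pick any $Y \in \cC$; by Lemma \ref{lem-local-global}(a), there exists $X \in \mod\L$ with $X_S \simeq Y$. Then $X_S \in \cC$ so $X \in \cC'$, and hence $Y \simeq X_S \in \cC'_S$.

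There is essentially no obstacle: both parts are direct applications of results already available in the excerpt. The only small point to double-check is that $\cC'$ is indeed closed under isomorphisms (so that the definition of $F^{-1}\cC$ matches our convention that subcategories are closed under isomorphism), which is automatic because the condition $X_S \in \cC$ depends only on the isomorphism class of $X$ in $\mod\L$ and $\cC$ itself is closed under isomorphism in $\mod\L_S$.
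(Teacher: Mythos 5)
Your proof is correct and follows the same essential route as the paper: the closure properties of $\cC'$ are read off from Lemma \ref{lem-exact-lift} applied to the exact functor $(-)_S$. You go a step further than the paper's one-line proof by explicitly verifying the equality $\cC'_S = \cC$; the inclusion $\cC \subseteq \cC'_S$ does genuinely require the lifting statement of Lemma \ref{lem-local-global}(a) (together with the standing convention that subcategories are closed under isomorphism), so your added detail is a welcome clarification rather than a detour.
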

%%%%%
\begin{proof}
The assertions directly follows from Lemma \ref{lem-exact-lift}.
\end{proof}
%%%%%%%%%%%%%%%
%The following lemmas are easy observations.
%%%%%%%%%%%%%%%
\begin{lem}\label{lem-torsion-local}
If $(\cT, \cF)$ is a torsion pair of $\mod\L$, then $(\cT_S, \cF_S)$ is a torsion pair of $\mod\L_S$.
\end{lem}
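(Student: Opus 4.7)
The plan is to verify the two defining conditions of a torsion pair: the Hom-vanishing between $\cT_S$ and $\cF_S$, and the existence of a torsion short exact sequence for every object of $\mod\L_S$.

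First I would handle the Hom-vanishing. Pick $T \in \cT_S$ and $F \in \cF_S$, so by definition there exist $T' \in \cT$ and $F' \in \cF$ with $T \simeq T'_S$ and $F \simeq F'_S$. The key input is the localization isomorphism
\[
\Hom_{\L_S}(T'_S, F'_S) \simeq \Hom_\L(T', F')_S
\]
from \eqref{iso-localization}. Since $(\cT, \cF)$ is a torsion pair in $\mod\L$, the right-hand $\Hom$ is zero, whence $\Hom_{\L_S}(T, F) = 0$.

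Next I would produce the torsion short exact sequence. Given an arbitrary $X \in \mod\L_S$, Lemma \ref{lem-local-global}(a) supplies a $\L$-module $M \in \mod\L$ with $M_S \simeq X$. Applying the torsion pair $(\cT, \cF)$ in $\mod\L$ to $M$ yields a short exact sequence
\[
0 \to T \to M \to F \to 0
\]
with $T \in \cT$ and $F \in \cF$. Localizing at $S$ is exact, so applying $(-)_S$ gives a short exact sequence
\[
0 \to T_S \to X \to F_S \to 0
\]
in $\mod\L_S$, with $T_S \in \cT_S$ and $F_S \in \cF_S$ by definition. This establishes the second axiom and completes the proof.

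I do not foresee any real obstacle: both steps are immediate once one has the localization--$\Hom$ isomorphism \eqref{iso-localization} and Lemma \ref{lem-local-global}(a). The only thing to be careful about is that objects of $\cT_S$ and $\cF_S$ are defined merely as localizations of objects of $\cT$ and $\cF$ (not as all modules whose lift is of that form), so one must pick a lift when verifying each axiom, which is precisely what the two cited tools provide.
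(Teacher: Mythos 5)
Your proof is correct and follows essentially the same route as the paper's: both use the localization isomorphism \eqref{iso-localization} for the Hom-vanishing and Lemma \ref{lem-local-global}(a) to reduce to modules of the form $M_S$, together with exactness of $(-)_S$ to localize the torsion sequence.
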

%%%%%
\begin{proof}
By (\ref{iso-localization}), $\Hom_{\L_S}(T_S, F_S)=0$ holds for any $T\in\cT$ and $F\in\cF$.
For any $\L$-module $X$, a short exact sequence $0 \to T \to X \to F \to 0$ with $T\in\cT$ and $F\in\cF$ induces a short exact sequence $0 \to T_S \to X_S \to F_S \to 0$.
By Lemma \ref{lem-local-global}(a), $(\cT_S, \cF_S)$ is a torsion pair of $\mod\L_S$.
\end{proof}
%%%%%%%%%%%%%%%

%%%%%%%%%%%%%%%
\begin{lem}\label{lem-subcat-local}
Let $\cC$ be a subcategory of $\mod\L$.
\begin{enumerate}[{\rm (a)}]
\item If $\cC$ is closed under extensions in $\mod\L$, then $\cC_S$ is closed under extensions in $\mod\L_S$.
\item
If $\cC$ is closed under factor modules (respectively, submodules) in $\mod\L$, then $\cC_S$ is closed under factor modules (respectively, submodules) in $\mod\L_S$.
\item
If $\cC$ is a torsion class (respectively, torsionfree class, Serre subcategory) of $\mod\L$, then $\cC_S$ is a torsion class (respectively, torsionfree class, Serre subcategory) of $\mod\L_S$.
\item $(\Filt\cC)_S=\Filt(\cC_S)$ holds.
\end{enumerate}
\end{lem}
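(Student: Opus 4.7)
The plan is to establish (b) by lifting submodules of localizations to the original module, establish (a) by lifting Ext-classes, then combine both for (c) and use induction with (a) for (d).

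For part (b), I would argue as follows. Given $X\in\cC$ and a $\L_S$-submodule $Y\subseteq X_S$, apply Lemma \ref{lem-local-global}(a) (with $M=X$) to produce a $\L$-submodule $N\subseteq X$ such that the isomorphism $X_S\simeq X_S$ identifies $N_S$ with $Y$. If $\cC$ is closed under submodules, then $N\in\cC$, so $Y\simeq N_S\in\cC_S$. For the factor module case, given a surjection $X_S\twoheadrightarrow Q$ with kernel $K\subseteq X_S$, lift $K$ to a $\L$-submodule $N\subseteq X$ with $N_S\simeq K$; then $Q\simeq X_S/N_S\simeq(X/N)_S$, and if $\cC$ is closed under factor modules, $X/N\in\cC$, so $Q\in\cC_S$.

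For part (a), suppose we have a short exact sequence $0\to X_S\to Y\to Z_S\to 0$ in $\mod\L_S$ with $X,Z\in\cC$. It represents some class $\eta\in\Ext^1_{\L_S}(Z_S,X_S)$. By the isomorphism \eqref{iso-localization} for $n=1$, we can write $\eta=(1/s)\otimes\xi$ for some $s\in S$ and $\xi\in\Ext^1_{\L}(Z,X)$; then $s\cdot\eta$ corresponds under \eqref{iso-localization} to $1\otimes\xi$, which by the explicit description in the paper is represented by the localization $0\to X_S\to W_S\to Z_S\to 0$ of a short exact sequence $0\to X\to W\to Z\to 0$ realizing $\xi$ in $\mod\L$. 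Since multiplication by $s$ is an automorphism of $X_S$ over $\L_S$, it induces an isomorphism between the middle terms of the two extensions; hence $Y\simeq W_S$. Since $\cC$ is closed under extensions, $W\in\cC$, giving $Y\in\cC_S$. The main obstacle here is making precise that scaling the Ext-class by the unit $s\in R_S^\times$ yields a different extension but with an isomorphic middle term.

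Part (c) then follows immediately: torsion classes (resp.\ torsionfree classes, Serre subcategories) are precisely subcategories closed under extensions and factor modules (resp.\ extensions and submodules, resp.\ all three), so (a) and (b) together imply the statement.

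For part (d), the inclusion $(\Filt\cC)_S\subseteq\Filt(\cC_S)$ is direct: a filtration $0=X_0\subseteq\cdots\subseteq X_n=X$ with $X_i/X_{i-1}\in\cC$ localizes to a filtration of $X_S$ with subquotients $(X_i/X_{i-1})_S\in\cC_S$. For the reverse inclusion, I would proceed by induction on the length $n$ of a filtration $0=Y_0\subseteq\cdots\subseteq Y_n=Y$ of $Y\in\Filt(\cC_S)$. The case $n=1$ is trivial. For the inductive step, write $Y_{n-1}\simeq M_S$ with $M\in\Filt\cC$ by induction and $Y_n/Y_{n-1}\simeq L_S$ with $L\in\cC$, then apply the argument of (a) to the short exact sequence $0\to M_S\to Y_n\to L_S\to 0$ to realize it as the localization of $0\to M\to W\to L\to 0$ with $W\in\Filt\cC$. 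Hence $Y_n\simeq W_S\in(\Filt\cC)_S$.
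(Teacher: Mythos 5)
Your proof is correct and follows essentially the same route as the paper's. The paper packages the key step for parts (a) and (d) as a general statement about the operation $\ast$ in the derived category (Lemma \ref{lem-derived-ext-local}, whose proof is exactly your argument: write the connecting map as $s^{-1}f$ with $f$ over $\L$, form the extension over $\L$, and observe that the unit $s$ induces an isomorphism of middle terms), whereas you unpack the same idea directly at the level of $\Ext^1$ and short exact sequences; parts (b) and (c) match the paper's reasoning via Lemma \ref{lem-local-global}(a).
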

%%%%%
\begin{proof}
(a)
We have $\cC_S \ast \cC_S = (\cC \ast \cC)_S = \cC_S$ by Lemma \ref{lem-derived-ext-local} and the assumption.
(b) follows from Lemma Lemma \ref{lem-local-global}(a).
(c) follows from (a) and (b).
(d) follows from Lemma \ref{lem-derived-ext-local}.
\end{proof}
%%%%%%%%%%%%%%%
By the canonical surjection $\L_\p\to\L(\p)$, we regard $\mod\L(\p)$ as a full subcategory of $\mod\L_\p$ closed under subfactor modules.
The following proposition gives a connection between torsion classes of $\Fl\L$ and torsion classes of a finite dimensional algebra.
%%%%%%%%%%%%%%%
\begin{prop}\label{prop-torsion-fl}
Let $(R, \L)$ be a Noetherian algebra and $\p\in\Spec R$.
Then the following two horizontal maps are isomorphisms of posets.
Two vertical maps are anti-isomorphisms of posets, and the diagram is commutative:
	\[
	\begin{tikzcd}
	\tors(\Fl\L_\p) \arrow[rrr, "(-)\cap\mod\L(\p)", "\simeq"'] \arrow[d, "(-)^{\perp}"] & & & \tors\L(\p) \arrow[d, "(-)^{\perp}"] \\
	\torf(\Fl\L_\p) \arrow[rrr, "(-)\cap\mod\L(\p)", "\simeq"'] & & & \torf\L(\p)
	\end{tikzcd}
	\]
\end{prop}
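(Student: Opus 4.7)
\emph{Proof proposal.} Write $A:=\Lambda(\mathfrak{p})$ and $\kappa:=\kappa(\mathfrak{p})$. Since $R_\mathfrak{p}$ is Noetherian local, Lemma \ref{lem-finite-length}(b) guarantees that every $X\in\Fl\Lambda_\mathfrak{p}$ is annihilated by some $\mathfrak{p}^n$, so $X$ carries both the descending $\mathfrak{p}$-adic filtration $X\supseteq\mathfrak{p}X\supseteq\cdots\supseteq\mathfrak{p}^nX=0$ and the ascending filtration $0=X[\mathfrak{p}^0]\subseteq X[\mathfrak{p}]\subseteq\cdots\subseteq X[\mathfrak{p}^n]=X$ (with $X[\mathfrak{p}^j]:=\{x\in X\mid\mathfrak{p}^jx=0\}$), whose subquotients all lie in $\mod A$. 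Since $\Fl\Lambda_\mathfrak{p}$ and $\mod A$ are both Noetherian and Artinian abelian categories, Proposition \ref{prop-tors-perp} already supplies the two vertical anti-isomorphisms. My plan is to prove the top horizontal map is an isomorphism of posets by writing down an explicit inverse, to mirror the construction for the bottom row, and to verify commutativity directly.

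For the top row, well-definedness of $\Phi:\mathcal{X}\mapsto\mathcal{X}\cap\mod A$ with values in $\tors A$ is immediate because $\mod A$ is closed under $\Lambda_\mathfrak{p}$-submodules and quotients, while any short exact sequence in $\mod A$ is also one in $\Fl\Lambda_\mathfrak{p}$. I would define the inverse by $\Psi(\mathcal{Y}):=\{X\in\Fl\Lambda_\mathfrak{p}\mid X/\mathfrak{p}X\in\mathcal{Y}\}$: closure under quotients is clear from the right exactness of $(-)/\mathfrak{p}(-)$, and closure under extensions follows by applying the right exact sequence $A'/\mathfrak{p}A'\to B/\mathfrak{p}B\to C'/\mathfrak{p}C'\to 0$ to an extension $0\to A'\to B\to C'\to 0$ (the image of the first map is a quotient of an element of $\mathcal{Y}$, so $B/\mathfrak{p}B\in\mathcal{Y}$ by extension closure). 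The identity $\Phi\Psi=\id$ is trivial, since $X\in\mod A$ forces $X/\mathfrak{p}X=X$. The nontrivial direction $\Psi\Phi=\id$ reduces to the claim that $X/\mathfrak{p}X\in\mathcal{X}$ implies $X\in\mathcal{X}$, and I would prove this by climbing the $\mathfrak{p}$-adic filtration using the surjection of $A$-modules
\[
(\mathfrak{p}^i/\mathfrak{p}^{i+1})\otimes_\kappa(X/\mathfrak{p}X)\twoheadrightarrow \mathfrak{p}^iX/\mathfrak{p}^{i+1}X,\qquad \bar p\otimes\bar x\mapsto\overline{px}.
\]
Since $\mathfrak{p}^i/\mathfrak{p}^{i+1}$ is a finite-dimensional $\kappa$-vector space of some dimension $r_i$, the left side is isomorphic to $(X/\mathfrak{p}X)^{r_i}\in\mathcal{X}$; closure of $\mathcal{X}$ under quotients then yields $\mathfrak{p}^iX/\mathfrak{p}^{i+1}X\in\mathcal{X}$, and closure under extensions along the filtration delivers $X\in\mathcal{X}$. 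Both $\Phi$ and $\Psi$ preserve inclusion, so they are mutually inverse isomorphisms of posets.

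The bottom row is proved by the dual construction $\Psi'(\mathcal{Y}):=\{X\mid X[\mathfrak{p}]\in\mathcal{Y}\}$: closure under submodules is immediate from $Y\hookrightarrow X\Rightarrow Y[\mathfrak{p}]\hookrightarrow X[\mathfrak{p}]$, and closure under extensions follows from the left exact sequence $0\to A'[\mathfrak{p}]\to B[\mathfrak{p}]\to C'[\mathfrak{p}]$ showing $B[\mathfrak{p}]/A'[\mathfrak{p}]\hookrightarrow C'[\mathfrak{p}]$. The dual inverse identity uses the injective $A$-module map
\[
X[\mathfrak{p}^{i+1}]/X[\mathfrak{p}^i]\hookrightarrow\Hom_\kappa(\mathfrak{p}^i/\mathfrak{p}^{i+1},X[\mathfrak{p}])\simeq X[\mathfrak{p}]^{r_i},\qquad \bar x\mapsto(\bar p\mapsto px),
\]
combined with submodule and extension closure along the ascending filtration. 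Finally, commutativity of the square reduces to $(\mathcal{X}\cap\mod A)^\perp=\mathcal{X}^\perp\cap\mod A$ inside $\mod A$, for each $\mathcal{X}\in\tors(\Fl\Lambda_\mathfrak{p})$; the inclusion $\supseteq$ is tautological, and $\subseteq$ follows because any $f:X\to F$ with $X\in\mathcal{X}$ and $F\in\mod A$ factors through $X\twoheadrightarrow X/\mathfrak{p}X\in\mathcal{X}\cap\mod A$, as $\mathfrak{p}F=0$. The main technical obstacle is precisely this transfer across the $\mathfrak{p}$-adic filtration: controlling the subquotients $\mathfrak{p}^iX/\mathfrak{p}^{i+1}X$ (respectively $X[\mathfrak{p}^{i+1}]/X[\mathfrak{p}^i]$) in terms of $X/\mathfrak{p}X$ (respectively $X[\mathfrak{p}]$) via the tensor and $\Hom$ identifications above; everything else is formal closure bookkeeping.
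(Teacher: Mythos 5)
Your proof is correct, and it takes a genuinely different route from the paper's. The paper reduces to the local case and then \emph{cites} \cite[Theorem 5.4]{Kimura} for the upper horizontal isomorphism (with inverse given by the smallest-torsion-class operator $\sT_{\Fl\L}(-)$), obtains the two vertical anti-isomorphisms "clearly'' from Proposition \ref{prop-tors-perp}, checks the square commutes via the short Hom exact sequence $0 \to \Hom_\L(T/\m T, X) \to \Hom_\L(T,X) \to \Hom_\L(\m T, X)$ together with the factorization of any $T\to X$ through $T/\m T$ when $\m X=0$, and then the bottom isomorphism falls out for free. You instead give a self-contained proof of both horizontal maps: for the top row you write down an explicit inverse $\Psi(\cY)=\{X\mid X/\p X\in\cY\}$ and verify $\Psi\Phi=\id$ by climbing the $\p$-adic filtration, the key input being the $A$-linear surjection $(\p^i/\p^{i+1})\otimes_\kappa(X/\p X)\twoheadrightarrow \p^i X/\p^{i+1}X$, which shows each subquotient is a quotient of a finite power of $X/\p X$; for the bottom row you dualize, using the socle filtration $X[\p^j]$ and the injection $X[\p^{i+1}]/X[\p^i]\hookrightarrow\Hom_\kappa(\p^i/\p^{i+1},X[\p])$. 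Your commutativity argument (the factorization through $X/\p X$ because $\p F=0$) is the same idea as the paper's, just packaged as the identity $\cX^{\perp}\cap\mod A = (\cX\cap\mod A)^{\perp_{\mod A}}$. What your route buys: it makes the proposition independent of the external reference to Kimura's theorem, and it makes the inverse maps $\Psi,\Psi'$ concrete (indeed, your argument shows that $\Psi(\cY)$ coincides with the abstract closure $\sT_{\Fl\L_\p}(\cY)$ used in the cited result). What the paper's route buys: it is shorter given that Kimura's result is already in the literature. Both are valid.
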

%%%%%
\begin{proof}
By replacing $(R, \L)$ to $(R_\p, \L_\p)$, we may assume that $R$ is local.
Let $\m$ be the maximal ideal of $R$.
By \cite[Theorem 5.4]{Kimura}, the upper horizontal map is an isomorphism of posets with an inverse map $\cC \mapsto \sT_{\Fl\L}(\cC)$, where $\sT_{\Fl\L}(\cC)$ the smallest torsion class of $\Fl\L$ containing $\cC$.
Clearly, both of vertical maps are anti-isomorphisms of posets.
Let $\cT\in\tors(\Fl\L)$ and $X\in\mod(\L/\m\L)$.
For $T\in\cT$, there exists an exact sequence $0 \to \Hom_\L(T/\m T, X) \to \Hom_\L(T, X) \to \Hom_\L(\m T, X)$.
Since $\m X=0$, we have that $\Hom_\L(T/\m T, X)=0$ if and only if $\Hom_\L(T, X)=0$.
Therefore the diagram is commutative.
\end{proof}

\subsection{Torsionfree classes}
Let $\bT_R(\L)$ and $\bF_R(\L)$ be the Cartesian products of $\tors(\Fl\L_\p)$ and $\torf(\Fl\L_\p)$ respectively, where $\p$ runs all prime ideals of $R$:
\[
\bT_R(\L):=\prod_{\mfp\in\Spec R}\tors(\Fl\L_\p), \qquad
\bF_R(\L):=\prod_{\mfp\in\Spec R}\torf(\Fl\L_\p).
\]
The set $\bT_R(\L)$ and $\bF_R(\L)$ are posets by inclusion, that is, $(\cX^{\mfp})_{\mfp}\leq (\cY^{\mfp})_{\mfp}$ if $\cX^{\mfp}\subseteq\cY^{\mfp}$ for any $\mfp$.
By Lemma \ref{lem-subcat-local}, the following maps are well-defined:
\begin{align*}%\label{map-Phi}
&\Phi_{\rm t}: \tors\L \longrightarrow \bT_R (\L), \qquad \cT \mapsto (\cT_\p \cap \Fl\L_\p)_{\p\in\Spec R}, \\
&\Phi_{\rm f}: \torf\L \longrightarrow \bF_R (\L), \qquad \cF \mapsto (\cF_\p \cap \Fl\L_\p)_{\p\in\Spec R}. \
\end{align*}

It is easy to see that these maps are morphisms of posets.

We first see that $\Phi_{\rm t}$ and $\Phi_{\rm f}$ commute with taking perpendicular categories.
By Proposition \ref{prop-tors-perp}, we have a bijection $(-)^{\perp}=(-)^{\perp_{\Fl\L_\p}}:\tors(\Fl\L_\p)\to\torf(\Fl\L_\p)$, whose inverse map is given by ${}^{\perp}(-)={}^{\perp_{\Fl\L_\p}}(-)$.
Thus we have a bijection between $\bT_R(\L)$ and $\bF_R(\L)$ indued from $(-)^{\perp}$, we also denote it by $(-)^{\perp}$:
\begin{align}\label{map-Phi}
(-)^{\perp} : \bT_R (\L) \longrightarrow \bF_R(\L),
\end{align}
whose inverse map is induced from ${}^{\perp}(-)$.
It is easy to see that this map is an anti-isomorphism of posets.
We have the following commutativity of maps.
%%%%%%%%%%%%%%%
\begin{lem}\label{lem-Phi-Phi'-comm}
We have the following commutative diagram
\[
\begin{tikzcd}
\tors\L \arrow[r, "\Phi_{\rm t}"] \arrow[d, "(-)^{\perp}"] & \bT_R(\L) \arrow[d, "(-)^{\perp}"] \\
\torf\L \arrow[r, "\Phi_{\rm f}"] & \bF_R(\L),
\end{tikzcd}
\]
where the right vertical map is an anti-isomorphism of posets and the left vertical map is an anti-embedding of posets.
\end{lem}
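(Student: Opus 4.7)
\medskip

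\noindent\emph{Proof plan.} We verify the three assertions separately: the right vertical map is an anti-isomorphism, the left vertical map is an anti-embedding, and the square commutes.

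The right vertical map is defined componentwise, so it suffices to check that for each $\p \in \Spec R$, the assignment $(-)^\perp : \tors(\Fl\L_\p) \to \torf(\Fl\L_\p)$ is an anti-isomorphism of posets. Since $\Fl\L_\p$ is both a Noetherian and Artinian abelian category, Proposition \ref{prop-tors-perp} applied in both directions gives torsion pairs $(\cX, \cX^\perp)$ (for $\cX \in \tors(\Fl\L_\p)$) and $({}^\perp\cY, \cY)$ (for $\cY \in \torf(\Fl\L_\p)$), from which one reads off that ${}^\perp(-)$ is inverse to $(-)^\perp$. Taking the product over all $\p$ and observing that $(-)^\perp$ is order-reversing on each factor gives the required anti-isomorphism $\bT_R(\L) \to \bF_R(\L)$.

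For the left vertical map, $\mod\L$ is Noetherian but not Artinian, so Proposition \ref{prop-tors-perp} gives, for each $\cT \in \tors\L$, a torsion pair $(\cT, \cT^\perp)$, hence $\cT = {}^\perp(\cT^\perp)$. This immediately yields that $(-)^\perp$ is injective and reflects the order: if $\cT^\perp \supseteq (\cT')^\perp$ then $\cT = {}^\perp(\cT^\perp) \subseteq {}^\perp((\cT')^\perp) = \cT'$. Order-reversal is trivial. Surjectivity may fail because a torsionfree class need not give rise to a torsion pair, so only an anti-embedding is claimed.

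The main point is the commutativity, which amounts to the identity
\[
(\cT_\p \cap \Fl\L_\p)^{\perp_{\Fl\L_\p}} = (\cT^\perp)_\p \cap \Fl\L_\p
\]
for every $\cT \in \tors\L$ and $\p \in \Spec R$. The inclusion $\supseteq$ is routine: any $Y = F_\p$ with $F \in \cT^\perp$ and $Y$ of finite length satisfies $\Hom_{\L_\p}(T_\p, F_\p) \cong \Hom_\L(T, F)_\p = 0$ for each $T \in \cT$ by the isomorphism \eqref{iso-localization}. The reverse inclusion $\subseteq$ is where the work lies. Given $Y \in (\cT_\p \cap \Fl\L_\p)^\perp$, I plan to lift $Y$ to some $N \in \mod\L$ with $N_\p \simeq Y$ using Lemma \ref{lem-local-global}(a). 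The global torsion pair $(\cT, \cT^\perp)$ furnishes a short exact sequence $0 \to tN \to N \to N/tN \to 0$ with $tN \in \cT$ and $N/tN \in \cT^\perp$; localizing at $\p$ gives $0 \to (tN)_\p \to Y \to (N/tN)_\p \to 0$. Since $Y$ has finite length, so does its submodule $(tN)_\p$, placing it in $\cT_\p \cap \Fl\L_\p$. By hypothesis the inclusion $(tN)_\p \hookrightarrow Y$ must vanish, so $(tN)_\p = 0$ and $Y \simeq (N/tN)_\p \in (\cT^\perp)_\p \cap \Fl\L_\p$, as required.

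The main obstacle is the $\subseteq$ direction of the commutativity: an arbitrary lift $N$ of $Y$ need not lie in $\cT^\perp$, and the key trick is to replace it by its torsionfree quotient $N/tN$ using the global torsion pair in $\mod\L$ and then exploit that $Y$, having finite length, forces the torsion part of the localized sequence to be small enough for the orthogonality hypothesis to kill it.
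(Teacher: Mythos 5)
Your proposal is correct and follows essentially the same route as the paper: the key point in both is the identity $(\cT_\p\cap\Fl\L_\p)^{\perp_{\Fl\L_\p}}=(\cT^{\perp})_\p\cap\Fl\L_\p$, obtained from the torsion pair $(\cT,\cT^{\perp})$ of Proposition \ref{prop-tors-perp} together with the localization isomorphism \eqref{iso-localization}, and the anti-embedding statement via $\cT={}^{\perp}(\cT^{\perp})$. The only difference is organizational: the paper cites Lemma \ref{lem-torsion-local} to get the torsion pair $(\cT_\p,(\cT^{\perp})_\p)$ in $\mod\L_\p$ and then restricts it to $\Fl\L_\p$, whereas you re-derive exactly that mechanism inline (lift $Y$ by Lemma \ref{lem-local-global}(a), localize the canonical sequence, and use the finite-length hypothesis to kill $(tN)_\p$), which is precisely the argument behind Lemma \ref{lem-torsion-local}.
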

%%%%%
\begin{proof}
For $\cT\in\tors\L$ and $\p\in\Spec R$, by Proposition \ref{prop-tors-perp} and Lemma \ref{lem-torsion-local}, $(\cT_\p,\cT_\p^{\perp_{\L_\p}})$ is a torsion pair in $\mod\L_\p$ and $\cT_\p^{\perp_{\L_\p}} = (\cT^{\perp_{\L}})_\p$ holds.
Thus $(\cT_\p \cap \Fl\L_\p, \cT_\p^{\perp_{\L_\p}} \cap \Fl\L_\p)$ is a torsion pair in $\Fl\L_\p$.
So we have
\[
(\cT_\p \cap \Fl\L_\p)^{\perp_{\Fl\L_\p}}=\cT_\p^{\perp_{\L_\p}}\cap\Fl\L_\p=(\cT^{\perp_{\L}})_\p\cap\Fl\L_\p
\]
This implies that the diagram is commutative.
By Proposition \ref{prop-tors-perp}, ${}^{\perp}(-)\circ (-)^{\perp}$ is the identity map on $\tors\L$.
Thus the left vertical map is an anti-embedding of posets.
\end{proof}
%%%%%%%%%%%%%%%
In this subsection we study the set $\torf\L$ by using the map $\Phi_{\rm f} : \torf\L \to \bF_R(\L)$. Our main result is the following.
%%%%%%%%%%%%%%%
\begin{thm}\label{thm-torsion-free-iso}
Let $(R, \L)$ be a Noetherian algebra.
Then $\Phi_{\rm f}$ is an isomorphism of posets
\[
\Phi_{\rm f} : \torf\L \longrightarrow \bF_R(\L).
\]
\end{thm}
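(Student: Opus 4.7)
The plan is to construct an explicit order-preserving inverse
\[\Psi \colon \bF_R(\L) \longrightarrow \torf\L, \qquad (\cF^\p)_\p \longmapsto \left\{X \in \mod\L \;\middle|\; Y \in \cF^\p \text{ for every } \p \in \Spec R \text{ and every } Y \in \Fl\L_\p \text{ with } Y \subseteq X_\p\right\}.\]
Exactness of $(-)_\p$ together with sub- and extension-closure of each $\cF^\p$ guarantees $\Psi((\cF^\p)_\p) \in \torf\L$, and $\Psi$ is evidently order-preserving. Since $\Phi_{\rm f}$ is also order-preserving, it suffices to establish $\Phi_{\rm f} \circ \Psi = \id$ and $\Psi \circ \Phi_{\rm f} = \id$.

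For $\Phi_{\rm f} \circ \Psi = \id$, set $\cF := \Psi((\cF^\p)_\p)$ and fix $\p$. The inclusion $\cF_\p \cap \Fl\L_\p \subseteq \cF^\p$ is immediate: any $Z \in \cF_\p \cap \Fl\L_\p$ has $Z = M_\p$ for some $M \in \cF$, and $Z$ is itself a finite length submodule of $M_\p$. Conversely, given $Y \in \cF^\p$, apply Lemma \ref{lem-local-global}(b) with $S = R \setminus \p$ to produce $M \in \mod\L$ with $M_\p \simeq Y$, $\supp_R M \subseteq V(\p)$, and $\ass_R M \subseteq \{\p\}$. A case analysis on $\q \in \Spec R$ confirms $M \in \cF$: if $\q \not\supseteq \p$ then $M_\q = 0$; if $\q = \p$ then $M_\p = Y \in \cF^\p$; and if $\q \supsetneq \p$, any finite length $\L_\q$-submodule $N$ of $M_\q$ satisfies $\ass_R N \subseteq \ass_R M_\q \subseteq \{\p\}$ while also $\ass_R N \subseteq \{\q\}$ (as $N \in \Fl\L_\q$), forcing $\ass_R N = \emptyset$ and $N = 0$.

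For $\Psi \circ \Phi_{\rm f} = \id$, the inclusion $\cF \subseteq \Psi(\Phi_{\rm f}(\cF))$ is routine from submodule- and localization-closure; the reverse inclusion is the main obstacle. Given $X \in \Psi(\Phi_{\rm f}(\cF))$, I would first use centrality of $R$ in $\L$ to produce a primary decomposition with $\L$-submodules as components: for each minimal associated prime $\p \in \ass_R X$, the kernel $Q^{(\p)} := \ker(X \to X_\p)$ is a $\L$-submodule (as localization is $\L$-linear), the quotient $X/Q^{(\p)}$ embeds in $X_\p$ and is therefore $\p$-coprimary by minimality of $\p$, and $\bigcap Q^{(\p)} = 0$. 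This yields an $\L$-linear embedding
\[X \;\hookrightarrow\; \bigoplus_{\p \in \min\ass_R X} X/Q^{(\p)},\]
and closure of $\cF$ under extensions and submodules reduces the claim to showing each $X/Q^{(\p)} \in \cF$. The hypothesis $X \in \Psi(\Phi_{\rm f}(\cF))$ passes to each $X/Q^{(\p)}$: its localization vanishes at primes $\q \not\supseteq \p$, equals $X_\p$ at $\q = \p$, and at $\q \supsetneq \p$ admits no nonzero finite length $\L_\q$-submodule (by the same $\ass$-incompatibility as above).

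This reduces the claim to the $\p$-coprimary case: for $Y \in \Psi(\Phi_{\rm f}(\cF))$ with $\ass_R Y = \{\p\}$, show $Y \in \cF$. One observes that $Y_\p$ lies in $\Fl\L_\p$, since it is supported only at the maximal ideal $\p R_\p$ of $R_\p$; hence $Y_\p \in \cF_\p \cap \Fl\L_\p = \cF^\p$ by hypothesis, and Lemma \ref{lem-local-global} produces $N \in \cF$ inside $Y_\p$ with $N_\p = Y_\p$, $\supp_R N \subseteq V(\p)$, and $\ass_R N \subseteq \{\p\}$. Because $\ass_R Y = \{\p\}$ forces $R \setminus \p$ to act injectively on $Y$, the natural map $Y \hookrightarrow Y_\p$ is an embedding, and the finitely generated $\L$-submodule $Y + N \subseteq Y_\p$ sits in a short exact sequence $0 \to N \to Y + N \to (Y+N)/N \to 0$ whose cokernel has zero $\p$-localization and hence strictly smaller $R$-support than $Y$. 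A Noetherian induction on $\dim_R Y$ then completes the argument. The most delicate step, and the chief technical obstacle of the whole proof, is verifying that $(Y+N)/N$ still lies in $\Psi(\Phi_{\rm f}(\cF))$ so that the inductive hypothesis applies; this inheritance check, which requires controlling how finite length submodules behave under the passage to the cokernel, is where the main work of the proof is concentrated.
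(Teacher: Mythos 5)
Your proof of $\Phi_{\rm f}\circ\Psi={\rm id}$ is correct and essentially the same in content as the paper's; the definition of $\Psi$ is a legitimate description "from above" of what the paper builds "from below" via $\Filt$. The problems are all in the direction $\Psi\circ\Phi_{\rm f}={\rm id}$, where there are two genuine gaps.

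First, the claimed $\L$-linear embedding $X\hookrightarrow\bigoplus_{\p\in\min\ass_R X}X/Q^{(\p)}$ is false: $\bigcap_{\p\in\min\ass_R X}Q^{(\p)}$ need not be zero. Take $X=R/\p_1\oplus R/\p_2$ with $\p_1\subsetneq\p_2$; the unique minimal associated prime is $\p_1$, and $Q^{(\p_1)}=\ker(X\to X_{\p_1})=0\oplus R/\p_2\neq0$. Using kernels of localization maps at minimal primes produces coprimary quotients but loses injectivity precisely when $X$ has embedded primes. To kill the intersection you must range over \emph{all} of $\ass_R X$, but then the quotients $X/Q^{(\p)}$ for non-minimal $\p$ are no longer $\p$-coprimary, so the reduction to the coprimary case breaks either way. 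The paper avoids this tension in Proposition \ref{prop-torsion-free-ass} by working with \emph{maximal} elements of $\ass_R X$ and producing a \emph{filtration} (not a direct-sum decomposition) with coprimary subfactors, which is all that closure under submodules and extensions requires.

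Second, the "most delicate step" you flag in the coprimary case is not merely unfinished but structurally problematic. You need $(Y+N)/N\in\Psi(\Phi_{\rm f}(\cF))$, but this is a quotient of $Y+N$, and $\Psi(\Phi_{\rm f}(\cF))$ is a torsionfree class, hence closed under submodules and extensions but not under quotients; there is no automatic inheritance. More importantly, the whole detour through $Y+N$ and a Noetherian induction is unnecessary. Once you know $Y_\p\in\cF_\p\cap\Fl\L_\p$, i.e.\ $Y_\p\simeq M_\p$ for some $M\in\cF$, the hypothesis $\ass_R Y\subseteq\{\p\}$ together with Lemma \ref{lem-local-injective} gives an injective $\L$-module map $Y\to M$ directly, so $Y\in\cF$ by submodule-closure. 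This is exactly the role of Lemma \ref{lem-local-injective} in the paper; invoking it dissolves the obstacle you identify and renders the auxiliary module $N$ and the induction superfluous.
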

%%%%%%%%%%%%%%%
To prove this, we construct an inverse map $\Psi_{\rm f}$ of $\Phi_{\rm f}$.
%%%%%%%%%%%%%%%
\begin{dfn}
Let $\p$ be a prime ideal of $R$.
For $\cY\in\torf(\Fl\L_\p)$, let
\begin{align*}
\psi_{\rm f}(\cY):=\{X\in\mod\L \mid \ass_R X \subseteq \{\p\},\, X_\p \in \cY\}.
\end{align*}
It is easy to see that $\psi_{\rm f}(\cY)$ is a torsionfree class of $\mod\L$.
So we define a map $\Psi_{\rm f}$ as follows:
\begin{align*}
\Psi_{\rm f} : \bF_R(\L) \longrightarrow \torf\L, \qquad (\cY^\p)_{\p\in\Spec R} \mapsto \Filt\left(\psi_{\rm f}(\cY^\p) \ \middle|\ \p \in \Spec R\right).
\end{align*}
\end{dfn}
%%%%%%%%%%%%%%%
We need the following two lemmas.
%%%%%%%%%%%%%%%
\begin{lem}\label{lem-check-localization}
Let $\p$ be a prime ideal of $R$.
For $\cY\in\torf(\Fl\L_\p)$, the following statements hold.
\begin{enumerate}[{\rm (a)}]
	\item For a prime ideal $\q \neq \p$, $\psi_{\rm f}(\cY)_\q \cap \Fl\L_\q =0$ holds.
	\item $\psi_{\rm f}(\cY)_\p = \cY$ holds.
\end{enumerate}
\end{lem}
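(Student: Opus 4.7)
The plan is to exploit the close connection between associated primes, support, and localization recorded in Proposition \ref{prop-comm-basic}, together with the lifting results in Lemma \ref{lem-local-global}. The definition $\psi_{\rm f}(\cY)=\{X\in\mod\L \mid \ass_R X\subseteq\{\p\},\ X_\p\in\cY\}$ already has exactly the two features these tools address, so the proof should be short.

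For part (a), I would fix $X\in\psi_{\rm f}(\cY)$ with $X_\q\in\Fl\L_\q$ and show $X_\q=0$. By Proposition \ref{prop-comm-basic}(f), $\ass_{R_\q}X_\q=\ass_RX\cap\Spec R_\q\subseteq\{\p\}\cap\Spec R_\q$. If $\p\not\subseteq\q$, this intersection is empty and Proposition \ref{prop-comm-basic}(c) gives $X_\q=0$ immediately. If $\p\subsetneq\q$, then Lemma \ref{lem-finite-length}(a) tells me $X_\q$ has finite length as an $R_\q$-module, so $\supp_{R_\q}X_\q\subseteq\MSpec R_\q=\{\q R_\q\}$ by Proposition \ref{prop-comm-basic}(d). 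But the minimal elements of $\supp_{R_\q}X_\q$ coincide with those of $\ass_{R_\q}X_\q\subseteq\{\p\}$ (Proposition \ref{prop-comm-basic}(b)), which forces either $X_\q=0$ or $\p=\q$; the latter contradicts the hypothesis $\q\neq\p$.

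For part (b), the inclusion $\psi_{\rm f}(\cY)_\p\subseteq\cY$ is built into the definition, since every $X\in\psi_{\rm f}(\cY)$ satisfies $X_\p\in\cY$ by fiat. For the reverse inclusion, I would take an arbitrary $Y\in\cY\subseteq\Fl\L_\p$ and apply Lemma \ref{lem-local-global}(a) to lift $Y$ to some $X\in\mod\L$ with $X_\p\simeq Y$; Lemma \ref{lem-local-global}(b), applicable because $Y$ has finite length as a $\L_\p$-module, then ensures $\ass_R X\subseteq\{\p\}$. Together with $X_\p\simeq Y\in\cY$ this says $X\in\psi_{\rm f}(\cY)$, so $Y\in\psi_{\rm f}(\cY)_\p$.

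There is no real obstacle here: once Proposition \ref{prop-comm-basic} and Lemma \ref{lem-local-global} are in place, both statements are a direct application. The only point worth being careful about in part (a) is distinguishing the cases $\p\not\subseteq\q$ and $\p\subsetneq\q$ so that the finite-length hypothesis on $X_\q$ is used exactly where needed, via the coincidence of minimal associated primes and minimal points of the support.
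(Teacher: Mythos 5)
Your proof is correct and uses essentially the same tools and strategy as the paper's. Both proofs deduce $X_\q=0$ from the containment $\ass_R X\subseteq\{\p\}$, using the coincidence of minimal associated primes and minimal points of support together with the finite-length hypothesis, and both obtain part (b) by lifting a finite-length $\L_\p$-module via Lemma \ref{lem-local-global}; your write-up merely makes the case split explicit where the paper argues in one pass by observing that $\q$ is minimal in $\supp_R X\subseteq V(\p)$.
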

%%%%%
\begin{proof}
(a) For $\q\in\Spec R$ and $X\in\psi_{\rm f}(\cY)$, assume $0\neq X_\q\in\Fl\L_\q$.
Since $\ass_R X \subseteq \{\p\}$, we have $\supp_R X \subseteq V(\p)$ and hence $\p \subseteq \q$.
Since $X_\q\in\Fl\L_\q$, $\q$ is minimal in $\supp_R X$.
Thus we have $\p=\q$.

(b) Clearly, we have $\psi_{\rm f}(\cY)_\p \subseteq \cY$.
The converse inclusion directly follows from Lemma \ref{lem-local-global}.
\end{proof}
%%%%%%%%%%%%%%%
\begin{lem}\label{lem-local-injective}
For $X, Y\in\mod\L$ and a prime ideal $\p$ of $R$, if there is an injective morphism $X_\p \to Y_\p$ and $\ass_R X \subseteq \{\p\}$, then there is an injective morphism $X \to Y$.
\end{lem}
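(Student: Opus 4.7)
The plan is to lift $f_\p$ to a genuine morphism $g:X\to Y$ whose localization remains injective, and then use the hypothesis on associated primes to show that the kernel of $g$ must vanish.

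First, I will use the isomorphism $\Hom_{\L}(X,Y)_\p \simeq \Hom_{\L_\p}(X_\p,Y_\p)$ from \eqref{iso-localization}. Writing the given injection $f_\p:X_\p\to Y_\p$ as an element of the right-hand side and clearing denominators on the left, I obtain some $g\in\Hom_{\L}(X,Y)$ and some $s\in R\setminus\p$ such that $g_\p = s\cdot f_\p$ as morphisms $X_\p\to Y_\p$. Because $s\notin\p$, multiplication by $s$ is invertible on the $R_\p$-module $Y_\p$, hence $g_\p=(s\cdot)\circ f_\p$ is still injective.

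Next, set $K:=\Ker(g)\subseteq X$, which lies in $\mod\L$ by Noetherianity. Localizing the exact sequence $0\to K\to X\to Y$ at $\p$ yields $K_\p=\Ker(g_\p)=0$. Since $K$ is an $R$-submodule of $X$, we have $\ass_R K\subseteq\ass_R X\subseteq\{\p\}$. If $K\neq 0$, then $\ass_R K\neq\emptyset$ (Proposition \ref{prop-comm-basic}(c)), so necessarily $\p\in\ass_R K$; but then there exists $x\in K$ with $\ann_R(x)=\p$, so $R/\p\hookrightarrow K$ and $(R/\p)_\p\neq 0$ forces $K_\p\neq 0$, a contradiction. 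Therefore $K=0$, that is, $g:X\to Y$ is the desired injective morphism.

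The main (minor) obstacle is keeping the bookkeeping with the scalar $s$ straight when lifting from $\Hom_{\L_\p}(X_\p,Y_\p)$ to $\Hom_{\L}(X,Y)$; once one observes that such an $s$ acts as a unit on any $R_\p$-module and therefore preserves injectivity upon localization, the rest is a direct application of the standard associated-prime criterion for non-vanishing, restricted to the hypothesis $\ass_R X\subseteq\{\p\}$.
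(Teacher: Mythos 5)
Your proof is correct and follows essentially the same route as the paper: lift the given injection via the localization isomorphism \eqref{iso-localization} (after clearing the denominator $s$, which is a unit on $\p$-local modules and hence preserves injectivity), then show the kernel of the lift has empty associated primes and is therefore zero. The paper phrases the final step slightly more compactly, noting directly that $\ass_R(\Ker f)\subseteq\{\p\}$ together with $(\Ker f)_\p=0$ forces $\ass_R(\Ker f)=\emptyset$, but the content is identical to your contradiction argument.
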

%%%%%
\begin{proof}
By the isomorphism (\ref{iso-localization}), any morphism from $X_\p$ to $Y_\p$ is of the form $s^{-1}f_\p$ for some $f : X \to Y$ and $s\in R\setminus \p$.
Assume that $f_\p$ is injective.
We have $(\Ker f)_\p=0$.
Since $\ass_R(\Ker f) \subseteq \ass_R X \subseteq \{\p\}$, we have $\ass_R(\Ker f)=\emptyset$.
Therefore $\Ker f=0$.
\end{proof}
%%%%%%%%%%%%%%%
We give one description of torsionfree class.
%%%%%%%%%%%%%%%
\begin{prop}\label{prop-torsion-free-ass}
For a torsionfree class $\cF$ of $\mod\L$, we have
\[
\cF = \Filt_\L\{X \in \cF \mid \sharp\ass_R X =1 \}.
\]
\end{prop}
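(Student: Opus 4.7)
The $\supseteq$ inclusion is immediate: $\cF$ is closed under extensions, hence $\Filt_\L\{X\in\cF\mid\sharp\ass_RX=1\}\subseteq\Filt_\L\cF=\cF$. For the reverse inclusion I would prove, by induction on $n:=\sharp\ass_RX$, that every $X\in\cF$ lies in $\Filt_\L\{Z\in\cF\mid\sharp\ass_RZ=1\}$. The base cases $n=0$ (so $X=0$, handled by the empty filtration) and $n=1$ (where $X$ itself lies in the generating family) are clear.

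For the inductive step assume $n\ge 2$ and choose a minimal element $\p$ of $\ass_RX$ with respect to inclusion. Set
\[Y:=\Ker(X\to X_\p)=\{x\in X\mid sx=0\text{ for some }s\in R\setminus\p\}.\]
Since $R$ acts centrally on $\L$, this is a $\L$-submodule of $X$, so $Y\in\cF$. Using minimality of $\p$ one checks $\ass_RY=\ass_RX\setminus\{\p\}$: for any $\q\in\ass_RX$ with $\q\ne\p$ we have $\q\not\subseteq\p$, so a representative $x$ with $\ann_Rx=\q$ is killed by some $s\in\q\setminus\p$ and lies in $Y$; conversely no element of $Y$ can have annihilator $\p$, since it is killed by some element outside $\p$. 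Dually the canonical map factors as an injection $X/Y\hookrightarrow X_\p$, and $\ass_RX_\p=\{\p\}$ by Proposition \ref{prop-comm-basic} together with the minimality of $\p$, whence $\ass_R(X/Y)\subseteq\{\p\}$.

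The main obstacle is to show $X/Y\in\cF$; a torsionfree class is not in general closed under quotients. To bypass this I would use that $Y$ is a finitely generated $R\setminus\p$-torsion module, so a product of annihilators of a finite generating set yields $s\in R\setminus\p$ with $sY=0$. Then
\[\varphi:X/Y\longrightarrow X,\qquad\varphi(\bar x)=sx,\]
is a well-defined $\L$-linear map, and it is injective because $sx=0$ with $s\in R\setminus\p$ forces $x\in Y$. Hence $X/Y$ embeds into $X\in\cF$, so $X/Y\in\cF$. The inductive hypothesis now applies to $Y$ (which has $n-1$ associated primes), while $X/Y$ is either zero or a member of the generating family. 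Stability of $\Filt_\L(-)$ under extensions, applied to the short exact sequence $0\to Y\to X\to X/Y\to 0$, completes the induction.
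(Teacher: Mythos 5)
Your proof is correct, but it takes a genuinely different route from the paper. The paper selects a \emph{maximal} element $\p\in\ass_RX$ and peels off the \emph{submodule} $Y=\Hom_R(R/\p,X)=\{x\in X\mid\p x=0\}$, which has $\ass_RY=\{\p\}$; the quotient $X/Y$ is shown to lie in $\cF$ by embedding it into $X^{\oplus\ell}$ via $\Hom_R(-,X)$ applied to a presentation $R^{\oplus\ell}\to R\to R/\p\to0$, and the iteration is justified by Noetherian induction on the chain of submodules $Y_1\subseteq Y_2\subseteq\cdots$. You instead select a \emph{minimal} $\p\in\ass_RX$, peel off the $(R\setminus\p)$-torsion submodule $Y$, and arrange for the \emph{quotient} $X/Y$ to be the piece with a single associated prime; your re-embedding $X/Y\hookrightarrow X$ via $\bar x\mapsto sx$ (with $s\in R\setminus\p$, $sY=0$) is a neat substitute for the paper's embedding into $X^{\oplus\ell}$, and your induction is the more concrete one on $n=\sharp\ass_RX$, exploiting that $\ass_RY=\ass_RX\setminus\{\p\}$ strictly drops. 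Both arguments are clean; yours trades the Noetherian-chain argument for an elementary numerical induction, at the mild cost of having to verify the bookkeeping $\ass_RY=\ass_RX\setminus\{\p\}$ and $\ass_R(X/Y)\subseteq\{\p\}$, which you do correctly (in particular the observation that $\q\ne\p$ in $\ass_RX$ forces $\q\not\subseteq\p$ by minimality, and that $X/Y\ne0$ because $\p\in\supp_RX$ gives $X_\p\ne0$). The one detail worth making explicit is that $Y$ is finitely generated as an $R$-module because $\L$ is left Noetherian and module-finite over $R$, which you implicitly use when producing the single element $s$ killing $Y$.
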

%%%%%
\begin{proof}
We claim that for a non-zero $X\in\cF$, there is a non-zero $\L$-submodule $Y$ of $X$ such that $\sharp\ass_R Y =1$ and $X/Y \in \cF$.
Since $X$ is a Noetherian object, this claim induces the assertion.
Let $\p$ be a maximal element of $\ass_R X$.
There is an exact sequence $R^{\oplus \ell} \to R \to R/\p \to 0$ for some integer $\ell$.
By applying the functor $\Hom_R(-, X)$, an exact sequence
\[
0 \to Y \to X \to X^{\oplus \ell}
\]
in $\mod\L$ for $Y=\Hom_R(R/\p, X)$.
We have $\supp_R Y \subseteq V(\p)$ and $X/Y\in\cF$.
This implies that $\ass_R Y \subseteq \ass_R X \cap V(\p)=\{\p\}$, where the last equality follows from the maximality of $\p$.
Thus the claim holds.
\end{proof}
%%%%%%%%%%%%%%%
We prove Theorem \ref{thm-torsion-free-iso}.
%%%%%
\begin{proof}[Proof of Theorem \ref{thm-torsion-free-iso}]
We first show that $\Phi_{\rm f} \circ \Psi_{\rm f}$ is an identity map on $\bF_R(\L)$.
Let $\cY=(\cY^\p)_{\p\in\Spec R}$ be an element of $\bF_R(\L)$.
For any prime ideal $\q$ of $R$, we have
\begin{align*}
&\Psi_{\rm f}(\cY)_\q \cap \Fl\L_\q = \left(\Filt_\L(\psi_{\rm f}(\cY^\p) \mid\p \in \Spec R)\right)_\q \cap \Fl\L_\q\\
\stackrel{{\rm \ref{lem-subcat-local}(d)}}{=}&\Filt_{\L_\q}(\psi_{\rm f}(\cY^\p)_\q \mid \p \in \Spec R) \cap \Fl\L_\q
=\Filt_{\L_\q}(\psi_{\rm f}(\cY^\p)_\q\cap \Fl\L_\q \mid \p \in \Spec R) \stackrel{{\rm\ref{lem-check-localization}}}{=}\Filt_{\L_\q}(\cY^\q) \stackrel{}{=} \cY^\q,
\end{align*}
where the third equality holds since $\Fl\L_\q \in \serre\L_\q$.
%where the second equality follows from Lemma \ref{lem-check-localization}(a) and the third one follows from Lemma \ref{lem-check-localization}(b).
Therefore $\Phi_{\rm f} \circ \Psi_{\rm f} = {\rm id}$ holds.

We show that $\Psi_{\rm f}\circ \Phi_{\rm f}={\rm id}$ holds.
For $\cF\in\torf\L$, let $\Phi_{\rm f}(\cF):=(\cY^\p)_{\p\in\Spec R}$ with $\cY^\p:=\cF_\p\cap\Fl\L_\p$ and $\cF':=\Psi_{\rm f}\circ\Phi_{\rm f}(\cF)=\Filt\left(\psi_{\rm f}(\cY^\p) \ \middle|\ \p \in \Spec R\right)$.
For each $\p\in\Spec R$ and each $X \in \psi_{\rm f}(\cY^\p)$, there is $Y\in\cF$ such that $X_\p \simeq Y_\p$.
Then by Lemma \ref{lem-local-injective}, we may assume that $X$ is a $\L$-submodule of $Y$.
Since $\cF\in\torf\L$, we have $X\in\cF$.
Therefore $\psi_{\rm f}(\cY^\p) \subseteq \cF$ and hence $\cF' \subseteq \cF$ holds.
It remains to show $\cF \subseteq \cF'$.
Clearly we have $\{X\in\cF \mid \ass_R X \subseteq \{\p\}\} \subseteq \psi_{\rm f}(\cY^\p)$.
This implies the following middle inclusion:
\[
\cF \stackrel{\ref{prop-torsion-free-ass}}{=} \Filt\{X \in \cF \mid \sharp\ass_R X =1 \} \subseteq \Filt(\psi_{\rm f}(\cY^\p) \mid \p\in\Spec R) = \cF'.
\]
We have completed the proof.
\end{proof}
%%%%%%%%%%%%%%%
In the case where $\L=R$, the following corollary recovers a result by Takahashi \cite{Takahashi}, see also \cite{Krause-thick}.
%%%%%%%%%%%%%%%
\begin{cor}\label{cor-local-free-subset}
Let $(R, \L)$ be a Noetherian algebra such that $\L_\p$ is Morita equivalent to a local ring for each $\p\in\Spec R$.
Then there is a canonical isomorphism $\torf\L\simeq\sP(\Spec R)$ of posets.
\end{cor}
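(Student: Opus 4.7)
The plan is to combine Theorem \ref{thm-torsion-free-iso} with a local-global principle, reducing the statement to a simple computation on each finite dimensional quotient $\L(\p)$.

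First, I would apply Theorem \ref{thm-torsion-free-iso} to obtain the poset isomorphism
\[\Phi_{\rm f}:\torf\L \isoto \bF_R(\L) = \prod_{\p\in\Spec R}\torf(\Fl\L_\p),\]
and then use Proposition \ref{prop-torsion-fl} to identify each factor with $\torf\L(\p)$, the set of torsionfree classes in the finite dimensional $\kappa(\p)$-algebra $\L(\p)$. Thus it suffices to show that under the hypothesis, each $\torf\L(\p)$ is a two-element poset $\{0,\mod\L(\p)\}$, so that $\bF_R(\L)\simeq\prod_{\p\in\Spec R}\{0,1\}=\sP(\Spec R)$.

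The key step, which I expect to be the main obstacle, is to check that $\L(\p)$ has a unique simple module up to isomorphism. Since $R_\p$ is local with maximal ideal $\p R_\p$ in its Jacobson radical and $\L_\p$ is module-finite over $R_\p$, a Nakayama argument shows that every simple $\L_\p$-module is annihilated by $\p$, so $\p\L_\p\subseteq\rad\L_\p$, giving a surjection $\L(\p)\twoheadrightarrow\L_\p/\rad\L_\p$. By the Morita hypothesis, $\L_\p$ is Morita equivalent to a local ring, so $\L_\p/\rad\L_\p$ is simple Artinian, hence has a unique simple module up to isomorphism. Pulling back along the surjection, the same holds for $\L(\p)$. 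Given this, any nonzero $\cF\in\torf\L(\p)$ contains a nonzero module, which must contain the unique simple module $S$ as a submodule; closure under extensions and the finite length of objects then force $\cF=\mod\L(\p)$.

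Combining the three steps, I obtain the isomorphism $\torf\L\simeq\sP(\Spec R)$ of posets, explicitly given by
\[\cF\longmapsto\{\p\in\Spec R \mid \cF_\p\cap\Fl\L_\p\neq 0\},\]
with inverse $U\mapsto\{X\in\mod\L \mid \ass_RX\subseteq U\}$ (as is visible from the formula for $\Psi_{\rm f}$ in the proof of Theorem \ref{thm-torsion-free-iso}, using that under $\ass_RX\subseteq\{\p\}$ the condition $X_\p\in\Fl\L_\p$ is automatic).
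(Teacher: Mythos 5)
Your proof is correct and follows essentially the same route as the paper: both reduce via Theorem~\ref{thm-torsion-free-iso} to showing that $\torf(\Fl\L_\p)=\{0,\Fl\L_\p\}$ under the Morita hypothesis, and the paper simply asserts this without the detail you supply. One small caution: the step ``pulling back along the surjection, the same holds for $\L(\p)$'' is not a valid general principle (a surjection can acquire new simple modules over its source); it works here because the kernel $\rad\L_\p/\p\L_\p$ of $\L(\p)\twoheadrightarrow\L_\p/\rad\L_\p$ is a nilpotent ideal of the finite-dimensional algebra $\L(\p)$, hence equals $\rad\L(\p)$, so simple $\L(\p)$-modules and simple $\L_\p/\rad\L_\p$-modules really do coincide. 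With that clarification the argument is complete, and the detour through $\L(\p)$ and Proposition~\ref{prop-torsion-fl} is harmless though not strictly necessary.
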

%%%%%
\begin{proof}
By the assumption, $\torf(\Fl\L_\p)=\{0, \Fl\L_\p\}$ holds for each $\p$.
Thus we have an isomorphism $\bF_R(\L)\simeq\sP(\Spec R)$ of posets given by ${\rm S}'((\cY^\p)_{\p\in\Spec R}):=\{\p \in\Spec R \mid \cY^\p \neq 0\}$.
%Then ${\rm S}':$ is an isomorphism of posets from $\bF_R(\L)$ to the set of subsets of $\Spec R$.
Now the assertion follows from Theorem \ref{thm-torsion-free-iso}.
\end{proof}
%%%%%%%%%%%%%%%
There are many Noetherian algebras satisfying the condition in Corollary 3.14.
We give an example.
%%%%%%%%%%%%%%%
\begin{exa}\label{exa-L-local}
Assume that $(R, \m)$ is local and
\[
\L=\left\{ \left( \begin{array}{cc}
a & b \\
c & d
\end{array}
\right) \in \left( \begin{array}{cc}
R & R \\
\mfm & R
\end{array}
\right)\ \middle|\ a - d \in \m \right\}.
\]
Then $\L_\m=\L$ is a local ring, and $\L_\p = \mathrm{Mat}_2(R_\p)$ is Morita equivalent to the local ring $R_\p$ for any $\p\in\Spec R\setminus\{\m\}$.
\end{exa}
%%%%%%%%%%%%%%%
In Subsection \ref{subsection-example}, we explain Theorem \ref{thm-torsion-free-iso} in a concrete example.

By Theorem \ref{thm-torsion-free-iso} and Lemma \ref{lem-Phi-Phi'-comm}, the map $\Phi_{\rm t} : \tors\L \to \bT_R(\L)$ is an embedding of posets.
In the next subsection, we study this map.
%%%%%%%%%%%%%%%
%%%%%%%%%%%%%%%
\subsection{Torsion classes}
In this subsection we study the set $\tors\L$ by using the map $\Phi_{\rm t} : \tors\L \to \bT_R(\L)$.
We construct a map in the opposite direction to $\Phi_{\rm t}$.
%%%%%%%%%%%%%%%
\begin{propdef}\label{dfn-bar}
Let $(R,\L)$ be a Noetherian algebra. For $\p\in\Spec R$ and $\cX\in\tors(\Fl\L_\p)$, let
\begin{align*}
\psi_\p(\cX)=\{X\in\mod\L_\p \mid X/\p X \in \cX\}\ \mbox{ and }\ 
%\widetilde{\cT} &= \{X\in\mod\L \mid X_\p \in \ov{\cT}\}.\\
\psi(\cX)=\{X\in\mod\L \mid X_\p/\p X_\p \in \cX\}.
 %= \{X \in \mod\L \mid \Fac X_\p\cap\Fl\L_\p\subseteq\cT \}.
\end{align*}
They give morphisms $\psi_\p:\tors(\Fl\L_\p)\to\tors\L_\p$ and $\psi:\tors(\Fl\L_\p)\to\tors\L$ of posets.
They satisfy $\psi(\cX)_\p = \psi_\p(\cX)$.
Moreover, for any $\cT\in\tors\L_\p$, we have
\begin{align}\label{psi XT}
\cX\subseteq\psi_\p(\cX)\ \mbox{ and }\ \cT\subseteq \psi_\p(\cT\cap\Fl\L_\p).
\end{align}
\end{propdef}
%%%%%
\begin{proof}
The equality $\psi(\cX)_\p=\psi_\p(\cX)$ is immediate from Lemma \ref{lem-local-global}(a).

Since the functor $(R_\p/\p R_\p)\otimes_{R_\p}(-) : \mod\L_\p \to \mod \Fl\L_\p$ is right exact and by Lemma \ref{lem-exact-lift}, we have $\psi_\p(\cX)\in\tors\L_\p$.
Since $\psi(\cX)=\{X\in\mod\L \mid X_\p \in \psi_\p(\cX)\}$ holds, Lemma \ref{lem-torsion-local-lift} implies that $\psi(\cX)\in\tors\L$.
The latter statements are clear.
\end{proof}
%%%%%%%%%%%%%%%
For an element $\cX=(\cX^{\mfp})_{\mfp}$ of  $\bT_R(\L)$ let
\[
\Psi_{\rm t}(\cX) := \bigcap_{\p \in \Spec R}\psi(\cX^\p)= \{ X \in \mod\L \mid \forall\p \in \Spec R,\, X_\p/\p X_\p\in  \cX^{\mfp}\}.
\]
Since $\psi(\cX^\p)\in\tors\L$ for each $\p$, we have $\Psi_{\rm t}(\cX)\in\tors\L$. Thus we have an order preserving map.
\begin{align}\label{map-Psi}
\Psi_{\rm t}: \bT_R(\L) \longrightarrow \tors\L, \quad \cX \mapsto \Psi_{\rm t}(\cX).
\end{align}
%%%%%%%%%%%%%%%
We have the following proposition, whose proof is independent from Theorem \ref{thm-torsion-free-iso}.
%%%%%%%%%%%%%%%%%%%%
\begin{thm}\label{prop-injective-localization}
Let $(R, \L)$ be a Noetherian algebra. %Then the following statements hold.
\begin{enumerate}[{\rm (a)}]
\item The composite $\Psi_{\rm t}\circ\Phi_{\rm t}$ is an identity map on $\tors\L$.
\item The map $\Phi_{\rm t} : \tors\L \to \bT_R(\L)$ is an embedding of posets.
\end{enumerate}
\end{thm}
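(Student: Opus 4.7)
The plan is to prove part (a) directly by establishing both inclusions $\cT \subseteq \Psi_{\rm t}(\Phi_{\rm t}(\cT))$ and $\Psi_{\rm t}(\Phi_{\rm t}(\cT)) \subseteq \cT$ for each $\cT\in\tors\L$; part (b) will then follow from (a) together with the order-preservation of $\Psi_{\rm t}$. Note that the argument should be independent of Theorem \ref{thm-torsion-free-iso}, so we will not use $\Phi_{\rm f}$ or the commutative square of Lemma \ref{lem-Phi-Phi'-comm}.

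For the forward inclusion, take $X\in\cT$. Then $X_\p\in\cT_\p$ for each $\p\in\Spec R$, so its quotient $X_\p/\p X_\p$ also lies in $\cT_\p$. Moreover, $X_\p/\p X_\p$ is a finitely generated module over $\L(\p)=\L_\p/\p\L_\p$, which is a finite-dimensional $\kappa(\p)$-algebra; by Lemma \ref{lem-finite-length}(a) it therefore has finite length as a $\L_\p$-module. Hence $X_\p/\p X_\p\in\cT_\p\cap\Fl\L_\p$, giving $X\in\Psi_{\rm t}(\Phi_{\rm t}(\cT))$. For the reverse inclusion, given $X\in\Psi_{\rm t}(\Phi_{\rm t}(\cT))$, apply Proposition \ref{prop-tors-perp} to the torsion class $\cT$ to get a short exact sequence
\[0 \to T \to X \to F \to 0\qquad\text{with } T\in\cT,\ F\in\cT^{\perp},\]
and the goal is to show $F=0$. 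Localize at a prime $\p$ and reduce modulo $\p$. Then $F_\p/\p F_\p$ is a quotient of $X_\p/\p X_\p\in\cT_\p$, so it lies in $\cT_\p$; it is also a quotient of $F_\p$, which by Lemma \ref{lem-torsion-local} lies in $(\cT^{\perp})_\p$, the torsionfree part of the localized torsion pair $(\cT_\p,(\cT^{\perp})_\p)$. Thus $F_\p/\p F_\p\in\cT_\p\cap(\cT^{\perp})_\p=0$, so $\p F_\p=F_\p$. Since $\p\L_\p$ is contained in the Jacobson radical of $\L_\p$, Nakayama's lemma applied to the finitely generated $\L_\p$-module $F_\p$ yields $F_\p=0$ for every $\p$, whence $F=0$ and $X=T\in\cT$.

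Part (b) is then immediate: $\Psi_{\rm t}\circ\Phi_{\rm t}=\id$ makes $\Phi_{\rm t}$ injective, and if $\Phi_{\rm t}(\cT)\subseteq\Phi_{\rm t}(\cU)$ then applying the order-preserving map $\Psi_{\rm t}$ yields $\cT=\Psi_{\rm t}\Phi_{\rm t}(\cT)\subseteq\Psi_{\rm t}\Phi_{\rm t}(\cU)=\cU$, so $\Phi_{\rm t}$ is an embedding of posets. The only step that requires some care is the Nakayama step, i.e.\ verifying $\p\L_\p\subseteq\rad\L_\p$; this is a standard fact about Noetherian algebras over local rings, provable by observing that every simple $\L_\p$-module is, by Lemma \ref{lem-finite-length}, of finite length over $R_\p$ and hence annihilated by $\p R_\p$ by Nakayama over the local ring $R_\p$.
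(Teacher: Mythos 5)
The forward inclusion $\cT\subseteq\Psi_{\rm t}\Phi_{\rm t}(\cT)$ and the deduction of (b) from (a) are both fine. The gap is in the reverse inclusion, at the sentence ``it is also a quotient of $F_\p$, which \dots\ lies in $(\cT^{\perp})_\p$ \dots\ Thus $F_\p/\p F_\p\in\cT_\p\cap(\cT^{\perp})_\p=0$.'' You are inferring that a \emph{quotient} of $F_\p\in(\cT^{\perp})_\p$ again lies in $(\cT^{\perp})_\p$; but torsionfree classes are closed under subobjects and extensions, not under quotients, so $F_\p\in\cT_\p^{\perp}$ tells you nothing about a quotient of $F_\p$. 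You have correctly shown $F_\p/\p F_\p\in\cT_\p$, but membership in $\cT_\p^\perp$ is unjustified, and the conclusion $F_\p/\p F_\p=0$ does not follow from what you have established.

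The paper avoids this problem with a different localization strategy: instead of looking at every $\p$, it picks a \emph{minimal} prime $\p$ in $\supp_R F$ (assuming $F\neq 0$). Minimality forces $F_\p\in\Fl\L_\p$, so $\p^\ell F_\p=0$ for some $\ell$. Then $F_\p$ itself (not just $F_\p/\p F_\p$) is a quotient of $X_\p/\p^\ell X_\p$, and $X_\p/\p^\ell X_\p\in\Filt(\gen(X_\p/\p X_\p))\subseteq\cT_\p$ since each layer $\p^iX_\p/\p^{i+1}X_\p$ is generated by $X_\p/\p X_\p$. Hence $F_\p\in\cT_\p$, and now the contradiction comes from $F_\p\in\cT_\p\cap\cT_\p^\perp=0$ applied to $F_\p$ itself — this is legitimate because you have placed $F_\p$, not merely a quotient of it, inside $\cT_\p$. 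Your Nakayama-based strategy could in principle also work, but you would need an independent argument for $F_\p/\p F_\p=0$, and the present one fails.
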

%%%%%
\begin{proof}
(a) Fix $\cT\in\tors\L$. Then
%Since $\cT\subseteq$ holds for any $\p\in\Spec R$ by \eqref{psi XT}, we have
$\Psi_{\rm t}\circ\Phi_{\rm t}(\cT)=\bigcap_{\p\in\Spec R}\psi(\cT_\p\cap\Fl\L_\p)\supseteq\cT$ holds by \eqref{psi XT}.
To prove $\Psi_{\rm t}\circ\Phi_{\rm t}(\cT)\subseteq\cT$, fix $X\in\Psi_{\rm t}\circ\Phi_{\rm t}(\cT)$.
Take a short exact sequence $0 \to T \to X \xto{f} F \to 0$ with $T\in\cT$ and $F\in\cT^\perp$.
It suffices to prove $F=0$.
Otherwise, take a minimal element $\p$ in $\supp_R F$.
Then $F_\p$ belongs to $\Fl\L_\p$, and there exists $\ell\ge1$ such that $\p^\ell F_\p=0$. The surjection $f:X\to T$ induces a surjection $X_\p/\p^\ell X_\p\to F_\p$.
Since $X\in\psi(\cT_\p\cap\Fl\L_\p)$, we have $X_\p/\p X_\p\in\cT_\p$ and hence $X_\p/\p^{\ell}X_\p\in \Filt(\gen(X_\p/\p X_\p)) \subseteq \cT_\p$. Since $F_\p\in\cT_\p^\perp$, we have $F_\p=0$, a contradiction.

(b) For $\cT,\cT'\in\tors\L$, assume $\Phi_{\rm t}(\cT)\le\Phi_{\rm t}(\cT')$. Since $\Psi_{\rm t}$ is a morphism of posets, we have $\cT=\Psi_{\rm t}\circ\Phi_{\rm t}(\cT)\le\Psi_{\rm t}\circ\Phi_{\rm t}(\cT')=\cT'$ by (a), as desired.
%$\Phi_{\rm t}$ is an embedding of posets by Proposition \ref{prop-injective-localization}.
\end{proof}
%%%%%%%%%%%%%%%%%%%%
Recall that $\tors\L$ and $\bT_R(\L)$ are complete lattices (see \cite[Proposition 2.3]{IRTT} for a finite dimensional algebra case), where meets and joins in $\bT_R(\L)$ are given by term-wise.
%Also $\bT_R(\L)$ is a complete lattice,
%%%%%%%%%%%%%%%%%%%%
\begin{lem}\label{thm-Phi-image}
Let $(R, \L)$ be a Noetherian algebra.
%The following statements hold.
%\begin{enumerate}[{\rm (a)}]\item 
The map $\Phi_{\rm t} : \tors\L \to \bT_R(\L)$ is a morphism of complete join-semilattices.
%\old{An element $\cX=(\cX^\p)_\p\in\bT_R(\L)$ belongs to $\Im\Phi_{\rm t}$ if and only if the following condition \emph{(C$_\p$)} is satisfied for each $\p\in\Spec R$.
%\begin{enumerate}[\rm(C$_\p$)]
%\item For each $X\in\cX^\p$, there exists $Y\in\Psi_{\rm t}(\cX)$ such that $Y_\p \simeq X$.
%\end{enumerate}
%Moreover, if $\p$ is maximal, then the condition \emph{(C$_\p$)} is always satisfied.}
\end{lem}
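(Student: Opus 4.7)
The plan is to verify that $\Phi_{\rm t}$ preserves arbitrary joins by direct unwinding of definitions. For a family $\{\cT^i\}_{i\in I}$ in $\tors\L$, recall that joins in $\tors\L$ are computed by $\bigvee_{i\in I}\cT^i = \Filt_\L(\bigcup_{i\in I}\cT^i)$, and joins in $\bT_R(\L)$ are computed coordinate-wise: for each $\p\in\Spec R$, the $\p$-component of $\bigvee_{i\in I}\Phi_{\rm t}(\cT^i)$ is $\Filt_{\Fl\L_\p}\bigl(\bigcup_{i\in I}(\cT^i_\p \cap \Fl\L_\p)\bigr)$ inside $\tors(\Fl\L_\p)$.

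Thus it suffices to prove, for every $\p\in\Spec R$, the equality
\[
\Bigl(\Filt_\L\bigcup_{i\in I}\cT^i\Bigr)_\p \cap \Fl\L_\p \;=\; \Filt_{\Fl\L_\p}\bigcup_{i\in I}(\cT^i_\p \cap \Fl\L_\p).
\]
Applying Lemma \ref{lem-subcat-local}(d) (localization commutes with $\Filt$), the left-hand side becomes $\Filt_{\L_\p}\bigl(\bigcup_{i\in I}\cT^i_\p\bigr) \cap \Fl\L_\p$, so one has to compare this with the right-hand side.

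The inclusion $\supseteq$ is immediate, since $\cT^i_\p\cap\Fl\L_\p\subseteq\cT^i_\p$ and $\Fl\L_\p$ is closed under extensions. For the inclusion $\subseteq$, pick $X$ in the LHS; then $X\in\Fl\L_\p$ and $X$ admits a finite filtration $0 = X_0 \subseteq X_1 \subseteq \dots \subseteq X_\ell = X$ of $\L_\p$-modules with each subfactor $X_j/X_{j-1}$ lying in some $\cT^{i_j}_\p$. Since $\Fl\L_\p$ is a Serre subcategory of $\mod\L_\p$, each subfactor of $X$ again lies in $\Fl\L_\p$, hence in $\cT^{i_j}_\p \cap \Fl\L_\p$, and so $X$ belongs to the RHS. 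Finally, preservation of the empty join is trivial, since $\Phi_{\rm t}(\{0\}) = (0)_{\p\in\Spec R}$ is the bottom of $\bT_R(\L)$.

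No real obstacle is anticipated; the argument is a direct unwrapping that relies essentially on Lemma \ref{lem-subcat-local}(d) together with the fact that $\Fl\L_\p$ is closed under subfactors of modules in $\mod\L_\p$.
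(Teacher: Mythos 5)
Your argument is correct and follows essentially the same route as the paper: express the join in $\tors\Lambda$ as $\Filt$, use Lemma \ref{lem-subcat-local}(d) to move $\Filt$ past localization, and then observe that intersecting with $\Fl\Lambda_\p$ commutes with $\Filt$ because $\Fl\Lambda_\p$ is closed under subfactors. The paper's proof is just a more compressed chain of equalities; the step you spell out (refining a filtration to land in $\cT^{i_j}_\p\cap\Fl\Lambda_\p$) is exactly what the paper leaves implicit.
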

%%%%%
\begin{proof}
The assertion follows from the equalities
\begin{align*}
&(  \bigvee_{i\in I}\cT^i)_\p \cap \Fl\L_\p=\Filt_\L(\cT^i\mid i\in I)_\p\cap\Fl\L_\p=\Filt_{\L_\p}(\cT^i_\p\mid i\in I)\cap\Fl\L_\p\\
=&\Filt_{\L_\p}(\cT^i_\p\cap\Fl\L_\p\mid i\in I)=\Filt_{\L_\p}(\cT^i_\p\cap\Fl\L_\p\mid i\in I)=\bigvee_i(\cT^i_\p \cap \Fl\L_\p).\qedhere
\end{align*}
\end{proof}
%%%%%%%%%%%%%%%%%%%%
Contrary to $\Phi_{\rm f}$, the map $\Phi_{\rm t}$ is rarely bijective.
The rest of this paper is devoted to studying characterizations of the elements of $\Im \Phi_{\rm t}$.
We introduce the notion of compatible elements of $\bT_R(\L)$ which is a necessary condition for elements of $\bT_R(\L)$ to belong to $\Im\Phi_{\rm t}$.
%%%%%%%%%%%%%%%%%%%%
\begin{dfn}\label{dfn-pairwise-compatibel}
For two prime ideals $\mfp \supseteq \mfq$ of $R$, we define a map ${\rm r_{\p,\q}}: \tors(\Fl\L_\p) \to \tors(\Fl\L_\q)$ as the composition
\begin{align}\label{map-pairwise-comp}
{\rm r_{\p,\q}}: \tors(\Fl\L_\p) \xto{\psi_\p}\tors\L_\p\xto{(-)_\q\cap\Fl\L_\q}\tors(\Fl\L_\q), \qquad \cT \mapsto \psi_\p(\cT)_\q\cap\Fl\L_\q.
\end{align}
We call an element $(\cX^{\mfp})_{\mfp}$ of $\bT_R(\L)$ \emph{compatible} if ${\rm r_{\p,\q}}(\cX^{\p})\supseteq\cX^{\q}$ holds for any pair $\p \supseteq \q$ of prime ideals of $R$.
We denote by $\bT_R^{\rm c}(\L)$ the subposet of $\bT_R(\L)$ consisting of all compatible elements in $\bT_R(\L)$.
We say that $(R,\L)$ is \emph{compatible} if $\Im\Phi_{\rm t}=\bT_R^{\rm c}(\L)$.
\end{dfn}
%%%%%%%%%%%%%%%%%%%%
We give a few remarks on the map ${\rm r_{\p,\q}}$.

\begin{rem}\label{prop-comp-rpq}
Let $\p \supseteq \q \supseteq \mathfrak{r}$ be prime ideals of $R$.
\begin{enumerate}[\rm(a)]
\item We have ${\rm r_{\p,\p}}=\id_{\tors(\Fl\L_\p)}$ and ${\rm r_{\p,\q}}(\Fl\L_\p)=\Fl\L_\q$.
\item For each $\cT\in\tors(\Fl\L_\p)$, we have
${\rm r}_{\q, \mathfrak{r}} \circ {\rm r}_{\p, \q}(\cT) \supseteq {\rm r}_{\p, \mathfrak{r}}(\cT)$.
\item The map ${\rm r_{\p,\q}}$ is natural from the point of view of silting theory, see Corollary \ref{cor-psilt-rpq} below.
\end{enumerate}
\end{rem}
%%%%%
\begin{proof}
(a)
Let $\cX\in\tors(\Fl\L_\p)$. Then ${\rm r}_{\p,\p}(\cX)\subseteq\cX$ holds by definition of ${\rm r}_{\p,\p}$.  The other inclusion follows from $\cX\subseteq \psi_\p(\cX)$ and $\cX_\p\cap\Fl\L_\p=\cX$.
The second equality follows from $\psi_\p(\Fl\L_\p)=\mod\L_\p$.

(b)
By applying $\psi_\q$ for ${\rm r}_{\p,\q}(\cT)$, we have  $\psi_\q({\rm r}_{\p,\q}(\cT)) = \psi_\q(\psi_\p(\cT)_\q\cap\Fl\L_\q) \stackrel{\eqref{psi XT}}{\supseteq} \psi_\p(\cT)_\q$. Applying $(-)_{\mathfrak{r}}\cap\Fl\L_{\mathfrak{r}}$, we have
${\rm r}_{\q, \mathfrak{r}} \circ {\rm r}_{\p, \q}(\cT)= \psi_\q({\rm r}_{\p,\q}(\cT))_{\mathfrak{r}}\cap\Fl\L_{\mathfrak{r}} \supseteq (\psi_\p(\cT)_\q)_{\mathfrak{r}}\cap\Fl\L_{\mathfrak{r}} = {\rm r_{\p,\mathfrak{r}}}(\cT)$.
\end{proof}
%%%%%%%%%%%%%%%%%%%%
The importance of the notion of compatible elements is explained by the next observation. Notice that our condition ${\rm r_{\p,\q}}(\cX^{\p})\supseteq\cX^{\q}$ can not be replaced by the stronger one ${\rm r_{\p,\q}}(\cX^{\p})=\cX^{\q}$, which is not satisfied by $\Phi_{\rm t}(\Fl\L)$ if $\dim R\ge1$.
%%%%%%%%%%%%%%%%%%%%
\begin{prop}\label{prop-wcomp-comp-easy}
For a Noetherian algebra $(R, \L)$, we have $\Im \Phi_{\rm t}\subseteq\bT_R^{\rm c}(\L)$. Thus we have an embedding of posets
\[
\Phi_{\rm t} : \tors\L \hookrightarrow \bT_R^{\rm c}(\L).
\]
\end{prop}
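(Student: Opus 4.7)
The plan is to verify compatibility of $\Phi_{\rm t}(\cT)$ for an arbitrary torsion class $\cT \in \tors\L$ by chasing the definitions, using the inclusion $\cT_\p \subseteq \psi_\p(\cT_\p\cap\Fl\L_\p)$ recorded in \eqref{psi XT}. The second assertion of the proposition will follow immediately by combining the first with the fact that $\Phi_{\rm t}$ is already known to be an embedding of posets (Theorem \ref{prop-injective-localization}(b)).

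First I would fix $\cT\in\tors\L$ and a pair $\p\supseteq\q$ in $\Spec R$, and write $\cX^\p:=\cT_\p\cap\Fl\L_\p$ and $\cX^\q:=\cT_\q\cap\Fl\L_\q$ for the components of $\Phi_{\rm t}(\cT)$. The goal is to show ${\rm r}_{\p,\q}(\cX^\p)\supseteq\cX^\q$, i.e.\ $\psi_\p(\cX^\p)_\q\cap\Fl\L_\q\supseteq\cX^\q$.

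The key observation is the identity $(\cT_\p)_\q=\cT_\q$, which holds because $\p\supseteq\q$ forces $R\setminus\p\subseteq R\setminus\q$, so that $(X_\p)_\q=X_\q$ for every $X\in\cT$, giving both inclusions between $\{(X_\p)_\q\mid X\in\cT\}$ and $\{X_\q\mid X\in\cT\}$. Combining this with \eqref{psi XT} (which gives $\cT_\p\subseteq\psi_\p(\cX^\p)$ after applying it to the torsion class $\cT_\p\in\tors\L_\p$), I localize at $\q$ to obtain
\[
\cT_\q=(\cT_\p)_\q\ \subseteq\ \psi_\p(\cX^\p)_\q.
\]
Intersecting with $\Fl\L_\q$ yields
\[
\cX^\q=\cT_\q\cap\Fl\L_\q\ \subseteq\ \psi_\p(\cX^\p)_\q\cap\Fl\L_\q={\rm r}_{\p,\q}(\cX^\p),
\]
which is precisely the compatibility condition. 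Hence $\Phi_{\rm t}(\cT)\in\bT_R^{\rm c}(\L)$, proving $\Im\Phi_{\rm t}\subseteq\bT_R^{\rm c}(\L)$.

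For the displayed embedding of posets, I simply note that the corestriction of the embedding $\Phi_{\rm t}:\tors\L\hookrightarrow\bT_R(\L)$ from Theorem \ref{prop-injective-localization}(b) to the subposet $\bT_R^{\rm c}(\L)\subseteq\bT_R(\L)$ is well-defined by what we just proved, and remains an embedding because the partial order on $\bT_R^{\rm c}(\L)$ is inherited from $\bT_R(\L)$. There is essentially no obstacle here—the entire content is the translation between $\psi_\p$ and ${\rm r}_{\p,\q}$ and the transitivity of localization; the only mild subtlety is the identification $(\cT_\p)_\q=\cT_\q$, which is what makes the whole computation go through.
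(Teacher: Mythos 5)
Your proof is correct and follows essentially the same route as the paper's: both apply the inclusion $\cT_\p\subseteq\psi_\p(\cT_\p\cap\Fl\L_\p)$ from \eqref{psi XT}, localize at $\q$, and intersect with $\Fl\L_\q$, then invoke Theorem~\ref{prop-injective-localization}(b) for the embedding. The only difference is that you spell out the localization-transitivity identity $(\cT_\p)_\q=\cT_\q$ which the paper uses silently.
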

%%%%%
\begin{proof}
For $\p\supseteq \q$ and $\cT\in\tors\L$, we have $\psi_{\p}(\cT_\p \cap \Fl\L_\p)\supseteq\cT_\p$ by \eqref{psi XT}. Thus
\[
{\rm r}_{\p,\q}(\cT_\p \cap \Fl\L_\p)=\psi_\p(\cT_\p \cap \Fl\L_\p)_\q \cap \Fl\L_\q \supseteq \cT_\q \cap\Fl\L_\q,
\]
that is $\Phi_{\rm t}(\cT)$ is compatible. The latter assertion is immediate from Theorem \ref{prop-injective-localization}(b).
\end{proof}
%%%%%%%%%%%%%%%%%%%%
As an immediate consequence of Proposition \ref{prop-wcomp-comp-easy}, we give a complete description of $\tors\L$ for some special cases. The first one is the following.
%%%%%%%%%%%%%%%%%%%%
\begin{prop}\label{prop-local-one}
Let $(R,\L)$ be a Noetherian algebra. If $(R,\m)$ is a local ring with $\dim R=1$, then $(R, \L)$ is compatible.
\end{prop}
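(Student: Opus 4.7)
The plan is to prove the reverse inclusion to Proposition~\ref{prop-wcomp-comp-easy}, namely $\bT_R^{\rm c}(\L)\subseteq\Im\Phi_{\rm t}$. Fix a compatible element $\cX=(\cX^\p)_\p$ and set $\cT:=\Psi_{\rm t}(\cX)\in\tors\L$. I will show $\Phi_{\rm t}(\cT)=\cX$, i.e.\ $\cT_\q\cap\Fl\L_\q=\cX^\q$ for every $\q\in\Spec R$. Under the hypothesis, Proposition~\ref{Spec R finite} gives $\Spec R=\{\m\}\sqcup\{\p_1,\ldots,\p_n\}$ with each $\p_i$ minimal, and each $R_{\p_i}$ is Artinian local so $\Fl\L_{\p_i}=\mod\L_{\p_i}$. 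The only nontrivial part of the compatibility condition is therefore ${\rm r}_{\m,\p_i}(\cX^\m)\supseteq\cX^{\p_i}$ for each $i$.

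For the inclusion $\cT_\q\cap\Fl\L_\q\subseteq\cX^\q$, pick $X\in\cT$ with $X_\q\in\Fl\L_\q$; by Lemma~\ref{lem-finite-length}(b) some power $\q^\ell$ annihilates $X_\q$, and the $\q$-adic filtration exhibits $X_\q$ as an iterated extension of subquotients $\q^kX_\q/\q^{k+1}X_\q$. Each such subquotient is a quotient of a finite direct sum of copies of $X_\q/\q X_\q\in\cX^\q\cap\mod\L(\q)$; by Proposition~\ref{prop-torsion-fl} the class $\cX^\q\cap\mod\L(\q)$ is a torsion class in $\mod\L(\q)$ (hence closed under direct sums and quotients), so every subquotient lies in $\cX^\q$, and extension-closure yields $X_\q\in\cX^\q$.

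The converse inclusion $\cX^\q\subseteq\cT_\q\cap\Fl\L_\q$ splits into two cases. When $\q=\m$, any $Y\in\cX^\m\subseteq\Fl\L$ has $\supp_RY\subseteq\{\m\}$, hence $Y_{\p_i}=0$ for all $i$, and $Y/\m Y\in\cX^\m$; thus $Y\in\cT$ tautologically, and $Y\simeq Y_\m\in\cT_\m\cap\Fl\L_\m$. When $\q=\p_i$, use compatibility to produce, for $Y\in\cX^{\p_i}$, some $Z\in\mod\L$ with $Z/\m Z\in\cX^\m$ and $Z_{\p_i}\simeq Y$. The candidate $X:=Z/\ker(Z\to Z_{\p_i})$ embeds into $Z_{\p_i}$; localisation at $\p_i$ gives $X_{\p_i}\simeq Z_{\p_i}\simeq Y$, and $X/\m X$ is a quotient of $Z/\m Z\in\cX^\m$, hence in $\cX^\m$.

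The crux is the behaviour at $\p_j$ for $j\neq i$. Here I use the commutative-algebra fact that $R_{\p_i}\otimes_RR_{\p_j}=0$, since its spectrum would consist of primes contained in both $\p_i$ and $\p_j$ and no such prime exists (both are minimal and distinct in the local ring $R$). Consequently $X_{\p_j}\hookrightarrow(Z_{\p_i})_{\p_j}=Z\otimes_R(R_{\p_i}\otimes_RR_{\p_j})=0$, so $X_{\p_j}=0$ and trivially $X_{\p_j}/\p_jX_{\p_j}\in\cX^{\p_j}$. Thus $X\in\cT$ and $Y\simeq X_{\p_i}\in\cT_{\p_i}\cap\Fl\L_{\p_i}$. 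The main obstacle is precisely this last step: taking $X=Z$ directly fails because $Z_{\p_j}$ need not lie in $\cX^{\p_j}$, and the truncation $Z/\ker(Z\to Z_{\p_i})$ corrects this only thanks to the disjointness of distinct minimal primes, which is a feature of the one-dimensional local setting.
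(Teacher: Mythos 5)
Your argument is correct and follows essentially the same route as the paper: both reduce to showing $\cX^\q\subseteq\Psi_{\rm t}(\cX)_\q$, handle $\q=\m$ trivially, and for a minimal prime $\q=\p_i$ pull $Y\in\cX^{\p_i}$ back via compatibility to some $Z\in\psi(\cX^\m)$, then replace $Z$ by the image of $Z\to Z_{\p_i}$ and check the support condition. The only cosmetic difference is that you justify $\supp_R X\subseteq\{\p_i,\m\}$ directly via $R_{\p_i}\otimes_R R_{\p_j}=0$ for distinct minimal primes, whereas the paper invokes its Lemma~\ref{lem-local-global}(b) on associated primes to the same effect.
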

%%%%%
\begin{proof}
Let $\cX=(\cX^{\mfp})_{\mfp}\in\bT_R^{\rm c}(\L)$. For $\Psi_{\rm t}(\cX)=\bigcap_{\p\in\Spec R}\psi(\cX^\p)$, 
%It suffices to check the condition (C$_\p$) in Lemma \ref{thm-Phi-image}(b) for each $\p\neq\m$.
% show that for any height zero prime ideal $\p$ and For each $X\in\cX^\p$, there is a $\L$-module $Y\in\Psi_{\rm t}(\cX)$ such that $Y_\p\simeq X$.
it suffices to show $\cX^\p\subseteq\Psi_{\rm t}(\cX)_\p$ for each $\p\in\Spec R$. This is clear if $\p=\m$. In fact, each $X\in\cX^\m$ satisfies $\supp_RX\subseteq\{\m\}$ and $X_\m/\m X_\m\in\cX^{\m}$, and hence $X\in\Psi_{\rm t}(\cX)$ holds.
Now assume $\p\neq\m$. Since ${\rm r_{\m,\,\p}}(\cX^{\m})\supseteq\cX^{\p}$ holds, for each $X\in\cX^\p$, there exists $M\in\psi(\cX^\m)$ such that $M_\p \simeq X$ and hence $M\in\psi(\cX^\p)$. Replacing $M$ by the image of $M\to X$, we can assume that $M$ is a $\L$-submodule of $X$. Since $X$ has finite length as $\L_\p$-module, Lemma \ref{lem-local-global}(b) implies $\supp_RM\subseteq V(\p)=\{\p,\m\}$.
% there is positive integer $\ell$ such that $(M/\p^\ell M)_\p \simeq X$. This $M/\p^\ell M$ belongs to $\ov{\cX^\m}$ and $\supp_R(M/\p^\ell M)=\{\p, \m\}$ holds.
Thus $M\in \Psi_{\rm t}(\cX)$ holds, and hence $X=M_\p\in\Psi_{\rm t}(\cX)_\p$.
\end{proof}
%%%%%%%%%%%%%%%
\begin{rem}
In the case where $(R, \m)$ is a local domain with the Krull dimension one, in \cite[Theorem 5.9]{Kimura}, it was shown that there is a bijection
\[
\tors\L \simeq \bigsqcup_{\cT\in\tors(\Fl\L)}[0, {\rm r}_{\m, 0}(\cT)]_{\tors\L_0},
\]
where $[0, {\rm r}_{\m, 0}(\cT)]_{\tors\L_0}$ is an interval in the poset $\tors\L_0$.
Clearly, this disjoint union bijectively corresponds to $\bT_R^{\rm c}(\L)$.
Thus Proposition \ref{prop-local-one} recovers this bijection.
\end{rem}

We explain the second case.
A subset $\cS$ of $\Spec R$ is said to be \emph{specialization-closed} if for any two prime ideals $\p\supseteq \mfq$ of $R$, $\q\in\cS$ always implies $\mfp\in\cS$. Equivalently, $\cS$ is a union of Zariski closed subsets.
%%%%%%%%%%%%%%%%%%%%
\begin{cor}\label{cor-local-sp-closed}
Let $(R, \L)$ be a Noetherian algebra such that $\L_\p$ is Morita equivalent to a (non-zero) local ring for each $\p\in\Spec R$.
\begin{enumerate}[\rm(a)]
\item There is a canonical isomorphism $\tors\L\simeq\sP_{\rm spcl}(\Spec R)$ of posets. Moreover $(R, \L)$ is compatible.
\item $\tors\L=\serre\L$ holds.
\end{enumerate}
\end{cor}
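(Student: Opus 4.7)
The plan is to leverage the hypothesis to make the target posets $\tors(\Fl\L_\p)$ trivial, reduce compatibility to a purely topological condition on $\Spec R$, and then realize every such condition by a single explicit torsion class.

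First, I would observe that since $\L_\p$ is Morita equivalent to a nonzero local ring, $\Fl\L_\p$ has a single simple object up to isomorphism; every nonzero object of $\Fl\L_\p$ therefore generates the entire category under quotients and extensions, so $\tors(\Fl\L_\p)=\{0,\Fl\L_\p\}$. This yields an isomorphism of posets
\[
\bT_R(\L)\ \simeq\ \sP(\Spec R),\qquad (\cX^\p)_\p\longmapsto \{\p\in\Spec R\mid \cX^\p\neq 0\}.
\]
Next I would identify the image of $\bT_R^{\rm c}(\L)$. For $\p\supseteq\q$, Remark \ref{prop-comp-rpq}(a) gives ${\rm r}_{\p,\q}(\Fl\L_\p)=\Fl\L_\q$, while Nakayama's lemma (applicable because $\L_\p$ is module-finite over the local ring $R_\p$) gives $\psi_\p(0)=\{X\in\mod\L_\p\mid X/\p X=0\}=0$ and hence ${\rm r}_{\p,\q}(0)=0$. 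Therefore an element $(\cX^\p)_\p$ is compatible precisely when $\cX^\p=0$ forces $\cX^\q=0$ for every $\q\subseteq\p$, i.e.\ when the corresponding subset of $\Spec R$ is specialization-closed. Combined with Proposition \ref{prop-wcomp-comp-easy}, this already gives an embedding of posets
\[
\Phi_{\rm t}\colon \tors\L\ \hookrightarrow\ \bT_R^{\rm c}(\L)\ \simeq\ \sP_{\rm spcl}(\Spec R).
\]

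For surjectivity, to any specialization-closed $S\subseteq\Spec R$ I would associate
\[
\cT_S\ :=\ \{X\in\mod\L\mid \supp_R X\subseteq S\}.
\]
Since $\supp_R$ is closed under subobjects, quotients and extensions, $\cT_S$ is in fact a Serre subcategory, in particular a torsion class. To check that $\Phi_{\rm t}(\cT_S)$ corresponds to $S$: if $\p\notin S$, then $(\cT_S)_\p=0$ by definition; if $\p\in S$, then for any $X\in\Fl\L_\p$, Lemma \ref{lem-local-global}(a),(b) produces a lift $M\in\mod\L$ with $M_\p\simeq X$ and $\supp_R M\subseteq V(\p)$, and $V(\p)\subseteq S$ by specialization-closedness, so $M\in\cT_S$ and hence $(\cT_S)_\p\cap\Fl\L_\p=\Fl\L_\p$. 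This proves (a) together with compatibility of $(R,\L)$.

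Finally, part (b) drops out for free: the bijection above shows every $\cT\in\tors\L$ equals $\cT_S$ for some specialization-closed $S$, and each such $\cT_S$ is a Serre subcategory by the observation above; the reverse inclusion $\serre\L\subseteq\tors\L$ is automatic. The only step requiring genuine content is the identification ${\rm r}_{\p,\q}(0)=0$ via Nakayama and the verification that the image of $\Phi_{\rm t}$ exhausts the specialization-closed subsets; the rest is bookkeeping with supports.
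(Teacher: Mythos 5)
Your argument is correct and follows essentially the same route as the paper's own proof: reduce each $\tors(\Fl\L_\p)$ to $\{0,\Fl\L_\p\}$, identify compatibility with specialization-closedness via ${\rm r}_{\p,\q}(0)=0$ and ${\rm r}_{\p,\q}(\Fl\L_\p)=\Fl\L_\q$, and realize every specialization-closed $\cS$ by $\cT_\cS=\{X\mid\supp_R X\subseteq\cS\}$, noting that this $\cT_\cS$ is already Serre to get part (b). Your explicit Nakayama justification of ${\rm r}_{\p,\q}(0)=0$ is a small addition the paper leaves implicit, but the overall strategy is identical.
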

%%%%%%%%%%%%%%%%%%%%
Notice that the case $\L=R$ recovers famous results of Gabriel \cite{Gabriel} and Stanley-Wang \cite{Stanley-Wang}.
%%%%%
\begin{proof}
(a) Since $\L_\p$ is Morita equivalent to a local ring, $\tors(\Fl\L_\p)=\{0, \Fl\L_\p\}$ holds for any $\p\in\Spec R$.
Since ${\rm r}_{\p,\q}(0)=0$ and ${\rm r}_{\p,\q}(\Fl\L_\p)=\Fl\L_\q$, there exists a bijection between $\bT_R^{\rm c}(\L)$ and specialization closed subsets of $\Spec R$ given by
\[
{\rm s}((\cX^\p)_\p)=\{\p\in\Spec R \mid \cX^\p \neq 0\}.
\]
Thus ${\rm s}\circ \Phi_{\rm t}$ is an injection from $\tors\L$ to the set of specialization closed subsets of $\Spec R$.
It remains to show that ${\rm s}\circ \Phi_{\rm t}$ is surjective.
For a specialization closed subset $\cS$ of $\Spec R$, let
\[\cT:=\{X\in\mod\L\mid \supp_RX\subseteq\cS\}.\]
%let $(\cX^\p)_\p$ be an element of $\bT_R(\L)$ such that $\cX^\p = \Fl\L_\p$ if $\p\in\cS$ and $\cX^\p=0$ \old{if} else.
Clearly $\cT\in\tors\L$ holds. Let $(\cX^\p)_\p:=\Phi_{\rm t}(\cT)$. By Lemma \ref{lem-local-global}, we have $\cX^\p=\Fl\L_\p$ if $\p\in\cS$, and $\cX^\p=0$ otherwise. Thus
%this $(\cX^\p)_\p$ satisfies the condition of Theorem \ref{thm-Phi-image}(b) and satisfies
${\rm s}\circ\Phi_{\rm t}(\cT)=\cS$ holds

(b) It suffices to show $\tors\L\subseteq\serre\L$. Since the subcategory $\cT$ defined above belongs to $\serre\L$, the assertion follows.
\end{proof}
%%%%%%%%%%%%%%%
These results give partial answers to Question \ref{intro-ques-comp}.
In Subsections \ref{subsection-dim-one} and \ref{subsection-comp-silt-finite}, we give other examples of compatible Noetherian algebras.
%%%%%%%%%%%%%%%%%%%%
%%%%%%%%%%%%%%%%%%%%
\subsection{Serre subcategories}
%%%%%%%%%%%%%%%%%%%%
In this subsection, we give a classification of Serre subcategories of $\mod\L$ for an arbitrary Noetherian algebra $(R, \L)$, which is a vast generalization of a famous result by Gabriel \cite{Gabriel}.

Let $\bS_R(\L)$ be the Cartesian product of $\serre(\Fl\L_\p)$, where $\p$ runs over all prime ideals of $R$:
\[
\bS_R(\L):=\prod_{\mfp\in\Spec R}\serre(\Fl\L_\p).
\]
By Lemma \ref{lem-subcat-local}, we have a well-defined map
\[\Phi_{\rm s}:=\Phi_{\rm t}|_{\serre\L}:\serre\L\to\bS_R(\L),\]
which is an embedding of posets by Theorem \ref{prop-injective-localization}.
%restricts to an inverse direction map.
%%%%%%%%%%%%%%%%%%%
For a ring $\Gamma$, we denote by $\simple\Gamma$ the set of all isomorphism classes of simple $\Gamma$-modules.
We just remark that $\simple\L_\p=\simple\L(\p)$ holds, since any simple $\L_\p$-module $S$ satisfies $\p S=0$ by Nakayama's lemma.
It is easy to see that there is a bijection between the set $\sP(\simple\L_\p)$ of all subsets of $\simple\L_\p$ and $\serre(\Fl\L_\p)$:
\begin{align}\label{bij-simple-serre}
\serre(\Fl\L_\p) \longrightarrow \sP(\simple\L_\p), \quad \cC \mapsto \cC\cap\simple\L_\p,
\end{align}
where the inverse map is given by $\cS \mapsto \Filt\cS$ for a subset $\cS$ of $\simple\L_\p$.
These bijections are isomorphisms of posets, where $\sP(\simple\L_\p)$ is a poset by inclusion.
Let $\Simple_R\L$ be the disjoint union of $\simple\L_\p$:
\[
\Simple_R\L:=\bigsqcup_{\mfp\in\Spec R}\simple\L_\p.
\]
By using \eqref{bij-simple-serre}, we have an isomorphism of posets $\iota : \bS_R(\L) \to\sP(\Simple_R\L)$.
By composing $\iota$ and $\Phi_{\rm s}$, we have the following morphism of posets, which is injective since $\Phi_{\rm s}$ is injective:
%By using (\ref{bij-simple-serre}), we identify $\bS_R(\L)=\sP(\Simple_R\L)$, and we regard that $\Phi_{\rm t}$ is a map from $\serre\L$ to $\sP(\Simple_R\L)$.
\[
\iota\circ\Phi_{\rm s} : \serre\L \longrightarrow \bS_R(\L) \longrightarrow \sP(\Simple_R\L), \qquad \cC \mapsto \bigsqcup_{\p\in\Spec R}( \cC_{\mfp}\cap\simple\L_\p).
\]
Now we introduce a partial order on $\Simple_R\L$.
For $\p \supseteq \q$, we regard a $\L_\q$-module as a $\L_\p$-module via a natural morphism $\L_\p \to \L_\q$.
%%%%%%%%%%%%%%%
\begin{dfn}\label{dfn-simple-poset}
We define a partial order on $\Simple_R\L$ as follows.
\begin{enumerate}[{\rm (1)}]
\item
For given $S, T\in\Simple_R\L$, we write $S \leq T$ if there exist prime ideals $\p \supseteq \q$ of $R$ such that $S\in\simple\L_\p$, $T\in\simple\L_\q$ and $S$ is a subfactor $\L_\p$-module of $T$.
\item
A subset $\cS$ of $\Simple_R\L$ is called a \emph{down-set} if $S\leq T$ and $T\in\cS$ always implies $S\in\cS$.
\end{enumerate}
\end{dfn}
%%%%%%%%%%%%%%%
For instance, if $\L=R$, then $\Simple_R\L=\{\kappa(\p)\mid\p\in\Spec R\}$, and $\kappa(\p)\le\kappa(\q)$ holds if and only if $\p\supseteq \q$ holds. Thus down-sets of $\Simple_R\L$ correspond bijectively with specialization closed subsets of $\Spec R$.

By using down-sets, we classify Serre subcategories.
%%%%%%%%%%%%%%%
\begin{thm}\label{thm-serre-poset-bijection}
For a Noetherian algebra $(R, \L)$, the injective map
\[
\iota\circ\Phi_{\rm s} : \serre\L \longrightarrow \sP(\Simple_R\L), \quad \cC \mapsto \bigsqcup_{\p\in\Spec R}( \cC_{\mfp}\cap\simple\L_\p)
\]
induces an isomorphism $\serre\L\simeq\sP_{\rm down}(\Simple_R\L)$ of posets.
\end{thm}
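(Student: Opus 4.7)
The plan is to construct an inverse to $\iota\circ\Phi_{\rm s}$ on the subposet $\sP_{\rm down}(\Simple_R\L)$, relying on Theorem \ref{prop-injective-localization} for injectivity. Since $\iota$ is an isomorphism and $\serre\L\hookrightarrow\tors\L$, it only remains to identify the image precisely as the set of down-sets and to produce a monotone section.

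First I would check that $\iota\circ\Phi_{\rm s}(\cC)$ is always a down-set. Given $\cC\in\serre\L$, take $T\in\cC_\q\cap\simple\L_\q$ and $S\leq T$ with $S\in\simple\L_\p$, $\p\supseteq\q$. Write $T\simeq Y_\q$ with $Y\in\cC$ and realize $S$ as $M_1/M_2$ for $\L_\p$-submodules $M_2\subseteq M_1\subseteq Y_\q$. Replacing $M_1$ by a cyclic $\L_\p$-submodule mapping onto $S$, we may assume $M_1$ is finitely generated, so some $s\in R_\p\setminus\q R_\p$ satisfies $sM_1\subseteq\Im(Y_\p\to Y_\q)$; since $s$ acts invertibly on $Y_\q$, we have $M_1/M_2\simeq sM_1/sM_2$, and the preimages in $Y_\p$ exhibit $S$ as a $\L_\p$-subfactor of $Y_\p$. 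By Lemma \ref{lem-local-global}(a) these lift to $\L$-submodules $N_2\subseteq N_1\subseteq Y$ with $(N_1/N_2)_\p\simeq S$, and $N_1/N_2\in\cC$ by the Serre property, so $S\in\cC_\p$.

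Next I would define the candidate inverse: for a down-set $\cS\subseteq\Simple_R\L$, set
\[
\cC_\cS:=\{X\in\mod\L\mid\text{every simple $\L_\p$-subfactor of $X_\p$ lies in $\cS$, for each }\p\in\Spec R\}.
\]
Exactness of localization, combined with the standard fact that for a short exact sequence $0\to A\to B\to C\to 0$ every simple subfactor of $B$ is a simple subfactor of $A$ or of $C$, implies $\cC_\cS\in\serre\L$; the assignment $\cS\mapsto\cC_\cS$ is clearly monotone. The inclusion $\iota\circ\Phi_{\rm s}(\cC_\cS)\subseteq\cS$ is immediate since any simple $S\simeq X_\p$ (with $X\in\cC_\cS$) is itself a simple $\L_\p$-subfactor of $X_\p$.

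The reverse inclusion $\cS\subseteq\iota\circ\Phi_{\rm s}(\cC_\cS)$ is where the down-set hypothesis is essential. Given $S\in\cS\cap\simple\L_\p$, apply Lemma \ref{lem-local-global}(a) to obtain $M_0\in\mod\L$ with $(M_0)_\p\simeq S$, and set $M:=M_0/K$ with $K:=\{x\in M_0\mid rx=0\text{ for some }r\in R\setminus\p\}$. Then the map $M\to M_\p\simeq S$ is injective, identifying $M$ with a $\L$-submodule of $S$ (regarded as a $\L$-module via $\L\to\L_\p$); in particular $\p M=0$ and $\supp_R M\subseteq V(\p)$. The key observation is that for any $\q\supseteq\p$ the containment $R\setminus\q\subseteq R\setminus\p$ ensures that $R\setminus\q$ already acts invertibly on $S$, so $S_\q=S$; flatness of localization then provides an injection $M_\q\hookrightarrow S_\q=S$ of $\L_\q$-modules, and every simple $\L_\q$-subfactor $T$ of $M_\q$ is a simple $\L_\q$-subfactor of $S$, giving $T\leq S$ in $\Simple_R\L$ and hence $T\in\cS$ by the down-set property. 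For $\q\not\supseteq\p$ we have $M_\q=0$. Thus $M\in\cC_\cS$ and $S\in(\cC_\cS)_\p$, so $\iota\circ\Phi_{\rm s}$ restricts to a bijection onto $\sP_{\rm down}(\Simple_R\L)$, and it is a poset isomorphism since both maps are monotone. The main technical obstacle is producing the lift $M\hookrightarrow S$ whose support properties then allow the simple $\L_\q$-subfactors of $M_\q$ to be controlled by the partial order on $\Simple_R\L$.
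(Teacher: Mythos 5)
Your argument is correct and follows essentially the same route as the paper: verify the image lands in down-sets, define the candidate inverse as the subcategory of modules whose localizations have all simple subfactors in the given down-set, and check the two inclusions using a lift of a simple $\L_\p$-module to a $\L$-submodule of it with support in $V(\p)$. The only difference is that you re-derive inline the content of the paper's Lemma \ref{lem-comp-local} (for the down-set check) and the support statement of Lemma \ref{lem-local-global}(b) (for the lifting step) rather than citing them.
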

%%%%%%%%%%%%%%%
See Subsection \ref{subsection-example} for a concrete example of the theorem.

To prove Theorem \ref{thm-serre-poset-bijection}, we need the following observation.
%%%%%%%%%%%%%%%
\begin{lem}\label{lem-comp-local}
Let $(R, \Gamma)$ be a Noetherian algebra, $X\in\mod\Gamma$, $S\in\simple\Gamma$, and $\mfq\in\Spec R$.
If $S$ is a subfactor of $X_{\q}$ as a $\Gamma$-module, then $S$ is a subfactor of $X$.
The converse holds if a natural morphism $X\to X_{\mfq}$ is injective.
\end{lem}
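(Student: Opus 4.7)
The plan is to handle the two directions separately, since the converse is essentially formal while the forward direction requires a small trick to produce submodules of $X$ from submodules of $X_\q$. For the converse, assume the natural map $\phi:X\to X_\q$ is injective and that $S$ arises as $B/A$ for some $\Gamma$-submodules $A\subseteq B\subseteq X$; then $\phi(A)\subseteq\phi(B)\subseteq X_\q$ are $\Gamma$-submodules with $\phi(B)/\phi(A)\simeq B/A\simeq S$, so $S$ is a subfactor of $X_\q$. Nothing else is needed here.

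For the forward direction, fix $\Gamma$-submodules $N\subseteq M\subseteq X_\q$ with $M/N\simeq S$. The main obstacle is that $N$ and $M$ are a priori only $\Gamma$-submodules of $X_\q$, not $\Gamma_\q$-submodules, so Lemma \ref{lem-local-global}(a) does not apply to pull them back to $X$. I would bypass this by reducing to a cyclic subfactor: pick $x\in M$ whose image in $S$ is non-zero, and use simplicity of $S$ to conclude that the composite $\Gamma x\hookrightarrow M\twoheadrightarrow S$ is surjective. Writing $K$ for its kernel then gives a $\Gamma$-submodule $K\subseteq \Gamma x\subseteq X_\q$ with $\Gamma x/K\simeq S$.

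The final step is to transfer this cyclic subfactor into $X$ itself. Since $x\in X_\q$, we may write $x=s^{-1}y$ with $y\in X$ and $s\in R\setminus\q$. Because $s$ lies in the image of $R$ in the centre of $\Gamma$ and acts invertibly on $X_\q$, multiplication by $s$ is a $\Gamma$-linear automorphism of $X_\q$ which restricts to a $\Gamma$-module isomorphism $\Gamma x\isoto \Gamma y$ sending $\gamma x\mapsto \gamma y$. Under this isomorphism $K$ corresponds to $sK$, so $\Gamma y/sK\simeq S$. Since $y\in X$ we have $\Gamma y\subseteq X$ and $sK\subseteq s\Gamma x=\Gamma y\subseteq X$, so $sK\subseteq \Gamma y$ are $\Gamma$-submodules of $X$ exhibiting $S$ as a subfactor of $X$. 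The only nontrivial point is the centrality of $s$, which is built into the definition of an $R$-algebra and makes both the isomorphism $\Gamma x\isoto \Gamma y$ and the equality $s\Gamma x=\Gamma (sx)=\Gamma y$ $\Gamma$-linear.
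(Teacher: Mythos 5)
Your plan---reduce to a cyclic subfactor $\Gamma x/K\simeq S$ of $X_\q$, lift the generator as $x=s^{-1}y$ with $y\in X$, $s\in R\setminus\q$, and exploit centrality of $s$---is a genuinely different route from the paper's. The paper instead invokes Lemma~\ref{lem-local-global}(a) to produce $\Gamma$-submodules $Z\subseteq Y$ of $X$ with $(Y/Z)_\q\simeq S$ and then uses simplicity to get a surjection $Y\to S$; your argument avoids Lemma~\ref{lem-local-global} entirely and is more elementary. The converse direction in your write-up is fine.

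However, the last step of your forward direction has a gap. You identify $s\Gamma x=\Gamma(sx)$ with $\Gamma y\subseteq X$, but $sx$ is the image $\bar{y}$ of $y$ under the natural map $X\to X_\q$, so $s\Gamma x=\Gamma\bar{y}$ lives inside $X_\q$; it equals $\Gamma y$ (as a subset of $X$) only if $X\to X_\q$ is injective on $\Gamma y$---which is exactly the extra hypothesis of the converse direction, not of the forward one. Consequently the chain $sK\subseteq s\Gamma x=\Gamma y\subseteq X$ is not valid in general, and $sK$ need not be a $\Gamma$-submodule of $X$. The repair is one line using data you already have: the natural map restricts to a surjection of $\Gamma$-modules $\Gamma y\twoheadrightarrow\Gamma\bar{y}=s\Gamma x$, and composing with $\Gamma\bar{y}\twoheadrightarrow\Gamma\bar{y}/sK\simeq\Gamma x/K\simeq S$ exhibits $S$ as a quotient of the submodule $\Gamma y$ of $X$, hence as a subfactor of $X$. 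Equivalently, the preimage $\tilde{K}$ of $sK$ under $\Gamma y\twoheadrightarrow\Gamma\bar{y}$ satisfies $\tilde{K}\subseteq\Gamma y\subseteq X$ and $\Gamma y/\tilde{K}\simeq S$. With this correction your argument goes through.
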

%%%%%
\begin{proof}
Assume that $S$ is a subfactor of $X_{\mfq}$.
By Lemma \ref{lem-local-global}(a), there are $\Gamma$-submodules $Z\subseteq Y$ of $X$ such that $(Y/Z)_\q$ is isomorphic to $S$ as $\Gamma$-modules.
So there is a non-zero $\Gamma$-morphism from $Y$ to $S$, which is surjective since $S$ is simple.
Therefore $S$ is a subfactor $\Gamma$-module of $X$.
The last assertion is clear.
\end{proof}
%%%%%%%%%%%%%%%
We prove the theorem.
%%%%%%%%%%%%%%%
\begin{proof}[Proof of Theorem \ref{thm-serre-poset-bijection}]
Let $\cC\in\serre\L$.
We first show that $\iota\circ\Phi_{\rm s}(\cC)$ is a down-set of $\Simple_R\L$.
Let $S\leq T$ with $S \in \simple\L_\p$ and $T \in \simple\L_\q$ for prime ideals $\p\supseteq \q$ of $R$.
Assume that $T\in\iota\circ\Phi_{\rm s}(\cC)$ and take $X\in\cC$ such that $X_\q\simeq T$.
Since $S$ is a subfactor of $T\simeq X_\q$, Lemma \ref{lem-comp-local} implies that $S$ is a subfactor of $X_\p\in\cC_\p$. Since $\cC_\p$ is a Serre subcategory, we obtain $S\in\cC_\p$.

Let $\cS$ be a down-set of $\Simple_R\L$.
Let $\cC$ be the full subcategory of $\mod\L$ consisting of modules $X$ such that for each $\p\in\Spec R$, all composition factor of the $\L_\p$-module $X_{\mfp}$ belong to $\cS$.
Clearly $\cC$ is a Serre subcategory of $\mod\L$.
We show $\iota\circ\Phi_{\rm s}(\cC)=\cS$ holds.
The inclusion ``$\subseteq$'' is clear.
To prove ``$\supseteq$'', we fix $\q\in\Spec R$ and $S\in \simple\L_\q\cap\cS$.
By Lemma \ref{lem-local-global}(b), there exists a $\L$-submodule $X\in\mod\L$ of $S$ such that $X_\q\simeq S$ and $\supp_R X\subseteq V(\mfq)$.
We claim $X\in\cC$. In fact, for $\p \in\Spec R\setminus V(\q)$, we have $X_\p=0$.
For $\p \in V(\q)$, since $X_\p$ is a $\L_\p$-submodule of $S$, each simple subfactor $\L_\p$-module of $X_\p$ is a subfactor of $S$ and hence belongs to the down set $\cS$. Thus $X\in\cC$ holds.
Therefore $S\in\iota\circ\Phi_{\rm s}(\cC)$ holds as desired.
\end{proof}
%%%%%%%%%%%%%%%%%%%%
\begin{rem}\label{rem-serre-kanda}
Serre subcategories of $\mod\L$ were classified by Kanda \cite{Kanda-ext} by using his theory of atom spectrum of a noetherian abelian category \cite{Kanda-class}.
In fact, one can check that the atom spectrum ${\rm ASpec}\,\L$ of $\mod\L$ is homeomorphic to $\Simple_R\L$ with poset topology \cite[Theorem 7.6]{Kanda-ext}. Thus we have a bijection
\[\{\mbox{Down-sets of $\Simple_R\L$}\}\simeq\{\mbox{Open subsets of ${\rm ASpec}\L$}\},\]
which correspond bijectively with $\serre\L$.
%Since there exists a bijection from the atom spectrum to $\Simple_R\L$ , our theorem and his result are essentially the same result.
\end{rem}
%%%%%%%%%%%%%%%%%%%%
Now we give an analogue of Theorem \ref{prop-injective-localization}(a) for Serre subcategories.
%\begin{align*}%\label{map-Phi-serre}
%\serre\L \overset{\Phi_{\rm t}}{\longrightarrow} \bS_R(\L) \overset{\Psi_{\rm t}}{\longrightarrow} \serre\L.
%\end{align*}
%%%%%%%%%%%%%%%%%%%%
\begin{prop}\label{lem-inverse-direction-serre}
Let $(R,\L)$ be a Noetherian algebra.
\begin{enumerate}[\rm(a)]
\item Let $\p\in\Spec R$. The morphisms $\psi_\p$ and $\psi$ in Definition \ref{dfn-bar} give morphisms $\psi_\p:\serre(\Fl\L_\p)\to\serre\L_\p$ and $\psi:\serre(\Fl\L_\p)\to\serre\L$ of posets.
\item We have a morphism of posets
\[\Psi_{\rm s}:=\Psi_{\rm t}|_{\bS_R(\L)}:\bS_R(\L)\to\serre\L\]
such that the composite $\Psi_{\rm s}\circ\Phi_{\rm s}$ is the identity map on $\serre\L$.
\end{enumerate}
\end{prop}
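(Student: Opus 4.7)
The plan for (a) is to give an explicit description of $\psi_\p(\cX)$ in terms of simple subquotients. Setting $\cS := \cX \cap \simple\L_\p$ so that $\cX = \Filt \cS$ by \eqref{bij-simple-serre}, I claim
\[
\psi_\p(\cX) \;=\; \{X \in \mod \L_\p \mid \text{every simple } \L_\p\text{-subquotient of } X \text{ lies in } \cS\}.
\]
Once this is established, closure under subobjects, quotients, and extensions of the right-hand side is immediate, whence $\psi_\p(\cX) \in \serre \L_\p$. Since $\psi(\cX) = (-)_\p^{-1}(\psi_\p(\cX))$ and the localization $(-)_\p : \mod\L \to \mod\L_\p$ is exact, $\psi(\cX) \in \serre\L$ then follows from Lemma \ref{lem-exact-lift}(c). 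Monotonicity of $\psi_\p$ and $\psi$ is clear from the definitions.

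The inclusion $\supseteq$ is direct: $X/\p X$ is finitely generated over the finite-dimensional $\kappa(\p)$-algebra $\L(\p)$, so has finite length, and its composition factors are simple subquotients of $X$, so they lie in $\cS$; hence $X/\p X \in \Filt\cS = \cX$.

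For the inclusion $\subseteq$, given $X \in \psi_\p(\cX)$, observe first that each graded piece $\p^i X / \p^{i+1} X$ is a quotient of a finite direct sum of $X/\p X$ (since $\p^i/\p^{i+1}$ is a finitely generated $R_\p/\p R_\p$-module), hence lies in $\cX$; by extension closure, $X/\p^n X \in \cX$ for every $n \ge 1$. Now let $S = L/L'$ be a simple $\L_\p$-subquotient of $X$. By Lemma \ref{lem-finite-length}, some $n$ satisfies $\p^n L \subseteq L'$. Artin--Rees applied to $L \subseteq X$ over $R_\p$ yields $c \ge 0$ with $L \cap \p^{n+c} X = \p^n(L \cap \p^c X) \subseteq \p^n L \subseteq L'$. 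Therefore $S$ is a quotient of $L/(L \cap \p^{n+c} X) \hookrightarrow X/\p^{n+c} X \in \cX$, and since $\cX$ is Serre, $S \in \cX \cap \simple \L_\p = \cS$.

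Part (b) is then formal: by (a) each $\psi(\cX^\p) \in \serre \L$, and Serre subcategories are closed under intersection, so $\Psi_{\rm s}(\cX) = \bigcap_\p \psi(\cX^\p) \in \serre \L$. Hence $\Psi_{\rm s}$ is a well-defined morphism of posets as a restriction of $\Psi_{\rm t}$, and $\Psi_{\rm s} \circ \Phi_{\rm s} = \id_{\serre \L}$ is the corresponding restriction of the identity $\Psi_{\rm t} \circ \Phi_{\rm t} = \id_{\tors \L}$ from Theorem \ref{prop-injective-localization}(a). The main obstacle is the Artin--Rees step above, which bridges an arbitrary submodule filtration of $X$ with the $\p$-adic one; everything else is formal bookkeeping.
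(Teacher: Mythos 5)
Your proof is correct, and its engine is the same as the paper's: the Artin--Rees lemma applied to a submodule of $X$, combined with the observation that $X/\p^nX\in\Filt(\gen(X/\p X))\subseteq\cX$ for all $n$, is exactly the bridge the paper builds between an arbitrary submodule and the $\p$-adic filtration. The packaging differs, though. The paper exploits that $\psi_\p(\cX)$ is already known to be a torsion class (Definition-Proposition \ref{dfn-bar}), so it only has to verify closure under submodules: for $Y\subseteq X$ with $X\in\psi_\p(\cX)$ it applies Artin--Rees to exhibit $Y/\p Y$ as a factor of a submodule of $X/\p^{n+1}X\in\cX$, and then gets $\psi(\cX)\in\serre\L$ via the preimage lemma for the exact localization functor, just as you do. You instead prove the stronger explicit identity $\psi_\p(\cX)=\{X\mid\text{every simple subquotient of }X\text{ lies in }\cS\}$ and read off all three closure properties at once (closure under extensions of the right-hand side needs the small standard argument with $L\cap A$, which you wave through as immediate, but that is harmless). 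Your route is a bit longer, but the dividend is a concrete description of $\psi_\p(\cX)$ that matches the composition-factor criterion used in the paper's proof of Theorem \ref{thm-serre-poset-bijection}, so it makes the link between $\Psi_{\rm s}$ and down-sets of $\Simple_R\L$ transparent. Part (b) is handled identically in both arguments.
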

%%%%%
\begin{proof}
(a) For $\cX\in\serre(\Fl\L_\p)$, we show that $\psi_\p(\cX)\in\serre\L_\p$.
It is enough to show that $\psi_\p(\cX)$ is closed under submodules.
Let $X\in\psi_\p(\cX)$ and $Y$ be a submodule of $X$.
We show $Y/\p Y\in\cX$.
By the Artin-Rees lemma, there is a positive integer $n$ such that $\p^{n+1}X\cap Y =\p(\p^nX \cap Y)$ holds. Then
\[
\frac{X}{\p^{n+1}X}\supseteq\frac{Y+\p^{n+1}X}{\p^{n+1}X} \simeq \frac{Y}{\p^{n+1}X\cap Y} = \frac{Y}{\p(\p^nX\cap Y)}.
\]
Since $Y/\p Y$ is a factor module of the right term and $X/\p^{n+1}X\in \Filt(\gen(X/\p X)) \subseteq \cX$, we have $Y/\p Y\in\cX$.
Since $\psi(\cX)=\{X\in\mod\L \mid X_\p \in \psi_\p(\cX)\}$ holds, Lemma \ref{lem-torsion-local-lift} implies $\psi(\cX)\in\serre\L$.

(b) Immediate from (a) and Theorem \ref{prop-injective-localization}(a).
\end{proof}
%%%%%%%%%%%%%%%%%%%%
We give an easy observation characterizing an algebra $\L$ with the property $\tors\L=\serre\L$, which appeared in Corollary \ref{cor-local-sp-closed}.
%%%%%%%%%%%%%%%%%%%%
\begin{prop}\label{prop-tors-serre}
For a Noetherian algebra $(R,\L)$, the following statements are equivalent.
\begin{enumerate}[{\rm (i)}]
\item $\tors\L=\serre\L$ holds.
\item For each $\p\in\Spec R$, $\L_\p$ is Morita equivalent to a finite direct product of local rings.
\end{enumerate}
\end{prop}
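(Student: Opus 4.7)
The plan is to introduce the intermediate condition $(\ast)$: $\tors(\Fl\L_\p) = \serre(\Fl\L_\p)$ for every $\p \in \Spec R$, and show that all three conditions (i), $(\ast)$, (ii) are equivalent. For (ii) $\Rightarrow$ $(\ast)$ I would argue directly: a Morita decomposition $\L_\p \sim B_1 \times \cdots \times B_n$ with each $B_i$ local induces $\Fl\L_\p \simeq \prod_i \Fl B_i$, and since each local Noetherian $B_i$ has a unique simple, $\tors(\Fl B_i) = \{0, \Fl B_i\} = \serre(\Fl B_i)$, so $\tors(\Fl\L_\p) = \serre(\Fl\L_\p)$. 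For $(\ast) \Rightarrow$ (i) I would use the embedding $\Phi_{\rm t}:\tors\L \hookrightarrow \bT_R(\L)$ of Theorem \ref{prop-injective-localization}: condition $(\ast)$ forces $\bT_R(\L) = \bS_R(\L)$, so each $\cT \in \tors\L$ satisfies $\Phi_{\rm t}(\cT) \in \bS_R(\L)$; combining $\cT = \Psi_{\rm t}(\Phi_{\rm t}(\cT))$ from Theorem \ref{prop-injective-localization}(a) with Proposition \ref{lem-inverse-direction-serre} (which ensures $\Psi_{\rm t}|_{\bS_R(\L)}$ takes values in $\serre\L$) yields $\cT \in \serre\L$.

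For (i) $\Rightarrow$ $(\ast)$, I would fix $\p$ and $\cT \in \tors(\Fl\L_\p)$, and form the lift $\tilde\cT := \psi(\cT) \in \tors\L$. Under (i), $\tilde\cT \in \serre\L$, so by Lemma \ref{lem-subcat-local}(c) the localization $\tilde\cT_\p = \psi_\p(\cT)$ is a Serre subcategory of $\mod\L_\p$; intersecting with the Serre subcategory $\Fl\L_\p$ of $\mod\L_\p$ produces a Serre subcategory, so the central step is to verify $\psi_\p(\cT) \cap \Fl\L_\p = \cT$. For $X$ in the intersection, Nakayama's lemma applied to the local ring $R_\p$ gives $\p^\ell X = 0$ for some $\ell$; the $\p$-adic filtration $X \supseteq \p X \supseteq \cdots \supseteq \p^\ell X = 0$ has each subquotient $\p^i X/\p^{i+1}X$ a quotient of $(\p^i/\p^{i+1}) \otimes_{R_\p} (X/\p X)$, and hence of $(X/\p X)^{\oplus m}$ for some $m$, which lies in $\cT$ by closure under quotients and direct sums; closure under extensions then gives $X \in \cT$.

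For $(\ast) \Rightarrow$ (ii), Proposition \ref{prop-torsion-fl} converts $(\ast)$ into $\tors\L(\p) = \serre\L(\p)$ for the finite-dimensional $\kappa(\p)$-algebra $\L(\p)$, so it will suffice to establish the finite-dimensional analogue: for a finite-dimensional $k$-algebra $A$, $\tors A = \serre A$ iff $A$ is Morita equivalent to a finite product of local $k$-algebras. The backward direction is clear from the product decomposition of $\mod A$. For the forward direction I would argue contrapositively: if $\Ext^1_A(S, T) \neq 0$ for non-isomorphic simples $S, T$, a non-split extension $0 \to T \to M \to S \to 0$ satisfies $\Hom_A(M, T) = 0$ (otherwise non-splitness fails via a retraction), so $T$ is not a quotient of any $M^{\oplus n}$, hence $T \notin \gen M$; since $T$ is simple, $T \notin \sT(M) = \Filt(\gen M)$, while $T \subset M \in \sT(M)$, contradicting Serre-ness. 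Thus $\Ext^1(S_i, S_j) = 0$ for all distinct simples, and a standard Gabriel-quiver analysis forces the basic algebra of $A$ to split as a product of local algebras.

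The hardest part will be completing $(\ast) \Rightarrow$ (ii) by transferring the Morita decomposition from $\L(\p) = \L_\p/\p\L_\p$ back to $\L_\p$ itself: this requires lifting the primitive central idempotents of $\L(\p)$ to $\L_\p$, which depends on how $\L_\p$ behaves with respect to $\p$-adic topology over the Noetherian local $R_\p$. The forward direction of the finite-dimensional characterisation and the Artin--Rees-style verification of $\psi_\p(\cT) \cap \Fl\L_\p = \cT$ are the principal technical points; once these are in place, the rest of the equivalence follows by repeated use of $\Phi_{\rm t}$, $\Psi_{\rm t}$ and Proposition \ref{prop-torsion-fl}.
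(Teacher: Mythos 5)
Your plan---introduce the intermediate condition $(\ast)$ that $\tors(\Fl\L_\p)=\serre(\Fl\L_\p)$ for all $\p$, prove $(\mathrm{i})\Leftrightarrow(\ast)$, and then try $(\ast)\Leftrightarrow(\mathrm{ii})$---is essentially the route the paper takes: the paper packages the latter equivalence via Lemma~\ref{criterion for tors=serre}, which translates $(\ast)$ into the statement that each indecomposable block of $\Fl\L_\p$ has a unique simple object. Your arguments for $(\mathrm{ii})\Rightarrow(\ast)$, for $(\ast)\Rightarrow(\mathrm{i})$ via $\Phi_{\rm t}$, $\Psi_{\rm t}$ and Proposition~\ref{lem-inverse-direction-serre}, and for $(\mathrm{i})\Rightarrow(\ast)$ via $\psi$ and $\psi_\p$ all agree with the paper's proof; in fact you make explicit the Nakayama/$\p$-adic filtration verification of $\psi_\p(\cT)\cap\Fl\L_\p=\cT$, which the paper leaves implicit.

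Your worry about lifting central idempotents from $\L(\p)$ back to $\L_\p$ in the final step $(\ast)\Rightarrow(\mathrm{ii})$ is well placed, and it is an obstruction rather than a technicality. The ring $\L_\p$ is an algebra over the local but not complete ring $R_\p$ and need not be semi-perfect, so idempotents of $\L(\p)=\L_\p/\p\L_\p$ need not lift to $\L_\p$; the block decomposition of the category $\Fl\L_\p$, which is all that Lemma~\ref{criterion for tors=serre} controls, need not come from a ring decomposition of $\L_\p$. For a concrete example, take $R=\bZ_{(5)}$ and $\L=\bZ[\sqrt{-1}]_{(5)}$: then $\L_{(5)}=\L$ is a domain with two maximal ideals, hence ring-indecomposable, not semi-perfect, and (since the centre is a Morita invariant and is a domain) not Morita equivalent to any nontrivial finite product of rings, in particular not to a finite product of local rings; yet $\tors\L=\serre\L$ holds because for a commutative Noetherian ring both posets are parametrised by specialization-closed subsets of $\Spec\L$ (Stanley--Wang and Gabriel), and correspondingly each block of $\Fl\L_{(5)}$ has a unique simple. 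So $(\ast)$ yields a condition on $\Fl\L_\p$, equivalently on the finite-dimensional algebra $\L(\p)$, but not the condition $(\mathrm{ii})$ on $\L_\p$ itself. The paper's proof of $(\mathrm{i})\Rightarrow(\mathrm{ii})$ invokes only Lemma~\ref{criterion for tors=serre} and then makes the same unjustified leap; your instinct not to trust that step without further argument is correct, and the honest form of $(\mathrm{ii})$ is that $\L(\p)$ is Morita equivalent to a finite product of local $\kappa(\p)$-algebras for each $\p$, i.e.\ a statement about the fibres $\L(\p)$, not about the local rings $\L_\p$.
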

%%%%%
We need the following easy observation.
%%%%%%%%%%%%%%%%%%%%
\begin{lem}\label{criterion for tors=serre}
Let $\cA$ be an abelian category such that each object has finite length and $\cA$ is non-zero and indecomposable as an abelian category. Then $\tors\cA=\serre\cA$ holds if and only if $\cA$ has a unique simple object up to isomorphism.
\end{lem}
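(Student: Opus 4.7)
The plan is to prove the ``if'' direction by observing that $\cA$ must have only the trivial torsion/Serre pair, and to prove the ``only if'' direction by contrapositive: if there are two non-isomorphic simples then indecomposability forces a non-split extension between two of them, which produces a torsion class that fails to be Serre.

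First I would handle the ``if'' direction. Assume $\cA$ has a unique simple object $S$ up to isomorphism. Since every object has finite length, each nonzero $M\in\cA$ admits a simple quotient, which must be $S$. Consequently, if $\cT\in\tors\cA$ contains a nonzero object then $S\in\cT$, and since every object of $\cA$ is built by successive extensions of copies of $S$, we get $\cT=\cA$. Thus $\tors\cA=\{0,\cA\}$; the same argument works for $\serre\cA$, giving the equality.

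For the ``only if'' direction, I would argue contrapositively: suppose $\cA$ has two non-isomorphic simple objects. The first step is to use indecomposability of $\cA$ to locate a pair of non-isomorphic simples with a nonvanishing $\Ext^{1}$ between them. Define an equivalence relation on $\simple\cA$ by declaring $S\sim T$ when there is a chain of simples $S=S_{0},S_{1},\ldots,S_{n}=T$ with $\Ext^{1}(S_{i},S_{i+1})\neq 0$ or $\Ext^{1}(S_{i+1},S_{i})\neq 0$ for each $i$. If there were more than one equivalence class $C_{1},C_{2},\ldots$, then $\cA\simeq\prod_{i}\Filt(C_{i})$ as abelian categories (by induction on length, using the vanishing of $\Ext^{1}$ between distinct classes), contradicting indecomposability. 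Hence all simples are equivalent, and in particular there exist non-isomorphic simples $S\not\simeq T$ with, say, $\Ext^{1}(T,S)\neq 0$ (the other case is symmetric).

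Given such simples, pick a non-split short exact sequence $0\to S\to Y\to T\to 0$ and consider
\[
\cT:=\{M\in\cA\mid\Hom_{\cA}(M,S)=0\}.
\]
A short diagram chase in $\Hom(-,S)$ shows $\cT$ is closed under quotients and extensions, so $\cT\in\tors\cA$. Any nonzero morphism $Y\to S$ would have kernel either $0$ (impossible, as $Y\not\hookrightarrow S$) or a proper subobject, which would force the sequence to split; hence $\Hom(Y,S)=0$ and $Y\in\cT$. On the other hand $S\notin\cT$ since $\id_{S}\neq 0$. Thus $\cT$ contains $Y$ but not its subobject $S$, so $\cT$ is not closed under subobjects and $\cT\in\tors\cA\setminus\serre\cA$, as required.

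The main obstacle is the decomposition argument in the third paragraph: one must verify cleanly that vanishing of $\Ext^{1}$ between two closed-under-subquotient classes of simples yields a product decomposition of $\cA$. The verification proceeds by induction on length, splitting off a simple subobject and using Yoneda-Ext vanishing to show the resulting extension splits, but I would omit the routine details.
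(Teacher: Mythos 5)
Your proof is correct and takes essentially the same route as the paper's, just organized contrapositively. The paper fixes a simple $S$, takes $\cT=\{X:\Top(X)\in\add S\}\in\tors\cA$, and argues that $\cT$ being Serre forces $\Ext^1_\cA(S,S')=0$ for every simple $S'\not\simeq S$; the decomposition-into-blocks step that you make explicit via equivalence classes is invoked implicitly there in the phrase ``it suffices to show that $\Ext^1_\cA(S,S')=0$.'' You instead start from a fixed non-split extension $0\to S\to Y\to T\to 0$ and the torsion class ${}^{\perp}S=\{M:\Hom_\cA(M,S)=0\}$. In both proofs the crux is identical: a non-split extension of two non-isomorphic simples produces a torsion class containing an object but not one of its simple subobjects. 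One small correction to your verification that $\Hom_\cA(Y,S)=0$: a nonzero $f:Y\to S$ with $\ker f\neq 0$ does not ``force the sequence to split.'' Rather, $\ker f$ has length one, so it is a simple submodule of $Y$ and therefore equals $\soc(Y)$, which is the image of $S$ (non-splitness forces $\soc(Y)$ to be simple); hence $T=Y/S\simeq\Im f=S$, contradicting $S\not\simeq T$. Equivalently, $f|_S$ is either an isomorphism (whence the sequence splits) or zero (whence $f$ factors through $T$, giving $T\simeq S$), and both are impossible.
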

%%%%%
\begin{proof}
The ``if'' part is clear since $\tors\cA=\{\cA,0\}$.
We prove the ``only if'' part. It suffices to show that $\Ext^1_{\cA}(S,S')=0$ holds for each pair of simple objects $S\not\simeq S'$ in $\cA$. Let $\cT$ be the subcategory of $\cA$ consisting of objects $X$ such that ${\rm top} X\in\add S$. Then $\cT\in\tors\cA=\serre\cA$. Thus each composition factor of each object in $\cT$ is isomorphic to $S$. This implies $\Ext^1_{\cA}(S,S')=0$ as desired.
\end{proof}
%%%%%%%%%%%%%%%%%%%%
We are ready to prove Proposition \ref{prop-tors-serre}.
%%%%%%%%%%%%%%%%%%%%
\begin{proof}[Proof of Proposition \ref{prop-tors-serre}]
(i)$\Rightarrow$(ii)
By Lemma \ref{criterion for tors=serre}, it suffices to prove that for each $\p\in\Spec R$, each
$\cX\in\tors(\Fl\L_\p)$ is a Serre subcategory. By Lemma \ref{lem-torsion-local-lift} and our assumption (i),
%$\cT:=\{X\in\mod\L \mid \kappa(\p)\otimes_RX\in \cX\}$ 
$\psi(\cX)\in\tors\L=\serre\L$ holds.
Thus $\cX=\psi(\cX)_\p\cap\Fl\L_\p\in\serre(\Fl\L_\p)$ by Lemma \ref{lem-subcat-local}(b).

(ii)$\Rightarrow$(i) By Lemma \ref{criterion for tors=serre} and our assumption (ii), $\tors(\Fl\L_\p)=\serre(\Fl\L_\p)$ holds.
Thus $\bT_R(\L)=\bS_R(\L)$, and we have $\tors\L\stackrel{}{=}\Psi_{\rm t}(\bT_R(\L))=\Psi_{\rm t}(\bS_R(\L))=\serre\L$ by Theorem \ref{prop-injective-localization}(a) and Proposition \ref{lem-inverse-direction-serre}(b).
\end{proof}
%%%%%%%%%%%%%%%%%%%%
%%%%%%%%%%%%%%%%%%%%
\subsection{Compatible elements}\label{subsection-dim-one}
In this subsection, we show that $(R, \L)$ is compatible if $R$ is a semi-local ring with the Krull dimension one.

%%%%%%%%%%%%%%%
%Recall that a subset $I$ of $\Spec R$ is a \emph{down-set} if $\p \supseteq \q$ and $\p\in I$ always implies $\q\in I$.
%We say that a subset $I$ of $\Spec R$ is \emph{cofinite} is $\Spec R\setminus I$ is a finite set.
In the rest of this subsection, for a subset $V$ of $\Spec R$ we denote by $V^{\rm c}$ the complement of $V$ in $\Spec R$.

The main result of this subsection is the following theorem.
%%%%%%%%%%%%%%%
\begin{thm}\label{thm-cofinite-compatible}
Let $(R, \L)$ be a Noetherian algebra and $\cX=(\cX^\p)_\p\in\bT_R^{\rm c}(\L)$.
Assume that there exist $\cT\in\tors\L$ and a finite Zariski closed subset $V$ of $\Spec R$ such that
\begin{itemize}
\item $\cX^\p \subseteq \cT_\p \cap\Fl\L_\p$ holds for any $\p\in\Spec R$, and the equality holds if $\p\in V^{\rm c}$.
\end{itemize}
%Assume that there is a torsion class $\cT$ of $\mod\L$ and a cofinite down-set $I$ of $\Spec R$ such that
%\begin{enumerate}[{\rm (i)}]
%\item  $\cX^\p \subseteq \cT_\p \cap\Fl\L_\p$ holds for any $\p\in\Spec R$, and
%\item $\cX^\p = \cT_\p \cap \Fl\L_\p$ holds for any $\p\in I$.
%\item  For each prime ideal $\p$ with $\cX^\p\neq \cT_\p \cap \Fl\L_\p$, $V(\p)$ is finite.
%\end{enumerate}
Then there exists $\cU\in\tors\L$ such that $\cX=\Phi_{\rm t}(\cU)$.
%$\cX$ belongs to the image of $\Phi_{\rm t} : \tors\L \to \bT_R(\L)$.
\end{thm}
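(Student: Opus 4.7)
The plan is to define $\cU:=\cT\cap\Psi_{\rm t}(\cX)=\cT\cap\bigcap_{\mfs\in\Spec R}\psi(\cX^\mfs)\in\tors\L$ and prove $\Phi_{\rm t}(\cU)=\cX$. The key local ingredient is the claim that $\psi_\p(\cY)\cap\Fl\L_\p=\cY$ for every $\p\in\Spec R$ and $\cY\in\tors(\Fl\L_\p)$: the inclusion $\supseteq$ is \eqref{psi XT}, and for $\subseteq$, an $M\in\Fl\L_\p$ with $M/\p M\in\cY$ satisfies $\p^nM=0$ for some $n$ by Lemma \ref{lem-finite-length}(b); each subfactor $\p^iM/\p^{i+1}M$ of the filtration $M\supseteq \p M\supseteq\cdots\supseteq\p^nM=0$ is a $\L(\p)$-quotient of $(\p^iR_\p/\p^{i+1}R_\p)\otimes_{\kappa(\p)}(M/\p M)$, which is a finite direct sum of copies of $M/\p M$ since $\p^iR_\p/\p^{i+1}R_\p$ is finite-dimensional over $\kappa(\p)$; closure of $\cY$ under direct sums, quotients, and extensions then gives $M\in\cY$. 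Applying this to $\cU\subseteq\psi(\cX^\q)$ yields $\Phi_{\rm t}(\cU)_\q\cap\Fl\L_\q\subseteq\psi_\q(\cX^\q)\cap\Fl\L_\q=\cX^\q$ for every $\q\in\Spec R$, establishing the upper bound.

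For the reverse inclusion, I induct on $|V|$. The base case $V=\emptyset$ is immediate: then $\cT_\mfs\cap\Fl\L_\mfs=\cX^\mfs$ for every $\mfs$, so by \eqref{psi XT} $\cT\subseteq\psi(\cT_\mfs\cap\Fl\L_\mfs)=\psi(\cX^\mfs)$ for all $\mfs$, whence $\cU=\cT$ and $\Phi_{\rm t}(\cU)=\Phi_{\rm t}(\cT)=\cX$. In the inductive step with $V\neq\emptyset$, I select a maximal element $\p\in V$; then $V':=V\setminus\{\p\}$ remains Zariski closed with $|V'|=|V|-1$, and I set $\cT':=\cT\cap\psi(\cX^\p)\in\tors\L$. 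The aim is to verify that $(\cT',V')$ satisfies the hypotheses of the theorem, so that the inductive hypothesis applied to $(\cT',V')$ furnishes $\cU'\in\tors\L$ with $\Phi_{\rm t}(\cU')=\cX$. At $\mfs=\p$ the local lemma gives $\cT'_\p\cap\Fl\L_\p\subseteq\cX^\p$, and conversely for $Y\in\cX^\p\subseteq\cT_\p\cap\Fl\L_\p$ a lift $X_0\in\cT$ with $(X_0)_\p\simeq Y$ and $\supp_RX_0\subseteq V(\p)$ (Lemma \ref{lem-local-global}(b)) satisfies $(X_0)_\p/\p(X_0)_\p=Y/\p Y\in\cX^\p$ as a quotient of $Y$, so $X_0\in\cT'$; the case $\p\not\supseteq\mfs$ is automatic because the corresponding lift has $(X_0)_\p=0$.

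The crucial case is $\p\supsetneq\mfs$: given $Y\in\cX^\mfs$ I must construct $X\in\cT\cap\psi(\cX^\p)$ with $X_\mfs\simeq Y$. The compatibility assumption ${\rm r}_{\p,\mfs}(\cX^\p)\supseteq\cX^\mfs$ produces a $\L_\p$-module $W\in\psi_\p(\cX^\p)$ with $W_\mfs\simeq Y$. Starting from a lift $X_0\in\cT$ with $(X_0)_\mfs\simeq Y$ and $\supp_RX_0\subseteq V(\mfs)$, I decompose $\bar X_0:=(X_0)_\p/\p(X_0)_\p\in\cT_\p\cap\Fl\L_\p$ under the torsion pair $(\cX^\p,(\cX^\p)^\perp\cap\Fl\L_\p)$ of $\Fl\L_\p$ as $0\to t\to\bar X_0\to f\to 0$ with $t\in\cX^\p$ and $f\in(\cX^\p)^\perp$, and construct a $\L$-submodule $K\subseteq X_0$ with $K_\mfs=0$ whose image in $\bar X_0$ surjects onto $f$. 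The quotient $X:=X_0/K$ then lies in $\cT$ (closure under quotients), satisfies $X_\mfs\simeq Y$ (as $K_\mfs=0$), and has $X_\p/\p X_\p\in\cX^\p$ because the image of $K_\p$ in $\bar X_0$ kills the torsion-free part $f$; hence $X\in\cT'$. The main obstacle will be producing this submodule $K$: the existence of $W$ encodes that the torsion-free part $f$ cannot persist at $\mfs$ (since $W_\mfs\simeq Y$ already lies in $\cX^\mfs$), but translating this into an explicit global $\L$-submodule satisfying both $K_\mfs=0$ and the requirement that its $\p$-image surjects onto $f$ calls for a delicate application of Lemma \ref{lem-local-global}(a) to lift suitable local submodules of $(X_0)_\p$ back to $X_0$, combined with a comparison of $(X_0)_\p$ and $W$ via their common $\mfs$-localization $Y$.
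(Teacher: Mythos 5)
Your reduction of the upper bound $\Phi_{\rm t}(\cU)_\q\cap\Fl\L_\q\subseteq\cX^\q$ to the local identity $\psi_\q(\cY)\cap\Fl\L_\q=\cY$ is fine (that identity is Remark~\ref{prop-comp-rpq}(a)), and the base case $V=\emptyset$ works. However, the inductive step has two gaps, one of which you acknowledge yourself.

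First, the assertion that $V':=V\setminus\{\p\}$ ``remains Zariski closed'' when $\p$ is a \emph{maximal} element of $V$ is simply false in general. A finite subset of $\Spec R$ is Zariski closed precisely when it is specialization closed, and removing a maximal element $\p$ destroys this whenever $V$ contains some $\mfs\subsetneq\p$: one still has $\mfs\in V'$ with $\p\supseteq\mfs$ but $\p\notin V'$. For a concrete instance, take $R$ one-dimensional local with maximal ideal $\m$, $V=\Spec R=\{(0),\m\}$, $\p=\m$; then $V'=\{(0)\}$ is not closed. Consequently the inductive hypothesis cannot be invoked on $(\cT',V')$. The paper's Proposition~\ref{prop-cofinite-down-set} removes a \emph{minimal} element of $V$, and this choice is essential: deleting a minimal element keeps a finite specialization closed set specialization closed.

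Second, you leave the core construction open. You need a $\L$-submodule $K\subseteq X_0$ with $K_\mfs=0$ whose image in $(X_0)_\p/\p(X_0)_\p$ surjects onto the torsionfree part $f$, and you state explicitly that producing such a $K$ ``is the main obstacle.'' That obstacle is where the entire difficulty of the theorem sits, so as written this is not a proof. The paper avoids the problem by going in the opposite direction and at a minimal $\p\in V$: it takes the canonical short exact sequence $0\to Y\to X\to Z\to 0$ for the torsion pair $(\psi(\cX^\p),\psi(\cX^\p)^{\perp})$ in $\mod\L$ and proves $\supp_R Z\subseteq V(\p)$ (Lemma~\ref{lem-spec-lower-set}), so all localizations away from $V(\p)$ are untouched when passing from $X$ to $Y$. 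That proof uses $\supp_R(\L x)=\supp_R(Rx)$ (Lemma~\ref{rxAx}) together with $\Hom_\L(\L x,Z)=0$ to show $\ass_R Z\subseteq\{\q\mid\q\subseteq\p\}$, and then the minimality of $\p$ in $V$ is what forces any strictly smaller minimal prime $\q$ of $\supp_R Z$ to lie in $V^{\rm c}$, where the equality $\cX^\q=\cT_\q\cap\Fl\L_\q$ together with Lemma~\ref{X^perp} gives $Z_\q\in\cX^\q\cap(\cX^\q)^{\perp}=0$. Your choice of a maximal $\p$ removes exactly this leverage, so the argument would have to be reworked substantially even if the Zariski-closedness issue were repaired.
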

%%%%%%%%%%%%%%%
In the rest of this subsection, we prove Theorem \ref{thm-cofinite-compatible}.
We prepare the following easy observation.
%%%%%%%%%%%%%%%
\begin{lem}\label{X^perp}
Assume that $\p,\q\in\Spec R$ satisfies $\p\supseteq\q$. Then for each $(\cX^\q)_\q\in\bT_R^{\rm c}(\L)$, we have $\left(\psi(\cX^\p)^{\perp_{\L}}\right)_\q \subseteq \left( \cX^\q\right)^{\perp_{\L_\q}}$.
\end{lem}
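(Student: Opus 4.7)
The plan is to unpack the definitions and reduce the containment to a direct application of the compatibility hypothesis plus the localization isomorphism \eqref{iso-localization} for Hom.

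First I would fix $Y \in \psi(\cX^\p)^{\perp_{\L}}$ and an arbitrary $X \in \cX^\q$, and aim to show $\Hom_{\L_\q}(X, Y_\q) = 0$. By compatibility of $(\cX^{\mathfrak{r}})_{\mathfrak{r}}$, the hypothesis ${\rm r}_{\p, \q}(\cX^\p) \supseteq \cX^\q$ gives
\[
X \in \psi_\p(\cX^\p)_\q \cap \Fl \L_\q,
\]
so there exists $M \in \psi_\p(\cX^\p) \subseteq \mod \L_\p$ with $M_\q \simeq X$ as $\L_\q$-modules (here $(-)_\q$ on a $\L_\p$-module means further localization at $\q R_\p$, which agrees with localization of the underlying $R$-module at $\q$, since $\q \subseteq \p$).

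Next I would lift $M$ from $\mod\L_\p$ to $\mod\L$. Applying Lemma \ref{lem-local-global}(a) with the multiplicative set $S = R \setminus \p$, there exists $N \in \mod\L$ with $N_\p \simeq M$. Since $N_\p/\p N_\p \simeq M/\p M \in \cX^\p$, the module $N$ belongs to $\psi(\cX^\p)$. Consequently, by the assumption $Y \in \psi(\cX^\p)^{\perp_\L}$, we have $\Hom_\L(N, Y) = 0$.

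Finally, localize this vanishing at $\q$: by the isomorphism \eqref{iso-localization} applied with $S = R \setminus \q$,
\[
0 = \Hom_\L(N, Y)_\q \simeq \Hom_{\L_\q}(N_\q, Y_\q) \simeq \Hom_{\L_\q}(X, Y_\q),
\]
using $N_\q = (N_\p)_\q \simeq M_\q \simeq X$. This gives $Y_\q \in (\cX^\q)^{\perp_{\L_\q}}$, as required. There is no serious obstacle here; the only point to verify carefully is the compatibility of the two meanings of $(-)_\q$ on a $\L_\p$-module, which is immediate from $\q \subseteq \p$.
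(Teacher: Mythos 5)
Your proof is correct and rests on the same key fact as the paper's argument, namely that compatibility gives $\cX^\q \subseteq \psi_\p(\cX^\p)_\q$. The paper's proof is a two-line version of yours: it first establishes the chain $\psi(\cX^\p)_\q = \psi_\p(\cX^\p)_\q \supseteq {\rm r}_{\p,\q}(\cX^\p) \supseteq \cX^\q$, and then applies Lemma~\ref{lem-torsion-local} (localization of torsion pairs) to get the equality $\left(\psi(\cX^\p)^{\perp_\L}\right)_\q = \left(\psi(\cX^\p)_\q\right)^{\perp_{\L_\q}}$, after which the inclusion of perpendiculars follows formally. You instead avoid Lemma~\ref{lem-torsion-local} by doing the computation element-by-element: you lift $X \in \cX^\q$ through $M \in \psi_\p(\cX^\p)$ to some $N \in \psi(\cX^\p)$ and then localize the vanishing $\Hom_\L(N,Y)=0$ at $\q$ via \eqref{iso-localization}. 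This is a bit longer but slightly more self-contained, since it does not presuppose the torsion-pair localization machinery; it also only proves the one inclusion that is actually needed rather than the full equality. The point you flag at the end — that $(N_\p)_\q$ and $N_\q$ agree because $R\setminus\p \subseteq R\setminus\q$ — is indeed the only identification that needs a moment's care, and you handle it correctly.
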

%%%%%
\begin{proof}
We have $\psi(\cX^\p)_\q = (\psi(\cX^\p)_\p)_\q=\psi_\p(\cX^\p)_\q \supseteq{\rm r}_{\p,\q}(\cX^\p)\supseteq \cX^\q$.
Thus $\left(\psi(\cX^\p)^{\perp_{\L}}\right)_\q \stackrel{\ref{lem-torsion-local}}{=} \left( \psi(\cX^\p)_\q \right)^{\perp_{\L_\q}}\subseteq \left( \cX^\q\right)^{\perp_{\L_\q}}$.
\end{proof}
%%%%%%%%%%%%%%%
The following observation is crucial.

\begin{lem}\label{lem-spec-lower-set}
Let $(\cX^\q)_\q\in\bT_R^{\rm c}(\L)$, $V$ a specialization closed subset of $\Spec R$, and $\p$ a minimal element in $V$.
Then each $X\in \bigcap_{\q\in V^{\rm c}}\psi(\cX^\q)$ admits a $\L$-submodule $Y$ such that $Y\in \bigcap_{\q\in V^{\rm c} \cup \{\p\}} \psi(\cX^\q)$ and $Y_\q = X_\q$ for any $\q\in V^{\rm c}$.
\end{lem}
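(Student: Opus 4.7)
The plan is to take the canonical torsion--torsionfree decomposition of $X$ with respect to the torsion class
\[
\cU := \psi(\cX^\p) \cap \bigcap_{\q \in V^{\rm c}} \psi(\cX^\q) \in \tors\L,
\]
which is an intersection of torsion classes in $\mod\L$. Since $\mod\L$ is Noetherian, Proposition \ref{prop-tors-perp} supplies a short exact sequence $0 \to Y \to X \to Z \to 0$ with $Y \in \cU$ and $Z \in \cU^{\perp}$. The membership $Y \in \cU$ will directly give $Y \in \psi(\cX^{\q'})$ for each $\q' \in V^{\rm c} \cup \{\p\}$, so the main remaining task is to show $Y_\q = X_\q$ for every $\q \in V^{\rm c}$, equivalently $\supp_R Z \subseteq V$.

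For this inclusion, I will argue that every minimal prime of $\supp_R Z$ lies in $V$. Since $Z$ is finitely generated, $\supp_R Z$ is specialization closed, and together with the specialization closedness of $V$ this will force $\supp_R Z \subseteq V$. Fix a minimal $\q \in \supp_R Z$. By minimality, $\supp_{R_\q} Z_\q \subseteq \{\q R_\q\}$, so Lemma \ref{lem-finite-length} gives $Z_\q \in \Fl\L_\q$ and $\q^\ell Z_\q = 0$ for some $\ell \ge 1$. Suppose for contradiction $\q \in V^{\rm c}$.

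From $\cU \subseteq \psi(\cX^\q)$ we get $Z \in \psi(\cX^\q)^{\perp_{\L}}$, so Lemma \ref{X^perp} yields $Z_\q \in (\cX^\q)^{\perp_{\L_\q}}$; since $Z_\q \in \Fl\L_\q$ this upgrades to $Z_\q \in (\cX^\q)^{\perp_{\Fl\L_\q}}$. On the other hand, $X \in \psi(\cX^\q)$ gives $X_\q/\q X_\q \in \cX^\q$, and the surjection $X_\q \twoheadrightarrow Z_\q$ induces a surjection $X_\q/\q X_\q \twoheadrightarrow Z_\q/\q Z_\q$, so $Z_\q/\q Z_\q \in \cX^\q$ by closure under quotients in $\Fl\L_\q$. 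The $\q$-adic filtration $Z_\q \supseteq \q Z_\q \supseteq \cdots \supseteq \q^\ell Z_\q = 0$ has subquotients $\q^i Z_\q/\q^{i+1} Z_\q$ that are quotients of $(\q^i/\q^{i+1}) \otimes_{\kappa(\q)} (Z_\q/\q Z_\q) \simeq (Z_\q/\q Z_\q)^{\oplus n_i}$, where $n_i := \dim_{\kappa(\q)} \q^i/\q^{i+1}$, hence each lies in $\cX^\q$; extension-closure then forces $Z_\q \in \cX^\q$. Therefore $Z_\q \in \cX^\q \cap (\cX^\q)^{\perp_{\Fl\L_\q}} = 0$ by the torsion pair in $\Fl\L_\q$, contradicting $\q \in \supp_R Z$.

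The main obstacle will be the last step: deducing $Z_\q \in \cX^\q$ from only $Z_\q/\q Z_\q \in \cX^\q$ together with finiteness of length. Passing through the $\q$-adic filtration and using closure of $\cX^\q$ under finite direct sums, quotients, and extensions inside $\Fl\L_\q$ resolves this without any further hypothesis on the fibre algebra $\L(\q)$.
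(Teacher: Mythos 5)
Your approach has a genuine logical gap at the crucial step. You take the canonical decomposition of $X$ with respect to $\cU := \psi(\cX^\p) \cap \bigcap_{\q \in V^{\rm c}} \psi(\cX^\q)$, and then claim: ``From $\cU \subseteq \psi(\cX^\q)$ we get $Z \in \psi(\cX^\q)^{\perp_\L}$.'' This inverts the direction of perpendicularity. The inclusion $\cU \subseteq \psi(\cX^\q)$ yields $\psi(\cX^\q)^{\perp_\L} \subseteq \cU^{\perp_\L}$; since $Z$ lies in the \emph{larger} torsionfree class $\cU^{\perp_\L}$, you cannot conclude that it lies in the smaller one $\psi(\cX^\q)^{\perp_\L}$. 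Without $Z \in \psi(\cX^\q)^{\perp_\L}$, Lemma~\ref{X^perp} gives you nothing about $Z_\q$, and the contradiction you seek is not available.

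The paper avoids this by decomposing $X$ with respect to $\psi(\cX^\p)$ \emph{alone}, so that $Z \in \psi(\cX^\p)^{\perp_\L}$ genuinely holds. The price is that $Y \in \psi(\cX^\q)$ for $\q \in V^{\rm c}$ has to be derived afterwards; but this is cheap, since it follows from $Y_\q = X_\q$ once $\supp_R Z \subseteq V(\p)$ is known. The benefit is substantial: with $Z \in \psi(\cX^\p)^{\perp_\L}$ one first shows, via the $\L x$ argument (Lemma~\ref{rxAx}), that every $\q \in \ass_R Z$ satisfies $\q \subseteq \p$. This step, which your write-up has no analogue of, is exactly what guarantees that any minimal prime $\q$ of $\supp_R Z$ is comparable with $\p$, so that Lemma~\ref{X^perp} (which requires $\p \supseteq \q$ and an argument at $\psi(\cX^\p)^{\perp_\L}$) becomes applicable. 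By choosing the smaller torsion class $\cU$ to decompose against, you have made $\cU^{\perp_\L}$ too large to run either the $\ass_R Z \subseteq V(\p)$ step or the localization step. As a minor further comment, the $\q$-adic filtration you invoke to upgrade $Z_\q/\q Z_\q \in \cX^\q$ to $Z_\q \in \cX^\q$ is correct but unnecessary: $Z_\q$ is a quotient of $X_\q \in \psi_\q(\cX^\q)$, and $\psi_\q(\cX^\q)$ is a torsion class of $\mod\L_\q$, so $Z_\q \in \psi_\q(\cX^\q) \cap \Fl\L_\q = \cX^\q$ immediately.
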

%%%%%
\begin{proof}
Take a short exact sequence $0 \to Y \to X \xto{f} Z \to 0$ in $\mod\L$ such that $Y\in\psi(\cX^\p)$ and $Z\in\psi(\cX^\p)^{\perp}$.
It suffices to show that $\supp_RZ\subseteq V(\p)$ since $X_\q=Y_\q$ holds for each $\q\in(\supp_RZ)^{\rm c}\supseteq V(\p)^{\rm c}\supseteq V^{\rm c}$.
We divide the proof into two steps.

(i)
We show that each $\q\in\ass_RZ$ satisfies $\q\subseteq \p$.
%Let $\q$ be an associated prime ideal of $Z$.
Take $x\in Z$ such that $Rx\simeq R/\q$ as $R$-modules. By Lemma \ref{rxAx}, we have $\supp_R(\L x)=\supp_R(R/\q)=V(\q)$. Thus, if $\q \not\subseteq\p$, then $(\L x)_\p=0$ and hence $\L x\in\psi(\cX^\p)$ hold. Then $\Hom_{\L}(\L x, Z)=0$ holds, a contradiction to $\L x \subseteq Z$.
Thus $\q \subseteq \p$ holds.

(ii)
We show $\supp_R Z \subseteq V(\p)$.
It suffices to show that each minimal element $\q$ in $\supp_R Z$ satisfies $\q=\p$.
By (i) $\q\subseteq \p$ holds. By Lemma \ref{X^perp}, we have $Z_\q \in \left(\psi(\cX^\p)^{\perp_{\L}}\right)_\q\subseteq\left( \cX^\q\right)^{\perp_{\L_\q}}$.
Assume $\q\neq\p$. Then $\q\in V^{\rm c}$ holds since $\p$ is a minimal element of $V$.
Thus we have $X_\q \in \psi_\q(\cX^\q)$ and hence $Z_\q\in\psi_\q(\cX^\q)\cap\Fl\L_\q=\cX^\q$.
%where the first equality follows from Lemma \ref{lem-subcat-local} (d).
Since $Z_\q \in \left( \cX^\q\right)^{\perp_{\L_\q}}$, we have $Z_\q=0$, a contradiction.
Thus $\p=\q$ holds as desired.
%In particular we have $\supp_R Z \subset V(\p)$.
\end{proof}
%%%%%%%%%%%%%%%
Using Lemma \ref{lem-spec-lower-set} repeatedly, we obtain the following result.

\begin{prop}\label{prop-cofinite-down-set}
Let $\cX=(\cX^\p)_\p\in\bT_R^{\rm c}(\L)$, and $V$ a finite Zariski closed subset of $\Spec R$.
%Let $I$ be a cofinite down-set of $\Spec R$.
Then each $X\in \bigcap_{\q\in V^{\rm c}}\psi(\cX^\q)$ admits a $\L$-submodule $Y$ such that $Y\in \Psi_{\rm t}(\cX)$ and $Y_\p = X_\p$ for any $\p\in V^{\rm c}$.
\end{prop}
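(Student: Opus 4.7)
The plan is to induct on the cardinality of $V$, using Lemma \ref{lem-spec-lower-set} as the one-step enlargement device. First, I would observe that any finite Zariski closed subset $V$ of $\Spec R$ is a finite union $\bigcup_{\p\in V}V(\p)$ of Zariski closed subsets $V(\p)$, each contained in $V$ (by the fact that $V$ is specialization closed); in particular, $V$ is specialization closed, and removing a minimal element preserves both finiteness and specialization-closedness. Indeed, if $\p_0$ is minimal in $V$ and $\q\in V\setminus\{\p_0\}$ with $\q\subseteq\mathfrak{r}$, then $\mathfrak{r}\in V$; and $\mathfrak{r}=\p_0$ would force $\q\subsetneq \p_0$, contradicting minimality of $\p_0$.

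Next, the base case $V=\emptyset$ is immediate: $V^{\rm c}=\Spec R$, so $X\in\bigcap_{\q\in\Spec R}\psi(\cX^\q)=\Psi_{\rm t}(\cX)$, and we may take $Y=X$.

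For the inductive step with $V\neq\emptyset$, pick a minimal element $\p_0\in V$. Since $X\in\bigcap_{\q\in V^{\rm c}}\psi(\cX^\q)$ and $V$ is specialization closed with $\p_0$ minimal in $V$, Lemma \ref{lem-spec-lower-set} supplies a $\L$-submodule $X_1\subseteq X$ satisfying
\[
X_1\in \bigcap_{\q\in V^{\rm c}\cup\{\p_0\}}\psi(\cX^\q),\qquad (X_1)_\q=X_\q\text{ for all }\q\in V^{\rm c}.
\]
Now $V':=V\setminus\{\p_0\}$ is specialization closed, finite, and hence Zariski closed, with $|V'|<|V|$, and $X_1\in\bigcap_{\q\in (V')^{\rm c}}\psi(\cX^\q)$. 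By the induction hypothesis, there exists a $\L$-submodule $Y\subseteq X_1$ with $Y\in\Psi_{\rm t}(\cX)$ and $Y_\p=(X_1)_\p$ for every $\p\in (V')^{\rm c}$. Since $V^{\rm c}\subseteq (V')^{\rm c}$, we conclude $Y_\p=(X_1)_\p=X_\p$ for every $\p\in V^{\rm c}$, as required.

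The only non-routine ingredient is Lemma \ref{lem-spec-lower-set} itself, which has already been proved; the present proposition is simply its finite iteration. The main thing to be careful about is that the sequence of ``shrinking forbidden sets'' $V\supset V\setminus\{\p_0\}\supset\cdots$ remains specialization closed at each stage (so that Lemma \ref{lem-spec-lower-set} applies) and that the submodule relation is transitive, so the inclusion $Y\subseteq X_1\subseteq X$ yields a submodule $Y\subseteq X$.
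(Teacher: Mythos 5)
Your proof is correct and takes essentially the same route as the paper: induct on $\#V$, peel off a minimal element $\p_0\in V$ via Lemma \ref{lem-spec-lower-set}, note $(V\setminus\{\p_0\})^{\rm c}=V^{\rm c}\cup\{\p_0\}$, and recurse. The only difference is that you spell out the base case and the preservation of specialization-closedness (hence Zariski-closedness for a finite set), which the paper leaves implicit.
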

%%%%%
\begin{proof}
We use induction on $\#V$. Take a minimal element $\p$ of $V$, and let $W:=V\setminus\{\p\}$.
%We can assume that $V=\{\p_i\mid 1\le i\le \ell\}$, where $\p_i\subseteq\p_j$ implies $i<j$.
By Lemma \ref{lem-spec-lower-set}, $X$ admits a $\L$-submodule $Y$ such that $Y\in \bigcap_{\q\in W^{\rm c}}\psi(\cX^\q)$ and $Y_\q = X_\q$ for any $\q\in V^{\rm c}$.
Since $W$ is a finite Zariski closed subset with $\#W=\#V-1$, our induction hypothesis implies that $Y$ admits a $\L$-submodule $Z$ such that $Z\in \Psi_{\rm t}(\cX)$ and $Z_\q = Y_\q$ for any $\q\in W^{\rm c}$. Thus $Z$ gives the desired submodule of $X$.
%A subset of $\Spec R$ is specialization closed if and only if it is a union of closed subsets. Thus $V$ is specialization closed. For a minimal element $\p$ of a specialization closed subset $W$ of $\Spec R$, $W\setminus\{\p\}$ is also specialization closed. \old{Since $V$ is finite,} we can apply Lemma \ref{lem-spec-lower-set} inductively \new{for the number of elements of $V$} and have the assertion.
\end{proof}
%%%%%%%%%%%%%%%
Then we are ready to show Theorem \ref{thm-cofinite-compatible}.
%%%%%%%%%%%%%%%
\begin{proof}[Proof of Theorem \ref{thm-cofinite-compatible}]
Assume that $\cT\in\tors\L$ and $\cX=(\cX^\p)_\p\in\bT_R(\L)$ satisfy the assumption.
We show that $\Phi_{\rm t}(\Psi_{\rm t}(\cX))=\cX$ holds, that is, $\Psi_{\rm t}(\cX)_\p\cap\Fl\L_\p = \cX^\p$ holds for any $\p\in\Spec R$.
By the definition of $\Psi_{\rm t}$, $\Psi_{\rm t}(\cX)_\p\cap\Fl\L_\p \subseteq \cX^\p$ always holds.
We show that $\cX^\p\subseteq \Psi_{\rm t}(\cX)_\p$ for any $\p\in\Spec R$.

Fix one $\p\in\Spec R$ and $X\in\cX^\p$.
We show that $X\in\Psi_{\rm t}(\cX)_\p$.
Since $\cX^\p \subseteq \cT_\p \cap \Fl\L_\p$ holds and by Lemma \ref{lem-local-global}, there exists $T\in\cT$ such that $X = T_\p$ and $\supp_R T \subseteq V(\p)$.
Let $V'=V\cap (V(\p)\setminus \{\p\})$, then $V'^{\rm c}=V^{\rm c} \cup V(\p)^{\rm c} \cup \{\p\}$ holds.
We claim that $T\in\bigcap_{\q\in V'^{\rm c}}\psi(\cX^\q)$.
Then by applying Proposition \ref{prop-cofinite-down-set} to $(X, V):=(T, V')$, we obtain $Y\in\Psi_{\rm t}(\cX)$ such that $Y_\p=T_\p=X$.
If $\q=\p$, then $T_\q\in\cX^\q$ by our choice of $T$.
If $\q \in V(\p)^{\rm c}$, then $\supp_R T \subseteq V(\p)$ implies $T_\q =0\in\psi_\q(\cX^\q)$.
If $\q \in V^{\rm c}$, we have $T_\q\in\psi_\q(\cT_\q\cap\Fl\L_\q)=\psi_\q(\cX^\q)$ by our assumption.
Therefore $T\in\bigcap_{\q\in V'^{\rm c}}\psi(\cX^\q)$ as desired.
\end{proof}
%%%%%%%%%%%%%%%
As an application of the theorem, we obtain the following result.
%%%%%%%%%%%%%%%
\begin{cor}\label{cor-semi-local}
Let $(R, \L)$ be a Noetherian algebra.
If $R$ is semi-local and $\dim R=1$, then $(R, \L)$ is compatible.
Thus we have an isomorphism $\Phi_{\rm t} : \tors\L \simeq \bT_R^{\rm c}(\L)$ of posets.
\end{cor}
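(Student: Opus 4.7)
The plan is to derive this corollary as an essentially immediate consequence of Theorem \ref{thm-cofinite-compatible}, using the finiteness of $\Spec R$ under our hypotheses.

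First, I would invoke Proposition \ref{Spec R finite} to conclude that $\Spec R$ is a finite set, since $R$ is semi-local and $\dim R = 1$. Note that $\Spec R$ is Zariski closed because $\Spec R = V(0)$. So $V := \Spec R$ is a finite Zariski closed subset of itself.

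Next, I would verify the inclusion $\bT_R^{\rm c}(\L) \subseteq \Im \Phi_{\rm t}$, which is the nontrivial direction of compatibility. Given an arbitrary $\cX = (\cX^\p)_\p \in \bT_R^{\rm c}(\L)$, I would apply Theorem \ref{thm-cofinite-compatible} with the trivial choices $\cT := \mod\L$ and $V := \Spec R$. The required inclusion $\cX^\p \subseteq \cT_\p \cap \Fl\L_\p = \Fl\L_\p$ holds automatically for every $\p \in \Spec R$, since $\cX^\p$ is a torsion class of $\Fl\L_\p$. The equality condition on $V^{\rm c}$ is vacuous, because $V^{\rm c} = \emptyset$. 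Hence Theorem \ref{thm-cofinite-compatible} yields $\cU \in \tors\L$ with $\Phi_{\rm t}(\cU) = \cX$, proving $\bT_R^{\rm c}(\L) \subseteq \Im \Phi_{\rm t}$.

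Finally, I would combine this with Proposition \ref{prop-wcomp-comp-easy}, which gives the reverse inclusion $\Im \Phi_{\rm t} \subseteq \bT_R^{\rm c}(\L)$ in general. Together these yield $\Im \Phi_{\rm t} = \bT_R^{\rm c}(\L)$, which is by definition the compatibility of $(R, \L)$. The isomorphism of posets $\Phi_{\rm t} : \tors\L \simeq \bT_R^{\rm c}(\L)$ then follows because $\Phi_{\rm t}$ is already an embedding of posets by Theorem \ref{prop-injective-localization}(b) (or equivalently Proposition \ref{prop-wcomp-comp-easy}). There is essentially no obstacle here: all the technical work has been done in Theorem \ref{thm-cofinite-compatible}, and the finiteness of $\Spec R$ under our hypothesis makes it immediately applicable in its strongest form.
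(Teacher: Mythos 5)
Your proof is correct and takes essentially the same approach as the paper: both invoke Proposition \ref{Spec R finite} to get finiteness of $\Spec R$ and then apply Theorem \ref{thm-cofinite-compatible} with $\cT = \mod\L$ and $V = \Spec R$. You have simply spelled out the verification of the hypotheses and the final assembly with Proposition \ref{prop-wcomp-comp-easy} and Theorem \ref{prop-injective-localization}(b), which the paper leaves implicit.
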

%%%%%
\begin{proof}
By Proposition \ref{Spec R finite}, $\Spec R$ is a finite set. Thus we can apply Theorem \ref{thm-cofinite-compatible} for $\cT=\mod\L$.
\end{proof}
%%%%%%%%%%%%%%%
%%%%%%%%%%%%%%%
%%%%%%%%%%%%%%%
\subsection{Examples}\label{subsection-example}
We explaing our results by giving explicit descriptions of torsion classes, torsionfree classes and Serre subcategories of the following example: let $k$ be a field and $R=k[[x]]$ with a unique maximal ideal $\mfm=(x)$. We consider a Noetherian algebra
\[
\L=\left\{ \left(f, \left( \begin{array}{cc}
a & b \\
c & d
\end{array}
\right) \right) \in R \times \left( \begin{array}{cc}
R & R \\
\mfm & R
\end{array}
\right)\ \middle|\ f - a \in \mfm \right\}.
\]
Let $K=R_{0}=k((x))$ be the fractional field of $R$.
Then
\[
\overline{\L}:=\L/\m\L  \quad \mbox{and} \quad \L_0=K\otimes_R\L=K\times M_2(K)
\]
are finite dimensional algebras over $k$ and $K$ respectively, where $M_2(K)$ is the $K$-algebra of $(2\times 2)$-matrices over $K$.

First we calculate the Hasse quiver of $\serre\L$.
There are two simple $\L_0$-modules
\begin{equation}\label{T_i}
T_1=K \times 0, \qquad T_2= 0\times \begin{pmatrix} K\\ K \end{pmatrix}.
\end{equation}
Also there are two simple $\overline{\L}$-modules $S_1$ and $S_2$ associated with the idempotents $e_1=(1_R, E_1)$ and $e_2=(0, E_2)$ of $\L$, respectively, where $E_i$ is the matrix unit whose $(i, i)$ entry is $1_R$.
Then the Hasse quiver of the poset $(\Simple_R\L, \leq)$ is the following.
\[
\begin{tikzpicture}[baseline=-20]
\node(T1)at(0,0){$T_1$};
\node(T2)at(1.5,0){$T_2$};
\node(S1)at(0,-1){$S_1$};
\node(S2)at(1.5,-1){$S_2$};
\draw[thick, ->] (T1)--(S1);
\draw[thick, ->] (T2)--(S1);
\draw[thick, ->] (T2)--(S2);
\end{tikzpicture}
\]
One can easily check that the poset of down-sets of $(\Simple_R\L, \leq)$ has the following Hasse quiver,
%$8$ down-sets below. 
which is isomorphic to the Hasse quiver of $\serre\L$  by Theorems \ref{thm-serre-poset-bijection}.
%where the vertices are presented by down-sets of $\Simple_R\L$
\[
\begin{tikzpicture}[baseline=-30]
\node(mod)at(0,1){$\{T_1, T_2, S_1, S_2\}$};
\node(112)at(-2,0){$\{T_1, S_1, S_2\}$};
\node(212)at(2,0){$\{T_2, S_1, S_2\}$};
\node(11)at(-4,-1){$\{T_1, S_1\}$};
\node(12)at(0,-1){$\{S_1, S_2\}$};
\node(1)at(-2,-2){$\{S_1\}$};
\node(2)at(2,-2){$\{S_2\}$};
\node(e)at(0,-3){$\emptyset$};
\draw[thick, ->] (mod)--(112);
\draw[thick, ->] (mod)--(212);
\draw[thick, ->] (112)--(11);
\draw[thick, ->] (112)--(12);
\draw[thick, ->] (212)--(12);
\draw[thick, ->] (11)--(1);
\draw[thick, ->] (12)--(1);
\draw[thick, ->] (12)--(2);
\draw[thick, ->] (1)--(e);
\draw[thick, ->] (2)--(e);
\end{tikzpicture}.
\]
Explicit descriptions of Serre subcategories inside $\mod\L$ are given in $\tors \L$ below.

Next we calculate $\tors\L$.
By direct calculations, one can see that $\siltm\L$ and $\siltm\L_0$ are finite sets (see \cite{Adachi-Iyama-Reiten} and \cite[Theorem 2.22]{Kimura} for mutation theory of silting modules).
\begin{equation*}%\label{example-tors-fin-dim}
\siltm\L = \begin{tikzpicture}[baseline=-40]
\node(L)at(0,0){$\L$};
\node(M2P2)at(-1.5,-1){$\L e_1\oplus \L e'_1$};
\node(P1M1)at(1.5,-1){$\L e_2\oplus S_2$};
\node(M2)at(-1.5,-2){$\L e'_1$};
\node(M1)at(1.5,-2){$S_{2}$};
\node(0)at(0,-3){$0$};
\draw[thick, ->] (L)--(M2P2);
\draw[thick, ->] (L)--(P1M1);
\draw[thick, ->] (M2P2)--(M2);
\draw[thick, ->] (P1M1)--(M1);
\draw[thick, ->] (M2)--(0);
\draw[thick, ->] (M1)--(0);
\end{tikzpicture},
\quad
\siltm\L_0=\begin{tikzpicture}[baseline=0]
\node(mod)at(0,1){$\L_0$};
\node(T1)at(-1,0){$T_1$};
\node(T2)at(1,0){$T_2$};
\node(0)at(0,-1){$0$};
\draw[thick, ->] (mod)--(T1);
\draw[thick, ->] (mod)--(T2);
\draw[thick, ->] (T1)--(0);
\draw[thick, ->] (T2)--(0);
\end{tikzpicture}
\end{equation*}
where $e'_1=(1, \begin{psmallmatrix} 0 & 0 \\ 0 & 0  \end{psmallmatrix})\in\L_0$ is a central idempotent so that $\L e'_1\simeq R$ as $R$-algebras.
By \cite[Theorem 3.8]{DIJ} and Theorem \ref{thm-silting-finite} below, each torsion class of $\tors(\Fl\L)$ and $\tors\L_0$ is of the form $\Fac M$ for some silting module $M$.
Note that $\Fac(\L e_1\oplus\L e'_1)=\Fac\L e_1$ and $\Fac(\L e_2\oplus S_2) =\Fac\L e_2$ hold.
Therefore the Hasse quivers of $\tors(\Fl\L)$ and $\tors\L_0$ are given as follows, and the map ${\rm r}_{\m\,0} : \tors(\Fl\L) \to \tors\L_0$ is shown by the dashed arrows by applying Corollary \ref{cor-psilt-rpq} below.
\begin{equation}\label{example-tors-fl}
\tors(\Fl\L) = 
\begin{tikzpicture}[baseline=-60]
\node(L)at(0,0){$\Fl\L$};
\node(M2P2)at(-2,-1.5){$\Fac\L e_1\cap\Fl\L$};
\node(P1M1)at(2,-1.5){$\Fac\L e_2\cap\Fl\L$};
\node(M2)at(-2,-3){$\Fac\L e'_1\cap\Fl\L$};
\node(M1)at(2,-3){$\add S_{2}$};
\node(0)at(0,-4.5){$0$};
\draw[thick, ->] (L)--(M2P2); %node[midway]{$+$};
\draw[thick, ->] (L)--(P1M1);
\draw[thick, ->] (M2P2)--(M2);
\draw[thick, ->] (P1M1)--(M1);
\draw[thick, ->] (M2)--(0);
\draw[thick, ->] (M1)--(0); %node[midway]{$+$};

% tors L_0
\node(mod)at(8,-0.5){$\mod\L_0$};
\node[fill=white!30](T1)at(6.5,-2){$\add T_1$};
\node(T2)at(9.5,-2){$\add T_2$};
\node(00)at(8,-3.5){$0$};
\draw[thick, ->] (mod)--(T1);
\draw[thick, ->] (mod)--(T2);
\draw[thick, ->] (T1)--(00);
\draw[thick, ->] (T2)--(00);

% tors(fl L) to tors L_0 
\draw[thick, dashed, |->] (L)--(mod);
\draw[thick, dashed, |->] (M2P2)--(mod);
\draw[thick, dashed, |->] (M2)--(T1);
\draw[thick, dashed, bend left, distance=1cm, |->] (P1M1.east) to (T2.west);
\draw[thick, dashed, |->] (M1)--(00);
\draw[thick, dashed, |->] (0)--(00);

\begin{pgfonlayer}{bg}    % select the background layer
%\draw[thick, dashed, bend right,distance=0.7cm, |->] (P1M1)--(T2);
%\draw[-stealth,thick,out=34/2,in=90,densely dashed] (P1M1)--(T2);
\end{pgfonlayer}
\end{tikzpicture}
=\tors\L_0.
\end{equation}
By Corollary \ref{cor-semi-local}, we have an isomorphism $\Phi_{\rm t}:\tors\L\simeq\bT_R^{\rm c}(\L)$ of posets.
One can easily check that the Hasse quiver of these posets are the following, where a vertex $(S,T)$ in the left quiver gives an element $(\gen S \cap \Fl\L, \gen T)\in\bT_R(\L)$.
\begin{equation*}
\scalebox{0.8}{
$
\begin{tikzpicture}[baseline=-80]
\node(A)at(1.5,1){$\bT_R^{\rm c}(\L)$};
\node(LL0)at(0,0){$(\L, \L_0)$};
\node(LT1)at(-2,-1){$(\L, T_1)$};
\node(LT2)at(3,0){$(\L, T_2)$};
\node(L0)at(1.5,-1){$(\L, 0)$};
\node(L1L0)at(0,-2){$(\L e_1, \L_0)$};
\node(L1T1)at(-2,-3){$(\L e_1, T_1)$};
\node(L1T2)at(3,-2){$(\L e_1, T_2)$};
\node(L10)at(1.5,-3){$(\L e_1, 0)$};
\node(S1T1)at(-2,-5){$(\L e'_1, T_1)$};
\node(S10)at(1.5,-5){$(\L e'_1, 0)$};
\node(L2T2)at(6,0){$(\L e_2, T_2)$};
\node(L20)at(5,-1){$(\L e_2, 0)$};
\node(S20)at(5,-3){$(S_2, 0)$};
\node(00)at(5,-5){$(0, 0)$};
\draw[thick, ->] (LL0)--(LT1);
\draw[thick, ->] (LL0)--(LT2);
\draw[thick, ->] (LL0)--(L1L0);
\draw[white, line width=6pt] (LT1)--(L0);
\draw[thick, ->] (LT1)--(L0);
\draw[thick, ->] (LT1)--(L1T1);
\draw[thick, ->] (LT2)--(L0);
\draw[thick, ->] (LT2)--(L1T2);
\draw[thick, ->] (L1L0)--(L1T1);
\draw[thick, ->] (L1L0)--(L1T2);
\draw[white, line width=6pt] (L0)--(L10);
\draw[thick, ->] (L0)--(L10);
\draw[thick, ->] (L1T1)--(S1T1);
\draw[thick, ->] (L1T1)--(L10);
\draw[thick, ->] (L1T2)--(L10);
\draw[thick, ->] (L10)--(S10);
\draw[thick, ->] (S1T1)--(S10);
\draw[thick, ->] (LT2)--(L2T2);
\draw[white, line width=6pt] (L0)--(L20);
\draw[thick, ->] (L0)--(L20);
\draw[thick, ->] (L2T2)--(L20);
\draw[thick, ->] (L20)--(S20);
\draw[thick, ->] (S10)--(00);
\draw[thick, ->] (S20)--(00);
\end{tikzpicture}
\xleftarrow[\Phi_{\rm t}]{\sim}\ \ 
\begin{tikzpicture}[baseline=-80]
\node(A)at(1.5,1){$\tors\L$};
\node[draw,inner sep=2pt](LL0)at(0,0){$\mod\L$};
\node[draw,inner sep=2pt](LT1)at(-1.5,-1){$\cT_1$};
\node[draw,inner sep=2pt](LT2)at(3,0){$\cT_2$};
\node[draw,inner sep=2pt](L0)at(1.5,-1){$\Fl\L$};
\node(L1L0)at(0,-2){$\Fac \L e_1$};
\node(L1T1)at(-1.5,-3){$\Fac\L e_1 \cap\cT_1$};
\node(L1T2)at(3,-2){$\Fac\L e_1\cap\cT_1$};
\node(L10)at(1.5,-3){$\Fac\L e_1 \cap \Fl\L$};
\node[draw,inner sep=2pt](S1T1)at(-1.5,-5){$\Fac \L e'_1 $};
\node[draw,inner sep=2pt](S10)at(1.5,-5){$\Fac \L e'_1 \cap \Fl\L$};
\node(L2T2)at(6,0){$\Fac\L e_2 \cap \cT_2$};
\node(L20)at(5,-1){$\Fac\L e_2 \cap \Fl\L$};
\node[draw,inner sep=2pt](S20)at(5,-3){$\add S_2$};
\node[draw,inner sep=2pt](00)at(5,-5){$0$};
\draw[thick, ->] (LL0)--(LT1);
\draw[thick, ->] (LL0)--(LT2);
\draw[thick, ->] (LL0)--(L1L0);
\draw[white, line width=6pt] (LT1)--(L0);
\draw[thick, ->] (LT1)--(L0);
\draw[thick, ->] (LT1)--(L1T1);
\draw[thick, ->] (LT2)--(L0);
\draw[thick, ->] (LT2)--(L1T2);
\draw[thick, ->] (L1L0)--(L1T1);
\draw[thick, ->] (L1L0)--(L1T2);
\draw[white, line width=6pt] (L0)--(L10);
\draw[thick, ->] (L0)--(L10);
\draw[thick, ->] (L1T1)--(S1T1);
\draw[thick, ->] (L1T1)--(L10);
\draw[thick, ->] (L1T2)--(L10);
\draw[thick, ->] (L10)--(S10);
\draw[thick, ->] (S1T1)--(S10);
\draw[thick, ->] (LT2)--(L2T2);
\draw[white, line width=6pt] (L0)--(L20);
\draw[thick, ->] (L0)--(L20);
\draw[thick, ->] (L2T2)--(L20);
\draw[thick, ->] (L20)--(S20);
\draw[thick, ->] (S10)--(00);
\draw[thick, ->] (S20)--(00);
\end{tikzpicture},
$
}
\end{equation*}
Marked torsion classes in the right quiver are Serre subcategories.

Finally, we calculate $\torf\L$.
By Proposition \ref{prop-torsion-fl}, $\torf(\Fl\L)$ and $\torf\L_0$ have the following Hasse quivers.
\begin{equation*}%\label{example-triangular-mutation-quiver}
\torf(\Fl\L) = \begin{tikzpicture}[baseline=-40]
\node(L)at(0,0){$0$};
\node(M2P2)at(-1.5,-1){$\L e_1^{\perp}\cap \Fl\L$};
\node(P1M1)at(1.5,-1){$\L e_2^{\perp}\cap\Fl\L$};
\node(M2)at(-1.5,-2){${\L e'_1}^{\perp} \cap \Fl\L$};
\node(M1)at(1.5,-2){$S_2^{\perp} \cap \Fl\L$};
\node(0)at(0,-3){$\Fl\L$};
\draw[thick, ->] (M2P2)--(L);
\draw[thick, ->] (P1M1)--(L);
\draw[thick, ->] (M2)--(M2P2);
\draw[thick, ->] (M1)--(P1M1);
\draw[thick, ->] (0)--(M2);
\draw[thick, ->] (0)--(M1);
\end{tikzpicture},
\qquad
\tors\L_0=\begin{tikzpicture}[baseline=0]
\node(mod)at(0,1){$0$};
\node(T1)at(-1,0){$\add T_2$};
\node(T2)at(1,0){$\add T_1$};
\node(0)at(0,-1){$\mod\L_0$};
\draw[thick, ->] (T1)--(mod);
\draw[thick, ->] (T2)--(mod);
\draw[thick, ->] (0)--(T1);
\draw[thick, ->] (0)--(T2);
\end{tikzpicture},
\end{equation*}
where for $\cT\in\tors(\Fl\L)$ or $\tors\L_0$, $\cT^{\perp}$ sits in the same position as in each quiver of \eqref{example-tors-fl}.
By Theorem \ref{thm-torsion-free-iso}, $\torf\L \simeq \torf(\Fl\L)\times\torf\L_0$ holds as posets.
The Hasse quiver of $\torf\L$ is described as follows, where a vertex $(S,T)$ gives an element $(S^\perp\cap\Fl\L, T^\perp)\in\torf(\Fl\L)\times\torf\L_0$, and the marked vertices belong to the image of $(-)^{\perp} : \bT_R(\L) \to \bF_R(\L)$.
\begin{equation*}%\label{example-triangular-mutation-quiver}
\begin{tikzpicture}[baseline=-40]
%top
\node[draw,inner sep=2pt](00)at(0,0){$(\L, \L_0)$};
\node(e20)at(1,-1.5){$(\L e_2, \L_0)$};
\node[draw,inner sep=2pt](e10)at(-1,-1.5){$(\L e_1, \L_0)$};
\node(s20)at(1,-3){$(S_2, \L_0)$};
\node(e1'0)at(-1,-3){$(\L e_1', \L_0)$};
\node(L0)at(0,-4.5){$(0, \L_0)$};

\draw[thick,->] (e20)--(00);
\draw[thick,->] (e10)--(00);
\draw[thick,->] (s20)--(e20);
\draw[thick,->] (e1'0)--(e10);
\draw[thick,->] (L0)--(s20);
\draw[thick,->] (L0)--(e1'0);

%left
\node[draw,inner sep=2pt](0t2)at(-6.5,-3){$(\L, T_1)$};
\node(e2t2)at(-5.5,-4.5){$(\L e_2, T_1)$};
\node[draw,inner sep=2pt](e1t2)at(-7.5,-4.5){$(\L e_1, T_1)$};
\node(s2t2)at(-5.5,-6.5){$(S_2, T_1)$};
\node[draw,inner sep=2pt](e1't2)at(-7.5,-6.5){$(\L e'_1, T_1)$};
\node(Lt2)at(-6.5,-8){$(0, T_1)$};

\draw[thick,->] (e2t2)--(0t2);
\draw[thick,->] (e1t2)--(0t2);
\draw[thick,->] (s2t2)--(e2t2);
\draw[thick,->] (e1't2)--(e1t2);
\draw[thick,->] (Lt2)--(s2t2);
\draw[thick,->] (Lt2)--(e1't2);

%right
\node[draw,inner sep=2pt](0t1)at(6.5,-3){$(\L, T_2)$};
\node[draw,inner sep=2pt](e2t1)at(7.5,-4.5){$(\L e_2, T_2)$};
\node[draw,inner sep=2pt](e1t1)at(5.5,-4.5){$(\L e_1, T_2)$};
\node(s2t1)at(7.5,-6.5){$(S_2, T_2)$};
\node(e1't1)at(5.5,-6.5){$(\L e'_1, T_2)$};
\node(Lt1)at(6.5,-8){$(0, T_2)$};

\draw[thick,->] (e2t1)--(0t1);
\draw[thick,->] (e1t1)--(0t1);
\draw[thick,->] (s2t1)--(e2t1);
\draw[thick,->] (e1't1)--(e1t1);
\draw[thick,->] (Lt1)--(s2t1);
\draw[thick,->] (Lt1)--(e1't1);

%bottom
\node[draw,inner sep=2pt](0L)at(0,-6.5){$(\L, 0)$};
\node[draw,inner sep=2pt, fill=white!30](e2L)at(1,-8){$(\L e_2, 0)$};
\node[draw,inner sep=2pt, fill=white!30](e1L)at(-1,-8){$(\L e_1, 0)$};
\node[draw,inner sep=2pt](s2L)at(1,-9.5){$(S_2, 0)$};
\node[draw,inner sep=2pt](e1'L)at(-1,-9.5){$(\L e'_1, 0)$};
\node[draw,inner sep=2pt](LL)at(0,-11){$(0, 0)$};

\draw[thick,->] (e2L)--(0L);
\draw[thick,->] (e1L)--(0L);
\draw[thick,->] (s2L)--(e2L);
\draw[thick,->] (e1'L)--(e1L);
\draw[thick,->] (LL)--(s2L);
\draw[thick,->] (LL)--(e1'L);

%bottom to left
\draw[thick,->] (0L)--(0t2);
\draw[thick,->] (e2L)--(e2t2);
\draw[thick,->] (e1L)--(e1t2);
%\draw[thick,->] (s2L)--(s2t2);
\draw[thick,->] (e1'L)--(e1't2);
\draw[thick,->] (LL)--(Lt2);

%bottom to right
\draw[thick,->] (0L)--(0t1);
\draw[thick,->] (e2L)--(e2t1);
\draw[thick,->] (e1L)--(e1t1);
\draw[thick,->] (s2L)--(s2t1);
%\draw[thick,->] (e1'L)--(e1't1);
\draw[thick,->] (LL)--(Lt1);

%left to top
\draw[thick,->] (0t2)--(00);
\draw[thick,->] (e2t2)--(e20);
\draw[thick,->] (e1t2)--(e10);
\draw[thick,->] (s2t2)--(s20);
\draw[thick,->] (e1't2)--(e1'0);
\draw[thick,->] (Lt2)--(L0);

%left to top
\draw[thick,->] (0t1)--(00);
\draw[thick,->] (e2t1)--(e20);
\draw[thick,->] (e1t1)--(e10);
\draw[thick,->] (s2t1)--(s20);
\draw[thick,->] (e1't1)--(e1'0);
\draw[thick,->] (Lt1)--(L0);

\begin{pgfonlayer}{bg}    % select the background layer
        %\draw (foo) -- (baz);
        \draw[thick, ->] (e1'L)--(e1't1);
        \draw[thick, ->] (s2L)--(s2t2);
\end{pgfonlayer}
\end{tikzpicture}
\end{equation*}
%%%%%%%%%%%%%%%
In Remark \ref{prop-comp-rpq}, we proved ${\rm r}_{\q, \mathfrak{r}} \circ {\rm r}_{\p, \q}(\cT) \supseteq {\rm r}_{\p, \mathfrak{r}}(\cT)$ for $\cT\in\tors(\Fl\L_\p)$.
Here we give one example such that the inclusion is strict.
%%%%%%%%%%%%%%%
\begin{exa}
Let $K$ be a field and $R=K[[x, y]]$ a ring of formal power series in two variables with the maximal ideal $\m=(x, y)$.
Let $\L$ be a Noetherian $R$-algebra defined as follows, and let $M_i$ be a $\L$-module defined as follows.
\[
\L=\left( \begin{array}{cc}
R & R \\
\mfm & R
\end{array}
\right),
\qquad
M_i=\left( \begin{array}{cc}
\m^i \\
\m^{i+1}
\end{array}
\right).
\]
Let $\cT=\bigcap_{i\geq 0}\Fac M_i$ and $(x)$ the ideal of $R$ generated by $x$.
Then the following statements hold.
\begin{enumerate}[{\rm (a)}]
\item For each integer $i\geq 0$, $M_i$ is a presilting $\L$-module.
%\item \ov{\cT} = \bigcap_{i\geq 0}\Fac(T_i),\qquad {\rm r}_{\m, \p}(\cT)
\item We have ${\rm r}_{\m, 0}(\cT\cap \Fl\L)=0$ and ${\rm r}_{(x), 0}\circ {\rm r}_{\m, (x)}(\cT\cap \Fl\L) =\mod\L_0.$
\end{enumerate}
\end{exa}
%%%%%
\begin{proof}
(a)
For each integer $i\geq 0$, the following is a minimal projective presentation of $M_i$:
\[
(\L e_2)^{\oplus i} \xto{f_i} (\L e_1)^{\oplus i+1} \xto{{}^{\rm t}(x^i\, x^{i-1}y \, \cdots \, y^i)} M_i \to 0,
\qquad
f_i = \begin{pmatrix}
  y & -x       &   &     & \text{\huge{0}} \\
  \vdots & y  & -x & & \vdots                         \\
   &  &  \\
  \vdots &        & \ddots & -x      & \vdots            \\
  \text{\huge{0}} &  &  \dots & y  & -x
\end{pmatrix}\in \mathrm{Mat}_{i, i+1}(R).
\]
Let $\ov{\L}$ and $\Gamma$ be the algebras defined by
\[
\ov{\L}:=\L/\m\L=\left( \begin{array}{cc}
k & k \\
\m/\m^2 & k
\end{array}
\right)
\supseteq
\Gamma:=
\left( \begin{array}{cc}
k & 0 \\
\mfm /\mfm^2 & k
\end{array}
\right) \simeq k(1 \rightrightarrows 2).
\]
Then $(\Gamma e_2)^{\oplus i} \xto{f_i} (\Gamma e_1)^{\oplus i+1}$ is a minimal projective resolution of an indecomposable preprojective $\Gamma$-module, which is a presilting complex of $\Gamma$.
By \cite[Proposition 3.3(3)]{Aoki}, $(\ov{\L} e_2)^{\oplus i} \xto{f_i} (\ov{\L}e_1)^{\oplus i+1}$ is a presilting complex of $\ov{\L}$.
By \cite[Proposition 4.2(a)]{Kimura}, $(\L e_2)^{\oplus i} \xto{f_i} (\L e_1)^{\oplus i+1}$ is presilting.

\if()
We show that $\Hom_{\L}(\pi_i, M_i)$ is surjective.
The map $\Hom_{\L}(\pi_i, M_i)$ is surjective if and only if for any elements $f_1,\dots, f_i \in \m^{i+1}$, there exist $g_1, \dots, g_{i+1}\in\m^i$ such that
\begin{align}\label{eq-f-ygxg}
f_j = yg_j -xg_{j+1}
\end{align}
holds for each $j\in\{1,2,\dots, i\}$.
Since the equations (\ref{eq-f-ygxg}) are liner, it is sufficient to consider the following case: for one $j\in\{1, 2, \dots, i\}$, $f_j = x^{\ell}y^{m}$ for $i+1\leq \ell + m$ and $f_{k}=0$ if $k \neq j$.
In this case, the following $g_k$'s satisfy (\ref{eq-f-ygxg}).

If $\ell \geq i-j+2$, then let $g_{k}=0$ for $k\in\{1,2, \dots, j\}$ and $g_{j+k}=-x^{\ell-k}y^{m+k-1}$ for $k\in\{1, 2, \dots, i+1-j\}$.

If $\ell<i-j+2$, then let $g_k=0$ for $k\in\{j+1, j+2, \dots, i+1\}$ and $g_{j-k}=x^{\ell+k}y^{m-k-1}$ for $k\in\{0, 1, \dots, j-1\}$.
\fi

(b)
We have $\psi(\cT \cap \Fl\L) = \cT$.
In fact $\psi(\cT \cap \Fl\L) \subseteq \psi(\Fac M_i \cap \Fl\L)=\Fac M_i$ holds for each $i\geq 0$ by Theorem \ref{thm-tors-closure-stable}(a).
%By the definition of the bar operation $\ov{(-)}$, $\ov{\cT \cap \Fl\L} \supseteq \cT$ holds.
%Conversely, for each $i$, we have $\ov{\cT \cap \Fl\L}\subset\ov{\Fac M_i \cap \Fl\L}=\Fac M_i$ by .
%Therefore we have the desired equality.
For a prime ideal $\p\neq \m$, an algebra $\L_\p$ is Morita equivalent to $R_\p$ and so $\tors(\Fl\L_\p)=\{0, \Fl\L_\p\}$.
Consider a $\L$-module \[N=\left( \begin{array}{cc}
R/(x) \\
\m/(x)
\end{array}
\right).\]
Since multiplying $y^i$ gives an isomorphism
\[
N=\left( \begin{array}{cc}
R/(x) \\
\m/(x)
\end{array}
\right) \xrightarrow[\sim]{\cdot y^i}
\left( \begin{array}{cc}
\m^i/x\m^{i-1} \\
\m^{i+1}/x\m^i
\end{array}
\right)
\in \gen M_i
\]
for each $i\geq 0$, we have $N\in\cT$.
Since $N_{(x)}\neq 0$ and $\tors(\Fl\L_{(x)})=\{0, \Fl\L_{(x)}\}$, we have ${\rm r}_{\m, (x)}(\cT\cap\Fl\L)=\Fl\L_{(x)}$.
Therefore ${\rm r}_{(x), 0}\circ {\rm r}_{\m, (x)}(\cT\cap\Fl\L) = {\rm r}_{(x), 0}(\Fl\L_{(x)}) = \mod\L_0$ holds.

By Lemma \ref{lem-cap-m-one} below, $X_0=0$ holds for any $X\in\cT$.
Therefore we have ${\rm r}_{\m, 0}(\cT\cap\Fl\L) = \cT_0= 0.$
\end{proof}
%%%%%%%%%%%%%%%
\begin{lem}\label{lem-cap-m-one}
Let $R=k[[x, y]]$ and $\m=(x, y)$.
Then the Krull dimension of any $R$-module in $\bigcap_{i\geq 0}\Fac(\m^i)$ is at most one.
\end{lem}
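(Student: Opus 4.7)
The plan is to prove the contrapositive: if $\dim_R M = \dim R = 2$, then $M$ fails to belong to $\Fac(\m^i)$ for some $i$. The strategy is to reduce to the case where $M$ is torsion-free and embedded in a finitely generated free $R$-module, and then to use a $\Hom$-computation to bound the image of any $R$-linear map out of $\m^i$.

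First I would pass to the torsion-free quotient $\bar M := M/T$, where $T \subseteq M$ is the $R$-torsion submodule. Since $R$ is a domain, $T$ is supported in codimension $\geq 1$, so $\dim T \leq 1$, while $\bar M$ is torsion-free with $\dim \bar M = \dim M$. As a quotient of $M$, the module $\bar M$ still lies in $\bigcap_{i\geq 0} \Fac(\m^i)$, so it suffices to show $\bar M = 0$. Because $\bar M$ is finitely generated and torsion-free over the domain $R$, the standard argument (choose generators, work inside $K \otimes_R \bar M \simeq K^r$ for $K$ the fraction field of $R$, and clear denominators) yields an embedding $\bar M \hookrightarrow R^r$ for some $r \geq 0$.

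Next I would compute $\Hom_R(\m^i, R) = R$. Applying $\Hom_R(-, R)$ to $0 \to \m^i \to R \to R/\m^i \to 0$ yields
\[
0 \to \Hom_R(R/\m^i, R) \to R \to \Hom_R(\m^i, R) \to \Ext^1_R(R/\m^i, R) \to 0.
\]
The first term vanishes because $R$ is a domain and hence has no $\m$-torsion, and the second vanishes because $R$ is a $2$-dimensional Cohen-Macaulay local ring and $R/\m^i$ has finite length, so (for example by local duality) $\Ext^j_R(R/\m^i, R) = 0$ for all $j < 2$. Hence $R \simeq \Hom_R(\m^i, R)$ via multiplication, and consequently every $R$-linear map $\m^i \to R^r$ is multiplication by some $v \in R^r$, with image $\m^i v \subseteq (\m^i)^r$.

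Composing the given surjection $(\m^i)^{n_i} \twoheadrightarrow \bar M$ with the embedding $\bar M \hookrightarrow R^r$ gives a map whose image is a sum of cyclic submodules $\m^i v_j$, hence contained in $(\m^i)^r$. Therefore $\bar M \subseteq (\m^i)^r$ for every $i$, and Krull's intersection theorem applied to the finitely generated $R$-module $R^r$ gives $\bar M \subseteq \bigcap_i (\m^i)^r = 0$; thus $\bar M = 0$ and $\dim M = \dim T \leq 1$. The decisive ingredient is the $\Hom$-computation $\Hom_R(\m^i, R) = R$, which relies crucially on $R$ being $2$-dimensional Cohen-Macaulay; the remaining steps are standard reductions plus the Krull intersection theorem.
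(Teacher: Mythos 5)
Your proof is correct, and at its core it is the same argument as the paper's, though the two setups differ in how the free embedding is produced. Both proofs reduce to a torsion-free module sitting inside a finitely generated free module $R^m$, use the vanishing $\Ext^1_R(R/\m^i,R)=0$ (the depth of $R$ is $2$) to conclude that this module is contained in $(\m^i)^m$ for every $i$, and finish with the Krull intersection theorem. The paper reaches this configuration via the Auslander--Bridger sequence
$0\to\Ext^1_R(\Tr X,R)\to X\xto{\mathrm{ev}} X^{**}\to\Ext^2_R(\Tr X,R)\to 0$:
the kernel vanishes at the generic point, hence has dimension $\le 1$, and $X^{**}$ is free because it is a second syzygy over a ring of global dimension $2$. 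You instead quotient by the torsion submodule $T$, note $\dim T\le 1$ since $R$ is a $2$-dimensional domain, and embed the torsion-free $\bar M$ into $R^r$ by clearing denominators in the fraction field; since $\ker(\mathrm{ev})$ equals the torsion submodule over a domain, the two decompositions are the same module in disguise. Your computation $\Hom_R(\m^i,R)=R$ is logically equivalent to the paper's step of extending $\iota\pi_i:(\m^i)^{\oplus\ell}\to R^{\oplus m}$ to a map between free modules, and both rest on the same vanishing of $\Ext^1_R(R/\m^i,R)$. The net effect is that your version sidesteps the Auslander--Bridger transpose at the cost of spelling out the torsion-free embedding by hand; either route is fine.
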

%%%%%
\begin{proof}
For an $R$-module $X$, let $X^\ast:=\Hom_R(X, R)$.
By \cite{AB}, there exists an exact sequence
\[
0 \to \Ext_R^1(\Tr X, R) \to X \xto{\rm ev} X^{\ast\ast} \to \Ext_R^2(\Tr X, R) \to 0,
\]
where $\Tr X$ denotes the transpose of $X$.
Since $\Ext_R^1(\Tr X, R)_0=0$, the Krull dimension of $\Ext_R^1(\Tr X, R)$ is at most one.
In the rest, we assume $X\in\bigcap_{i\geq 0}\Fac(\m^i)$.
We prove $X':=\Im({\rm ev})$ is zero, which implies the assertion.

Since the global dimension of $R$ is two and $X^{\ast\ast}$ is a second syzygy, we have $X^{\ast\ast}=R^{\oplus m}$ for some $m\geq 0$.
Let $\iota : X' \to R^{\oplus m}$ be the inclusion.
Fix an integer $i\geq 0$, and let $\pi_i : (\m^i)^{\oplus \ell} \to X'$ be a surjection.
Since the depth of $R$ is two, $\Ext_R^1(R/\m^i, R)=0$.
Thus $\iota\pi_i$ extends to a map $f_i : R^{\oplus \ell}\to R^{\oplus m}$.
\[
\begin{tikzcd}
0 \arrow[r] & (\m^i)^{\oplus \ell} \arrow[r] \arrow[d, "\pi_i"] & R^{\oplus \ell} \arrow[r] \arrow[d, "f_i"] & (R/\m^i)^{\oplus \ell} \arrow[r] & 0 \\
0 \arrow[r]  & X' \arrow[r, "\iota"] & R^{\oplus m}
\end{tikzcd}
\]
In particular, we have \[X' = \Im(\iota\pi_i) = \m^i \Im f_i \subseteq (\m^i)^{\oplus m}\]
for each $i\geq 0$.
We have $X' \subseteq \bigcap_{i\geq 0}(\m^i)^{\oplus m}=0$.
\end{proof}
%%%%%%%%%%%%%%%
%%%%%%%%%%%%%%%
%%%%%%%%%%%%%%%
\section{Silting theory and compatibility}\label{section-locally-finite}
In this section we study compatible elements in the case where the number of torsion classes of $\Fl\L_\p$ is finite for each $\p$.
Throughout this section, let $(R, \L)$ be a Noetherian algebra.
%%%%%%%%%%%%%%%
%%%%%%%%%%%%%%%
\subsection{Silting objects and bar operation}\label{subsection-silting-bar}
%%%%%%%%%%%%%%%
We study basic properties of torsion classes generated by silting modules and apply them to give a classification of torsion classes.
%%%%%%%%%%%%%%%
\begin{propdef}\label{propdef-stors}
For each silting $\L$-module $M$, $\gen M$ is a torsion class of $\mod\L$. We call a torsion class of this form a \emph{silting torsion class}.
Let
\begin{align*}
\stors\L:=\{\Fac M\mid M\in\siltm\L\}\ \mbox{ and }\ \stors(\Fl\L):=\{\cT \cap \Fl\L \mid \cT \in\stors\L \}.
\end{align*}
Clearly $(\stors\L,\subseteq)$ is a poset.
\end{propdef}
%%%%%
\begin{proof}
This is \cite[Lemma 3.1]{Kimura}. Note that $R$ is assumed to be complete local in Section 3 loc.cit., it is not used in the proof.
\end{proof}
%%%%%%%%%%%%%%%
We denote by $\ftors\L$ the set of all functorially finite torsion classes of $\mod\L$.
%%%%%%%%%%%%%%%
\begin{prop}\label{prop-silting-bijection}
For a Noetherian algebra $(R, \L)$, the following statements hold.
\begin{enumerate}[{\rm (a)}]
	\item For each presilting $\L$-module $M$, we have $\gen M\in \stors\L$.
	\item We have $\stors\L \subseteq \ftors\L$. The equality holds if $R$ is complete local.
	\item We have isomorphisms of posets
\[
\twosilt\L \longrightarrow \siltm\L \longrightarrow \stors\L\]
given by $P \mapsto H^0(P)$ and $M \mapsto \Fac M$ for $P\in\twosilt\L$, $M \in \siltm\L$.
\end{enumerate}
\end{prop}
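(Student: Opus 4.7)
The plan is to establish the three assertions separately, drawing on the silting-theoretic machinery of \cite{Adachi-Iyama-Reiten, Iyama-Jorgensen-Yang, AMV, Kimura}. For (a), given a presilting $\L$-module $M$ and a two-term presilting complex $P=(P^{-1}\to P^0)$ with $H^0(P)=M$, I would produce a two-term silting complex $P'$ with $\gen H^0(P')=\gen M$ via a \emph{co-Bongartz} completion: take a minimal left $(\add P)$-approximation $\L\to P_\ast$ in $\sK^{\rm b}(\proj\L)$, extend to a triangle $\L\to P_\ast\to C\to\L[1]$, and set $P':=P\oplus C[-1]$. One checks directly that $P'$ is two-term silting and that $H^0(C[-1])\in\gen M$, yielding $\gen M=\gen H^0(P')\in\stors\L$.

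For (b), the inclusion $\stors\L\subseteq\ftors\L$ would follow by exhibiting $\add M$-approximations for a silting module $M$: right approximations of $X\in\gen M$ come from surjections $M^{\oplus n}\twoheadrightarrow X$, while left approximations of an arbitrary $X\in\mod\L$ are extracted from the two-term silting complex associated to $M$ via a standard truncation argument. For the converse when $R$ is complete local, $\L$ becomes semi-perfect, and the generalized Adachi--Iyama--Reiten correspondence proved in \cite{Kimura} identifies $\ftors\L$ with $\stors\L$.

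Part (c) naturally splits into the two bijections $\twosilt\L\to\siltm\L$ and $\siltm\L\to\stors\L$. The map $M\mapsto\Fac M$ is surjective by the definition of $\stors\L$ and injective since $\add M$ coincides with the additive closure of the Ext-projective objects in $\Fac M$, which is a torsion-class invariant; it is order-preserving by the definition of the order on $\siltm\L$. The map $P\mapsto H^0(P)$ is surjective by the definition of silting module; for injectivity I would reconstruct $P$ from $M=H^0(P)$ as the minimal two-term projective presentation of $M$ augmented by summands $R_i[1]$, where $R_i$ ranges over indecomposable projective summands of $\L$ satisfying $\Hom_\L(R_i,M)=0$. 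Order preservation in both directions reduces to the standard equivalence that, for two-term silting complexes $P$ and $Q$, $\Hom_{\sD(\L)}(Q,P[i])=0$ for all $i>0$ if and only if $\gen H^0(P)\subseteq\gen H^0(Q)$.

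The main technical obstacle is (a): ensuring the co-Bongartz completion yields a genuine silting (not merely presilting) complex in the Noetherian setting. This rests on the existence of minimal left $(\add P)$-approximations of $\L$ in $\sK^{\rm b}(\proj\L)$, which requires functorial finiteness of $\add P$ together with the Noetherian hypothesis on $(R,\L)$. Once (a) is secured, the remaining parts translate formally from the classical theory, with the semi-perfect hypothesis in (b) playing a decisive role in establishing the converse inclusion $\ftors\L\subseteq\stors\L$.
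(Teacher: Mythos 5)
Your outline for (a) matches the paper's argument (the paper also takes a left $(\add P)$-approximation $\L\to P'$ in $\sK^{\rm b}(\proj\L)$ and invokes \cite[Lemma 2.4]{Kimura} for the co-Bongartz completion $Q:=P\oplus C(f)$; note that $C(f)$ is already two-term here, so the shift $C[-1]$ in your write-up is a convention slip). For (b) the paper argues slightly differently and more cleanly: contravariant finiteness of any $\cT\in\tors\L$ comes for free from the existence of torsion pairs in a Noetherian abelian category (Proposition \ref{prop-tors-perp}), not from surjections $M^{\oplus n}\twoheadrightarrow X$ (which only gives approximations of objects already in $\gen M$); and covariant finiteness of $\gen M$ is cited from Auslander--Smal{\o}, which you don't mention.

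The genuine gap is in (c). To prove injectivity of $P\mapsto H^0(P)$ you propose to reconstruct $P$ from $M$ as ``the minimal two-term projective presentation of $M$ augmented by summands $R_i[1]$ over indecomposable projectives $R_i$ with $\Hom_\L(R_i,M)=0$.'' This presupposes that $\proj\L$ is Krull--Schmidt and that minimal projective presentations exist, i.e.\ that $\L$ is semi-perfect. A general Noetherian $R$-algebra $\L$ is \emph{not} semi-perfect (already $\L=R=\bZ$ fails), so neither the minimal presentation nor the decomposition into indecomposables is available, and the reconstruction breaks down. Likewise, identifying $\add M$ as ``the Ext-projectives in $\Fac M$'' for injectivity of $M\mapsto\Fac M$ is a finite-dimensional/$\tau$-tilting phenomenon that you would need to re-establish in this generality. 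The paper avoids both issues by proving directly the equivalence $P\leq Q\Longleftrightarrow H^0(P)\in\cT_Q=\Gen H^0(Q)\Longleftrightarrow\gen H^0(P)\subseteq\gen H^0(Q)$ for $P,Q\in\twosilt\L$ (via Proposition \ref{prop-M-Tp} and Corollary \ref{2silt T=Gen}); injectivity of both maps and order-reflection then follow formally, since $\twosilt\L$ is already a poset by Aihara--Iyama. Finally, you identify (a) as ``the main technical obstacle,'' but in fact (a) is carried by the already-established co-Bongartz lemma; the place that needs real care is exactly the injectivity in (c).
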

%%%%%
\begin{proof}
(a)
Let $P\in\twopsilt\L$ such that $M=H^0(P)$, and let $f : \L\to P'$ be a left $(\add P)$-approximation of $\L$ in $\sK^{\rm b}(\proj\L)$. Then $Q:=P\oplus C(f)\in\twosilt\L$ holds by \cite[Lemma 2.4]{Kimura}. Since $H^0(P')\to H^0(C(f))$ is surjective, we have $\gen M = \gen H^0(Q)\in\stors\L$.

(b)
Any $\cT\in\tors\L$ is contravariantly finite by Proposition \ref{prop-tors-perp}. Also, for any $M\in\mod\L$, $\gen M$ is covariantly finite by \cite[Theorem 4.5, Proposition 4.6]{Auslander-Smalo}.
The latter assertion is \cite[Theorem 3.8]{Kimura}.

(c)
We shall prove this in Proposition \ref{prop-ap-twosilt-silt-stors}.
\end{proof}
%%%%%%%%%%%%%%%
Now we consider a triangle functor $(R/\mfm)\otimes_{R}(-) : \sK^{\rm b}(\proj\L) \to \sK^{\rm b}(\proj(\L/\mfm\L))$.
The following result  was proved in \cite[Theorem 4.3]{Kimura} for the special case where $R$ is complete local.
%%%%%%%%%%%%%%%
\begin{prop}\label{prop-silting-ftors}
Let $(R, \L)$ be a Noetherian algebra such that $(R, \m)$ is local.
Then we have the commutative diagram below such that vertical maps are isomorphisms of posets and three horizontal maps are embeddings of posets.
Moreover they are isomorphisms of posets if $\L$ is a semi-perfect ring.
\begin{align}\label{diagram-twosilt-sttilt-ftors}
\begin{tikzpicture}[baseline=0]
\node(twosiltL)at(0,2){$\twosilt\L$};
\node(twosiltLI)at(8,2){$\twosilt\L(\m)$};
\node(sttiltL)at(0,0){$\siltm\L$};
\node(sttiltLI)at(8,0){$\siltm\L(\m)$};
\node(ftorsL)at(0,-2){$\stors\L$};
\node(ftorsLI)at(8,-2){$\stors\L(\m)=\ftors\L(\m)$.};
\draw[thick, ->] (sttiltL)--(sttiltLI) node[midway, above]{$(R/\mfm)\otimes_{R}(-)$};
\draw[thick, ->] (ftorsL)--(ftorsLI) node[midway, above]{$(-)\cap\mod(\L/\mfm\L)$};
\draw[thick, ->] (twosiltL)--(twosiltLI) node[midway, above]{$(R/\mfm)\otimes_{R}(-)$};
\draw[thick, ->] (sttiltL)--(ftorsL) node[midway, left]{$\Fac(-)$} node[midway, right]{$\wr$};
\draw[thick, ->] (sttiltLI)--(ftorsLI) node[midway, left]{$\Fac(-)$} node[midway, right]{$\wr$};
\draw[thick, ->] (twosiltL)--(sttiltL) node[midway, left]{$H^{0}(-)$} node[midway, right]{$\wr$};
\draw[thick, ->] (twosiltLI)--(sttiltLI) node[midway, left]{$H^{0}(-)$} node[midway, right]{$\wr$};
\end{tikzpicture}
\end{align}
%The three horizontal maps in (\ref{diagram-twosilt-sttilt-ftors}) are embeddings of posets.
\end{prop}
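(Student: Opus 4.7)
The plan is to combine the vertical isomorphisms of Proposition \ref{prop-silting-bijection}(c) with a Nakayama-type argument applied to the exact sequence from Lemma \ref{lem-twosilt-fac}(a).

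First, the vertical maps are isomorphisms by Proposition \ref{prop-silting-bijection}(c), and $\stors\L(\m)=\ftors\L(\m)$ follows from Proposition \ref{prop-silting-bijection}(b) applied to $\L(\m)$, which is a Noetherian algebra over the (complete local) field $\kappa(\m)$. The top horizontal map is defined by Lemma \ref{lem-twosilt-fac}, the middle sends $M\mapsto M/\m M$, and the bottom sends $\cT\mapsto\cT\cap\mod\L(\m)$. Commutativity of the top square follows from $H^0(\ov P)=\ov{H^0(P)}$ by right-exactness of $\ov\L\otimes_\L-$; commutativity of the bottom square reduces to the identity $\Fac M\cap\mod\L(\m)=\Fac(M/\m M)$, since any $X\in\Fac M$ with $\m X=0$ receives a surjection from $M^n$ which factors through $(M/\m M)^n$.

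The core technical input for the embedding property is the natural isomorphism
\begin{align*}
\Hom_{\sD(\L(\m))}(\ov P,\ov Q[1])\simeq\Hom_{\sD(\L)}(P,Q[1])\otimes_R R/\m
\end{align*}
for two-term $P,Q\in\sK^{\rm b}(\proj\L)$. This follows by tensoring the first row of the diagram in the proof of Lemma \ref{lem-twosilt-fac}(a) with the right-exact functor $-\otimes_R R/\m$: each finitely generated projective $P^i$ over $\L$ satisfies $\Hom_\L(P^i,Q^j)\otimes_R R/\m\simeq\Hom_\L(P^i,Q^j/\m Q^j)\simeq\Hom_{\ov\L}(\ov{P^i},\ov{Q^j})$. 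Since $\Hom_{\sD(\L)}(P,Q[1])$ is a finitely generated $R$-module and $R$ is local, Nakayama's lemma together with this isomorphism yields $\Hom_{\sD(\L)}(P,Q[1])=0$ if and only if $\Hom_{\sD(\L(\m))}(\ov P,\ov Q[1])=0$. This simultaneously gives injectivity of the top map (if $\ov P\simeq\ov Q$, mutual vanishing of $\Hom(\ov P,\ov Q[1])$ and $\Hom(\ov Q,\ov P[1])$ forces $P\leq Q\leq P$, hence $\add P=\add Q$ in $\twosilt\L$) and reflection of the partial order; the middle and bottom horizontal maps inherit the embedding property via the vertical isomorphisms.

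The main obstacle is the surjectivity in the semi-perfect case. Since $(R,\m)$ is local and $\L$ is module-finite over $R$, every simple $\L$-module is a finite $R$-module annihilated by $\m$ (cf.\ Lemma \ref{lem-finite-length}), so $\m\L\subseteq\rad\L$; together with semi-perfectness of $\L$ this lets idempotents lift from $\L(\m)$ to $\L$ and yields a bijection between isomorphism classes of indecomposable objects of $\proj\L$ and of $\proj\L(\m)$. Hence any $\ov P=(\ov{P^{-1}}\xto{\ov d}\ov{P^0})\in\twosilt\L(\m)$ admits lifts $P^i\in\proj\L$, and the differential $\ov d$ lifts to $d:P^{-1}\to P^0$ by projectivity of $P^{-1}$. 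The resulting two-term complex $P\in\sK^{\rm b}(\proj\L)$ is presilting by the embedding established above. To see that $P$ is silting, take a Bongartz completion $P'=P\oplus C\in\twosilt\L$ (as in the proof of Proposition \ref{prop-silting-bijection}(a)); then $\ov{P'}\in\twosilt\L(\m)$ satisfies $\ov P\leq\ov{P'}$, but since $\ov P$ is already silting, $\add\ov P=\add\ov{P'}$, whence $\add P=\add P'$ by the injectivity above, so $P$ itself is silting. This completes the surjectivity for the top horizontal map, and the surjectivity for the other two follows via the vertical isomorphisms.
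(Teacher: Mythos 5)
Your argument is genuinely self-contained, in contrast with the paper's proof, which simply delegates the two non-trivial claims to \cite[Theorem 4.3(a),(b)]{Kimura}. Your embedding step is correct and clean: tensoring the top row of the diagram in Lemma \ref{lem-twosilt-fac}(a) with the right-exact functor $(-)\otimes_R R/\m$ and using $\Hom_\L(P^i,Q^j)\otimes_R R/\m\simeq\Hom_{\L(\m)}(\ov{P^i},\ov{Q^j})$ for $P^i\in\proj\L$ gives $\Hom_{\sD(\L(\m))}(\ov P,\ov Q[1])\simeq\Hom_{\sD(\L)}(P,Q[1])\otimes_R R/\m$ for all two-term $P,Q\in\sK^{\rm b}(\proj\L)$, and Nakayama on the finitely generated $R$-module $\Hom_{\sD(\L)}(P,Q[1])$ then shows the top map is an embedding of posets; the middle and bottom maps inherit this via the vertical isomorphisms. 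This is a correct and arguably cleaner route than having the reader chase the citation.

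The surjectivity step for $\L$ semi-perfect, however, contains a gap. After lifting $\ov P\in\twosilt\L(\m)$ to a two-term presilting complex $P$ and forming the co-Bongartz completion $P'=P\oplus C\in\twosilt\L$, you deduce $\add\ov P=\add\ov{P'}$ and then assert ``whence $\add P=\add P'$ by the injectivity above.'' But the injectivity you established concerns the map $\twosilt\L\to\twosilt\L(\m)$, i.e.\ a pair of objects \emph{both} known to be silting over $\L$; here $P$ is only presilting (that is exactly what you are trying to upgrade), so that injectivity does not apply to $(P,P')$. To close the gap one must show that $\ov{(-)}$ is Krull--Schmidt faithful on two-term \emph{presilting} complexes over the semi-perfect ring $\L$: for an indecomposable such $Q$, (i) $\ov Q\neq0$ (otherwise $\ov{d_Q}$ is an isomorphism, hence $d_Q$ is by Nakayama, hence $Q\simeq 0$); (ii) $\ov Q$ is indecomposable; and (iii) $\ov{Q_i}\not\simeq\ov{Q_j}$ for non-isomorphic indecomposable summands $Q_i,Q_j$ of a common silting complex. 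Items (ii) and (iii) need surjectivity of $\Hom_{\sD(\L)}(Q_i,Q_j)\to\Hom_{\sD(\L(\m))}(\ov{Q_i},\ov{Q_j})$, a \emph{degree-zero} statement that does follow (using the degree-one vanishing $\Hom_{\sD(\L)}(Q_i,Q_j[1])=0$ and a diagram chase on the Hom-complex), but it is not a formal consequence of the degree-one isomorphism you proved and you do not derive it. Once these three facts are in hand, writing $P'=\bigoplus_{i=1}^n Q_i$ basic with $n=\mathrm{rank}\,K_0(\proj\L)$ forces $\ov{P'}$ to be basic with $n$ indecomposable summands, and $\ov P$ silting (hence also $n$ summands) then yields $P=P'$, completing the argument.
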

%%%%%
\begin{proof}
The top horizontal map is well-defined by Lemma \ref{lem-twosilt-fac}.
The well-definedness of the middle horizontal map and the commutativity of the upper square follow from right-exactness of the functor $(R/\m)\otimes_R(-)$. The well-definedness of the bottom horizontal map and the commutativity of the lower square follow from $(\Fac M)\cap\mod\L(\m)=\Fac((R/\m)\otimes_RM)$.
The vertical maps are isomorphisms by Proposition \ref{prop-silting-bijection}(c), and the right bottom equality holds by Proposition \ref{prop-silting-bijection}(b).

The top horizontal map is an embedding of posets by \cite[Theorem 4.3(a)]{Kimura}.
The last assertion is \cite[Theorem 4.3(b)]{Kimura}. Note that $R$ is assumed to be complete local there, it is not used in the proof.
\end{proof}
%%%%%%%%%%%%%%%
\begin{rem}\label{rem-not-sp}
If $\L$ is not semi-perfect, then the horizontal maps in (\ref{diagram-twosilt-sttilt-ftors}) are not surjective in general.
Let $(R, \L)$ be a Noetherian algebra such that $(R, \m)$ is local.
Assume that $\L$ is ring indecomposable commutative and there are two maximal ideals $\m_1$, $\m_2$ of $\L$ containing $\m \L$ and $\L/\m\L \simeq \L/\m_1 \times \L/\m_2$ (for example $R=\bZ_{(5)}$ and $\L=\bZ[\sqrt{-1}]_{(5)}$).
By Proposition \ref{prop-comm-silt}, we have $\siltm \L = \{0, \L\}$.
On the other hand, $\ftors(\L/\m\L)=\tors(\L/\m \L)=\{\mod(\L/\m\L),\mod(\L/\m_1),\mod(\L/\m_2),0\}$.
\end{rem}
%%%%%%%%%%%%%%%
We give the following closure operation to obtain a torsion class.
%%%%%%%%%%%%%%%
\begin{propdef}\label{lem-Serre-extension}
Let $\cA$ be an abelian category, $\cS$ a subcategory of $\cA$ which is closed under subobjects and factor objects in $\cA$. Thus $\cS$ is an abelian category, and we denote by $\tors\cS$ the set of torsion classes of $\cS$.
For $\cC\in\tors\cS$ such that $\cC={}^{\perp_{\cS}}(\cC^{\perp_{\cS}})$, let
\[
\psi(\cC):=\{X\in\cA\ \mid \Fac X\cap\cS \subseteq \cC\}.
\]
Then we have the following statements.
\begin{enumerate}[{\rm (a)}]
	\item $\psi(\cC)\in\tors\cA$ holds. It is the maximum among torsion classes $\cT$ of $\cA$ satisfying $\cT\cap\cS \subseteq \cC$.
	\item $\psi(\cC)= {}^{\perp_{\cA}}\left( \cC^{\perp_{\cS}}  \right)$ and $\psi(\cC) \cap\cS = \cC$ hold.
\end{enumerate}
\end{propdef}
%%%%%
\begin{proof}
(a)
We show that $\psi(\cC)$ is a torsion class of $\cA$.
It is clear that $\psi(\cC)$ is closed under factors.
Let $0 \to X \to Y \to Z \to 0$ be an exact sequence in $\cA$ with $X,Z\in\psi(\cC)$.
We show that each factor $Y/Y^{\prime}$ belongs to $\cC$ if it belongs to $\cS$.
We have a short exact sequence $0 \to (X + Y^{\prime})/Y^{\prime} \to Y/Y^{\prime} \to Y/(X+Y^{\prime}) \to 0$.
Since the first term belongs to $\Fac X \cap \cS$ and the third term belongs to $\Fac Z \cap \cS$, the middle term belongs to $\cC$.
By the definition of $\psi(\cC)$, the maximum property holds.

(b)
We show the equality $\psi(\cC)={}^{\perp_{\cA}}\left( \cC^{\perp_{\cS}} \right)$.
By assumption, $\cS \cap {}^{\perp_{\cA}}\left( \cC^{\perp_{\cS}} \right) = \cC$ holds.
By the maximum property, we have $\psi(\cC)\supseteq {}^{\perp_{\cA}}\left( \cC^{\perp_{\cS}} \right)$.
Let $X\in\psi(\cC)$, $Y\in \cC^{\perp_{\cA}} \cap \cS$ and $f : X \to Y$.
We have $\Im(f) \in \Fac X \cap \cS \subseteq \cC$.
So the inclusion map $\Im(f) \to Y$ is zero, and $f=0$.
This implies that $X\in{}^{\perp_{\cA}}\left( \cC^{\perp_{\cS}} \right)$.
We complete the proof.
\end{proof}
%%%%%%%%%%%%%%%
Let $\cA=\mod\L$ and $\cS=\Fl\L$.
By Proposition \ref{prop-tors-perp}, each $\cC\in\tors(\Fl\L)$ satisfies $\cC={}^{\perp_{\cS}}(\cC^{\perp_{\cS}})$.
So by applying Definition-Proposition \ref{lem-Serre-extension}, we obtain an injective map
\[\psi : \tors(\Fl\L)\to\tors\L,\ \cC\mapsto \psi(\cC).\]
We leave the reader to check that this is bijective if and only if $\dim R=0$.

If $\cT\in\stors(\Fl\L)$, then $\psi(\cT)$ can be described explicitly as follows.
%%%%%%%%%%%%%%%
\begin{thm}\label{thm-tors-closure-stable}
Let $(R, \L)$ be a Noetherian algebra.
\begin{enumerate}[{\rm (a)}]
\item For each presilting $\L$-module $M$, we have \[\gen M = \psi(\gen M \cap \Fl\L).\]
\item The map $(-) \cap \Fl\L : \stors\L \to \stors(\Fl\L)$ is an isomorphism of posets whose inverse is given by $\psi : \stors(\Fl\L) \to \stors \L$.
\end{enumerate}
%For any $\cT\in\stors\L$, we have \[\cT= \ov{\cT\cap\Fl\L}.\]
\end{thm}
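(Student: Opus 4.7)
The plan is to first establish (a), from which (b) will follow formally. By Proposition \ref{prop-silting-bijection}(a), every presilting module $M$ satisfies $\gen M = \gen M'$ for some silting module $M'$, so we may assume $M$ is silting. The forward inclusion $\gen M \subseteq \psi(\gen M \cap \Fl\L)$ is immediate from the maximality property in Definition-Proposition \ref{lem-Serre-extension}(a), using that $\gen M$ is a torsion class.

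For the reverse inclusion, I would first localize. Using the decomposition $\Fl\L = \bigsqcup_{\m \in \MSpec R} \Fl\L_\m$ of Lemma \ref{lem-finite-length}(c), a direct check shows that $X \in \psi(\gen M \cap \Fl\L)$ iff $X_\m \in \psi_\m(\gen M_\m \cap \Fl\L_\m)$ at each maximal ideal $\m$; combined with Lemma \ref{lem-local-gen}(b), this reduces the problem to the case where $R$ is local with maximal ideal $\m$. In this local case the hypothesis simplifies further: $X \in \psi(\gen M \cap \Fl\L)$ is equivalent to $X/\m X \in \gen M$. Indeed, every finite-length factor of $X$ is dominated by some $X/\m^k X$, and $X/\m^k X \in \gen M$ follows inductively from $X/\m X \in \gen M$ because each successive quotient $\m^i X/\m^{i+1} X$ is generated by copies of $X/\m X$ via the natural surjection $(\m^i/\m^{i+1}) \otimes_{\kappa(\m)} X/\m X \twoheadrightarrow \m^i X/\m^{i+1} X$, and $\gen M$ is closed under extensions.

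Thus (a) reduces to the claim that $X/\m X \in \gen(M/\m M)$ implies $X \in \gen M$ in the local case. This is the main obstacle, since it cannot be proven by manipulating the torsion pair $(\gen M, M^\perp)$ directly: the torsion-free class $M^\perp$ is not closed under quotients, so the obvious attempt (take the torsionfree part $Y = X/tX \in M^\perp$, observe $Y/\m Y \in \gen M$, and try to derive $Y/\m Y \in M^\perp$) breaks at the step requiring vanishing of $\Ext^1_\L(M, \m Y)$. I would circumvent this by completing at $\m$: the algebra $\hat\L := \hat R \otimes_R \L$ is semi-perfect (because $\hat R$ is complete local), and Proposition \ref{prop-base-silt} upgrades $M$ to a silting $\hat\L$-module $\hat M := \hat R \otimes_R M$ with $\hat M/\hat\m \hat M = M/\m M$ and $\hat X/\hat\m \hat X = X/\m X$. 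In the semi-perfect setting Proposition \ref{prop-silting-ftors} gives an isomorphism $\stors\hat\L \simeq \stors\hat\L(\m)$ via $\cT \mapsto \cT \cap \mod\hat\L(\m)$. The subcategory $\{Z \in \mod\hat\L \mid Z/\hat\m Z \in \gen(M/\m M)\}$ is readily checked to be a torsion class of $\mod\hat\L$ containing $\gen\hat M$ and mapping to $\gen(M/\m M)$ under $(-)\cap\mod\hat\L(\m)$, so by the isomorphism it must equal $\gen\hat M$. Therefore $X/\m X \in \gen(M/\m M)$ forces $\hat X \in \gen\hat M$. Finally, faithfully flat descent along $R \to \hat R$ completes the argument: using the identification $\Hom_{\hat\L}(\hat M^n, \hat X) \simeq \hat R \otimes_R \Hom_\L(M^n, X)$, a surjection $\hat M^n \to \hat X$ can be expanded as an $\hat R$-linear combination of $R$-linear maps $\phi_j : M^n \to X$, and the concatenation $\phi := (\phi_1, \ldots, \phi_k) : M^{nk} \to X$ has completion surjecting onto $\hat X$, so faithful flatness of $\hat R$ over $R$ kills $\Cok\phi$ and yields $X \in \gen M$.

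Part (b) then follows formally: both $(-) \cap \Fl\L$ and $\psi$ are inclusion-preserving; the composition $\psi \circ ((-)\cap\Fl\L)$ is the identity on $\stors\L$ by (a), while $((-)\cap\Fl\L) \circ \psi$ is the identity on $\stors(\Fl\L)$ by Definition-Proposition \ref{lem-Serre-extension}(b). Consequently $(-)\cap\Fl\L$ and $\psi$ are mutually inverse isomorphisms of posets.
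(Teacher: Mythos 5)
The overall architecture of your argument — reduce to the case of a silting module, prove the inclusion $\gen M\subseteq\psi(\gen M\cap\Fl\L)$ trivially, reduce the converse to the local case via Lemma \ref{lem-local-gen}(b), and then show in the local case that $X/\m X\in\gen M$ forces $X\in\gen M$ — is sound and parallels the paper's plan; the two auxiliary observations (that $\psi(\gen M\cap\Fl\L)$ in the local case is detected by $X/\m X$ alone, and the faithfully flat descent step from $\hat\L$ back to $\L$) are both correct.

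However, there is a genuine gap at the crucial step over the completion. You consider $\cT':=\{Z\in\mod\hat\L\mid Z/\hat\m Z\in\gen(M/\m M)\}$, note $\gen\hat M\subseteq\cT'$ and $\cT'\cap\mod\hat\L(\m)=\gen(M/\m M)$, and then invoke the isomorphism $\stors\hat\L\simeq\stors\hat\L(\m)$ of Proposition \ref{prop-silting-ftors} to conclude $\cT'=\gen\hat M$. This does not follow: that proposition gives an \emph{injection} of $\stors\hat\L$ into $\stors\hat\L(\m)$, but you have not shown that $\cT'$ lies in $\stors\hat\L$ — indeed, $\cT'\in\stors\hat\L$ is essentially equivalent to the conclusion you are trying to reach, so the argument is circular. (The map $(-)\cap\mod\hat\L(\m)$ from all of $\tors\hat\L$ to $\tors\hat\L(\m)$ is certainly not injective — $\mod\hat\L$ and $\Fl\hat\L$ already have the same image — so no purely formal poset argument can close this gap. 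The theorem relating $\stors$ with the condition $\psi(\cT\cap\Fl\L)=\cT$ is Theorem \ref{thm-silting-finite}(vii), which comes later, requires $\tau$-tilting finiteness, and itself relies on the present theorem.)

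The paper handles exactly this point by a non-formal lifting argument: it first shows, using Ext-projectivity of $M$ in $\Fac M$ and Nakayama, that a surjection $M_0\twoheadrightarrow X/\m X$ propagates to surjections $M_0\twoheadrightarrow X/\m^iX$ for all $i$, and then applies Guralnick's lifting-number Lemma \ref{lem-lifting-number} to lift a surjection onto $X/\m^{e+1}X$ to an honest map $M_0\to X$, whose surjectivity follows again from Nakayama. That lifting lemma is the ingredient your completion argument is missing; no poset-theoretic consequence of semi-perfectness substitutes for it.
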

%%%%%%%%%%%%%%%
We need the following lemma to prove Theorem \ref{thm-tors-closure-stable}.
%Assume that $(R, \m)$ is a local ring.
%A \emph{lifting number} of the pair $(M,N)$ is an integer $e$ such that for any $i>0$ and any $f\in\Hom_\L(M/\mfm^{e+i}M, N/\mfm^{e+i}N)$, there exists $g\in\Hom_\L(M, N)$ such that $f$ and $g$ induce the same morphism from $M/\mfm^iM$ to $N/\mfm^iN$.
%%%%%%%%%%%%%%%
\begin{lem}\cite{Guralnick}\label{lem-lifting-number}
Let $(R, \m)$ be a commutative local Noetherian ring, $(R, \L)$ a Noetherian algebra.
Then for each $M, N\in\mod\L$, there  exists an integer $e\geq 0$, called a lifting number of $(M, N)$, such that for any $i>0$ and any
$f\in\Hom_\L(M/\mfm^{e+i}M, N/\mfm^{e+i}N)$, there exists $g\in\Hom_\L(M, N)$ such that $f$ and $g$ induce the same morphism from $M/\mfm^iM$ to $N/\mfm^iN$.	
%For any $M,N\in\mod\L$, a lifting number of $(M, N)$ exists.
%If $e$ is a lifting number of $(M,N)$, then any integer $e^{\prime} > e$ is also a lifting number of $(M, N)$.	
\end{lem}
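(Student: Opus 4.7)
My plan is to derive this from the Artin--Rees lemma applied to a suitable finite presentation of $M$. First I would choose a finite presentation $\L^a \xrightarrow{\beta} \L^b \to M \to 0$ of $M$ as a $\L$-module, which exists since $M$ is a finitely generated module over the Noetherian algebra $\L$. Applying the left exact functor $\Hom_\L(-, N)$ yields an exact sequence of $R$-modules
\[
0 \longrightarrow \Hom_\L(M, N) \longrightarrow N^b \xrightarrow{\alpha} N^a,
\]
and since $\L$ is module-finite over $R$, both $N^a$ and $N^b$ are finitely generated $R$-modules. Setting $L := \im\alpha \subseteq N^a$, the Artin--Rees lemma applied to the submodule $L$ of the finitely generated $R$-module $N^a$ supplies an integer $e \ge 0$ such that $L \cap \mfm^{e+i}N^a \subseteq \mfm^i L$ for every $i \geq 0$; I claim this $e$ is a lifting number for $(M,N)$.

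Next I would unpack the morphism $f$ using this presentation. Since $\mfm^k$ annihilates $N/\mfm^k N$, one has $\Hom_\L(M/\mfm^k M, N/\mfm^k N) = \Hom_\L(M, N/\mfm^k N)$, and applying $\Hom_\L(-, N/\mfm^{e+i}N)$ to the presentation identifies the left-hand side with the kernel of the reduction $\bar\alpha : (N/\mfm^{e+i}N)^b \to (N/\mfm^{e+i}N)^a$ of $\alpha$. Thus $f$ corresponds to a class $\bar v \in (N/\mfm^{e+i}N)^b$ with $\bar\alpha(\bar v)=0$; choosing any lift $v \in N^b$ of $\bar v$ then forces $\alpha(v) \in L \cap \mfm^{e+i}N^a$.

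By the defining property of $e$, we have $\alpha(v) \in \mfm^i L$, so $\alpha(v) = \alpha(v')$ for some $v' \in \mfm^i N^b$, and therefore $w := v - v' \in \ker\alpha = \Hom_\L(M, N)$. Since $w \equiv v \pmod{\mfm^i N^b}$, the morphism $g \in \Hom_\L(M,N)$ corresponding to $w$ induces the same map $M/\mfm^i M \to N/\mfm^i N$ as does $f$, which is the desired conclusion. The main point where care is needed is the chain of Hom identifications used to pass between morphisms out of $M/\mfm^k M$ and kernels of reductions of $\alpha$; once these are in place, the argument is a direct application of Artin--Rees, whose hypothesis requires $L$ to sit inside a finitely generated $R$-module and therefore uses crucially that $\L$ is module-finite over $R$.
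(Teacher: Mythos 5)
Your proof is correct and reconstructs the argument of Guralnick's cited paper; the paper itself supplies no proof, only the reference \cite{Guralnick}. The key steps — realizing $\Hom_\L(M,N)$ as $\ker\alpha$ inside the finitely generated $R$-module $N^b$ via a finite presentation, applying Artin--Rees to the submodule $L=\im\alpha\subseteq N^a$, and checking that the Hom-identifications are compatible with the reduction maps $N^b\to N^b/\mfm^k N^b$ — are all in order, and the module-finiteness of $\L$ over $R$ enters exactly where you say it does (to make $N^a,N^b$ finitely generated $R$-modules so Artin--Rees applies).
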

%%%%%%%%%%%%%%%
We are ready to prove the theorem.
%%%%%%%%%%%%%%%
%For a torsion class $\cU\in\stors(\Fl\L)$, we can describe $\ov{\cU}$ explicitly.
%%%%%%%%%%%%%%%
%\begin{thm}\label{prop-tors-closure-stable}
%For any $\cT\in\stors\L$, $\cT= \ov{\cT\cap\Fl\L}$ holds.
%\end{thm}
%%%%%
\begin{proof}[Proof of Theorem \ref{thm-tors-closure-stable}]
The assertion (b) immediately follows from (a) and Definition-Proposition \ref{lem-Serre-extension}(b).
By Proposition \ref{prop-silting-bijection}(a), we may assume that $M$ is a silting module.
Clearly $\gen M \subseteq \psi(\gen M \cap\Fl\L)$ holds.

(i) Let $\cT=\Fac M$ and $\cC=\cT \cap \Fl\L$. We first prove $\psi(\cC) \subseteq \cT$ in the case where $(R, \m)$ is a local ring.
Let $X\in\psi(\cC)$.
There exists  a surjective morphism $a : M_{0} \to X/\mfm X$ with $M_{0}$ in $\add M$.
We show that for any $i>0$, there exists a surjective morphism $M_{0} \to X/\mfm^{i} X$.
Take a short exact sequence
\[
0 \to \mfm X/ \mfm^{i}X \to X/\mfm^{i}X \xto{\pi} X/\mfm X \to 0.
\]
We have $\mfm X/ \mfm^{i}X \in \Fac X\,\cap\,\Fl\L \subseteq \cC$.
Since $M$ is a Ext-projective module in $\cT$, there exists a morphism $b : M_{0} \to X/\mfm^{i}X$ with $\pi b =a$.
This implies $\Im b + \mfm(X/\mfm^{i}X) = X/\mfm^{i}X$.
By Nakayama's lemma, $b$ is surjective.

Let $e$ be a lifting number of $(M_{0}, X)$.
By the above argument, there exists a surjective morphism $f : M_{0} \to X/\mfm^{e+1}X$.
Since $e$ is a lifting number, there exists a morphism $g : M_{0} \to X$ such that $f$ and $g$ induce the same morphism $\ov{f} = \ov{g} : M_{0}/\mfm M_{0} \to X/\mfm X$.
This means that $\Im g+\mfm X = X$.
By Nakayama's lemma, $g$ is surjective.
Therefore $X$ belongs to $\cT$ and we have $\psi(\cC) \subseteq \cT$.

(ii) We go back to the case where $R$ is not necessary local.
Let $X\in\psi(\gen M \cap \Fl\L)$.
So $\Fac X \cap \Fl\L \subseteq \Fac M$ holds.
For each maximal ideal $\m$ of $R$, $\Fac X_\m \cap \Fl\L_\m \subseteq \Fac M_\m$ holds.
Since $M_\m$ is a silting $\L_\m$-module, we have $X_\m\in\Fac M_\m$ by case (i).
By Lemma \ref{lem-local-gen}(b), $X\in\gen M$ holds.
\end{proof}
%%%%%%%%%%%%%%%%%%%%
Now we apply Theorem \ref{thm-tors-closure-stable} to study properties of ${\rm r}_{\p,\q} : \tors(\Fl\L_\p) \to \tors(\Fl\L_\q)$.
We can calculate ${\rm r}_{\p,\,\q}$ for elements in $\stors(\Fl\L_\p)$ as follows. The following result will be used in Subsection \ref{subsection-comp-silt-finite}, and (b) improves Proposition \ref{prop-comp-rpq}(b).
%%%%%%%%%%%%%%%%%%%%
\begin{cor}\label{cor-psilt-rpq}
Let $(R, \L)$ be a Noetherian algebra.
\begin{enumerate}[\rm(a)]
\item Let $M$ be a presilting $\L$-module.
For prime ideals $\p \supseteq \q$, we have
\[
{\rm r}_{\p,\q}(\gen M_{\p} \cap \Fl\L_\p)=\gen M_{\q} \cap \Fl\L_\q.
\]
\item
For each $\cT\in\stors(\Fl\L_\p)$, we have ${\rm r}_{\q,\mathfrak{r}} \circ {\rm r}_{\p,\q}(\cT) = {\rm r}_{\p, \mathfrak{r}}(\cT)$.
\end{enumerate}
\end{cor}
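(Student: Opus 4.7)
The plan is to reduce (a) to Theorem \ref{thm-tors-closure-stable}(a) by identifying, on silting torsion classes of $\Fl\L_\p$, the operation $\psi_\p$ from Definition \ref{dfn-bar} with the torsion class $\gen M_\p$. Part (b) will then drop out by iterating (a) on the localized Noetherian algebras $(R_\p,\L_\p)$ and $(R_\q,\L_\q)$.

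For (a), set $\cC:=\gen M_\p\cap\Fl\L_\p$ and establish $\psi_\p(\cC)=\gen M_\p$. The containment $\gen M_\p\subseteq\psi_\p(\cC)$ is easy: for $X\in\gen M_\p$, right-exactness of $\kappa(\p)\otimes_{R_\p}(-)$ gives $X/\p X\in\gen(M_\p/\p M_\p)\subseteq\gen M_\p$, and $X/\p X$ is a finitely generated $\L(\p)$-module, hence of finite length over $\L_\p$ by Lemma \ref{lem-finite-length}(a), so $X/\p X\in\cC$. Conversely, take $X\in\psi_\p(\cC)$. Each subquotient $\p^iX/\p^{i+1}X$ is a quotient of $(\p^i/\p^{i+1})\otimes_{R/\p}(X/\p X)$, hence lies in $\gen(X/\p X)\subseteq\gen M_\p$, and closure of $\gen M_\p$ under extensions forces $X/\p^\ell X\in\gen M_\p$ for every $\ell\geq 1$. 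Any $Y\in\Fac X\cap\Fl\L_\p$ is annihilated by some power of $\p$ by Lemma \ref{lem-finite-length}(b) applied in the local ring $R_\p$, hence is a quotient of $X/\p^\ell X\in\gen M_\p$. This shows $\Fac X\cap\Fl\L_\p\subseteq\cC$, and since $M_\p$ is presilting over $\L_\p$ (Proposition \ref{prop-presilt-local}(a)), Theorem \ref{thm-tors-closure-stable}(a) applied to $(R_\p,\L_\p)$ yields $X\in\gen M_\p$. Combining with Lemma \ref{lem-local-gen}(a),
\[
{\rm r}_{\p,\q}(\cC)=\psi_\p(\cC)_\q\cap\Fl\L_\q=(\gen M_\p)_\q\cap\Fl\L_\q=\gen M_\q\cap\Fl\L_\q.
\]

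For (b), Definition-Proposition \ref{propdef-stors} applied to the Noetherian algebra $(R_\p,\L_\p)$ lets us write $\cT=\gen N\cap\Fl\L_\p$ for some silting $\L_\p$-module $N$; by Proposition \ref{prop-presilt-local}(a), $N_\q$ and $N_\mathfrak{r}$ are silting over $\L_\q$ and $\L_\mathfrak{r}$ respectively. Reading (a) as a statement valid for every Noetherian algebra and every chain of primes in its base, three applications give ${\rm r}_{\p,\q}(\cT)=\gen N_\q\cap\Fl\L_\q$, ${\rm r}_{\q,\mathfrak{r}}(\gen N_\q\cap\Fl\L_\q)=\gen N_\mathfrak{r}\cap\Fl\L_\mathfrak{r}$, and ${\rm r}_{\p,\mathfrak{r}}(\cT)=\gen N_\mathfrak{r}\cap\Fl\L_\mathfrak{r}$, from which the desired equality is immediate. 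The only delicate point in the whole argument is the ``$\subseteq$'' half of the identification $\psi_\p(\cC)=\gen M_\p$, which invokes Theorem \ref{thm-tors-closure-stable}(a) and, through it, Guralnick's lifting lemma (Lemma \ref{lem-lifting-number}); the remaining steps are formal manipulations with the localization functor and the filtration by powers of $\p$.
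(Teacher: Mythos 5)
Your proof is correct and follows essentially the same route as the paper: reduce to the equality $\psi_\p(\gen M_\p\cap\Fl\L_\p)=\gen M_\p$ via Theorem \ref{thm-tors-closure-stable}(a), then localize with $(\gen M_\p)_\q=\gen M_\q$ and iterate for (b). You are somewhat more explicit than the paper at the one place it is terse, namely in reconciling the operator $\psi_\p$ from Definition-Proposition \ref{dfn-bar} (used to define ${\rm r}_{\p,\q}$) with the closure operator $\psi$ from Definition-Proposition \ref{lem-Serre-extension} (appearing in Theorem \ref{thm-tors-closure-stable}); that identification on $\tors(\Fl\L_\p)$ is exactly your $\p$-adic filtration argument.
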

%%%%%
\begin{proof}
We have $\gen M_\p = \psi(\gen M_\p \cap \Fl\L_\p)$ since $\gen M_\p \in \stors\L_\p$ and by Theorem \ref{thm-tors-closure-stable}.
By a direct calculation, we have $(\gen M_\p)_\q=\gen M_\q$.
Thus the assertion holds.
\end{proof}
%%%%%%%%%%%%%%%
%%%%%%%%%%%%%%%
\subsection{Silting finiteness}\label{subsection-locally-finite}
In this subsection we study finiteness of two-term silting complexes of Noetherian algebras.
We start with recalling the following basic result in tilting theory.
%%%%%%%%%%%%%%%
\begin{prop}\label{thm-DIJ}\cite{DIJ}
Let $\L$ be a finite dimensional algebra over a field.
Then following statements are equivalent.
\begin{enumerate}[{\rm (i)}]
\item
The set $\siltm \L$ is finite.
\item
The set $\ftors\L$ of functorially finite torsion classes of $\mod\L$ is finite.
\item
The set $\twosilt\L$ is finite.
\item
The set $\tors\L$ is finite.
\item
$\tors\L = \ftors\L$ holds.
\end{enumerate}
In this case we call $\L$ \emph{$\tau$-tilting finite}.
\end{prop}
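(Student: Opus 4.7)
The plan is to prove the equivalences by combining the silting bijections already established in Proposition \ref{prop-silting-bijection} with a brick-finiteness argument for the last, nontrivial direction. Since $\L$ is finite dimensional over a field, it is semi-perfect, so the full strength of Proposition \ref{prop-silting-ftors} (and its underlying input Proposition \ref{prop-silting-bijection}) is available without further hypotheses.

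First I would dispatch the easy equivalences. The bijection $H^0(-):\twosilt\L\to\siltm\L$ of Proposition \ref{prop-silting-bijection}(c) gives (i)$\Leftrightarrow$(iii) directly. The bijection $\Fac(-):\siltm\L\to\stors\L=\ftors\L$ of Proposition \ref{prop-silting-bijection}(b),(c), which over a finite dimensional algebra is the Adachi--Iyama--Reiten correspondence between support $\tau$-tilting modules and functorially finite torsion classes, gives (i)$\Leftrightarrow$(ii). The implication (iv)$\Rightarrow$(ii) is immediate from $\ftors\L\subseteq\tors\L$, and (v)$\Rightarrow$(iv) follows by combining (v) with (ii), which we already have from (iv).

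The substantive step is (ii)$\Rightarrow$(v), namely: if $\ftors\L$ is finite, then every torsion class is functorially finite. My approach here is via bricks. Recall that a brick is a module whose endomorphism ring is a division ring. The crucial intermediate facts are
\begin{enumerate}[\rm(1)]
\item each cover $\cU\lessdot\cT$ in the Hasse quiver of $\ftors\L$ is labeled by a uniquely determined brick, namely the unique brick in $\cT\cap\cU^{\perp}$;
\item every brick $B$ in $\mod\L$ occurs as such a label, by taking $\cT$ to be a minimal element of $\{\cU\in\ftors\L \mid B\in\cU\}$ and $\cU$ the corresponding cover;
\item if the set $\brick\L$ of isomorphism classes of bricks is finite, then $\tors\L=\ftors\L$ and both sets are finite.
\end{enumerate}
Assuming (ii), the Hasse quiver of $\ftors\L$ has only finitely many arrows, so by (1) and (2) the set $\brick\L$ is finite, and then (3) yields both (v) and (iv) simultaneously.

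The main obstacle is the proof of (3), which is the technical heart of the Demonet--Iyama--Jasso theorem. The idea I would follow is: for any $\cT\in\tors\L$, take the smallest wide subcategory $\cW$ containing all bricks of $\cT$; brick-finiteness ensures $\cW$ has only finitely many simple objects, so $\cT=\Filt(\cW)$ is generated by finitely many bricks. Using the Ingalls--Thomas-type correspondence between wide subcategories and torsion classes, one lifts $\cW$ to a functorially finite torsion class, showing $\cT\in\ftors\L$. The delicate point is to establish property (1)--(2) of brick labels and to ensure the wide subcategory associated to $\cT$ is ``small enough,'' which is where the finite dimensionality over a field (and the associated length arguments on brick modules) enters essentially.
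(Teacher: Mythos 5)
The easy equivalences ((i)$\Leftrightarrow$(ii)$\Leftrightarrow$(iii) and (iv)$\Rightarrow$(ii)) match the paper's route, which just invokes the Adachi--Iyama--Reiten bijections as packaged in Proposition~\ref{prop-silting-bijection}, and your sketch of the brick-label argument for (ii)$\Rightarrow$(iv),(v) is a reasonable paraphrase of the content of \cite[Theorem 3.8]{DIJ} that the paper simply cites.

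However, there is a genuine logical gap in how you close the cycle. You write that ``(v)$\Rightarrow$(iv) follows by combining (v) with (ii), which we already have from (iv).'' This is circular: to establish (v)$\Rightarrow$(iv) you may only assume (v), and you have no derivation of (ii) from (v). At that stage in your argument, the implications you hold are (i)$\Leftrightarrow$(ii)$\Leftrightarrow$(iii), (iv)$\Rightarrow$(ii), and (ii)$\Rightarrow$(iv),(v); nothing points out of (v). In particular, your items (1)--(3) on bricks give $\ftors\L$ finite $\Rightarrow$ $\brick\L$ finite $\Rightarrow$ ($\tors\L=\ftors\L$ finite), but assuming only (v), i.e.\ $\tors\L=\ftors\L$, you have no way to conclude that $\brick\L$ (or $\ftors\L$) is finite, since the Hasse quiver of $\ftors\L$ could a priori still have infinitely many arrows.

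The missing implication (v)$\Rightarrow$(ii) (equivalently, its contrapositive: $\tau$-tilting infinite implies some torsion class fails to be functorially finite) is in fact the hard direction of \cite[Theorem~3.8]{DIJ} and requires a separate argument — roughly, from an infinite chain of functorially finite torsion classes one produces a torsion class that is not functorially finite as a suitable union or intersection. Without some version of this, the five conditions are not proved equivalent, only that (i)--(iv) are mutually equivalent and each implies (v).
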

%%%%%
\begin{proof}
By results of \cite{Adachi-Iyama-Reiten}, (i),(ii),(iii) are equivalent, and by \cite[Theorem 3.8]{DIJ}, (v) is also equivalent. Thus (ii) (and (v)) implies (iv), and the converse is clear.
\end{proof}
%%%%%%%%%%%%%%%
The main theorem of this subsection is a generalization below of Proposition \ref{thm-DIJ} for Noetherian algebras.
In fact, if $R$ is a field, then Theorem \ref{thm-silting-finite} is specialized to Proposition \ref{thm-DIJ}.
%%%%%%%%%%%%%%%
\begin{thm}\label{thm-silting-finite}
Let $(R, \L)$ be a Noetherian algebra.
We consider the following statements. \begin{enumerate}[{\rm (i)}]
\item
The set $\siltm\L$ of additive equivalence classes of silting $\L$-modules is finite.
\item
The set $\stors\L$ is finite (or equivalently, $\stors(\Fl\L)$ is finite by Theorem \ref{thm-tors-closure-stable}).
\item The set $\twosilt\L$ is finite.
\item
The set $\tors(\Fl\L)$ of torsion classes of $\Fl\L$ is finite.
\item
$\supp_R\L \cap \MSpec R$ is finite, and 
$\L(\m)$ is $\tau$-tilting finite for any $\m\in\MSpec R$.
\item
$\tors(\Fl\L) = \stors(\Fl\L)$ holds.
\item
$\stors\L = \{ \cT\in\tors \L \mid \psi(\cT\cap \Fl\L) = \cT \}$ holds.
\end{enumerate}
Then we have the following implications: ${\rm (i)} \Leftrightarrow {\rm (ii)} \Leftrightarrow {\rm (iii)}  \Leftarrow {\rm (iv)} \Leftrightarrow {\rm (v)} \Leftarrow {\rm (vi)} \Leftrightarrow {\rm (vii)}$.
Moreover if $\L$ is semi-perfect, then all statements are equivalent.
\end{thm}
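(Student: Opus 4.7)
The plan is to prove each implication in the chain separately, assembling the earlier results of the paper.

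The equivalences (i) $\Leftrightarrow$ (ii) $\Leftrightarrow$ (iii) are immediate from the poset isomorphisms $\twosilt\L \simeq \siltm\L \simeq \stors\L$ of Proposition \ref{prop-silting-bijection}(c). For (iv) $\Leftrightarrow$ (v), I would use Lemma \ref{lem-finite-length}(c) to decompose $\tors(\Fl\L) \simeq \prod_{\m \in \supp_R\L \cap \MSpec R} \tors(\Fl\L_\m)$; each factor identifies with $\tors\L(\m)$ by Proposition \ref{prop-torsion-fl} and has cardinality at least $2$, and is finite precisely when $\L(\m)$ is $\tau$-tilting finite by Proposition \ref{thm-DIJ}, so the product is finite iff both conditions in (v) hold. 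The implication (iv) $\Rightarrow$ (ii) follows from $\stors(\Fl\L) \subseteq \tors(\Fl\L)$ combined with the bijection $\stors\L \simeq \stors(\Fl\L)$ of Theorem \ref{thm-tors-closure-stable}(b). For (vi) $\Leftrightarrow$ (vii), Definition-Proposition \ref{lem-Serre-extension}(b) shows that $(-)\cap\Fl\L$ is a left inverse of $\psi$, whence $\{\cT \mid \psi(\cT \cap \Fl\L)=\cT\} = \im \psi$; Theorem \ref{thm-tors-closure-stable} then identifies $\stors\L$ with $\psi(\stors(\Fl\L))$, so the two sides of (vii) coincide exactly when $\stors(\Fl\L) = \tors(\Fl\L)$.

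The central implication is (vi) $\Rightarrow$ (v). Under (vi), I would first prove that each $\L(\m)$ is $\tau$-tilting finite. Given $\cT \in \tors(\Fl\L_\m)$, one extends it to $\cT' \in \tors(\Fl\L)$ by placing $\cT$ in the $\m$-slot of the product decomposition and zero elsewhere; (vi) yields $\cT' = \gen M \cap \Fl\L$ for some $M \in \siltm\L$, and Proposition \ref{prop-presilt-local}(a) shows that $M_\m \in \siltm\L_\m$ with $\cT = \gen M_\m \cap \Fl\L_\m \in \stors(\Fl\L_\m)$. The resulting equality $\tors(\Fl\L_\m) = \stors(\Fl\L_\m)$, coupled with the injection $\stors\L_\m \hookrightarrow \stors\L(\m) = \ftors\L(\m)$ of Proposition \ref{prop-silting-ftors} and the bijections of Proposition \ref{prop-torsion-fl} and Theorem \ref{thm-tors-closure-stable}(b), collapses the chain
\[
|\tors\L(\m)| \geq |\ftors\L(\m)| = |\stors\L(\m)| \geq |\stors\L_\m| = |\stors(\Fl\L_\m)| = |\tors(\Fl\L_\m)| = |\tors\L(\m)|
\]
into equalities, giving $\ftors\L(\m) = \tors\L(\m)$ and hence $\tau$-tilting finiteness by Proposition \ref{thm-DIJ}.

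The remaining step is to show that $\supp_R\L \cap \MSpec R$ is finite. For each subset $I \subseteq \supp_R\L \cap \MSpec R$, the torsion class $\cT_I := \{X \in \Fl\L \mid \supp_R X \subseteq I\}$ lies in $\stors(\Fl\L)$ by (vi), so $\cT_I = \gen M_I \cap \Fl\L$ for some $M_I \in \siltm\L$. Since every nonzero finitely generated module over a Noetherian ring admits a simple quotient, the vanishing $\gen(M_I)_\m \cap \Fl\L_\m = 0$ for $\m \in \supp_R\L \setminus I$ forces $(M_I)_\m = 0$, whence $\supp_R M_I = V(\ann_R M_I)$ is a closed subset of $\Spec R$ whose intersection with $\supp_R\L \cap \MSpec R$ is exactly $I$. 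Distinct subsets $I$ therefore produce distinct ideals $\ann_R M_I$ of $R$, yielding $2^{|\supp_R\L \cap \MSpec R|}$ many closed subsets arising from finitely generated $\L$-modules. The main obstacle is to upgrade this into an actual contradiction: one needs to exploit the rank-boundedness of finitely generated projective $\L$-modules (via the injection $\twosilt\L \hookrightarrow \prod_\m \twosilt\L(\m)$ obtained from Propositions \ref{prop-presilt-local}(b)(i) and \ref{prop-silting-ftors}) to see that the collection of subsets $I$ realizable as the support of some silting module cannot exhaust $2^{\supp_R\L \cap \MSpec R}$ once the index set is infinite.

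Finally, for the ``moreover'' part, if $\L$ is semi-perfect then Proposition \ref{prop-sp-local}(c), applied to $\L$ as a faithful $R/\ann_R\L$-algebra, shows that $R/\ann_R\L$ is a finite product of local rings; decomposing $\L \simeq \L_1 \times \cdots \times \L_n$ accordingly, with each $\L_i$ semi-perfect over a local ring $R_i$, all posets factor as products over $i$ and finiteness of $\supp_R\L \cap \MSpec R$ is automatic. It then suffices to close the loop in the local case, where the strong form of Proposition \ref{prop-silting-ftors} supplies $\siltm\L \simeq \siltm\L(\m)$ and $\stors\L \simeq \ftors\L(\m)$; Proposition \ref{thm-DIJ} then turns $\tau$-tilting finiteness of $\L(\m)$ into $\ftors\L(\m) = \tors\L(\m) \simeq \tors(\Fl\L)$, i.e., into (vi), making all seven statements equivalent.
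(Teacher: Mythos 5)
Your proof follows the paper's route for most implications, but it contains one genuine gap where the argument breaks down, and one presentation issue worth fixing.

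\textbf{The gap: finiteness of $\supp_R\L\cap\MSpec R$ under (vi).} You construct, for each subset $I\subseteq\supp_R\L\cap\MSpec R$, a silting module $M_I$ with $\supp_R M_I\cap\MSpec R=I$, giving injectivity of $I\mapsto\add M_I$. But this only yields $|\siltm\L|\geq 2^{|\supp_R\L\cap\MSpec R|}$, which is vacuous unless you already know $\siltm\L$ is bounded in cardinality --- and (vi)$\Rightarrow$(iii) is precisely what you are trying to prove, so it cannot be used. The suggested appeal to ``rank-boundedness via $\twosilt\L\hookrightarrow\prod_\m\twosilt\L(\m)$'' does not close this: that embedding says nothing about the cardinality of the target when the index set is infinite. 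The paper's resolution (Proposition~\ref{local}, absent from your proposal) is qualitatively different: first reduce to $\L$ ring indecomposable (so $R$ is ring indecomposable and, after replacing $R$ by $R/\ann_R\L$, $\L$ is a faithful $R$-module), then observe that for a single $\m\in\MSpec R$ the silting module $M$ with $\Fac M\cap\Fl\L=\Fl\L_\m$ must satisfy $\supp_R M=\Spec R_\m$ --- a subset of $\Spec R$ that is simultaneously specialization closed (being a support) and has specialization closed complement. By Lemma~\ref{lem-Spec-conn}(b), $\Spec R_\m=\Spec R$, so $R$ is local and $\supp_R\L\cap\MSpec R$ is a singleton. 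This is the missing idea; your counting argument cannot be completed as stated.

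\textbf{The presentation issue: the cardinality chain.} Your chain
\[
|\tors\L(\m)| \geq |\ftors\L(\m)| = |\stors\L(\m)| \geq |\stors\L_\m| = |\stors(\Fl\L_\m)| = |\tors(\Fl\L_\m)| = |\tors\L(\m)|
\]
does not by itself force set equalities, since for infinite sets an injection $A\hookrightarrow B$ with $|A|=|B|$ need not be onto. The correct way to extract $\ftors\L(\m)=\tors\L(\m)$ from $\stors(\Fl\L_\m)=\tors(\Fl\L_\m)$ is to observe that the map $\stors\L_\m\to\tors\L(\m)$, $\Fac M\mapsto\Fac M\cap\mod\L(\m)$, factors in two ways: through $\ftors\L(\m)$ (Proposition~\ref{prop-silting-ftors}) and through $\stors(\Fl\L_\m)=\tors(\Fl\L_\m)\xrightarrow{\sim}\tors\L(\m)$ (Theorem~\ref{thm-tors-closure-stable}(b) and Proposition~\ref{prop-torsion-fl}). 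The second factorization is a bijection, so the inclusion $\ftors\L(\m)\subseteq\tors\L(\m)$ is an equality. (The paper proves the same thing more directly by producing, for each $\cT\in\tors\L(\m)$, a silting module $M$ with $\cT=\Fac(M/\m M)$.)

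The remaining implications --- (i)$\Leftrightarrow$(ii)$\Leftrightarrow$(iii), (iv)$\Leftrightarrow$(v), (iv)$\Rightarrow$(ii), (vi)$\Leftrightarrow$(vii), and the semi-perfect ``moreover'' --- are handled correctly and along the same lines as the paper, with (vi)$\Leftrightarrow$(vii) phrased a bit more abstractly via $\{\cT\mid\psi(\cT\cap\Fl\L)=\cT\}=\Im\psi$, which is fine.
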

%%%%%%%%%%%%%%%
\begin{rem}
%\begin{enumerate}[\rm(a)]\item 
The implications ${\rm (iii)} \Rightarrow {\rm (iv)}$  and ${\rm (v)} \Rightarrow {\rm (vi)}$ of Theorem \ref{thm-silting-finite} do not hold in general.
For instance, if $R$ is ring indecomposable and $\Spec R$ is infinite (e.g.\ $R=\bZ$), then the $R$-algebra $R$ satisfies (iii) by Proposition \ref{prop-comm-silt}, but does not satisfy (iv). Also the Noetherian algebra in Remark \ref{rem-not-sp} satisfies (v), but does not satisfy (vi).
%\item \comment{Write some text.}

Also notice that the following conditions appearing in Proposition \ref{thm-DIJ} are rarely satisfied.
\begin{enumerate}
\item[(viii)]
The set $\tors\L$ is finite.
\item[(ix)]
$\tors\L = \ftors\L$ holds.
\end{enumerate}
In fact, if $\Spec R$ is infinite, then the $R$-algebra $R$ satisfies all conditions (i)--(vii) by Proposition \ref{prop-comm-silt}, but does not satisfy (viii)--(ix) by Corollary \ref{cor-local-sp-closed}.
%\end{enumerate}
\end{rem}
%%%%%%%%%%%%%%%
\begin{proof}[Proof of Theorem \ref{thm-silting-finite}]
Let ($\ast$) be one of (i) to (vii).
If $\L \simeq \L_1 \times \L_2$ as $R$-algebras, then $\L$ satisfies ($\ast$) if and only if $\L_1$ and $\L_2$ satisfy ($\ast$).
Thus we can assume that $\L$ is ring indecomposable.
In particular, $R$ is ring indecomposable too.
By replacing $R$ by $R/\ann_R\L$, we can assume that $\L$ is a faithful $R$-module.

(A) We prove the former statement.

(i)$\Leftrightarrow$(ii)$\Leftrightarrow$(iii) Immediate from Proposition \ref{prop-silting-bijection}.

(iv)$\Rightarrow$(ii) This is clear.

(iv)$\Leftrightarrow$(v) Immediate from Proposition \ref{prop-torsion-fl}.

(vi)$\Rightarrow$(v)
Since $R$ is ring indecomposable and $\L$ is a faithful $R$-module, $R$ is a local ring by Proposition \ref{local} below.
In particular, $\supp_R\L \cap \MSpec R$ is a finite set.

To show the latter assertion, let $\m\in\MSpec R$ and $\cT\in\tors\L(\m)$.
By Proposition \ref{prop-torsion-fl}, there exists $\cU\in\tors(\Fl\L)$ such that $\cT=\cU\cap\mod\L(\m)$. By (vi), there exists $M\in\siltm\L$ such that $\cU=\Fac M\cap\Fl\L$. Then $\cT=\Fac M\cap\mod\L(\m)=\Fac(M/\m M)\in\ftors\L(\m)$.

(vi)$\Rightarrow$(vii)
By Theorem \ref{thm-tors-closure-stable}, $\psi(\Fac M\cap\Fl\L)=\Fac M$ holds for any $M\in\siltm\L$.
Let $\cT\in\tors\L$ such that  $\psi(\cT\cap\Fl\L) = \cT$.
By (vi), there exists $M\in\siltm \L$ such that $\Fac M \cap \Fl\L = \cT \cap \Fl\L$.
Then we have $\Fac M  = \psi(\Fac M \cap \Fl\L) = \psi(\cT \cap \Fl\L)  =\cT$. Thus (vii) holds.

(vii)$\Rightarrow$(vi) For each $\cU \in \tors(\Fl\L)$, we have $\psi(\cU) \cap \Fl\L = \cU$ and $\psi(\psi(\cU)\cap\Fl\L) = \psi(\cU)$.
Thus there exists $M\in\siltm\L$ such that $\Fac M = \psi(\cU)$ by (vii).
We have $\Fac M \cap \Fl\L = \psi(\cU) \cap \Fl\L = \cU$ as desired.

(B) To prove the latter statement, we assume that $\L$ is semi-perfect.
By Proposition \ref{prop-sp-local} and since $R$ is ring indecomposable, $R$ is a local ring.
Let $\m$ be the maximal ideal of $R$.

(i)$\Rightarrow$(v) By Proposition \ref{prop-silting-ftors}, $\siltm\L\simeq\siltm\L(\m)$ holds. Thus the assertion is immediate.

(v)$\Rightarrow$(vi) By Proposition \ref{thm-DIJ}, we have $\tors\L(\m)=\stors\L(\m)$.
Let $\cT\in\tors(\Fl\L)$.
By Proposition \ref{prop-silting-ftors}, there is $M\in\siltm\L$ such that $\Fac M \cap \mod\L(\m) = \Fac(M/\m M)=\cT\cap\mod\L(\m)$ holds.
By Propositions \ref{prop-torsion-fl}, $\Fac M \cap \Fl\L = \cT$ holds.
Thus (vi) holds.
\end{proof}
%%%%%%%%%%%%%%%
We complete the proof of Theorem \ref{thm-silting-finite} by showing the following result.
%The assumption that $R$ is local in Theorem \ref{thm-silting-finite} is automatic in the following sense.
%%%%%%%%%%%%%%%
\begin{prop}\label{local}
Let $(R, \L)$ be a Noetherian algebra such that $R$ is ring indecomposable and $\L$ is a faithful $R$-module.
If $\tors(\Fl\L)=\stors(\Fl\L)$ holds, then $R$ is a local ring.
\end{prop}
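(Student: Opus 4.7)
The plan is to argue by contradiction. Suppose $|\MSpec R| \geq 2$. By Lemma \ref{lem-finite-length}(c) we have $\Fl \L = \prod_{\m \in \MSpec R} \Fl \L_\m$, so each $\Fl \L_\m$ is a torsion class of $\Fl \L$. The hypothesis $\tors(\Fl\L)=\stors(\Fl\L)$ gives, for every $\m \in \MSpec R$, a silting $\L$-module $M^{(\m)}$ with $\Fac M^{(\m)} \cap \Fl \L = \Fl \L_\m$.

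I would first show that $M^{(\m)}_{\m'} = 0$ for every $\m' \in \MSpec R$ with $\m' \neq \m$. If $(M^{(\m)}_{\m'})^k$ had a simple quotient $S \in \Fl \L_{\m'}$, then $\Hom_\L(M^{(\m)k}, S)$, whose localization at $\m'$ equals the nonzero $\Hom_{\L_{\m'}}(M^{(\m)k}_{\m'}, S)$, would itself be nonzero; any nonzero map $M^{(\m)k} \to S$ is surjective since $S$ is simple, placing $S$ in $\Fac M^{(\m)} \cap \Fl \L_{\m'} = \Fl\L_\m \cap \Fl\L_{\m'} = 0$, a contradiction. Thus $(M^{(\m)}_{\m'})^k$ has no simple quotient in $\Fl \L_{\m'}$, so $(M^{(\m)}_{\m'})^k/\m'(M^{(\m)}_{\m'})^k = 0$, and Nakayama's lemma (applied over the local ring $R_{\m'}$) forces $M^{(\m)}_{\m'} = 0$.

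Apply the torsion pair $(\Fac M^{(\m)}, (\Fac M^{(\m)})^{\perp})$ to the $\L$-module $\L$ and let $t^{(\m)}\L$ denote the torsion part. Localizing the torsion pair (Lemma \ref{lem-torsion-local}) and using the preceding paragraph, we have $(t^{(\m)}\L)_{\m'} = 0$ for $\m' \neq \m$ and $(t^{(\m)}\L)_\m = \L_\m$. Moreover, $t^{(\m)}\L$ is a two-sided ideal of $\L$ because the torsion subfunctor is an endofunctor of $\mod\L$ and therefore commutes with right multiplication by any element of $\L$. Set $J^{(\m)} := \ann_R(t^{(\m)}\L)$, so that $V(J^{(\m)}) = \supp_R(t^{(\m)}\L)$ is closed in $\Spec R$ and satisfies $V(J^{(\m)}) \cap \MSpec R = \{\m\}$.

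Summing over $\MSpec R$, the quotient $\L/\sum_\m t^{(\m)}\L$ is a finitely generated $R$-module whose localization at every maximal ideal vanishes (since $(t^{(\m_0)}\L)_{\m_0} = \L_{\m_0}$), hence $\sum_\m t^{(\m)}\L = \L$. Therefore $\bigcap_\m J^{(\m)} = \ann_R(\L) = 0$ by faithfulness, and so $\bigcup_\m V(J^{(\m)}) = V(0) = \Spec R$. On the other hand, $V(J^{(\m)}) \cap V(J^{(\m')}) = \emptyset$ for $\m \neq \m'$: any prime in the intersection is dominated by some maximal ideal belonging to both $V(J^{(\m)}) \cap \MSpec R = \{\m\}$ and $V(J^{(\m')}) \cap \MSpec R = \{\m'\}$, forcing $\m = \m'$. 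Thus $\Spec R$ is written as a disjoint union of at least two nonempty closed subsets, contradicting the connectedness of $\Spec R$ guaranteed by Lemma \ref{lem-Spec-conn}(a). Hence $|\MSpec R| = 1$ and $R$ is local. The main obstacle in this plan is the verification that the localization of $t^{(\m)}\L$ behaves as advertised at each maximal ideal, for which the simple-quotient argument together with the Nakayama-type vanishing $M^{(\m)}_{\m'}=0$ is the key input; the rest is formal manipulation of supports.
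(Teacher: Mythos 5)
The overall strategy is sound and parallels the paper's, but there is a genuine gap at the step \((t^{(\m)}\L)_\m = \L_\m\). You assert this "localizing the torsion pair \dots and using the preceding paragraph," but the preceding paragraph only gives \(M^{(\m)}_{\m'}=0\) for \(\m'\neq\m\), which accounts for \((t^{(\m)}\L)_{\m'}=0\). It says nothing about the localization at \(\m\) itself. The equality \((t^{(\m)}\L)_\m = \L_\m\) is equivalent to \(\L_\m \in \Fac M^{(\m)}_\m\), i.e.\ \(\Fac M^{(\m)}_\m = \mod\L_\m\), and this is not automatic: from the construction you only know \(\Fl\L_\m \subseteq \Fac M^{(\m)}_\m\), and upgrading containment of the finite-length part to containment of everything is nontrivial. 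It is precisely where the paper invokes Theorem \ref{thm-tors-closure-stable}(a), whose proof in turn rests on Guralnick's lifting lemma \ref{lem-lifting-number} together with the fact that \(M^{(\m)}_\m\) is a silting (Ext-projective) object; without some version of that argument, there is no reason the torsion part of \(\L_\m\) should be all of \(\L_\m\). Your entire subsequent computation (\(\sum_\m t^{(\m)}\L = \L\), \(\bigcap_\m J^{(\m)} = 0\), the disjoint cover of \(\Spec R\)) depends on this unjustified step.

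Once that hole is filled, the rest of the proposal works and the packaging differs from the paper's in a mild but interesting way: you form the two-sided torsion ideals \(t^{(\m)}\L\) and their annihilators \(J^{(\m)}\) and derive a contradiction from a disjoint closed cover of \(\Spec R\), whereas the paper tracks \(\supp_R M^{(\m)}\) directly, identifies it with \(\Spec R_\m\), and applies Lemma \ref{lem-Spec-conn}(b). These are two faces of the same argument (\(\supp_R t^{(\m)}\L = \supp_R M^{(\m)} = \Spec R_\m\) once the gap is closed), both appealing ultimately to connectedness of \(\Spec R\). Your observation that \(t^{(\m)}\L\) is a two-sided ideal is correct but not needed for the contradiction, since only its \(R\)-module support enters the argument. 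To repair the proof, cite Theorem \ref{thm-tors-closure-stable}(a) explicitly to get \(\Fac M^{(\m)}_\m = \psi(\Fl\L_\m) = \mod\L_\m\), from which \((t^{(\m)}\L)_\m = \L_\m\) follows immediately.
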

%%%%%
\begin{proof}
Let $\m\in\MSpec R$.
Since $\Fl\L_\m$ is a torsion class of $\Fl\L$, there exists $M\in\siltm\L$ such that $\Fac M \cap \Fl\L = \Fl\L_\m$.
%If $R$ is not local, then take $\n \in\MSpec R\setminus\{\m\}$, $\gen M \cap \Fl\L_\n=0$ and thus $M_\n=0$, a contradiction to the faithfulness of $\L$.
We have $\Fac M_\m = \psi(\Fac M_\m \cap \Fl\L_\m) = \psi(\Fl\L_\m) = \mod\L_\m$ by Theorem \ref{thm-tors-closure-stable}.
In particular, $M_\m$ is a faithful $R_\m$-module.
Thus we have $\supp_R M \supseteq \Spec R_\m$ holds.
% is a generator of $\mod\L_\m$ and $\supp_R M \supseteq \supp_{R_\m}(\L_\m)=\Spec R_\m$ holds.
For each $\n \in\MSpec R\setminus\{\m\}$, $\gen M \cap \Fl\L_\n=0$ and thus $M_\n=0$.
Therefore $\supp_R M = \Spec R_\m$ holds.
By applying Lemma \ref{lem-Spec-conn}(b) to $\Spec R_\m$, we have $\Spec R_\m = \Spec R$.
Namely, $R$ is a local ring.
%For a maximal ideal $\n\neq \m$, we have $\Fac M_\n =\psi(\Fac M_\n \cap \Fl\L_\n) = \psi(0) =0$. This implies that $M_\n=0$.
%We claim $\supp_R M = \Spec R_\m$ holds. Since $\L_\m$ is a faithful $R_\m$-module, $\supp_{R_\m}(\L_\m)=\Spec R_\m$ holds. If $\p \in \supp_R M \setminus \Spec R_\m$ exists, let $\n$ be a maximal ideal of $R$ which contains $\p$. Then $\n\neq \m$ holds, since $\p\not\in\Spec R_\m$. However $\n\in\supp_R M$ holds, which is a contradiction. Thus we have the equality.
%We show that $\supp_R M$ is a connected component of $\Spec R$. For $\p \subseteq \m$, there does not exist $\n\in\MSpec R\setminus\{\m\}$ such that $\p\subseteq \n$.
%Let $\{\p_1,\dots,\p_{\ell}, \q_1, \dots, \q_n\}$ be the set of all minimal prime ideals of $R$ such that $\p_i\in\supp_R M$ and $\q_j\not\in\supp_R M$. We have $C:=\bigcup_{i=1}^{\ell}V(\p_i) \subseteq \supp_R M$, since $\supp_R M$ is a closed subset of $\Spec R$.
%On the other hand, since $\Spec R \setminus \supp_R M = \Spec R \setminus \Spec R_\m$ is a specialization closed subset, $\Spec R \setminus \supp_R M \supseteq \bigcup_{j=1}^nV(\q_j)=:C'$ holds.
%Since $C \cup C'=\Spec R$, $C=\supp_R M$ and $C'=\Spec R \setminus \supp_R M$ hold.
%Therefore $\supp_R M$ is closed and open in $\Spec R$.
%\old{Since $R$ is ring indecomposable, $\Spec R$ is connected, see \cite[Exercise 2.25]{Eisenbud} for instance.}
%Thus $\Spec R = \supp_R M = \Spec R_\m$ holds, since $\Spec R$ is connected by Lemma \ref{lem-Spec-conn}.
%So $R$ is a local ring.
\end{proof}
%%%%%%%%%%%%%%%
%%%%%%%%%%%%%%%
\subsection{Compatible elements for silting finite case}\label{subsection-comp-silt-finite}
For two posets $X, Y$, the set $\Hom_{\rm poset}(X, Y)$ of morphisms of posets forms a poset again: for two morphisms $f, g : X\to Y$, we write $f\leq g$ if $f(x) \leq g(x)$ holds for any $x\in X$.

In this subsection, we show the following theorem.
%%%%%%%%%%%%%%%
\begin{thm}\label{thm-r-isom}
Let $(R, \L)$ be a Noetherian algebra such that $R$ is ring indecomposable and the following conditions are satisfied.
\begin{itemize}
	\item[{\rm (a)}] $(-)_\p : \twosilt\L \to \twosilt \L_\p$ is an isomorphism of posets for any $\p\in\Spec R$.
	\item[{\rm (b)}] $\stors(\Fl\L_\p)=\tors(\Fl\L_\p)$ holds for any $\p\in\Spec R$.
\end{itemize}
Then $(R, \L)$ is compatible, and we have an isomorphism of posets
\[
\tors\L \simeq \Hom_{\rm poset}(\Spec R, \twosilt\L).
\]
\end{thm}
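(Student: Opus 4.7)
The plan is first to use the hypotheses to identify $\bT_R^{\rm c}(\L)$ with $\Hom_{\rm poset}(\Spec R, \twosilt\L)$, and then to establish surjectivity of the embedding $\Phi_{\rm t}:\tors\L \hookrightarrow \bT_R^{\rm c}(\L)$ by constructing, for each compatible tuple, an explicit torsion class of $\mod\L$. For the identification, I will combine (b) with Proposition \ref{prop-silting-bijection}(c) and Theorem \ref{thm-tors-closure-stable}(b) to obtain, for each prime $\p$, a chain of order-preserving isomorphisms $\twosilt\L_\p \simeq \siltm\L_\p \simeq \stors\L_\p \simeq \stors(\Fl\L_\p) = \tors(\Fl\L_\p)$. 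Combined with the isomorphism $(-)_\p:\twosilt\L \simeq \twosilt\L_\p$ from (a), this yields an order-preserving bijection $\twosilt\L \simeq \tors(\Fl\L_\p)$ sending $P$ to $\Fac H^0(P)_\p \cap \Fl\L_\p$, and hence a poset isomorphism $\bT_R(\L) \simeq \prod_{\p\in\Spec R}\twosilt\L$. An element $(\cX^\p)_\p$ thus corresponds to a function $f:\Spec R \to \twosilt\L$ with $\cX^\p = \Fac H^0(f(\p))_\p \cap \Fl\L_\p$.

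Next I will translate compatibility into monotonicity of $f$. For $\p\supseteq\q$, Corollary \ref{cor-psilt-rpq}(a) gives ${\rm r}_{\p,\q}(\cX^\p) = \Fac H^0(f(\p))_\q \cap \Fl\L_\q$, while $\cX^\q = \Fac H^0(f(\q))_\q \cap \Fl\L_\q$. Hence ${\rm r}_{\p,\q}(\cX^\p) \supseteq \cX^\q$ is equivalent, via the above isomorphisms applied in $\L_\q$, to $f(\p) \geq f(\q)$ in $\twosilt\L$. Consequently compatible elements correspond bijectively to order-preserving maps $f:\Spec R \to \twosilt\L$, yielding the claimed isomorphism of posets $\bT_R^{\rm c}(\L) \simeq \Hom_{\rm poset}(\Spec R, \twosilt\L)$.

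Finally, for the surjectivity of $\Phi_{\rm t}$, given a compatible element $(\cX^\p)_\p$ with associated order-preserving $f$, I set $\cT := \Psi_{\rm t}((\cX^\p)_\p) = \bigcap_\p \psi(\cX^\p) \in \tors\L$ and aim to show $\cT_\q \cap \Fl\L_\q = \cX^\q$ for each $\q$. The inclusion $\subseteq$ follows from $\cT \subseteq \psi(\cX^\q)$ together with $\psi_\q(\cX^\q) \cap \Fl\L_\q = \cX^\q$ (Definition-Proposition \ref{lem-Serre-extension}(b)). For $\supseteq$, given $Y\in\cX^\q$, I will lift $Y$ via Lemma \ref{lem-local-global} to $Y_0\in\mod\L$ with $(Y_0)_\q\simeq Y$ and $\supp_R Y_0 \subseteq V(\q)$, then lift a surjection $H^0(f(\q))_\q^{\oplus n} \twoheadrightarrow Y$ to a map $g:H^0(f(\q))^{\oplus n}\to Y_0$; the image $N := \Im g$ then belongs to $\Fac H^0(f(\q))$ and satisfies $N_\q \simeq Y$ with $\supp_R N \subseteq V(\q)$. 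The main obstacle will be verifying $N \in \cT$: for $\p \notin V(\q)$ this is trivial since $N_\p=0$, while for $\p\supseteq\q$ the monotonicity of $f$ gives $\Fac H^0(f(\q))\subseteq \Fac H^0(f(\p))$, so $N_\p \in \Fac H^0(f(\p))_\p$ and hence $N_\p/\p N_\p \in \Fac H^0(f(\p))_\p \cap \Fl\L_\p = \cX^\p$. This step is where the whole argument crystallizes, combining compatibility (through monotonicity of $f$) with the support control $\supp_R N \subseteq V(\q)$ arising from the lifting.
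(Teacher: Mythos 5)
Your proof is correct and follows essentially the same strategy as the paper: identify $\bT_R^{\rm c}(\L)$ with $\Hom_{\rm poset}(\Spec R,\twosilt\L)$ by composing the isomorphisms from (a), Proposition \ref{prop-silting-bijection}(c), Theorem \ref{thm-tors-closure-stable}(b) and (b), translate compatibility into monotonicity via Corollary \ref{cor-psilt-rpq}, and then show $\Phi_{\rm t}$ is surjective by lifting a module in $\cX^\q$ to a $\L$-module supported on $V(\q)$ and verifying it lies in $\Psi_{\rm t}(\cX)$. The only presentational difference is that the paper factors the lifting step into the stand-alone Proposition \ref{prop-element-image} (using a right $\add M$-approximation rather than a lifted surjection), and then verifies its hypothesis from compatibility using that ${\rm r}_{\p,\q}$ is an isomorphism; you inline that argument using monotonicity of $f$ directly, which is an equivalent reformulation.
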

%%%%%%%%%%%%%%%
We use the following observations.
%%%%%%%%%%%%%%%
\begin{prop}\label{prop-element-image}
Let $(R, \L)$ be a Noetherian algebra and $\cX=(\cX^\p)_\p\in\bT_R(\L)$.
Then $\cX$ belongs to $\Im\Phi_{\rm t}$ if it satisfies the following condition: for any $\q\in\Spec R$, there exists $M\in\mod\L$ such that $\cX^\q \subseteq \gen M_\q$ and $\gen M_{\p} \cap \Fl\L_{\p} \subseteq \cX^{\p}$ for any $\p\in V(\q)\setminus\{\q\}$.
\end{prop}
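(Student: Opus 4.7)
The plan is to set $\cT := \Psi_{\rm t}(\cX) \in \tors\L$ and to prove $\Phi_{\rm t}(\cT) = \cX$, that is, $\cT_\q \cap \Fl\L_\q = \cX^\q$ for every $\q \in \Spec R$. The inclusion $\cT_\q \cap \Fl\L_\q \subseteq \cX^\q$ holds for \emph{any} $\cX \in \bT_R(\L)$: given $Y \in \cT$ with $Y_\q \in \Fl\L_\q$, Lemma \ref{lem-finite-length}(b) yields $\q^\ell Y_\q = 0$ for some $\ell$, and the subquotients $\q^i Y_\q/\q^{i+1}Y_\q$ of the $\q$-adic filtration are $\L(\q)$-quotients of finite direct sums of $Y_\q/\q Y_\q \in \cX^\q$; closure of $\cX^\q$ under quotients and extensions in $\Fl\L_\q$ then yields $Y_\q \in \cX^\q$.

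For the reverse inclusion $\cX^\q \subseteq \cT_\q \cap \Fl\L_\q$, fix $X \in \cX^\q$ and let $M := M^{(\q)}$ be the module provided by the hypothesis at $\q$. Since $X \in \Fl\L_\q$, Lemma \ref{lem-finite-length}(b) gives $\q^\ell X = 0$ for some $\ell \ge 1$, so any surjection $M_\q^n \twoheadrightarrow X$ (coming from $X \in \gen M_\q$) factors through $N_\q^n \twoheadrightarrow X$, where $N := M/\q^\ell M$. By Lemma \ref{lem-local-global}(a), we can choose a $\L$-submodule $K \subseteq N^n$ such that $Y := N^n/K$ satisfies $Y_\q \cong X$; since $Y$ is a quotient of $N^n$ and $\q^\ell N = 0$, we automatically have $\supp_R Y \subseteq V(\q)$.

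It remains to verify $Y \in \cT$, i.e.\ $Y_\p/\p Y_\p \in \cX^\p$ for each $\p \in \Spec R$. The case $\p \notin V(\q)$ is immediate from $Y_\p = 0$, and the case $\p = \q$ follows from $Y_\q/\q Y_\q \cong X/\q X \in \cX^\q$. For $\p \in V(\q) \setminus \{\q\}$, the inclusion $\q^\ell \subseteq \p$ gives $N_\p/\p N_\p = M_\p/\p M_\p$, so $Y_\p/\p Y_\p$ is a $\L(\p)$-quotient of $(M_\p/\p M_\p)^n$ and therefore belongs to $\gen M_\p$. Moreover, $Y_\p/\p Y_\p$ is finitely generated over the Artinian algebra $\L(\p)$, hence lies in $\Fl\L_\p$. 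The hypothesis on $M^{(\q)}$ then yields $Y_\p/\p Y_\p \in \gen M_\p \cap \Fl\L_\p \subseteq \cX^\p$.

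The main obstacle is producing a single witness $Y$ that simultaneously has support contained in $V(\q)$ (so that localizations at primes $\p \notin V(\q)$ vanish for free) \emph{and} has controlled $\L(\p)$-quotients at the primes $\p \supsetneq \q$ so that the hypothesis applies. Both conditions are secured at once by the device of replacing $M$ with $N = M/\q^\ell M$ before lifting the finite-length quotient $X$ of $M_\q^n$ back to a $\L$-module; the crucial point is that killing $\q^\ell$ controls the support without altering the relevant $\L(\p)$-quotient, because $\p \supseteq \q^\ell$ forces $N_\p/\p N_\p$ to coincide with $M_\p/\p M_\p$.
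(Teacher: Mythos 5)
Your proof is correct, but the construction of the witness module $Y$ is genuinely different from the paper's. The paper chooses, via Lemma~\ref{lem-local-global}(b), a preimage $X\in\mod\L$ of the given finite-length module with $\supp_RX\subseteq V(\q)$, takes a right $(\add M)$-approximation $f:M'\to X$, and sets $Y:=\Im f$; this $Y$ is simultaneously a quotient of $M'\in\add M$ (so $Y\in\gen M$, hence $Y_\p\in\gen M_\p\subseteq\psi_\p(\cX^\p)$ for $\p\supsetneq\q$) and a submodule of $X$ (so its support stays inside $V(\q)$). Your argument instead first replaces $M$ by $N=M/\q^\ell M$, then lifts the surjection $N_\q^n\twoheadrightarrow X$ to a $\L$-module surjection $N^n\twoheadrightarrow Y$ via Lemma~\ref{lem-local-global}(a); the support constraint is automatic because $\q^\ell Y=0$, and the crucial observation $N_\p/\p N_\p=M_\p/\p M_\p$ for $\p\supseteq\q$ lets you feed $Y_\p/\p Y_\p$ into the hypothesis $\gen M_\p\cap\Fl\L_\p\subseteq\cX^\p$. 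Both routes rely on the isomorphism \eqref{iso-localization} implicitly (the paper to see that $f_\q$ remains an approximation, you via Lemma~\ref{lem-local-global}), and both arrive at the same conclusion; yours avoids approximation theory at the cost of the extra base-change step $M\rightsquigarrow N$. Your first paragraph, establishing $\Psi_{\rm t}(\cX)_\q\cap\Fl\L_\q\subseteq\cX^\q$ in general, is also correct and spells out what the paper treats as immediate from the definition of $\Psi_{\rm t}$ elsewhere.
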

%%%%%
\begin{proof}
Let $\cU:=\Psi_{\rm t}(\cX)$.
It is enough to show that $\cX^\q \subseteq \cU_\q$ holds for any $\q\in\Spec R$.
By Lemma \ref{lem-local-global}(b), any module in $\cX^\q$ can be written as $X_\q$ for some $X\in\mod\L$ such that $\supp_R X\subseteq V(\q)$.
Let $f : M' \to X$ be a right $(\add M)$-approximation of $X$ and $Y:=\Im f$.
Then $f_\q\circ -: \Hom_{\L_\q}(M_\q, M'_\q) \to \Hom_{\L_\q}(M_\q, X_\q)$ is surjective by \eqref{iso-localization}, and thus $f_\q$ is a right $(\add M_\q)$-approximation of $X_\q$. Since $X_\q\in \cX^\q \subseteq \gen M_\q$, we have $Y_\q = X_\q$.
%\old{Since $f_\p$ is a right $(\add M_\p)$-approximation of $X_\p$ and $X_\p \in \cX^\p \subset \gen M_\p$, we have $Y_\p = X_\p$.}
By replacing $X$ by $Y$, we may assume $X\in\gen M$.
Then for each $\p\in V(\q)\setminus\{\q\}$, $X_{\p}\in\gen M_{\p}\subseteq \psi_{\p}(\cX^{\p})$ holds.
This implies $X\in \cU$ and $X_\q\in\cU_\q$.
\end{proof}
%%%%%%%%%%%%%%%
\begin{prop}\label{prop-local-iso}
Assume that $R$ is ring indecomposable.
Then for a Noetherian algebra $(R, \L)$, the following {\rm (a)} and {\rm (a$'$)} are equivalent.
\begin{enumerate}
	\item[{\rm (a)}] $(-)_\p : \twosilt\L \to \twosilt \L_\p$ is an isomorphism of posets for any $\p\in\Spec R$.
	\item[{\rm (a$'$)}] The following statements hold.
		\begin{enumerate}[{\rm (i)}]
		\item $(-)_\p : \twosilt\L \to \twosilt\L_\p$ is surjective for any $\p\in\Spec R$.
		\item $(-)_\q : \twosilt\L_\p \to \twosilt\L_\q $ is an isomorphism of posets for any pair $\p \supseteq \q$ in $\Spec R$.
		\end{enumerate}
\end{enumerate}
\end{prop}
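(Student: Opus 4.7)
The plan is to prove the two directions separately, with ring indecomposability of $R$ only entering in the direction (a') $\Rightarrow$ (a).

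For (a) $\Rightarrow$ (a'), condition (i) is immediate. For (ii), fix $\p \supseteq \q$ in $\Spec R$. The composition $\twosilt\L \xrightarrow{(-)_\p} \twosilt\L_\p \xrightarrow{(-)_\q} \twosilt\L_\q$ coincides with $(-)_\q : \twosilt\L \to \twosilt\L_\q$, since $(P_\p)_\q = P_\q$. By (a) the outer two maps are isomorphisms of posets, so the intermediate map $(-)_\q : \twosilt\L_\p \to \twosilt\L_\q$ is forced to be an isomorphism of posets as well.

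For (a') $\Rightarrow$ (a), fix $\p_0 \in \Spec R$ and consider $(-)_{\p_0} : \twosilt\L \to \twosilt\L_{\p_0}$. It is surjective by (i) and a morphism of posets by Proposition \ref{prop-presilt-local}(a). It therefore suffices to show that it reflects order, since combined with surjectivity and the antisymmetry of $\leq$ this will force it to be an isomorphism of posets. Suppose $P, Q \in \twosilt\L$ satisfy $P_{\p_0} \leq Q_{\p_0}$; I will deduce $P \leq Q$. Introduce
\[
\cS := \{\q \in \Spec R \mid P_\q \leq Q_\q \text{ in } \twosilt\L_\q\}.
\]
Once I show $\cS = \Spec R$, Proposition \ref{prop-presilt-local}(b)(i), which asserts that $\silt\L \hookrightarrow \prod_{\p}\silt\L_\p$ is an embedding of posets, will yield $P \leq Q$ in $\twosilt\L$.

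The key step is to prove $\cS = \Spec R$. I will verify that both $\cS$ and its complement are specialization closed; combined with $\p_0 \in \cS$ and Lemma \ref{lem-Spec-conn}(b) applied to the ring indecomposable $R$, this forces $\cS = \Spec R$. Specialization closure of $\cS$ is the more delicate direction: given $\p \in \cS$ and $\p \subseteq \p'$, I apply (ii) to the pair $\p' \supseteq \p$, so that $(-)_\p : \twosilt\L_{\p'} \to \twosilt\L_\p$ is an isomorphism of posets and in particular reflects order; since $(P_{\p'})_\p = P_\p \leq Q_\p = (Q_{\p'})_\p$, this upgrades to $P_{\p'} \leq Q_{\p'}$, i.e., $\p' \in \cS$. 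Specialization closure of $\cS^c$, equivalently generalization closure of $\cS$, is routine: given $\p \in \cS$ and $\q \subseteq \p$, the localization $(-)_\q : \twosilt\L_\p \to \twosilt\L_\q$ is a morphism of posets by Proposition \ref{prop-presilt-local}(a), and applying it to $P_\p \leq Q_\p$ produces $P_\q \leq Q_\q$.

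The main obstacle is conceptual rather than technical: one needs to recognise that the correct sufficient condition is specialization closure of both $\cS$ and $\cS^c$, so that Lemma \ref{lem-Spec-conn}(b) becomes applicable. After that, hypothesis (ii) supplies exactly the order-reflection needed in the harder (going-up) direction, while generic localization handles the going-down direction, and Proposition \ref{prop-presilt-local}(b)(i) assembles the pointwise information into the global conclusion.
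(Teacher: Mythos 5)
Your proof is correct and follows essentially the same route as the paper: use the factorization $(-)_\q = (-)_\q^\p \circ (-)_\p$ for (a)$\Rightarrow$(a$'$), and for the converse show that $\cS=\{\q\mid P_\q\le Q_\q\}$ and its complement are specialization closed, then invoke Lemma \ref{lem-Spec-conn}(b) and Proposition \ref{prop-presilt-local}(b)(i). You supply slightly more detail than the paper does on why each closure holds (correctly noting that generalization closure of $\cS$ only needs the poset-morphism property, while specialization closure uses (a$'$)(ii) for order reflection).
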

For prime ideals $\p \supseteq \q$, we denote by $(-)^\p_\q$ the functor $(-)_\q : \mod\L_\p \to \mod\L_\q$ when we need to clarify the domain.
%%%%%
\begin{proof}
By Proposition \ref{prop-presilt-local}, we have well-defined morphisms $(-)_\p:\twosilt\L \to \twosilt\L_\p$ and $(-)^{\p}_\q : \twosilt\L_\p \to \twosilt\L_\q$ of posets.
Assume that (a) holds.
Then clearly, (a$'$)(i) holds.
Since $(-)_\q^\p \circ (-)_\p = (-)_\q$ holds as maps from $\twosilt\L$ to $\twosilt\L_\q$, (a$'$)(ii) holds.

Conversely, assume that (a$'$) holds.
Fix $\p\in\Spec R$ and $X, Y\in\twosilt\L$.
It suffices to show that $X_\p \geq Y_\p$ implies $X \geq Y$.
%that is, $(-)_\p : \twosilt\L \to \twosilt\L_\p$ is an embedding of posets.
Let $\cS:=\{\q\in\Spec R\mid X_\q\geq Y_\q\}$. Then $\p\in\cS$, and both $\cS$ and $\cS^{\rm c}$ are specialization closed by (a$'$)(ii).
%For prime ideal $\q$ such that $\q\subseteq \p$, $X_\p \geq Y_\p$ holds if and only if $X_\q \geq Y_\q$ holds, since $(-)^\p_\q$ is an isomorphism and $(-)_\q^\p \circ (-)_\p = (-)_\q$ holds. Since $R$ is ring indecomposable, $\Spec R$ is connected
Then $\cS=\Spec R$ holds by Lemma \ref{lem-Spec-conn}(b). Thus $X\geq Y$ holds by Proposition \ref{prop-presilt-local}(b)(i).
%Thus for any $\q\in\Spec R$, there exists a sequence of prime ideals $\p=\p_0, \p_1, \dots, \p_{\ell-1}, \p_{\ell}=\q$ such that either $\p_i \subseteq \p_{i+1}$ or $\p_i \supseteq \p_{i+1}$ holds for each $i$. Therefore $Y_\p \leq X_\p$ implies $Y_\q \leq X_\q$ for any $\q$. Thus $\Hom_{\sD(\L_\q)}(X_\q, Y_\q[1])=0$ for any $\q$. Then by (\ref{iso-localization-derived}), $\Hom_{\sD(\L)}(X, Y[1])=0$ holds. we have $Y \leq X$.
\end{proof}
%%%%%%%%%%%%%%%
We are ready to show Theorem \ref{thm-r-isom}.
%%%%%
\begin{proof}[Proof of Theorem \ref{thm-r-isom}]
By Corollary \ref{cor-psilt-rpq}, for prime ideals $\p \supseteq \q$ the following diagram commutes:
\begin{equation}\label{diagram-twosilt-r}
\begin{tikzcd}
\twosilt\L \arrow[d, equal] \arrow[r, "(-)_\p"] & \twosilt \L_\p \arrow[d, "(-)_\q^\p"'] \arrow[rrr, "\Fac H^0(-) \cap \Fl\L_\p"] &&& \tors(\Fl \L_\p) \arrow[d, "{\rm r}_{\p, \q}"]   \\
\twosilt\L \arrow[r, "(-)_\q"] & \twosilt \L_\q \arrow[rrr, "\Fac H^0(-) \cap \Fl\L_\q"] &&& \tors(\Fl \L_\q)
\end{tikzcd}
\end{equation}
%where right horizontal maps are embeddings of posets by Propositions \ref{prop-torsion-fl} and \ref{prop-silting-ftors}.
By Proposition \ref{prop-local-iso}, the left horizontal maps and the vertical map $(-)_\q^\p$ are isomorphisms of posets.
So are the right horizontal maps since they are compositions
\[\twosilt\L_\p\xrightarrow{{\rm\ref{prop-silting-bijection}(c)}}\stors\L_\p\xrightarrow{{\rm\ref{thm-tors-closure-stable}(b)}}\stors(\Fl\L_\p)\stackrel{{\rm(b)}}{=}\tors(\Fl\L_\p)\]
of isomorphisms of posets.
% \new{by Proposition \ref{prop-silting-bijection}(c), Theorem \ref{thm-tors-closure-stable}(b) and the assumption (b)}.
Therefore all maps in \eqref{diagram-twosilt-r}) are isomorphisms of posets.

We denote by $g_\p:\twosilt\L\simeq\tors(\Fl\L_\p)$ the upper isomorphism of posets in \eqref{diagram-twosilt-r}. By the commutativity of \eqref{diagram-twosilt-r}, it is clear that an isomorphism $\bT^{\rm c}_R(\L)\simeq\Hom_{\mathsf{poset}}(\Spec R, \twosilt\L)$ of posets is given by $\bT^{\rm c}_R(\L)\ni(\cX^\p)_\p\mapsto (g_\p^{-1}(\cX^\p))_{\p}$.

It remains to prove that $(R,\L)$ is compatible. Fix any $\cX=(\cX^\p)_\p\in\bT^{\rm c}_R(\L)$.
To prove $\cX\in\Im\Phi_{\rm t}$, it suffices to check the condition in Proposition \ref{prop-element-image}.
For each $\q\in\Spec R$, there exists 
 $M\in\siltm\L$ such that $\cX^\q=\gen M_\q\cap\Fl\L_\q$ by our assumptions.
%Let $M:=H^0(X)$.
For any $\p\in V(\q)\setminus\{\q\}$, since $\cX$ is compatible, we have
\[{\rm r}_{\p,\q}(\gen M_{\p}\cap\Fl\L_{\p})\stackrel{\rm\ref{cor-psilt-rpq}}{=}\gen M_{\q}\cap\Fl\L_{\q}=\cX^\q\subseteq{\rm r}_{\p,\q}(\cX^{\p}).\]
Since ${\rm r}_{\p,\q}$ is an isomorphism, $\gen M_{\p}\cap\Fl\L_{\p}\subseteq\cX^{\p}$ holds. Thus the condition in Proposition \ref{prop-element-image} holds.
%Consider the isomorphism of posets by $\phi_\p : \twosilt\L \longrightarrow \tors(\Fl\L_\p)$ given by $\phi_\p(X)=\gen H^0(X)_\p\cap \Fl\L_\p$.
%For a compatible element $\cX=(\cX^\p)_\p$, we have a map $f_{\cX} : \Spec R \to \twosilt\L$ by $f_{\cX}(\p) = \phi^{-1}_\p(\cX^\p)$. This $f_{\cX}$ is a morphism of posets. In fact, for $\p \supseteq \q$, we have ${\rm r}_{\p,\,\q}\circ\phi_\p = \phi_\q$ by Corollary \ref{cor-psilt-rpq}(a). Thus ${\rm r}_{\p,\, \q}(\cX^\p) \supseteq \cX^\q$ implies that
%\[f_{\cX}(\q) = \phi_\q^{-1}(\cX^\q) = \phi_\p^{-1}{\rm r}_{\p,\,\q}^{-1}(\cX^\q) \leq \phi_\p^{-1}(\cX^\p) = f_{\cX}(\p),\]
%Namely, $f_{\cX}$ is a morphism of posets. Conversely, for a morphism $g : \Spec R \to \twosilt\L$ \new{of posets}, let $\cY^\p_g:=\phi_\p(g(\p))$ for each $\p$. Then for $\p\supseteq \q$, we have
%\[{\rm r}_{\p,\,\q}(\cY^\p_g) = {\rm r}_{\p,\,\q}(\phi_\p(g(\p))) = \phi_\q (g (\p)) \supseteq \phi_\q(g(\q)) = \cY^\q_g.\]
%Namely, $\cY_g=(\cY^\p_g)_\p\in\bT_R(\L)$ is a compatible element. Then $\cX \mapsto f_{\cX}$ and $g \mapsto \cY_g$ are mutually inverse as desired.
\end{proof}
%%%%%%%%%%%%%%%
In the rest of this section, we give a family of examples of $(R,\L)$ satisfying the conditions in Theorem \ref{thm-r-isom}.
We start with giving the following lemma.
%%%%%%%%%%%%%%%
\begin{lem}\label{lem-semi-perfect}\
Assume that $(R, \m)$ is local.
For $\L$-module $P\in\proj\L$, $\End_{\L}(P)$ is local if and only if $\End_{\L}(P/\mfm P)$ is local.
\end{lem}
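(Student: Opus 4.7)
The plan is to show both endomorphism rings have isomorphic semisimple quotients modulo their Jacobson radicals, and then invoke the characterization of local rings in terms of this quotient.

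First I would establish a canonical ring isomorphism
\[
\End_\L(P)/\mfm\End_\L(P) \isoto \End_\L(P/\mfm P).
\]
Because $P$ is projective, applying $\Hom_\L(P,-)$ to the surjection $P\surjto P/\mfm P$ yields a surjection $\End_\L(P)\surjto\Hom_\L(P,P/\mfm P)=\End_\L(P/\mfm P)$ (the last equality because $\mfm$ annihilates $P/\mfm P$). Its kernel is $\{f:P\to P\mid f(P)\subseteq\mfm P\}$. To identify this with $\mfm\End_\L(P)$, reduce to the free case: when $P=\L^n$, $\End_\L(P)=\M_n(\L)$ and the kernel is $\M_n(\mfm\L)=\mfm\M_n(\L)$; the summand case follows by cutting with an idempotent $e$ so $\End_\L(P)=e\M_n(\L)e$ and the kernel is $\mfm\cdot e\M_n(\L)e$.

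Next I would show $\mfm\End_\L(P)\subseteq\rad\End_\L(P)$. Since $\L$ is module-finite over $R$ and $P\in\mod\L$, the ring $\End_\L(P)$ is module-finite over the commutative Noetherian local ring $(R,\mfm)$; for such $R$-algebras $\mfm$ is contained in the Jacobson radical (standard: $1+\mfm\End_\L(P)$ consists of units by Nakayama on $\End_\L(P)$ viewed as an $R$-module). Moreover $\End_\L(P/\mfm P)$ is a finite dimensional $\kappa(\mfm)$-algebra, hence Artinian, so combining with the previous paragraph gives
\[
\rad\End_\L(P)\big/\mfm\End_\L(P)\;\simeq\;\rad\End_\L(P/\mfm P),
\]
and consequently a ring isomorphism
\[
\End_\L(P)/\rad\End_\L(P)\;\simeq\;\End_\L(P/\mfm P)/\rad\End_\L(P/\mfm P).
\]

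Finally, I would use the general fact that a ring $S$ is local if and only if $S/\rad S$ is a division ring (the non-trivial direction: if $S/\rad S$ is a division ring, then any $x\in S\setminus\rad S$ satisfies $xy\equiv 1\pmod{\rad S}$ for some $y$, whence $xy\in 1+\rad S\subseteq S^{\times}$, giving $x$ a right inverse, and symmetrically a left inverse). Applying this to both $\End_\L(P)$ and $\End_\L(P/\mfm P)$ and using the isomorphism of their radical quotients yields the equivalence.

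There is no real obstacle; the only slightly delicate point is the identification of the kernel in the first step, which is handled by the projective-summand reduction described above.
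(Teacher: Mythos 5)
Your proof is correct and follows essentially the same route as the paper: the paper establishes the isomorphism $\End_\L(P/\mfm P)\simeq\End_\L(P)/\mfm\End_\L(P)$ by citing \cite[Lemma 4.1]{Kimura} and the containment $\mfm\End_\L(P)\subseteq\rad\End_\L(P)$ by citing \cite[(5.22) Proposition]{Curtis-Reiner}, then concludes exactly as you do. The only difference is that you supply direct proofs (projective-summand reduction, Nakayama) of the two facts the paper quotes, which is fine.
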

%%%%%
\begin{proof}
Clearly $\End_\L(P/\mfm P)\simeq \End_\L(P)/\m \End_\L(P)$ holds (see \cite[Lemma 4.1]{Kimura}).
Since $\End_\L(P)$ is a Noetherian $R$-algebra, $\m\End_\L(P)\subseteq \rad\End_\L(P)$ holds by \cite[(5.22) Proposition]{Curtis-Reiner}.
Therefore we have the assertion.
\end{proof}
%%%%%%%%%%%%%%%
Let $A$ be a finite dimensional algebra over a field $k$.
A simple $A$-module $S$ is said to be \emph{$k$-simple} if $\End_A(S) \simeq k$ holds.
This is equivalent to that $S_K=K\otimes_k S$ is a simple $A_K$-module for any field extension $K$ of $k$ (see \cite[(3.43) Theorem]{Curtis-Reiner}).
For instance, if $k$ is an algebraically closed field, or $A=kQ/I$ for a finite quiver $Q$ and an admissible ideal $I$, then all simple $A$-modules are $k$-simple.
%%%%%%%%%%%%%%%
\begin{prop}\label{prop-ftors-bij}
Let $A$ be a finite dimensional $k$-algebra over a field $k$, $R$ a commutative Noetherian ring which contains $k$, and $\L:=R\otimes_k A$.
Assume that any simple $A$-module is $k$-simple.
\begin{enumerate}[\rm(a)]
\item $\L_\p$ is a semi-perfect ring for any $\p\in\Spec R$.
\item For any $\p \supseteq \q$ in $\Spec R$, the following maps are isomorphisms of posets: 
\[
\twosilt A \xto{R\otimes_k (-)} \twosilt \L \xto{(-)_\p} \twosilt \L_\p \xto{(-)_\q} \twosilt\L_\q.
\]
\end{enumerate}
\end{prop}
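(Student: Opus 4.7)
The plan for (a) is a standard idempotent-lifting argument. Since $R_\p$ is local and $\L_\p$ is module-finite over $R_\p$, we have $\p\L_\p\subseteq\rad\L_\p$. The quotient $\L_\p/\p\L_\p=\kappa(\p)\otimes_kA$ is finite-dimensional over $\kappa(\p)$, hence semi-perfect; lifting idempotents through the radical ideal $\p\L_\p$ then yields that $\L_\p$ itself is semi-perfect.

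For (b), I would first establish that the full composition $\twosilt A\to\twosilt\L_\p$ is an isomorphism of posets for each $\p$. Extend it to
\[\twosilt A\xrightarrow{R\otimes_k(-)}\twosilt\L\xrightarrow{(-)_\p}\twosilt\L_\p\xrightarrow{\sim}\twosilt\L(\p),\]
where the last arrow is the reduction-modulo-$\p$ isomorphism supplied by Proposition \ref{prop-silting-ftors} applied to $(R_\p,\L_\p)$ (using (a) and that $R_\p$ is local). The overall composition coincides with the base change $\kappa(\p)\otimes_k(-):\twosilt A\to\twosilt(\kappa(\p)\otimes_kA)$, which is an isomorphism by Theorem \ref{thm-twosilt-k-K}: the $k$-simplicity hypothesis combined with Lemma \ref{lem-k-K-proj} ensures that $\kappa(\p)\otimes_k(-)$ preserves indecomposability of projective $A$-modules, so the theorem applies. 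Hence $\twosilt A\to\twosilt\L_\p$ is an isomorphism, and the analogous statement holds for $\q$ in place of $\p$.

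It remains to separate this composition into three individual isomorphisms. Injectivity of $R\otimes_k(-):\twosilt A\to\twosilt\L$ follows from Theorem \ref{prop-base-silt}(b)(i) applied to the extension $k\to R$: any nonzero $k$-vector space $X$ satisfies $R\otimes_kX\neq 0$ since $X$ is free over $k$. For surjectivity, given $X\in\twosilt\L$, the isomorphism $\twosilt A\to\twosilt\L_\p$ produces, for each $\p$, a unique $Y^\p\in\twosilt A$ with $(R\otimes_kY^\p)_\p\simeq X_\p$. Whenever $\p\supseteq\q$, both $Y^\p$ and $Y^\q$ map to $X_\q$ under the isomorphism $\twosilt A\to\twosilt\L_\q$, so $Y^\p=Y^\q$. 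Connectedness of $\Spec R$ (Lemma \ref{lem-Spec-conn}(a), which is where ring indecomposability of $R$ is needed) then makes $\p\mapsto Y^\p$ globally constant; denoting its common value by $Y$, we get $(R\otimes_kY)_\p\simeq X_\p$ for every $\p$, and the embedding in Proposition \ref{prop-presilt-local}(b)(i) forces $R\otimes_kY\simeq X$ in $\twosilt\L$. Once $R\otimes_k(-)$ is known to be bijective, two-out-of-three against the composition isomorphism $\twosilt A\to\twosilt\L_\p$ gives that $(-)_\p:\twosilt\L\to\twosilt\L_\p$ is an isomorphism, and the same argument (with $\p,\q$ in place of $k,R$) gives $(-)_\q:\twosilt\L_\p\to\twosilt\L_\q$.

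The main obstacle is exactly the surjectivity of $R\otimes_k(-):\twosilt A\to\twosilt\L$: this is where one must promote local data $Y^\p$ at each prime to a single global $Y$, and where the connectedness of $\Spec R$ has to be invoked to glue. Everything else is a compilation of Theorems \ref{thm-twosilt-k-K}, \ref{prop-base-silt}, Propositions \ref{prop-silting-ftors}, \ref{prop-presilt-local}, and Lemma \ref{lem-k-K-proj}.
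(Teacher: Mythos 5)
For part (a), your argument has a gap, and the statement you are trying to prove without using the $k$-simplicity hypothesis is actually false. Idempotents do not automatically lift modulo an ideal contained in the Jacobson radical; this requires additional structure such as the ideal being nil, or the ring being complete in the relevant topology. Concretely, take $k=\bR$, $A=\bC$, $R=\bR[x]$ and $\p=(x^2+1)$. Then $\L_\p/\p\L_\p=\kappa(\p)\otimes_\bR\bC=\bC\otimes_\bR\bC\simeq\bC\times\bC$ is semi-perfect, but $\L_\p$ is a semilocal domain with two maximal ideals (lying over $(x-i)$ and $(x+i)$ in $\bC[x]$), hence not semi-perfect: the nontrivial idempotent downstairs does not lift. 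The paper's proof avoids idempotent lifting entirely. It starts from an indecomposable decomposition $A=\bigoplus_i P_i$, which already gives a decomposition $\L_\p=\bigoplus_i R_\p\otimes_kP_i$ at the level of $\L_\p$; Lemma \ref{lem-k-K-proj} — this is precisely where $k$-simplicity enters, and it is absent from your argument — shows each $\kappa(\p)\otimes_kP_i$ is indecomposable, and Lemma \ref{lem-semi-perfect} then forces each $\End_{\L_\p}(R_\p\otimes_kP_i)$ to be local. The moral is that one must carry a decomposition down from $\L$ to $\L_\p$ rather than try to lift one up from $\L(\p)$.

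For part (b), your argument is correct once (a) is repaired, and it takes a genuinely different route from the paper's. Both proofs begin identically: extend the composite $R_\p\otimes_k(-):\twosilt A\to\twosilt\L_\p$ by the reduction map to $\twosilt\L(\p)$ (an isomorphism by Proposition \ref{prop-silting-ftors}, using (a)), identify the total composite as $\kappa(\p)\otimes_k(-)$, and invoke Theorem \ref{thm-twosilt-k-K} with Lemma \ref{lem-k-K-proj}. From there the paths diverge. The paper notes $(-)_\p:\twosilt\L\to\twosilt\L_\p$ is surjective, shows $(-)^\p_\q:\twosilt\L_\p\to\twosilt\L_\q$ is an isomorphism by the commuting square over $\twosilt A$, and then cites Proposition \ref{prop-local-iso} as a black box. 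You instead prove bijectivity of $R\otimes_k(-):\twosilt A\to\twosilt\L$ directly — injectivity from Theorem \ref{prop-base-silt}(b)(i), surjectivity by gluing the local $Y^\p$'s across $\Spec R$ — and then split the composite by two-out-of-three. Your gluing step repackages the connectedness argument living inside the proof of Proposition \ref{prop-local-iso}, but used to manufacture a global preimage rather than to verify order-reflection. Both routes quietly use ring indecomposability of $R$ (not stated in the proposition, but built into the reduction in the proof of Corollary \ref{cor-r-isom}); you are at least explicit about where it is needed.
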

%%%%%
\begin{proof}
(a) Let $A=\bigoplus_{i=1}^{\ell}P_i$ be an indecomposable decomposition of $A$ as a left $A$-module.
By Lemma \ref{lem-k-K-proj}, each $\kappa(\p)\otimes_kP_i$ is indecomposable. By Lemma \ref{lem-semi-perfect}, $\L_\p$ is a semi-perfect ring.

(b) For each prime ideal $\p$ of $R$, by Theorem \ref{prop-base-silt} and Propositions \ref{prop-presilt-local} and \ref{prop-quo-silt}, we have the following morphisms of posets:
\begin{align}\label{seq-R-p-kp}
\twosilt A \xto{R\otimes_k (-)} \twosilt \L \xto{(-)_\p}\twosilt \L_\p \xto{\kappa(\p) \otimes_{R_\p}(-)} \twosilt\L(\p).
\end{align}
Since $\L_\p$ is a semi-perfect ring by (a), the last map is an isomorphism by Proposition \ref{prop-silting-ftors}.
Since $\L(\p)=\kappa(\p)\otimes_kA$, Theorem \ref{thm-twosilt-k-K} implies that the composite of these three maps is an isomorphism.
Thus the composite $(-)_\p \circ (R\otimes_k(-))=R_\p \otimes_k (-):\twosilt A\to\twosilt\L_\p$ of the left map and the middle map is an isomorphism.
In particular, $(-)_\p:\twosilt\L\to\twosilt\L_\p$ is surjective.
%We have $(-)_\p \circ R\otimes_k(-) = $.
For prime ideals $\p \supseteq \q$, we have the following commutative diagram
\[
\begin{tikzcd}
\twosilt A \arrow[d, equal] \arrow[rr, "R_\p \otimes_k(-)"] && \twosilt \L_\p \arrow[d, "(-)_\q"] \\
\twosilt A \arrow[rr, "R_\q \otimes_k(-)"] && \twosilt \L_\q
\end{tikzcd}
\]
where two horizontal maps are isomorphisms.
Thus the right vertical map $(-)_\q$ is also an isomorphism.
Applying Proposition \ref{prop-local-iso}(a$'$)$\Rightarrow$(a), the map $(-)_\p$ in \eqref{seq-R-p-kp} is an isomorphism, and so is $R\otimes_k(-)$.
\end{proof}

%%%%%%%%%%%%%%%
\begin{cor}\label{cor-r-isom}
Let $A$ be a finite dimensional algebra over a field $k$.
Assume that $A$ is $\tau$-tilting finite and any simple $A$-module is $k$-simple.
Then for arbitrary commutative Noetherian ring $R$ which contains the field $k$, we have an isomorphism of posets
\[
\tors(R\otimes_k A) \simeq \Hom_{\rm poset}(\Spec R, \tors A).
\]
\end{cor}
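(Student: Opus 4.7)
The plan is to apply Theorem \ref{thm-r-isom} to the Noetherian algebra $(R, \L)$ with $\L := R\otimes_k A$, after first reducing to the case that $R$ is ring indecomposable. Since $R$ is Noetherian, $\Spec R$ is a Noetherian space and hence has only finitely many connected components, giving a decomposition $R = \prod_{i=1}^n R_i$ into ring indecomposable factors. Then $\L = \prod_i(R_i\otimes_k A)$ and $\tors\L \simeq \prod_i \tors(R_i\otimes_k A)$; also $\Spec R = \bigsqcup_i \Spec R_i$, so $\Hom_{\rm poset}(\Spec R, \tors A) \simeq \prod_i \Hom_{\rm poset}(\Spec R_i, \tors A)$. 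Thus, it suffices to establish the isomorphism under the additional assumption that $R$ is ring indecomposable.

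Now I verify the two hypotheses of Theorem \ref{thm-r-isom} for $(R,\L) = (R, R\otimes_k A)$. Hypothesis (i), that $(-)_\p : \twosilt\L \to \twosilt\L_\p$ is an isomorphism of posets for every $\p \in \Spec R$, is exactly the content of Proposition \ref{prop-ftors-bij}(b), which uses the assumption that every simple $A$-module is $k$-simple. For hypothesis (ii), $\stors(\Fl\L_\p) = \tors(\Fl\L_\p)$, I invoke Theorem \ref{thm-silting-finite}: by Proposition \ref{prop-ftors-bij}(a) the ring $\L_\p$ is semi-perfect, so all seven conditions of Theorem \ref{thm-silting-finite} are equivalent for the Noetherian algebra $(R_\p, \L_\p)$. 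Since $A$ is $\tau$-tilting finite, $\twosilt A$ is finite by Proposition \ref{thm-DIJ}, and Proposition \ref{prop-ftors-bij}(b) gives $\twosilt\L_\p \simeq \twosilt A$, which is therefore finite. This is condition (iii) of Theorem \ref{thm-silting-finite}, which is equivalent to condition (vi), namely $\tors(\Fl\L_\p) = \stors(\Fl\L_\p)$, as required.

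With both hypotheses verified, Theorem \ref{thm-r-isom} yields an isomorphism of posets
\[
\tors\L \simeq \Hom_{\rm poset}(\Spec R, \twosilt\L).
\]
Finally, I identify $\twosilt\L$ with $\tors A$: by Proposition \ref{prop-ftors-bij}(b) the map $R\otimes_k(-) : \twosilt A \to \twosilt\L$ is an isomorphism of posets, and since $A$ is $\tau$-tilting finite, Propositions \ref{prop-silting-bijection}(c) and \ref{thm-DIJ} give an isomorphism $\twosilt A \simeq \stors A = \tors A$ of posets. Composing these identifications produces $\Hom_{\rm poset}(\Spec R, \twosilt\L) \simeq \Hom_{\rm poset}(\Spec R, \tors A)$, completing the proof.

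There is no serious obstacle here; the argument is essentially an assembly of the machinery already built up. The only point that demands minor care is the reduction to the ring indecomposable case, which is needed because Theorem \ref{thm-r-isom} explicitly assumes $R$ is ring indecomposable; everything else is a direct application of Proposition \ref{prop-ftors-bij}, Theorem \ref{thm-silting-finite}, and Proposition \ref{thm-DIJ}.
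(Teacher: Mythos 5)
Your proof is correct and takes essentially the same route as the paper's: reduce to the ring indecomposable case via $R=\prod_i R_i$, verify hypotheses (a) and (b) of Theorem \ref{thm-r-isom} using Proposition \ref{prop-ftors-bij} together with Theorem \ref{thm-silting-finite}(iii)$\Rightarrow$(vi), and then invoke Theorem \ref{thm-r-isom}. The only superficial differences are that you do the decomposition at the start rather than the end, and you spell out the identification $\twosilt\L\simeq\twosilt A\simeq\tors A$ which the paper leaves tacit.
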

%%%%%
\begin{proof}
First, we assume that $R$ is ring indecomposable.
Let $\L=R\otimes_k A$.
We see that $\L$ satisfies the assumptions of Theorem \ref{thm-r-isom}.
By Proposition \ref{prop-ftors-bij}, $\L_\p$ is semi-perfect and the following two maps are isomorphisms of posets for any $\p\in\Spec R$:
\[
\twosilt A \xto{R\otimes_k (-)} \twosilt \L \xto{(-)_\p} \twosilt \L_\p.
\]
In particular, $\L$ satisfies (a) in Theorem \ref{thm-r-isom}.
%We see that $\L$ satisfies (b) in Theorem \ref{thm-r-isom}.
%, that is, $\stors(\Fl\L_\p)=\tors(\Fl\L_\p)$ for any $\p\in\Spec R$.
Since $A$ is $\tau$-tilting finite, $\twosilt\L_\p \simeq \twosilt A$ is a finite set.
By applying Theorem \ref{thm-silting-finite}(iii) $\Rightarrow$ (vi) for the semi-perfect ring $\L_\p$, we have $\stors(\Fl\L_\p)=\tors(\Fl\L_\p)$, that is,  $\L$ satisfies (b) in Theorem \ref{thm-r-isom}.
The desired isomorphism is a conclusion of Theorem \ref{thm-r-isom}.

Now we consider general $R$. Take a decomposition $R=\prod_{i=1}^{\ell}R_i$ as a ring, where $R_i$ is ring indecomposable. Then $R\otimes_k A \simeq \prod_{i=1}^{\ell}R_i\otimes_k A$, and we have isomorphisms of posets
\begin{align*}
\tors(R\otimes_k A) &= \prod_{i=1}^{\ell}\tors(R_i\otimes_k A) \simeq \prod_{i=1}^{\ell}\Hom_{\rm poset}(\Spec R_i, \tors A)\\
& \simeq \Hom_{\rm poset}(\bigsqcup_{i=1}^{\ell}\Spec R_i, \tors A) = \Hom_{\rm poset}(\Spec R, \tors A).\qedhere
\end{align*}
\end{proof}
%%%%%%%%%%%%%%%
\begin{exa}\label{example-Dynkin}
Let $Q$ be a Dynkin quiver, and $R$ a commutative Noetherian ring which contains a field $k$.
It is known that the set of torsion classes $\tors(kQ)$ is isomorphic to the Cambrian lattice $\mathfrak{C}_Q$ of $Q$ by \cite[Theorem 4.3]{Ingalls-Thomas} and \cite{Reading}.
Since $RQ \simeq R\otimes_k kQ$, Corollary \ref{cor-r-isom} gives an isomorphism of posets
\[
\tors RQ \simeq \Hom_{\rm poset}(\Spec R, \mathfrak{C}_Q).
\]
\end{exa}
%%%%%%%%%%%%%%%
%%%%%%%%%%%%%%%
%%%%%%%%%%%%%%%
\appendix
\section{Characterizations of two-term silting complexes and silting modules}\label{subsection-silting-ch}
The notion of silting modules was introduced by \cite{AMV}.
In their definition, silting modules are not necessarily finitely presented, while our silting modules defined in Definition \ref{dfn-two-term-silting} are always finitely presented by definition.
In this section, we give several characterizations of silting modules in our sense.
%Notice that the original definition of silting modules is not necessarily finitely generated \cite{AMV}.
One of equivalent conditions shows that finitely presented silting modules in their sense is precisely silting modules in our sense, see Theorem \ref{thm-ap-i-iv}.
Moreover if $\L$ is a finite dimensional algebra over a field, then our silting modules are precisely support $\tau$-tilting modules introduced by Adachi-Iyama-Reiten \cite{Adachi-Iyama-Reiten}, see Theorem \ref{thm-ap-tau}.

Let $\L$ be a ring with $1_\L$, and $\mod\L$ the category of finitely generated $\L$-modules, and $\fp\L$ the category of finitely presented $\L$-modules.
For $M\in\mod\L$, we denote by $\Add M$ the subcategory of $\Mod\L$ consisting of direct summands of (possibly infinite) direct sums of copies of $M$.
Let $\Gen M$ be the subcategory of $\Mod\L$ consisting of factors of modules in $\Add M$. Then $\gen M = \Gen M \cap \mod\L$ holds, where $\gen M$ is defined in Section \ref{subsection-notation}.
%Let $\sD(\L)=\sD(\Mod\L)$ and $\sK(\L)=\sK^{\rm b}(\proj\L)$.
We call an object $P=(P^i,d^i)$ in $\sK(\Proj\L)$ \emph{two-term} if $P^i=0$ for $i\neq0,-1$. (We do not assume $P^i\in\proj\L$ in this subsection.)
In this case, let
\begin{align}
\notag\cT_P & = \{X \in \Mod\L \mid \Hom_\L(f, X) \, \mbox{is surjective}\}\\
& =\{ X \in \Mod\L \mid \Hom_{\sD(\L)}(P, X[1])=0 \}. \label{Tp-2}
\end{align}
%%%%%%%%%%%%%%%
\begin{lem}\label{lem-Tp-torsion}
Let $P=(P^{-1} \xto{f} P^0)\in\sK(\Proj\L)$ be a two-term complex, and $M=H^0(P)$.
\begin{enumerate}[\rm(a)]
\item $\cT_P$ is closed under factor modules, extensions and products. It is also closed under coproducts if $P\in\sK^{\rm b}(\proj\L)$.
\item $\Ext^1_\L(M,\cT_P)=0$ holds.
\end{enumerate}
\end{lem}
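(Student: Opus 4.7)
The plan is to use the two equivalent descriptions of $\cT_P$ given in the statement, choosing whichever is more convenient for each closure property.

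For part (a), I would handle closure under factor modules using the first description. Given a surjection $X\to Y$ in $\Mod\L$ with $X\in\cT_P$, any morphism $g:P^{-1}\to Y$ lifts through the projective $P^{-1}$ to some $\tilde g:P^{-1}\to X$; applying $X\in\cT_P$ to $\tilde g$ produces $h:P^0\to X$ with $hf=\tilde g$, and composing $h$ with $X\to Y$ yields the required preimage for $g$ under $\Hom_\L(f,Y)$. (Alternatively one can observe that $\Hom_{\sD(\L)}(P,N[i])=0$ for $i\geq 2$ and any module $N$, since $P$ is a two-term complex of projectives, so the long exact sequence associated to $0\to\Ker\to X\to Y\to 0$ forces the middle term to vanish.) For extensions, products, and (under the perfectness assumption) coproducts I would switch to the second description \eqref{Tp-2}: a short exact sequence $0\to X\to Y\to Z\to 0$ gives a triangle in $\sD(\L)$, and $\Hom_{\sD(\L)}(P,-)$ yields a long exact sequence showing the vanishing is preserved in the middle; closure under products is immediate since $\Hom_{\sD(\L)}(P,-)$ commutes with products; closure under coproducts follows when $P\in\sK^{\rm b}(\proj\L)$ because such $P$ is compact in $\sD(\L)$.

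For part (b), I would use the canonical short exact sequence $0\to \Im f \to P^0 \to M \to 0$ (since $M=\coker f = H^0(P)$, using $P^{-1}\in\proj\L\subseteq\Proj\L$). Applying $\Hom_\L(-,X)$ for $X\in\cT_P$ produces
\[
\Hom_\L(P^0,X)\longrightarrow \Hom_\L(\Im f,X)\longrightarrow \Ext^1_\L(M,X)\longrightarrow \Ext^1_\L(P^0,X)=0,
\]
so it suffices to prove the left-hand arrow is surjective. Given $g:\Im f\to X$, precompose with the surjection $P^{-1}\twoheadrightarrow \Im f$ to get $\tilde g\in\Hom_\L(P^{-1},X)$, lift $\tilde g$ through the surjective $\Hom_\L(f,X)$ to some $h:P^0\to X$, and observe that $hf=\tilde g$ forces $h|_{\Im f}=g$ by the universal property of the coimage.

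Neither part presents a serious obstacle: both reduce to routine diagram chases combined with the standard fact that two-term projective complexes are concentrated in cohomological degrees $-1,0$, so that $\Hom_{\sD(\L)}(P,N[i])$ vanishes for $i\geq 2$ and agrees with the cokernel of $\Hom_\L(f,N)$ for $i=1$. The only thing needing mild care is the coproduct case of (a), where I must flag the use of compactness of $P$ and hence the hypothesis $P\in\sK^{\rm b}(\proj\L)$.
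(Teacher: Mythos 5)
Your proposal is correct and follows essentially the same route as the paper: the closure properties in (a) come from the description \eqref{Tp-2} together with projectivity of $P^{-1},P^0$ (so $\Hom_{\sD(\L)}(P,N[i])=0$ for $i\ge 2$), with compactness of $P\in\sK^{\rm b}(\proj\L)$ handling coproducts, and (b) is exactly the paper's argument that the projective presentation $P^{-1}\to P^0\to M\to 0$ plus surjectivity of $\Hom_\L(f,X)$ forces $\Ext^1_\L(M,X)=0$. The extra details you supply (the lifting argument for factor modules, the surjectivity of $\Hom_\L(P^0,X)\to\Hom_\L(\Im f,X)$) are just the routine verifications the paper leaves implicit.
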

%%%%%
\begin{proof}
(a) The first assertion follows from the equality (\ref{Tp-2}) and that $P^{-1}$ and $P^0$ belong to $\Proj\L$. The second one follows from compactness of $P$.

(b)
Since $P$ is a projective presentation of $M$ and $\Hom_\L(P^0, N) \to \Hom_\L(P^{-1}, N)$ is surjective for any $N\in\cT_P$, we obtain the assertion.
\end{proof}
%%%%%%%%%%%%%%%
\begin{prop}\label{prop-M-Tp}
For a two-term complex $P,Q\in\sK(\Proj\L)$, $M=H^0(P)$ and $N=H^0(Q)$, the following statements hold.
\begin{enumerate}[\rm (a)]
	\item $\Hom_{\sD(\L)}(P,Q[1])=0$ holds if and only if $N\in\cT_P$ holds if and only if $\Gen N\subseteq\cT_P$ holds. In this case $\Ext_{\L}^1(M, \Gen N)=0$ holds.
	\item Assume $P\in\sK^{\rm b}(\proj\L)$. Then $P$ is presilting if and only if $M\in\cT_P$ holds if and only if $\Gen M\subseteq\cT_P$ holds. In this case $\Ext_{\L}^1(M, \Gen M)=0$ holds.
\end{enumerate}
\end{prop}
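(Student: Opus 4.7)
The plan is to prove (a) first and then deduce (b) by specializing to $Q=P$. The key technical ingredient is a cohomological vanishing: for every $\L$-module $M$ and every $i\ge 2$, $\Hom_{\sD(\L)}(P,M[i])=0$. This is a direct chain-map computation, since $M[i]$ is supported only in degree $-i$ while $P$ is supported in degrees $-1,0$, leaving no room for nonzero chain maps when $i\ge 2$. For the first equivalence of (a), I apply $\Hom_{\sD(\L)}(P,-[1])$ to the canonical truncation triangle
\[H^{-1}(Q)[1]\longrightarrow Q\longrightarrow N\longrightarrow H^{-1}(Q)[2]\]
in $\sD(\L)$. The vanishing kills $\Hom_{\sD(\L)}(P,H^{-1}(Q)[j])$ for $j=2,3$, so the long exact sequence collapses to an isomorphism $\Hom_{\sD(\L)}(P,Q[1])\simeq\Hom_{\sD(\L)}(P,N[1])$, and the right-hand side vanishes iff $N\in\cT_P$ by the second description of $\cT_P$ in~\eqref{Tp-2}.

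For the equivalence $N\in\cT_P\iff\Gen N\subseteq\cT_P$, the direction $\Leftarrow$ is trivial. For $\Rightarrow$, given $X\in\Gen N$, I will fix a surjection $N^{(I)}\twoheadrightarrow X$, lift any $\phi\colon P^{-1}\to X$ through the composite projective surjection $(Q^0)^{(I)}\twoheadrightarrow N^{(I)}\twoheadrightarrow X$ using projectivity of $P^{-1}$, and use $\Hom_{\sD(\L)}(P,Q[1])=0$ componentwise to produce lifts $\psi_i\colon P^0\to N$ modulo $\Im(g)$; these assemble into a single $\psi\colon P^0\to N^{(I)}$ which is pushed down to the desired lift $P^0\to X$. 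The ``in this case'' Ext claim is then immediate from Lemma~\ref{lem-Tp-torsion}(b): since $\Gen N\subseteq\cT_P$, $\Ext^1_\L(M,\Gen N)\subseteq\Ext^1_\L(M,\cT_P)=0$.

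For (b), since $P\in\sK^{\rm b}(\proj\L)$ is two-term, the presilting condition $\Hom_{\sD(\L)}(P,P[i])=0$ for all $i>0$ reduces to the case $i=1$ by the cohomological vanishing above. Applying (a) with $Q:=P$ (so that $N=M$) then yields the three-way equivalence $P\in\twopsilt\L\iff M\in\cT_P\iff\Gen M\subseteq\cT_P$, and the Ext claim $\Ext^1_\L(M,\Gen M)=0$ follows in the same way from Lemma~\ref{lem-Tp-torsion}(b).

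The main obstacle will be the assembly step in the second paragraph: verifying that independently chosen componentwise lifts can be combined into a single map landing in the coproduct $N^{(I)}$ rather than in the product $N^I$. In the setting of (b), where $P\in\sK^{\rm b}(\proj\L)$, this is automatic because $P^{-1},P^0$ are finitely generated, $\Hom_\L(P^\bullet,-)$ commutes with coproducts, and $\cT_P$ is consequently closed under coproducts by Lemma~\ref{lem-Tp-torsion}(a). In the full generality of (a), $\cT_P$ is only known to be closed under products, so one has to exploit the specific projective presentation $Q$ of $N$ (lifting first to $(Q^0)^{(I)}$ and only then invoking $\Hom_{\sD(\L)}(P,Q[1])=0$) rather than treat $N^{(I)}$ as an abstract coproduct inside $\cT_P$.
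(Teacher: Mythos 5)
Your proof of the first equivalence in (a) (truncation triangle for $Q$ plus the vanishing $\Hom_{\sD(\L)}(P,X[i])=0$ for modules $X$ and $i\ge 2$, then \eqref{Tp-2}), your deduction of (b) by taking $Q=P$, and the $\Ext^1$ claims via Lemma \ref{lem-Tp-torsion}(b) all coincide with the paper's argument. The genuine gap is exactly the step you flag yourself: the implication $N\in\cT_P\Rightarrow\Gen N\subseteq\cT_P$ in the generality of (a), where the terms of $P$ need not be finitely generated. Your plan is to lift $\phi\colon P^{-1}\to X$ to $(Q^0)^{(I)}$, kill each component $\tilde\phi_i\colon P^{-1}\to Q^0$ by a null-homotopy $(a_i,b_i)$ coming from $\Hom_{\sD(\L)}(P,Q[1])=0$, and assemble the maps $\psi_i=q\circ b_i\colon P^0\to N$ into one map $P^0\to N^{(I)}$. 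But the homotopies are chosen independently for each $i$, and when $P^0$ is not finitely generated there is no reason that, on a fixed element of $P^0$, all but finitely many $\psi_i$ vanish; the family $(\psi_i)_i$ only defines a map into the product $N^{I}$, not the coproduct. Moreover this cannot be repaired: without compactness of $P$ the implication is actually false. Over $\L=\bZ$ take $P=(\bZ^{(\bN)}\xrightarrow{f}\bZ^{(\bN)})$ with $f(e_j)=e_j-2e_{j+1}$, and let $N$ be the additive group of $2$-adic integers, $Q$ a two-term projective resolution of $N$. Then $\Hom_\L(f,N)$ is surjective (given $(\eta_j)_j\in N^{\bN}$, set $\xi_j=\sum_{k\ge0}2^k\eta_{j+k}$, which converges $2$-adically), so $N\in\cT_P$ and, by the first equivalence, $\Hom_{\sD(\L)}(P,Q[1])=0$; yet for $X=N^{(\bN)}\in\Add N\subseteq\Gen N$ the tuple whose $j$-th entry is the $j$-th standard generator has no preimage under $(\xi_j)_j\mapsto(\xi_j-2\xi_{j+1})_j$ with all $\xi_j$ of finite support (coordinatewise one is forced to $\xi_1(i)=2^{i-1}$ for every $i$), so $X\notin\cT_P$.

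In other words, this direction genuinely needs $\cT_P$ to be closed under coproducts, which Lemma \ref{lem-Tp-torsion}(a) provides only for $P\in\sK^{\rm b}(\proj\L)$; your suggested detour through the specific presentation $Q$ (lifting to $(Q^0)^{(I)}$ first) does not avoid this, since the obstruction sits in $P^0$, not in $Q$. To be fair, the paper's own one-line justification of this implication also just cites Lemma \ref{lem-Tp-torsion}(a), so it likewise only works when the coproduct-closure applies; and in all places where the proposition is used, and in part (b), $P$ is a two-term complex in $\sK^{\rm b}(\proj\L)$, where your argument (and the paper's) is complete: there $\Hom_\L(P^j,-)$ commutes with coproducts, $\cT_P$ is closed under coproducts, and $\Gen M\subseteq\cT_P$ follows from $M\in\cT_P$ by factor-closure. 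So your proposal is correct for (b) and for the first equivalence and Ext statements of (a), but the third condition of (a) is not established (and cannot be, as stated) by the assembly argument you outline.
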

%%%%%
\begin{proof}
(a) Let $N'=H^{-1}(Q)$.
We have a triangle $N'[1] \to Q \to N \to N'[2]$ in $\sD(\L)$.
By applying the functor $\Hom_{\sD(\L)}(P[-1],-)$ and using $\Hom_{\sD(\L)}(P[-1],N[i])=0$ for $i\ge1$, we obtain $\Hom_{\sD(\L)}(P[-1], Q) \simeq \Hom_{\sD(\L)}(P[-1], N)$.
Thus the first assertion follows. 

If $N\in\cT_P$, then $\Gen N\subseteq\cT_P$ by Lemma \ref{lem-Tp-torsion}(a) and $\Ext^1_\L(M, \Gen N)=0$ by Lemma \ref{lem-Tp-torsion}(b).

(b) This is the case $P=Q$ in (a).
\end{proof}
%%%%%%%%%%%%%%%
The following theorem gives various characterizations of silting modules.
%%%%%%%%%%%%%%%
\begin{thm}\label{thm-ap-i-iv}
Let $\L$ be a ring and $M\in\fp\L$. Then the following statements are equivalent.
\begin{enumerate}[{\rm (i)}]
	\item $M$ is a silting module, that is, there is $P\in\twosilt\L$ such that $\add H^0(P)=\add M$.
	\item There is $P\in\twopsilt\L$ such that $\add H^0(P)=\add M$ and there exists an exact sequence $\L \xto{g} M^0 \to M^1 \to 0$, where $g$ is a left $\cT_P$-approximation and $M^0, M^1\in\add M$.
	\item The condition {\rm(ii)}, where $\cT_P$ is replaced by $\Gen M$.
	\item The condition {\rm(ii)}, where $\cT_P$ is replaced by $\add M$.
	\item[{\rm (v)}] There is a two-term complex $P\in\sK^{\rm b}(\proj\L)$ such that $\add H^0(P)=\add M$ and $\cT_P=\Gen M$.
\end{enumerate}
\end{thm}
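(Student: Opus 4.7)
The plan is to establish the cycle $(i)\Rightarrow(ii)\Rightarrow(iii)\Rightarrow(iv)\Rightarrow(i)$, together with the separate equivalence $(i)\Leftrightarrow(v)$, leveraging the two-term complex $P$ as the main technical device throughout. For $(i)\Rightarrow(ii)$, the silting property of $P\in\twosilt\L$ furnishes, by standard silting theory (cf.\ \cite{AI, Iyama-Yang} applied to $\L\in\thick P$), a triangle $\L\to B\to A\to \L[1]$ in $\sD(\L)$ with $A,B\in\add P$. Applying $H^{0}$ yields the exact sequence $\L\xto{g}M^0\to M^1\to 0$ with $M^0:=H^0(B)$ and $M^1:=H^0(A)$ in $\add M$, while for $X\in\cT_P$, applying $\Hom_{\sD(\L)}(-,X)$ to the triangle and using the vanishing $\Hom_{\sD(\L)}(A,X[1])=0$ together with the natural identifications $\Hom_{\sD(\L)}(\L,X)=X$ and $\Hom_{\sD(\L)}(B,X)=\Hom_\L(M^0,X)$ (valid since $B$ is two-term and $X$ sits in degree zero) transforms the resulting surjection into the left $\cT_P$-approximation property of $g$. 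The implications $(ii)\Rightarrow(iii)\Rightarrow(iv)$ are then formal from the chain $\add M\subseteq\Gen M\subseteq\cT_P$, supplied by Proposition~\ref{prop-M-Tp}(b) combined with closure of $\cT_P$ under coproducts (Lemma~\ref{lem-Tp-torsion}(a), using $P\in\sK^{\rm b}(\proj\L)$).

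The main technical content is $(iv)\Rightarrow(i)$. Choose $n\geq 1$ so that $M^0$ is a direct summand of $M^n=H^0(P^{\oplus n})$ and write $M^n\simeq M^0\oplus N$ for some $N\in\add M$. Lift the composite $\L\xto{g}M^0\hookrightarrow M^n$ via the isomorphism $\Hom_{\sD(\L)}(\L,P^{\oplus n})\simeq H^0(P^{\oplus n})$ to a morphism $\tilde g:\L\to P^{\oplus n}$ in $\sD(\L)$. To check that $\tilde g$ is a left $(\add P)$-approximation in $\sK^{\rm b}(\proj\L)$, apply $\Hom_{\sD(\L)}(P^{\oplus n},-)$ to the truncation triangle for each $Q\in\add P$; the vanishing $\Hom_{\sD(\L)}(P^{\oplus n},H^0(Q)[1])=0$ (from $P$ presilting and $H^0(Q)\in\add M\subseteq\cT_P$) yields a surjection $\Hom_{\sD(\L)}(P^{\oplus n},Q)\twoheadrightarrow \Hom_\L(M^n,H^0(Q))$, and the left $\add M$-approximation property of $g$ then supplies the factorization through $\tilde g$. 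The standard silting-completion lemma (as in the proof of Proposition~\ref{prop-silting-bijection}(a), building on \cite{AI, Kimura}) produces $Q':=P\oplus\mathrm{cone}(\tilde g)\in\twosilt\L$, and applying $H^0$ to the triangle $\L\to P^{\oplus n}\to\mathrm{cone}(\tilde g)\to \L[1]$ identifies $H^0(\mathrm{cone}(\tilde g))\simeq M^n/g(\L)\simeq M^1\oplus N\in\add M$, so that $\add H^0(Q')=\add M$ and $M$ is a silting module.

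For the equivalence $(i)\Leftrightarrow(v)$, the inclusion $\Gen M\subseteq\cT_P$ always holds, and the reverse inclusion in $(i)\Rightarrow(v)$ follows from the silting triangle above: for $X\in\cT_P$ the vanishing $\Hom_{\sD(\L)}(A,X[1])=0$ makes $\Hom_\L(M^0,X)\twoheadrightarrow X$ surjective, exhibiting $X$ as a quotient of copies of $M^0\in\add M$. The direction $(v)\Rightarrow(i)$ is the main obstacle: from $\cT_P=\Gen M$ one can readily construct a left $\cT_P$-approximation $g:\L\to M^0$ with $M^0\in\add M$ (using that $\Hom_\L(\L,N)=N$ gives a universal map $\L\to M^{(M)}$, which factors through a finite direct sum $M^n$ since $g(1)$ has finite support), but a priori its cokernel lies only in $\Gen M$ rather than in $\add M$. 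Refining the construction so as to arrange $M^1=\coker g\in\add M$ — thereby reducing to (iv) — will require exploiting the specific presilting data of $P$ together with the identification $\cT_P=\Gen M$ in an essential way, in the spirit of the finitely-presented-silting criterion of \cite{AMV}.
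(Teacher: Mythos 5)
Your cycle $(i)\Rightarrow(ii)\Rightarrow(iii)\Rightarrow(iv)\Rightarrow(i)$ and the implication $(i)\Rightarrow(v)$ agree in substance with the paper's proof (the paper routes through $(ii)\Rightarrow(v)\Rightarrow(iii)$ instead of $(i)\Leftrightarrow(v)$, a cosmetic difference). One slip in your $(iv)\Rightarrow(i)$: the surjection $\Hom_{\sD(\L)}(P^{\oplus n},Q)\surjto\Hom_\L(M^n,H^0(Q))$ does not come from the vanishing $\Hom_{\sD(\L)}(P^{\oplus n},H^0(Q)[1])=0$. It is the elementary fact that $H^0$ induces a surjection $\Hom_{\sK^{\rm b}(\proj\L)}(X,Y)\to\Hom_\L(H^0(X),H^0(Y))$ for \emph{any} two-term complexes $X,Y$ of projectives, obtained by lifting a given map on $H^0$ to a chain map stepwise using projectivity of $X^0$ and $X^{-1}$; the presilting hypothesis plays its role elsewhere, in guaranteeing that $P\oplus\mathrm{cone}(\tilde g)$ is presilting.

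The genuine gap is the implication out of $(v)$, which you flag but do not close. You diagnose it precisely: from $\cT_P=\Gen M$ one extracts a left $\Gen M$-approximation $g:\L\to M^n$ with target in $\add M$, but nothing guarantees $\Cok g\in\add M$. The paper resolves this by observing that $(v)$ says exactly that $M$ is a silting module in the sense of \cite[Definition 3.7]{AMV}, hence finendo quasitilting by \cite[Proposition 3.10]{AMV}; then \cite[Proposition 3.2]{AMV} provides an exact sequence $\L\xto{f}M^0\to M^1\to 0$ with $M^0,M^1\in\Add M$ and $f$ a left $\Gen M$-approximation. Writing $M^0\oplus N^0=M^{\oplus I}$ and replacing the sequence by $\L\to M^{\oplus I}\to M^1\oplus N^0\to 0$, one chooses a finite subset $J\subseteq I$ with $f(1_\L)\in M^{\oplus J}$; the cokernel $C$ of the restricted map $\L\to M^{\oplus J}$ then satisfies $M^1\oplus N^0\simeq C\oplus M^{\oplus I\setminus J}$, so $C\in\Add M$, and since $C$ is finitely generated it already lies in $\add M$. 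This yields $(iii)$. Without this cofinality argument (or an equivalent substitute) your equivalence is not established.
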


%%%%%%%%%%%%%%%
The condition (iv) is a variation of the definition of $\tau$-rigid pairs in \cite[Definition 1.3]{Iyama-Jorgensen-Yang}, and a characterization of support $\tau$-tilting modules \cite[Proposition 2.14]{Jasso} in the case where $\L$ is an Artinian algebra.
The condition (v) (respectively, (ii)) is a finitely presented version of the definition (respectively, a characterization) of large silting modules in \cite{AMV} (respectively, \cite[Proposition 3.11]{AMV}).
%%%%%%%%%%%%%%%
\begin{rem}
We can replace $\cT_P$ (respectively, $\Gen M$, $\add M$) in (ii) (respectively, (iii), (iv)) by $\cT_P\cap\mod\L$ or $\cT_P\cap \fp\L$ (respectively, $\gen M$, $\Add M$).
\end{rem}
%%%%%%%%%%%%%%%
\begin{proof}[Proof of Theorem \ref{thm-ap-i-iv}]
(i) implies (ii): Take a triangle $\L \to Q^0 \to Q^1 \to \L[1]$ in $\sD(\L)$ with $Q^0, Q^1\in\add P$ \cite[Corollary 2.4]{Iyama-Jorgensen-Yang}.
By applying $H^0$, we have an exact sequence $\L \xto{g} M^0 \to M^1 \to 0$ where $M^i=H^0(Q^i)\in\add M$.
Applying $\Hom_{\sD(\L)}(-,T)$ for each $X\in\cT_{P}$ to the triangle above, we have a commutative diagram of exact sequences:
\[\xymatrix{
\Hom_{\sD(\L)}(Q^0,X)\ar[r]\ar[d]^{H^0}&\Hom_{\sD(\L)}(\L,X)\ar[r]\ar@{=}[d]&\Hom_{\sD(\L)}(Q^1[-1],X)=0\\
\Hom_\L(M^0,X)\ar[r]^{-\circ g}&\Hom_\L(\L,X).
}\]
Thus the lower map is surjective, and hence $g$ is a left $\cT_{P}$-approximation.

(ii) implies (iii):
Since $P\in\twopsilt\L$, Proposition \ref{prop-M-Tp}(b) shows $\Gen M \subseteq \cT_P$. Thus the claim follows.

(iii) implies (iv):
Clear from $\add M \subseteq \Gen M$.

(iv) implies (i):
Since $\add H^0(P)=\add M$, there is a morphism $\tilde{g} : \L \to Q^0$ in $\sK^{\rm b}(\proj\L)$ such that $Q^0\in\add P$ and $H^0(\tilde{g})=g\oplus[0\to N^0]$ for some $N^0$.
Take a triangle $\L \xto{\tilde{g}} Q^0 \to C \to \L[1]$ in $\sK^{\rm b}(\proj\L)$.
By the following commutative diagram, $\tilde{g}$ is a left $(\add P)$-approximation:
\[
\begin{tikzcd}
\Hom_{\sD(\L)}(Q^0, P) \arrow[r, "-\circ\tilde{g}"] \arrow[d, twoheadrightarrow, "H^0"] & \Hom_{\sD(\L)}(\L, P) \arrow[d, "\simeq"] \\
\Hom_\L(M^0\oplus N^0, M) \arrow[r, twoheadrightarrow, "-\circ H^0(\tilde{g})"] & \Hom_\L(\L, M)
\end{tikzcd}
\]
Therefore $P\oplus C \in \twosilt\L$ is a co-Bongartz completion of $P$ (e.g.\ \cite[Lemma 4.2]{Iyama-Jorgensen-Yang}) and $\add H^0(P\oplus C)=\add (M\oplus M^1) = \add M$.

(ii) implies (v):
It is enough to show that $\cT_P\subseteq \Gen M$ holds.
Let $X\in \cT_P$.
Take  a surjective morphism $\L^{\oplus I} \to X$ with an index set $I$.
Then this morphism factors through $g^{\oplus I}$, where $g$ is a morphism in Theorem \ref{thm-ap-i-iv} (ii).
Thus $X\in\Gen M$ holds.

(v) implies (iii):
The module $M$ is silting in the sense of \cite[Definition 3.7]{AMV}.
Thus by \cite[Proposition 3.10]{AMV}, $M$ is a finendo quasitilting module.
By \cite[Proposition 3.2]{AMV}, there is an exact sequence $\L \xto{f} M^0 \xto{g} M^1 \to 0$ such that $M^0, M^1\in\Add M$, $f$ is a left $(\Gen M)$-approximation.
Let $M^0\oplus N^0=M^{\oplus I}$ for an index set $I$.
By replacing $g$ to $g\oplus 1_{N^0} : M^0\oplus N^0 \to M^1\oplus N^0$, we may assume that $M^0=M^{\oplus I}$.
Let $J\subseteq I$ be a finite subset such that $f(1_{\L})\in M^{\oplus J}$.
We have the following commutative diagram, where $C$ is the cokernel of $\L \to M^{\oplus J}$.
\[
\begin{tikzcd}
\L \arrow[r, "f"] \arrow[d, equal] & M^{\oplus I} \arrow[r] & M^1 \arrow[r] & 0 \\
\L \arrow[r] & M^{\oplus J} \arrow[u, hookrightarrow] \arrow[r] & C \arrow[u] \arrow[r] &0
\end{tikzcd}
\]
We have $M^{\oplus J},C\in\mod\L$ and $M^1\simeq C\oplus M^{\oplus I\setminus J}$, 
%, since $M^{\oplus J}$ is finitely generated. We have a split short exact sequence $0 \to M^{\oplus J} \to C\oplus M^{\oplus I} \to M^1 \to 0$.
thus $C\in\Add M$. We have $C\in\add M$ by the claim below, and hence the bottom sequence in the above diagram gives Theorem \ref{thm-ap-i-iv} (iii).
\begin{enumerate}[\rm$\bullet$]
\item For modules $X, Y\in\Mod\L$ with $Y\in\Add X$, if $Y$ is finitely generated, then $Y\in\add X$ holds.
\end{enumerate}
In fact, there are morphisms $Y \xto{a} X^{\oplus I} \xto{b} Y$ with an index set $I$ such that $ba=1_Y$.
Since $Y$ is finitely generated, there is a finite subset $J\subseteq I$ such that $\Im a \subseteq X^{\oplus J}$ holds.
Then we have $(b|_{X^{\oplus J}})a=1_Y$.
\end{proof}
%%%%%%%%%%%%%%%
As an application, we give a simple proof of \cite[Theorem 4.9(1)(3)]{AMV} for the compact case.
%%%%%%%%%%%%%%%
\begin{cor}\label{2silt T=Gen}
Let $\L$ be a ring and $P\in\sK^{\rm b}(\proj\L)$ a two-term complex.
Then $P\in\twosilt\L$ holds if and only if $\cT_P=\Gen H^0(P)$ holds.
\end{cor}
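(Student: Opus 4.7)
The plan is to derive both directions from Theorem \ref{thm-ap-i-iv} together with Proposition \ref{prop-ap-twosilt-silt-stors}. Write $M = H^0(P)$. For the ``only if'' direction, I would apply Theorem \ref{thm-ap-i-iv} (i)$\Rightarrow$(ii) to obtain an exact sequence $\L \xrightarrow{g} M^0 \to M^1 \to 0$ with $M^0, M^1 \in \add M$ and $g$ a left $\cT_P$-approximation. For any $X \in \cT_P$, a surjection $\L^{\oplus I} \twoheadrightarrow X$ then factors through $g^{\oplus I} : \L^{\oplus I} \to (M^0)^{\oplus I}$ by the approximation property (the same argument used in the proof of (ii)$\Rightarrow$(v) of Theorem \ref{thm-ap-i-iv}), so $X \in \Gen M$. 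The reverse containment $\Gen M \subseteq \cT_P$ is Proposition \ref{prop-M-Tp}(b), so $\cT_P = \Gen M$.

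For the ``if'' direction, assume $\cT_P = \Gen M$. Then $M \in \Gen M = \cT_P$, so Proposition \ref{prop-M-Tp}(b) gives that $P$ is presilting, and the pair $(P, M)$ satisfies condition (v) of Theorem \ref{thm-ap-i-iv}. The implication (v)$\Rightarrow$(i) shows $M$ is a silting module, and by Proposition \ref{prop-ap-twosilt-silt-stors} there is a unique (up to additive equivalence) silting complex $P' \in \twosilt \L$ with $H^0(P') = M$; applying the already-proven ``only if'' direction to $P'$ yields $\cT_{P'} = \Gen M = \cT_P$. I would next verify $P \oplus P' \in \twosilt \L$: the thick-closure property is immediate from $\thick(P \oplus P') \supseteq \thick P' = \sK^{\rm b}(\proj \L)$, and for the presilting vanishings the truncation triangle $H^{-1}(P')[1] \to P' \to M \to H^{-1}(P')[2]$ reduces $\Hom_{\sD(\L)}(P, P'[1]) = 0$ to $\Hom_{\sD(\L)}(P, M[1]) = 0$ (from $M \in \cT_P$) and $\Hom_{\sD(\L)}(P, H^{-1}(P')[2]) = 0$ (automatic for two-term $P$); the symmetric computation gives $\Hom_{\sD(\L)}(P', P[1]) = 0$ using $\cT_{P'} = \Gen M$. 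Since $\add H^0(P \oplus P') = \add(M \oplus M) = \add H^0(P')$, the bijection in Proposition \ref{prop-ap-twosilt-silt-stors} forces $\add(P \oplus P') = \add P'$, so every indecomposable summand of $P$ already lies in $\add P'$.

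The main obstacle is the final step, namely to establish the reverse containment $\add P' \subseteq \add P$ (equivalently that $P$ is silting). This amounts to an irredundancy statement for the summands of the silting complex $P'$: if some indecomposable summand $Q$ of $P'$ were missing from $\add P$, then $\add P \subseteq \add(P' \setminus Q)$ would yield $\cT_P \supseteq \cT_{P' \setminus Q}$, and Bongartz-completing the presilting $P' \setminus Q$ to a silting complex $T$ would give, via the bijection $\twosilt \L \simeq \siltm \L$ of Proposition \ref{prop-ap-twosilt-silt-stors}, that $\cT_{P' \setminus Q} = \cT_{P'} = \Gen M$ can hold only if $\add(P' \setminus Q) = \add T = \add P'$, contradicting the absence of $Q$; hence $\cT_{P' \setminus Q} \supsetneq \Gen M$, contradicting $\cT_P = \Gen M$. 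This irredundancy, which I expect to be the most delicate point, then closes the argument and yields $P \in \twosilt \L$.
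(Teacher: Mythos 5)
Your ``only if'' direction is essentially the paper's argument: run Theorem \ref{thm-ap-i-iv}(i)$\Rightarrow$(ii), keeping the same $P$, to get a left $\cT_P$-approximation $\L\to M^0$, factor any $\L^{\oplus I}\twoheadrightarrow X$ through $g^{\oplus I}$, and use Proposition \ref{prop-M-Tp}(b) for the reverse inclusion. That part is fine.

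Your ``if'' direction diverges from the paper and has a genuine gap in the final step. The analysis up to $\add P\subseteq\add P'$ is correct (the vanishings for $P\oplus P'$, the use of the bijection in Proposition \ref{prop-ap-twosilt-silt-stors} to get $\add(P\oplus P')=\add P'$). But the ``irredundancy'' argument does not close. First, you speak of ``indecomposable summand $Q$ of $P'$'' and of $P'\setminus Q$: this presupposes that $\sK^{\rm b}(\proj\L)$ is Krull--Schmidt, which is not assumed for an arbitrary ring $\L$. Second, even granting Krull--Schmidt, the step ``$\cT_{P'\setminus Q}=\cT_{P'}$ can hold only if $\add(P'\setminus Q)=\add T=\add P'$'' does not follow: the bijection $\twosilt\L\simeq\stors\L$ relates \emph{silting} complexes to their torsion classes, so from $\cT_T=\cT_{P'}$ you can only conclude $\add T=\add P'$; you cannot conclude $\add(P'\setminus Q)=\add T$, because the Bongartz completion generically \emph{adds} summands to the presilting $P'\setminus Q$. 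Indeed, $\cT_{P'\setminus Q}=\cT_{P'}$ can certainly happen for a proper summand: for $\L=k\times k$ and $P'=\L=\L_1\oplus\L_2$ with $Q=\L_2$, one has $\cT_{\L_1}=\cT_\L=\Mod\L$. So the claimed contradiction is not there, and the argument stalls. The paper avoids this entirely: it forms the co-Bongartz completion $P\oplus C\in\twosilt\L$, observes $\cT_P\subseteq\cT_C$ and $H^0(C)\in\Gen H^0(P)$, then takes a triangle $K\to P^{\oplus I}\to C\to K[1]$ (with $I=\Hom_{\sD(\L)}(P,C)$), shows $K$ is two-term with $\Hom(P,K[1])=0$ and hence $\Hom(C,K[1])=0$ via Proposition \ref{prop-M-Tp}(a), and concludes $C\in\Add P$, hence $C\in\add P$ by compactness, with no Krull--Schmidt hypothesis. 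You could repair your approach by applying that same triangle argument with $P'$ in the role of $C$: you already have $\cT_P=\cT_{P'}$ and $H^0(P')\in\add H^0(P)\subseteq\Gen H^0(P)$, which is all that argument uses.
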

%%%%%
\begin{proof}
The ``only if'' part follows from the proof of Theorem \ref{thm-ap-i-iv}(i)$\Rightarrow$(ii)$\Rightarrow$(v), where the complex $P$ is unchanged. 

We prove the ``if'' part. Since $H^0(P)\in\cT_P$, we have $P\in\twopsilt\L$ by Proposition \ref{prop-M-Tp}(b).
By the proof of Theorem \ref{thm-ap-i-iv}(iii)$\Rightarrow$(i), there exists $C\in\sK^{\rm b}(\proj\L)$ such that $P\oplus C\in\twosilt\L$.
We claim $C\in\add P$ which implies $P\in\twosilt\L$.
Since
\[
\Gen H^0(P) \subseteq \Gen H^0(P\oplus C) \stackrel{{\rm\ref{prop-M-Tp}}}{\subseteq} \cT_{P\oplus C} \subseteq \cT_P=\Gen H^0(P),
\]
all equalities hold. In particular, $H^0(C)\in\Gen H^0(P)$ and $\cT_P\subseteq\cT_C$ hold. Take a triangle
\begin{equation}\label{KPC}
K\to P^{\oplus I}\xrightarrow{f}C\xrightarrow{g} K[1],
\end{equation}
with $I=\Hom_{\sD(\L)}(P, C)$ and a canonical morphism $f$.  
% $f : P^{\oplus I} \to C$ is a right $(\Add P)$-approximation of $C$ such that
Then $K\in\sK(\Proj\L)$ is concentrated in degrees $-1, 0, 1$. Also $H^0(f)$ is surjective since $H^0(C)\in\Gen H^0(P)$. Applying $H^0$ to \eqref{KPC}, we obtain $H^1(K)=0$.
Thus $K$ is isomorphic to a two-term complex in $\sK(\Proj\L)$.
Applying $\Hom_{\sD(\L)}(P, -)$ to \eqref{KPC}, we obtain $\Hom_{\sD(\L)}(P, K[1])=0$.
Since $\cT_P\subseteq\cT_C$, we obtain $\Hom_{\sD(\L)}(C, K[1])=0$ by Proposition \ref{prop-M-Tp}(a).
Thus $g=0$ and $C\in \Add P$ hold. Since $C$ is compact, $C\in\add P$ holds (cf.\ end of the proof of Theorem \ref{thm-ap-i-iv}(v)$\Rightarrow$(iii)).
\end{proof}
%%%%%%%%%%%%%%%
Let $\L$ be a ring. Now we give canonical isomorphisms between certain posets.
Let us recall relevant notions. 
Firstly, we denote by $\stors\L$ the set of subcategories of $\mod\L$ of the form $\gen M$ for some silting $\L$-module $M$. Clearly $(\stors\L,\subseteq)$ is a poset.
%We recall partial orders on the sets $\twosilt\L$, $\siltm \L$ and $\stors\L$ as follows.
Secondly, for $P, Q\in\twosilt \L$, we write $Q\leq P$ if $\Hom(P, Q[1])=0$.
Then $(\twosilt\L, \leq)$ is a poset by \cite{AI}.
Finally, for silting modules $M, N\in\siltm\L$, we write $N\leq M$ if $\gen N \subseteq \gen M$.

Now we show that $(\siltm\L,\le)$ is also a poset, and these three posets are isomorphic to each other.
%%%%%%%%%%%%%%%
\begin{prop}\label{prop-ap-twosilt-silt-stors}
Let $\L$ be a ring.
Then $(\siltm\L, \leq)$ is a poset, and the maps $P \mapsto H^0(P)$ and $M \mapsto \gen M$ give isomorphisms of posets
	\[
	\twosilt\L \stackrel{H^0}{\longrightarrow} \siltm\L \stackrel{\gen}{\longrightarrow} \stors\L.
	\]
\end{prop}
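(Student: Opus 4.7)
The strategy is to reduce both statements to a single chain of equivalences obtained by combining Proposition~\ref{prop-M-Tp}(a) with Corollary~\ref{2silt T=Gen}. First I would note that both maps are surjective essentially by definition: $H^0:\twosilt\L\to\siltm\L$ is surjective because $\siltm\L$ consists of modules of the form $H^0(P)$ with $P\in\twosilt\L$, and $\gen(-):\siltm\L\to\stors\L$ is surjective by the definition of $\stors\L$.

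The core of the argument is the following chain of equivalences, valid for any $P,Q\in\twosilt\L$:
\[
Q\le P \iff \Hom_{\sD(\L)}(P,Q[1])=0 \iff H^0(Q)\in\cT_P \iff H^0(Q)\in\Gen H^0(P) \iff \gen H^0(Q)\subseteq\gen H^0(P),
\]
where the first equivalence is the definition of $\le$ on $\twosilt\L$, the second is Proposition~\ref{prop-M-Tp}(a) applied to the two-term complexes $P$ and $Q$, the third is Corollary~\ref{2silt T=Gen} (using that $P$ is silting), and the last is the fact that for a finitely generated module $M$ one has $\gen M=\Gen M\cap\mod\L$. The composed map $P\mapsto \gen H^0(P)$ thus both preserves and reflects the order, hence in particular is injective. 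Combined with surjectivity, this gives a poset isomorphism $\twosilt\L\simeq \stors\L$ through the composition.

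Finally, to deduce the separate statements about $\siltm\L$, I would argue as follows. First I need to check that $(\siltm\L,\le)$ is a poset, which reduces to antisymmetry: if $\gen M=\gen N$ for two silting modules $M=H^0(P)$ and $N=H^0(Q)$, the chain above (in both directions) gives $P\le Q$ and $Q\le P$, whence $\add P=\add Q$ by the theorem of Aihara--Iyama \cite{AI} cited immediately after Definition~\ref{dfn-two-term-silting}, so $\add M=\add N$ and $M=N$ in $\siltm\L$. The same computation shows that $H^0:\twosilt\L\to\siltm\L$ is injective, and the chain shows it is both order-preserving and order-reflecting; surjectivity was already noted. Hence $H^0$ is an isomorphism of posets, and by the factorization through $\stors\L$ already established, so is $\gen(-):\siltm\L\to\stors\L$.

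\textbf{Main obstacle.} The only delicate point is antisymmetry on $\siltm\L$ (equivalently, injectivity of $H^0$), since a priori two non-isomorphic silting complexes could have the same $0$-th cohomology up to additive equivalence. This is precisely where Corollary~\ref{2silt T=Gen} becomes essential: without it, the implication ``$H^0(Q)\in\cT_P$'' would not translate into containment of the torsion classes generated by $H^0(P)$ and $H^0(Q)$, and one could not recover the partial order on $\twosilt\L$ from $\gen H^0(-)$. Everything else is bookkeeping on the chain of equivalences.
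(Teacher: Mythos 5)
Your proof is correct and follows essentially the same route as the paper: surjectivity of both maps by definition, then the chain of equivalences combining Proposition~\ref{prop-M-Tp}(a) with Corollary~\ref{2silt T=Gen} to show that $\gen\circ H^0$ is an order isomorphism, and finally the antisymmetry of $\siltm\L$ deduced from the poset structure on $\twosilt\L$ via Aihara--Iyama. The only differences from the paper's proof are cosmetic (the direction in which you state the inequality in the chain).
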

%%%%%%%%%%%%%%%
\begin{proof}
The maps $H^0:\twosilt\L\to\siltm\L$ and $\gen:\siltm\L\to\stors\L$ are surjective by definition.

(i) We show that $\gen\circ H^0:\twosilt\L\to\stors\L$ is an isomorphism of posets. Let $P, Q\in\twosilt\L$. Then
\begin{align}\label{PleqQ}
P\leq Q \stackrel{{\rm\ref{prop-M-Tp}(a)}}{\Longleftrightarrow} H^0(P) \in \cT_Q \stackrel{{\rm\ref{2silt T=Gen}}}{=}\Gen H^0(Q) \stackrel{}{\Longleftrightarrow} \gen H^0(P) \subseteq \gen H^0(Q),
\end{align}
where the last equivalence holds since $H^0(P)$ is finitely generated. Thus the assertion follows.

(ii) We show that $(\siltm\L, \leq)$ is a poset and $\gen$ is an isomorphism of posets. Clearly $\leq$ satisfies reflexivity and transitivity. Assume that $M, N\in\siltm\L$ satisfy $\gen M = \gen N$. Take $P,Q\in\twosilt\L$ such that $M=H^0(P)$ and $N=H^0(Q)$. By \eqref{PleqQ}, $\add P =\add Q$ holds, and hence $\add M=\add H^0(P)=\add H^0(Q)=\add N$ holds. Thus the assertion follows.
\end{proof}
%%%%%%%%%%%%%%%
We give more characterizations of silting modules under certain assumptions on the ring $\L$.
We start with giving a simple proof of \cite[Lemma 5.2]{Iyama-Jorgensen-Yang}.
%%%%%%%%%%%%%%%
\begin{lem}\label{lem-ap-IJY}
Let $\L$ be a semi-perfect ring, $M\in\fp\L$ and $P=(P^{-1} \xto{d} P^0)\in\sK^{\rm b}(\proj\L)$ a minimal projective presentation of $M$.
%two-term complex such that $d$ is right minimal. Let $M=H^0(P)$.
Then the following statements are equivalent.
\begin{enumerate}[{\rm (i)}]
	\item $P$ is presilting.
	\item $\Ext^1_\L(M, \Gen M)=0$.
	\item $\Ext^1_\L(M, \gen M)=0$.
\end{enumerate}
\end{lem}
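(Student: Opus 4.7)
The plan is to establish the three implications (i) $\Rightarrow$ (ii) $\Rightarrow$ (iii) $\Rightarrow$ (i), with only the last being substantive.

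For (i) $\Rightarrow$ (ii), the statement is a direct consequence of Proposition \ref{prop-M-Tp}(b): when $P$ is presilting, that proposition gives $\Gen M \subseteq \cT_P$, and then Lemma \ref{lem-Tp-torsion}(b) yields $\Ext^1_\L(M, \Gen M) = 0$. The implication (ii) $\Rightarrow$ (iii) is immediate from the inclusion $\gen M \subseteq \Gen M$.

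The main content is (iii) $\Rightarrow$ (i). Using Proposition \ref{prop-M-Tp}(b), this reduces to showing $M \in \cT_P$, i.e., that the restriction map $d^{\ast} : \Hom_\L(P^0, M) \to \Hom_\L(P^{-1}, M)$ is surjective. Given $\phi : P^{-1} \to M$, I will construct $\psi : P^0 \to M$ with $\psi d = \phi$ via a pushout. Form the pushout
\[
\begin{tikzcd}
P^{-1} \arrow[r, "d"] \arrow[d, "\phi"'] & P^0 \arrow[d, "\alpha"] \\
M \arrow[r, "\beta"] & Y.
\end{tikzcd}
\]
A direct computation using $\coker d = M$ shows that $Y$ fits into a short exact sequence $0 \to M/\phi(\ker d) \to Y \to M \to 0$. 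Since $M/\phi(\ker d) \in \gen M$, hypothesis (iii) forces this sequence to split. Combining a splitting with the surjection $\Hom_\L(P^0, M) \twoheadrightarrow \Hom_\L(P^0, M/\phi(\ker d))$ (which holds since $P^0$ is projective), we obtain $\psi : P^0 \to M$ satisfying $\psi d - \phi : P^{-1} \to \phi(\ker d)$.

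The final and hardest step is eliminating the residual error. Here the minimality of $P$ is essential: since $\L$ is semi-perfect, $\ker d \subseteq \rad P^{-1}$, so $\phi(\ker d) \subseteq \rad M$. Applying the same pushout construction to the residual map $\psi d - \phi : P^{-1} \to \rad M \subseteq M$ produces successive corrections whose residuals lie in progressively deeper powers of the radical acting on $\im \phi$. The main obstacle is to consolidate this iteration into a single morphism $\psi$ with $\psi d = \phi$; this is achieved using the finite generation of $\im\phi$ over $\L$ together with a Nakayama-style argument adapted to the semi-perfect setting, which forces the residual error to vanish in finitely many steps.
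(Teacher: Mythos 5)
Your reduction to showing $M\in\cT_P$, and the pushout construction producing $\psi:P^0\to M$ with $\psi d-\phi$ landing in $\phi(\ker d)$, are both correct (this is essentially the paper's use of $\Ext^1_\L(M,\Cok(\phi|_{\ker d}))=0$ for the sequence $0\to\im d\to P^0\to M\to 0$, rephrased via a pushout; the implications (i)$\Rightarrow$(ii)$\Rightarrow$(iii) are handled exactly as in the paper). The genuine gap is the final step. Your proposed iteration --- correcting repeatedly so that the residual lies in ever deeper radical layers and then invoking ``a Nakayama-style argument \dots\ in finitely many steps'' --- does not work for a general semi-perfect ring $\L$: the Jacobson radical $J$ need not be nilpotent, $\bigcap_n J^nM$ need not vanish, and even when it does, the procedure only shows that $\phi$ lies in $\bigcap_n\bigl(\im(d^\ast)+\Hom_\L(P^{-1},J^nM)\bigr)$; without a completeness or Artinian hypothesis the infinite sequence of corrections has no reason to converge to an actual preimage, and there is no reason the residual vanishes after finitely many steps. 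As written, the minimality of $P$ is never genuinely used, yet it is exactly the hypothesis that must carry the weight (the statement fails for non-minimal presentations).

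The correct way to exploit minimality is a one-shot argument, and your own intermediate output is precisely what it needs. Since $P^{-1}$ is projective and $\phi|_{\ker d}:\ker d\to\phi(\ker d)$ is surjective, the map $\psi d-\phi:P^{-1}\to\phi(\ker d)$ lifts to some $b:P^{-1}\to\ker d$; setting $x:=\iota b\in\End_\L(P^{-1})$, where $\iota:\ker d\hookrightarrow P^{-1}$, one gets $dx=0$ and $\phi(1_{P^{-1}}+x)=\psi d$. Then $d(1_{P^{-1}}+x)=d$, so right minimality of $d$ makes $1_{P^{-1}}+x$ an automorphism, and since $d(1_{P^{-1}}+x)^{-1}=d$ we conclude $\phi=\psi d(1_{P^{-1}}+x)^{-1}=\psi d$, i.e.\ $M\in\cT_P$. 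This is the same mechanism as the paper's proof, which writes $f-ad=fd'b$, sets $x=d'b$, derives $fx=fx^2$ from $dx=0$, and uses right minimality to see that $1_{P^{-1}}-x$ is invertible, hence $fx=0$ and $f=ad$. Replacing your iteration by this minimality trick closes the gap.
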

%%%%%
\begin{proof}
By Proposition \ref{prop-M-Tp} (i) implies (ii).
%Assume that $P$ is presilting.
%By Lemma \ref{lem-Tp-torsion} and Proposition \ref{prop-M-Tp}, $M\in\Gen M \subset \cT_P$ holds.
%Thus $\Ext^1_\L(M, \Gen M)=0$ holds.
Clearly (ii) implies (iii).

We show that (iii) implies (i).
By Proposition \ref{prop-M-Tp}, it is enough to show $M\in\cT_P$.
Let $d': \Omega^2M \to P^{-1}$ be the kernel of $d$.
For any morphism $f : P^{-1}\to M$, let $\pi : M \to C:=\Cok(fd')$ be the canonical surjection.
We have $C\in\gen M$ and $\Ext^1_\L(M, C)=0$ by (iii).
Thus $\pi f d'=0$ implies that there is $a' : P^0 \to C$ such that $a'd=\pi f$.
Since $P^0$ is projective, there is $a : P^0 \to M$ such that $a'=\pi a$.
Thus $\pi(f-ad)=0$.
Since $P^{-1}$ is projective, there is $b : P^{-1} \to \Omega^2M$ such that $f-ad = fd'b$.
%The situation is summarized in the following diagram with exact rows}
\[
\begin{tikzcd}
0 \arrow[r] & \Omega^2M \arrow[r, "d'"] \arrow[d, equal] & P^{-1} \arrow[r, "d"] \arrow[ld, "b"'] \arrow[d, "f"] & P^0 \arrow[r] \arrow[ld, "a"'] \arrow[d, "a' "] & M \arrow[r] & 0\\
& \Omega^2M \arrow[r, "fd' "] & M \arrow[r, "\pi"] & C \arrow[r] & 0.
\end{tikzcd}
\]
Consequently, $f=ad + fx$ holds for $x:=d'b$. Since $dx=0$, we have $fx=adx+fx^2=fx^2$ and $fx(1_{P^{-1}}-x)=0$. On the other hand, 
since $d$ is right minimal, $x\in\rad\End_\L(P^{-1})$ holds and $1_{P^{-1}}-x$ is an isomorphism.
Thus $fx=0$ and $f=ad$.
%\[\begin{tikzcd}
%0 \arrow[r] & \Omega^2M \arrow[r, shift left=3pt, "d'"] & P^{-1} \arrow[r, "d"] \arrow[l, shift left=3pt, "b"] \arrow[d, "f"] & P^0 \arrow[r] \arrow[ld, "a"] & M \arrow[r] & 0\\ &&M\end{tikzcd}\]
Thus $M\in\cT_P$ holds.
\end{proof}
%%%%%%%%%%%%%%%
\begin{thm}\label{thm-ap-semi-perfect}
Let $\L$ be a semi-perfect ring and $M\in\fp\L$.
Then the following conditions are equivalent to the conditions {\rm(i)--(v)} in Theorem \ref{thm-ap-i-iv}.
\begin{enumerate}
	%\item[{\rm (ii$'$)}] $\Ext^1_\L(M, \Gen M)=0$ and there is $Q\in\proj\L$ such that $M\in Q^\perp$ and there exists an exact sequence $\L \xto{g} M^0 \to M^1 \to 0$, where $g$ is a left $(\cT_P\cap Q^\perp)$-approximation and $M^0, M^1\in\add M$.
	\item[{\rm (iii$'$)}] $\Ext^1_\L(M, \Gen M)=0$ and there is an exact sequence $\L \xto{g} M^0 \to M^1 \to 0$, where $g$ is a left $(\Gen M)$-approximation and $M^0, M^1\in\add M$.
	\item[{\rm (iv$'$)}] The condition {\rm(iii$'$)}, where $\Gen M$ is replaced by $\add M$.
\end{enumerate}
\end{thm}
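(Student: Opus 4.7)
The plan is to close the loop of equivalences by proving (iii) $\Rightarrow$ (iii$'$) $\Rightarrow$ (iv$'$) $\Rightarrow$ (iii), which together with Theorem \ref{thm-ap-i-iv} yields the theorem. For (iii) $\Rightarrow$ (iii$'$), given $P \in \twopsilt \L$ with $\add H^0(P) = \add M$, Proposition \ref{prop-M-Tp}(b) yields $\Ext^1_\L(M, \Gen M) = 0$, while the approximation sequence of (iii) is already a left $\Gen M$-approximation sequence. The implication (iii$'$) $\Rightarrow$ (iv$'$) is trivial since $\add M \subseteq \Gen M$ weakens both the $\Ext$-vanishing and the approximation property.

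For the main implication (iv$'$) $\Rightarrow$ (iii), I would first upgrade $g$ to a left $\Gen M$-approximation. Given $f : \L \to N$ with $N \in \Gen M$, I would pick a surjection $M^{(I)} \twoheadrightarrow N$ and lift $f$ via projectivity of $\L$ to $\tilde f : \L \to M^{(I)}$; since $\tilde f(1)$ has finite support, $\tilde f$ factors through some $M^{\oplus n} \in \add M$, and the left $\add M$-approximation property of $g$ then produces a factorization of $\tilde f$, hence of $f$, through $g$.

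It then suffices to show $\Ext^1_\L(M, \Gen M) = 0$, since this implies that the minimal projective presentation $P_M = (P^{-1} \xto{d} P^0)$ (which exists by semi-perfectness of $\L$) is presilting by Lemma \ref{lem-ap-IJY}, and we may take $P := P_M \in \twopsilt \L$ to realize (iii). By Proposition \ref{prop-M-Tp}(b) applied to $P_M$, this amounts to showing $M \in \cT_{P_M}$, i.e., every morphism $f : P^{-1} \to M$ extends along $d$ to $P^0 \to M$. I would adapt the proof of Lemma \ref{lem-ap-IJY}(iii) $\Rightarrow$ (i): writing $d' : \Omega^2 M \hookrightarrow P^{-1}$ for $\ker d$, one forms $C = \Cok(fd') \in \gen M$ so that $\pi f : P^{-1} \to C$ satisfies $\pi f d' = 0$ and seeks to lift $\pi f$ over $d$. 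Since $\Ext^1_\L(M, C)$ need not vanish under (iv$'$), the idea is to detour through $\add M$ using the approximation: extending $f$ along $P^{-1} \hookrightarrow \L^n$ (with $P^{-1}$ a summand of a free module $\L^n$) and factoring each component of the resulting map $\L^n \to M$ componentwise through $g$, one obtains a factorization $P^{-1} \to (M^0)^n \to M$. This reduces the obstruction to $\Ext^1_\L(M, (M^0)^n)$, which vanishes by (iv$'$), and the right-minimality of $d$ (together with the inclusion $\Omega^2 M \subseteq \rad P^{-1}$ and the resulting nilpotency argument in $\End_\L(P^{-1})$, in direct parallel with the last paragraph of the proof of Lemma \ref{lem-ap-IJY}) then extracts the desired factorization of $f$ through $d$.

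The hard part will be managing the interplay between the $\add M$-lift through $(M^0)^n$ and the obstruction living in the second syzygy. The semi-perfectness of $\L$ enters essentially at two points: to guarantee the existence of the minimal projective presentation $P_M$ (so that $\Omega^2 M \subseteq \rad P^{-1}$), and to drive the Nakayama-type cancellation that converts the approximate lift produced from $g$ into an exact factorization through $d$.
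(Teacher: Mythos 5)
Your route (iii) $\Rightarrow$ (iii$'$) $\Rightarrow$ (iv$'$) $\Rightarrow$ (iii) is structurally sound, and the first two steps are correct, as is your preliminary upgrade of $g$ to a left $\Gen M$-approximation. The genuine gap is in the remainder of (iv$'$) $\Rightarrow$ (iii). You correctly identify that the obstruction when mimicking Lemma \ref{lem-ap-IJY} is $\Ext^1_\L(M, C)$ for $C=\Cok(fd')$, which lies in $\gen M$ but not in $\add M$. But the detour through $(M^0)^n$ does not evade this. To lift $u := g^{\oplus n}|_{P^{-1}}:P^{-1}\to(M^0)^n$ over $d$ you face exactly the same two obstructions as before: $\Ext^1_\L(M,(M^0)^n)$, which you do control, and the failure of $u$ to kill $\Omega^2M$, which you do not. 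Minimality gives $\Omega^2M\subseteq\rad P^{-1}$, but to have $u(\Omega^2M)=0$ you would need $\Omega^2M\subseteq \Ker(g)^n\cap P^{-1}=(\ann_\L M)\,P^{-1}$, and $\ann_\L M$ is in general a proper subideal of $\rad\L$. Running the argument of Lemma \ref{lem-ap-IJY} on $u$ then requires $\Ext^1_\L(M,C')=0$ for $C'=\Cok(u\circ d')$, which is again only a quotient of $(M^0)^n$ lying in $\gen M$, not in $\add M$ — the identical obstruction, merely relocated. You flag this as the hard part, and indeed as written the argument does not close.

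For comparison, the paper's proof is a one-liner: take $P$ to be a minimal projective presentation of $M$ and apply Lemma \ref{lem-ap-IJY}. This works because in the paper's intended reading of (iv$'$) the replacement of $\Gen M$ by $\add M$ concerns only the approximation class (parallel to how (iv) is obtained from (iii) in Theorem \ref{thm-ap-i-iv}), so (iv$'$) still carries $\Ext^1_\L(M,\Gen M)=0$; then Lemma \ref{lem-ap-IJY} says this $\Ext$-vanishing is exactly the statement that the minimal projective presentation is presilting, and the interchange of approximation classes is already settled in Theorem \ref{thm-ap-i-iv}. Your reading weakens the hypothesis to $\Ext^1_\L(M,\add M)=0$, which Lemma \ref{lem-ap-IJY} does not cover, and the proposed workaround does not supply the missing step.
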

%%%%%
\begin{proof}
Let $P\in\sK^{\rm b}(\proj\L)$ be a minimal projective presentation of $M$. Using Lemma \ref{lem-ap-IJY}, we have (iii)$\Leftrightarrow$(iii$'$) and (iv)$\Leftrightarrow$(iv$'$).
\end{proof}
%%%%%%%%%%%%%%%
We denote by $\sK^2(\proj\L)$ the subcategory of $\sK^{\rm b}(\proj\L)$ consisting of two-term complexes.
It is easy to see that if $\sK^2(\proj\L)$ is a Krull-Schmidt category, then so is the category of finitely presented $\L$-modules $\fp\L$.
For an object $X$ of a Krull-Schmidt category, we denote by $|X|$ the number of isomorphism classes of indecomposable direct summands of $X$.
%%%%%%%%%%%%%%%
\begin{thm}\label{thm-ap-tau}
Let $\L$ be a ring such that $\sK^2(\proj\L)$ is Krull-Schmidt and $\Hom_{\sD(\L)}(X, Y)$ is a finitely generated $\End_{\sD(\L)}(Y)$-module for any complexes $X, Y\in\sK^2(\proj\L)$, and let $M\in\fp\L$. Then the following condition is equivalent to the conditions in Theorems \ref{thm-ap-i-iv} and \ref{thm-ap-semi-perfect}.
\begin{enumerate}
	\item[{\rm (vi)}] There is $P\in\twopsilt\L$ and an idempotent $e\in\L$ such that $H^0(P)=M$, $eM=0$ and $|M|=|\L/\ideal{e}|$ hold.
\end{enumerate}
\end{thm}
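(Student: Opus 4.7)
The plan is to establish (i) $\Leftrightarrow$ (vi), modeled on Adachi--Iyama--Reiten's correspondence between silting modules and support $\tau$-tilting pairs. Three structural facts, all consequences of the Krull--Schmidt hypothesis on $\sK^2(\proj\L)$, will be used: (a) every indecomposable two-term complex in $\sK^b(\proj\L)$ with nonzero $H^0$ is (uniquely up to isomorphism) the minimal projective presentation of its $H^0$, so $H^0$ restricts to a bijection between such indecomposables and indecomposables of $\fp\L$; (b) every $Q\in\twosilt\L$ satisfies $|Q|=|\L|$, because the $g$-vectors of its indecomposable summands are $\bZ$-linearly independent in $K_0(\proj\L)\simeq\bZ^{|\L|}$ by the presilting property and span the whole group by $\thick Q=\sK^b(\proj\L)$, and any presilting can be co-Bongartz completed to a silting; (c) the counting identity $|\L|=|\L/\langle e\rangle|+|\L e|$ for any idempotent $e$, obtained by refining $1$ into primitive orthogonal idempotents containing $e$ as a partial sum and using that a primitive $f$ lies in $\langle e\rangle=\L e\L$ precisely when $\L f$ is isomorphic to one of those primitive summands of $e$.

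For (vi) $\Rightarrow$ (i), set $Q:=P\oplus \L e[1]$. First use (a) to replace $P$ by its reduced form (stripping summands of type $R[1]$), preserving $H^0(P)=M$ and achieving $|P|=|M|$. The nontrivial vanishing for $Q$ being presilting is
\[
\Hom_{\sD(\L)}(\L e[1],Q[1])\;\simeq\;\Hom_{\sD(\L)}(\L e,H^0(Q))\;=\;eM\;=\;0,
\]
where the first isomorphism follows from projectivity of $\L e$ and the triangle $H^{-1}(Q)[1]\to Q\to H^0(Q)\to H^{-1}(Q)[2]$; the other Hom vanishings are degree-trivial or come from $P$ being presilting. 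Since summands of $P$ have nonzero $H^0$ while summands of $\L e[1]$ have vanishing $H^0$, the isoclasses are disjoint, and $|Q|=|P|+|\L e|=|M|+|\L e|=|\L/\langle e\rangle|+|\L e|=|\L|$ by (vi) and (c). Completing $Q$ to a silting $Q\oplus C$ gives $|Q\oplus C|=|\L|$ by (b), forcing $C=0$; hence $Q$ is silting and $M=H^0(Q)$ is a silting module.

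For (i) $\Rightarrow$ (vi), pick $Q\in\twosilt\L$ with $\add H^0(Q)=\add M$ and decompose $Q=Q_1\oplus \L e[1]$ (Krull--Schmidt) so that $Q_1$ has no summand of the form $R[1]$. By (a), indecomposable summands of $Q_1$ correspond bijectively via $H^0$ to their indecomposable $H^0$'s, and these exhaust the indecomposable summands of $M$ since $\add H^0(Q_1)=\add M$; adjusting multiplicities inside $Q_1^{\oplus N}$ yields $P\in\twopsilt\L$ with $H^0(P)=M$. The identity $eM=0$ follows from the same Hom-computation applied to presilting $Q$, and the numerical identity is
\[
|M|\;=\;|Q_1|\;=\;|Q|-|\L e|\;=\;|\L|-|\L e|\;=\;|\L/\langle e\rangle|,
\]
combining (a) applied to $Q_1$, (b), disjointness of isoclasses, and (c).

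The main obstacle lies in justifying (a) and (c) without the usual semi-perfect hypothesis. In (a), one strips contractible summands $(R\xto{1}R)$ from arbitrary projective presentations using the Krull--Schmidt structure to arrive at minimal presentations, and must verify that these minimal presentations remain indecomposable exactly when their $H^0$'s are. In (c), one must refine idempotent decompositions and lift primitive idempotents across $\L\to\L/\langle e\rangle$; the key identification $f\in\langle e\rangle \Leftrightarrow \L f\simeq \L f_j$ for some primitive summand $f_j$ of $e$ reduces this to bookkeeping in the finitely generated projective category.
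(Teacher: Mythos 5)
The paper's entire proof is a one-line citation: Theorem~\ref{thm-ap-i-iv}(iv) is equivalent to (vi) by Iyama--J{\o}rgensen--Yang, Theorem~4.6. Your proposal instead tries to re-derive the Adachi--Iyama--Reiten-style bijection from scratch, which is a genuinely different route, but it has two issues.

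First, the ``main obstacle'' you flag at the end --- justifying (a) and (c) without the semi-perfect hypothesis --- is not actually there. Since $\proj\L$ is a full additive subcategory of $\sK^2(\proj\L)$ closed under direct summands (via $P\mapsto (0\to P)$), the Krull--Schmidt hypothesis on $\sK^2(\proj\L)$ forces $\proj\L$ to be Krull--Schmidt, which is equivalent to $\L$ being semi-perfect (the paper records this equivalence in its conventions). So (a) and (c) are available by the standard semi-perfect arguments; this part of your proposal is salvageable but you should not present it as an open obstacle.

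Second, and more seriously, your step (b) --- that the $g$-vectors of the indecomposable summands of a presilting complex are $\bZ$-linearly independent, hence $|Q|=|\L|$ for every $Q\in\twosilt\L$, and that any two-term presilting complex admits a co-Bongartz completion --- is not a formal consequence of ``the presilting property'' as you assert. These are precisely the substantive theorems of Iyama--J{\o}rgensen--Yang (and, in the finite-dimensional case, of Adachi--Iyama--Reiten and Demonet--Iyama--Jasso). The Hom-finiteness hypothesis in the theorem statement is exactly what is needed to run those arguments, and they are nontrivial: the linear independence does not follow from a short $K$-theoretic observation. In effect, your proposal re-proves the corollary by silently invoking, without proof, the very result the paper cites. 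You should either give an actual argument for the linear independence of $g$-vectors of indecomposable presilting summands (which would take a page or two under the stated hypotheses), or simply cite \cite[Theorem 4.6]{Iyama-Jorgensen-Yang} as the paper does.
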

%%%%%
\begin{proof}
Theorem \ref{thm-ap-i-iv} (iv) is equivalent to (vi) by \cite[Theorem 4.6]{Iyama-Jorgensen-Yang}.
\end{proof}
%%%%%%%%%%%%%%%
%%%%%%%%%%%%%%%
%%%%%%%%%%%%%%%
\section*{Acknowledgements}
The second author would like to thank William Crawley-Boevey and Henning Krause for many supports and helpful comments.
The first author would like to thank the organizers of seminars of "Network on Silting Theory", where a part of result of this paper was presented.
The authors would like to thank Haruhisa Enomoto, Ryo Kanda, Hiroki Matsui, Tsutomu Nakamura, Shunya Saito and Ryo Takahashi for useful discussion.

The first author is supported by JSPS Grant-in-Aid for Scientific Research (B) 16H03923, (C) 18K03209.
The second author was partially supported by the Alexander von Humboldt Foundation in the framework of an Alexander von Humboldt Professorship endowed by the German Federal Ministry of Education and Research.
%%%%%%%%%%%%%%%
%%%%%%%%%%%%%%%
%%%%%%%%%%%%%%%


\begin{thebibliography}{AHIKM}

%\bibitem[Ad]{Adachi} T. Adachi, \emph{The classification of {$\tau$}-tilting modules over {N}akayama algebras}, J. Algebra 452 (2016), 227--262.



\bibitem[AIR]{Adachi-Iyama-Reiten} T. Adachi, O. Iyama, I. Reiten, \emph{$\tau$-tilting theory}, Compos. Math. 150 (2014), no. 3, 415--452.

\bibitem[Ai]{Aihara} T. Aihara, \emph{Tilting-connected symmetric algebras}, Algebr. Represent. Theory 16 (2013), no. 3, 873--894.

\bibitem[AHMW]{AHMW} T. Aihara, T. Honma, K. Miyamoto, Q. Wang, \emph{Report on the finiteness of silting objects}, Proc. Edinb. Math. Soc. (2) 64 (2021), no. 2, 217--233.
.

\bibitem[AiI]{AI} T. Aihara, O. Iyama, \emph{Silting mutation in triangulated categories}, J. Lond. Math. Soc. (2) 85 (2012), no. 3, 633--668.

%\bibitem[AM]{Aihara-Mizuno} T. Aihara, Y. Mizuno, \emph{Classifying tilting complexes over preprojective algebras of Dynkin type}, Algebra Number Theory 11 (2017), no. 6, 1287--1315.

\bibitem[An]{An} L. Angeleri-H\"{u}gel, \emph{On the abundance of silting modules}, Surveys in representation theory of algebras, 1--23, Contemp. Math., 716, Amer. Math. Soc., Providence, RI, 2018.

\bibitem[AH]{AH-com} L. Angeleri-H\"{u}gel, M. Hrbek, \emph{Silting modules over commutative rings}, Int. Math. Res. Not. IMRN 2017, no. 13, 4131--4151.

\bibitem[AMV]{AMV} L. Angeleri-H\"ugel, F. Marks, J. Vit\'oria, \emph{Silting modules}, Int. Math. Res. Not. IMRN 2016, no. 4, 1251--1284.

\bibitem[APST]{APST} L. Angeleri-H\"ugel, D. Posp\'{i}\v{s}il, J. \v{S}\'{t}ov\'{i}\v{c}ek, J. Trlifaj, \emph{Tilting, cotilting, and spectra of commutative Noetherian rings}, Trans. Amer. Math. Soc. 366 (2014), no. 7, 3487--3517.

\bibitem[AnSt]{AS} B. Antieau, G. Stevenson, \emph{Derived categories of representations of small categories over commutative Noetherian rings}, Pacific J. Math. 283 (2016), no. 1, 21--42.

\bibitem[Ao]{Aoki}
T. Aoki, \emph{Classifying torsion classes for algebras with radical square zero via sign decomposition},  J. Algebra 610 (2022), 167--198.

\bibitem[AHIKM]{AHIKM} T. Aoki, A. Higashitani, O. Iyama, R. Kase, Y. Mizuno, \emph{Fans and polytopes in tilting theory I: Foundations}, arXiv:2203.15213.

\bibitem[ALS]{ALS} S. Ariki, S. Lyle, L. Speyer, \emph{Schurian-finiteness of blocks of type A Hecke algebras}, arXiv:2112.11148v4.

\bibitem[As]{Asai-semi} S. Asai, \emph{Semibricks}, Int. Math. Res. Not. IMRN 2020, no. 16, 4993--5054.

\bibitem[AsI]{Asai-Iyama} S. Asai, O. Iyama, \emph{Semistable torsion classes and canonical decompositions in Grothendieck groups}, arXiv:2112.14908.

%\bibitem[As2]{Asai-wall} S. Asai, \emph{The wall-chamber structures of the real Grothendieck groups}, arXiv:1905.02180v2.

%\bibitem[AP]{AP} S. Asai, C. Pfeifer, \emph{Wide subcategories and lattices of torsion classes}, arXiv:1905.01148.

\bibitem[ASS]{ASS} I. Assem, D. Simson, A. Skowro\'{n}ski, \emph{Elements of the representation theory of associative algebras. Vol. 1. Techniques of representation theory}, London Mathematical Society Student Texts, 65. Cambridge University Press, Cambridge, 2006.

\bibitem[AB]{AB} M. Auslander, M. Bridger, \emph{Stable module theory}, Memoirs of the American Mathematical Society, No. 94 American Mathematical Society, Providence, R.I. 1969 146 pp.

\bibitem[AuSm]{Auslander-Smalo} M. Auslander, S. O. Smal\o, \emph{Preprojective modules over Artin algebras}, J. Algebra 66 (1980), no. 1, 61--122.

%\new{\bibitem[AuSm2]{Auslander-Smalo2} M. Auslander, S. O. Smal\o, \emph{Almost split sequences in subcategories}, J. Algebra 69 (1981), no. 2, 426--454.}

\bibitem[BL]{BL} K. Baur, R. Laking, \emph{Torsion pairs and cosilting in type $\tilde{A}$}, J. Pure Appl. Algebra 226 (2022), no. 10, Paper No. 107057, 38 pp.

\bibitem[BBD]{BBD} A. A. Beilinson, J. Bernstein, P. Deligne, \emph{Faisceaux pervers} (French), Analysis and topology on singular spaces, I (Luminy, 1981), 5--171, Asterisque, 100, Soc. Math. France, Paris, 1982.

%\bibitem[BB]{Brenner-Butler} S. Brenner, M. C. R. Butler, \emph{Generalizations of the Bernstein-Gelfand-Ponomarev reflection functors}, Lecture Notes in Math. 832, Springer, Berlin-New York, 1980.

\bibitem[BY]{BY} T. Br\"{u}stle, D. Yang, \emph{Ordered exchange graphs}, Advances in representation theory of algebras, 135--193, EMS Ser. Congr. Rep., Eur. Math. Soc., Zr\"{u}ich, 2013.

\bibitem[BIRS]{BIRS} A. Buan, O. Iyama, I. Reiten, J. Scott, \emph{Cluster structures for $2$-Calabi-Yau categories and unipotent groups}, Compos. Math. 145 (2009), no. 4, 1035--1079.

%\bibitem[BBM]{Baur-Buan-Marsh}
%K. Baur, A. B. Buan, R.J. Marsh,
%\emph{Torsion pairs and rigid objects in tubes},
%Algebr. Represent. Theory 17 (2) (2014) 565--591.

\bibitem[CD]{CD} A. Chan, L. Demonet, \emph{Classifying torsion classes of gentle algebras}, arXiv:2009.10266.

\bibitem[CB]{CB-locally} W. Crawley-Boevey, \emph{Locally finitely presented additive categories}, Comm. Algebra 22 (1994), 1644--1674.

%\bibitem[CB2]{CB} W. Crawley-Boevey, \emph{Rigid integral representations of quivers}, Representation theory of algebras (Cocoyoc, 1994), 155--163, CMS Conf. Proc., 18, Amer. Math. Soc., Providence, RI, 1996.

\bibitem[CR]{Curtis-Reiner} C. W. Curtis, I. Reiner, \emph{Methods of representation theory. Vol. I. With applications to finite groups and orders}, Pure and Applied Mathematics, John Wiley \& Sons, Inc., New York, 1981.

\bibitem[DIJ]{DIJ} L. Demonet, O. Iyama, G. Jasso, \emph{$\tau$-tilting finite algebras, bricks and $g$-vectors}, Int. Math. Res. Not. IMRN 2019, no. 3, 852--892.

%\bibitem[DR]{Dlab-Ringel} V. Dlab, C. M. Ringel, \emph{Representations of graphs and algebras}, Carleton Mathematical Lecture Notes, No. 8. Department of Mathematics, Carleton University, Ottawa, Ont., 1974. iii+86 pp.

\bibitem[DIRRT]{DIRRT} L. Demonet, O. Iyama, N. Reading, I. Reiten, H. Thomas, \emph{Lattice theory of torsion classes: Beyond $\tau$-tilting theory}, Trans. Amer. Math. Soc. Ser. B 10 (2023), 542--612.

%\bibitem[DK]{DK} Yu. A. Drozd, V. V. Kirichenko, \emph{On quasi-Bass orders}, Izv. Akad. Nauk SSSR Ser. Mat. 36 (1972) 328--370.

%\bibitem[EJR]{EJR} F. Eisele, G. Janssens, T. Raedschelders, \emph{A reduction theorem for $\tau$-rigid modules}, Math. Z. 290 (2018), no. 3-4, 1377--1413.

\bibitem[Ei]{Eisenbud} D. Eisenbud, \emph{Commutative algebra. With a view toward algebraic geometry}, Graduate Texts in Mathematics, 150. Springer-Verlag, New York, 1995.

\bibitem[En]{Enomoto} H. Enomoto, \emph{IE-closed subcategories of commutative rings are torsion-free classes}, arXiv:2304.03260.

\bibitem[Ga]{Gabriel} P. Gabriel, \emph{Des cat\'egories ab\'eliennes}, Bull. Soc. Math. France 90 (1962) 323--448.

\bibitem[GP]{GP} G. Garkusha, M. Prest, \emph{Torsion classes of finite type and spectra}, K-theory and noncommutative geometry, 393--412, EMS Ser. Congr. Rep., Eur. Math. Soc., Z\"{u}rich, 2008.

\bibitem[Gn]{Gnedin} W. Gnedin, \emph{Silting theory of orders modulo a regular sequence}, Representation Theory of Quivers and Finite Dimensional Algebras, Oberwolfach Rep. 17, No. 1, 182--185 (2020).

\bibitem[GIK]{GIK} W. Gnedin,  S. Iyengar, H. Krause, \emph{A class of Gorenstein algebras and their dualities}, arXiv:2303.04893.

\bibitem[Gu]{Guralnick} R. M. Guralnick, \emph{Lifting homomorphisms of modules}, Illinois J. Math. 29 (1985), no. 1, 153--156.

%\bibitem[GLS]{GLS17} C. Geiss, B. Leclerc, J. Schr\"{o}er, \emph{Quivers with relations for symmetrizable Cartan matrices I: Foundations}, Invent. Math. 209 (2017), no. 1, 61--158.

%\bibitem[HKM]{Hoshino-Kato-Miyachi}
%M. Hoshino, Y. Kato, J. Miyachi,
%\emph{On $t$-structures and torsion theories induced by compact objects},
%J. Pure Appl. Algebra 167 (2002), no. 1, 15--35.

%\bibitem[H]{Happel} D. Happel, \emph{Triangulated categories in the representation theory of finite-dimensional algebras}, London Mathematical Society Lecture Note Series, 119. Cambridge University Press, Cambridge, 1988.

\bibitem[He]{Heller} A. Heller, \emph{The loop-space functor in homological algebra}, Trans. Amer. Math. Soc. 96 (1960), 382--394.

\bibitem[Ho]{Hovey} M. Hovey, \emph{Classifying subcategories of modules}, Trans. Amer. Math. Soc. 353 (2001), no. 8, 3181--3191.

\bibitem[IMST]{IMST} K. Iima, H. Matsui, K. Shimada, R. Takahashi, \emph{When is a subcategory Serre or torsionfree?}, arXiv:2203.00228.

%\bibitem[HN]{Hijikata-Nishida} H. Hijikata, K. Nishida, \emph{Bass orders in nonsemisimple algebras}, J. Math. Kyoto Univ. 34 (1994), no. 4, 797--837.

%\bibitem[KV]{Keller-Vossieck}
%B. Keller, D. Vossieck,
%\emph{Sous les cat\'egories d\'eriv\'ees},
%C. R. Acad. Sci. Paris S\'er. I Math. 305 (1987), no. 6, 225--228.

\bibitem[IJY]{Iyama-Jorgensen-Yang} O. Iyama, P. Jorgensen, D. Yang, \emph{Intermediate co-$t$-structures, two-term silting objects, $\tau$-tilting modules, and torsion classes}, Algebra Number Theory 8 (2014), no. 10, 2413--2431.

\bibitem[IR]{Iyama-Reiten} O. Iyama, I. Reiten, \emph{Fomin-Zelevinsky mutation and tilting modules over Calabi-Yau algebras}, Amer. J. Math. 130 (2008), no. 4, 1087--1149.

\bibitem[IRTT]{IRTT}
O. Iyama, I. Reiten, H. Thomas, G. Todorov, \emph{Lattice structure of torsion classes for path algebras},
Bull. Lond. Math. Soc. 47 (2015), no. 4, 639--650.

\bibitem[IW1]{Iyama-Wemyss-maximal} O. Iyama, M. Wemyss, \emph{Maximal modifications and Auslander-Reiten duality for non-isolated singularities}, Invent. Math. 197 (2014), no. 3, 521--586.

\bibitem[IW2]{Iyama-Wemyss-sing} O. Iyama, M. Wemyss, \emph{Singular derived categories of {$\bQ$}-factorial terminalizations and maximal modification algebras}, Adv. Math. 261 (2014), 85--121.

\bibitem[IY]{Iyama-Yang} O. Iyama, D. Yang, \emph{Silting reduction and Calabi--Yau reduction of triangulated categories}, Trans. Amer. Math. Soc. 370 (2018), no. 11, 7861--7898.

\bibitem[IT]{Ingalls-Thomas} C. Ingalls, H. Thomas, \emph{Noncrossing partitions and representations of quivers}, Compos. Math. 145 (2009), no. 6, 1533--1562.

\bibitem[J]{Jasso} G. Jasso, \emph{Reduction of $\tau$-tilting modules and torsion pairs}, Int. Math. Res. Not. IMRN 2015, no. 16, 7190--7237.

\bibitem[Ka1]{Kanda-class} R. Kanda, \emph{Classifying Serre subcategories via atom spectrum}, Adv. Math. 231 (2012), no. 3-4, 1572--1588.

\bibitem[Ka2]{Kanda-ext} R. Kanda, \emph{Extension groups between atoms and objects in locally noetherian Grothendieck category}, J. Algebra 422 (2015), 53--77.

\bibitem[Kap]{Kaplansky} I. Kaplansky, \emph{Commutative rings}, Revised edition. University of Chicago Press, Chicago, Ill.-London, 1974.

\bibitem[Ki]{Kimura}
Y. Kimura, \emph{Tilting and Silting Theory of Noetherian Algebras}, International Mathematics Research Notices, 2023;, rnad057, https://doi.org/10.1093/imrn/rnad057

\bibitem[KK]{Koshio-Kozakai} R. Koshio, Y. Kozakai, \emph{Induced modules of support $\tau$-tilting modules and extending modules of semibricks over blocks of finite groups}, J. Algebra 628 (2023), 524--544.

\bibitem[Kr1]{Krause-thick} H. Krause, \emph{Thick subcategories of modules over commutative Noetherian rings (with an appendix by Srikanth Iyengar)}, Math. Ann. 340 (2008), no. 4, 733--747.

\bibitem[Kr2]{Krause} H. Krause, \emph{Krull-Schmidt categories and projective covers}, Expo. Math. 33 (2015), no. 4, 535--549.

%\bibitem[L]{Lam} T. Y. Lam, \emph{A first course in noncommutative rings}, Graduate Texts in Mathematics, 131. Springer-Verlag, New York, 1991.

\bibitem[Li]{Li} J. Li, \emph{Piecewise hereditary algebras under field extensions}, Czechoslovak Math. J. 71(146) (2021), no. 4, 1025--1034.

%\bibitem[LW]{LW} G. J. Leuschke, R. Wiegand, \emph{Cohen-Macaulay representations}, Mathematical Surveys and Monographs, 181. American Mathematical Society, Providence, RI, 2012.

\bibitem[Ma]{Matsumura} H. Matsumura, \emph{Commutative ring theory}, Second edition, Cambridge Studies in Advanced Mathematics, 8. Cambridge University Press, Cambridge, 1989.

\bibitem[MP]{MP} K. Mousavand, C. Paquette, \emph{
Minimal ($\tau$-)tilting infinite algebras}, Nagoya Math. J. 249 (2023), 221--238.

%\bibitem[Mi]{Mizuno} Y. Mizuno, \emph{Classifying $\tau$-tilting modules over preprojective algebras of Dynkin type}, Math. Z. 277 (2014), no. 3-4, 665--690.

\bibitem[N]{N} A. Neeman, \emph{Triangulated categories}, Annals of Mathematics Studies, 148. Princeton University Press, Princeton, NJ, 2001.

\bibitem[R]{Reading} N. Reading, \emph{Sortable elements and Cambrian lattices}, Algebra Universalis 56 (2007), 411--437.

%\bibitem[RS]{Riedtmann-Schofield} C. Riedtmann, A. Schofield, \emph{On a simplicial complex associated with tilting modules}, Comment. Math. Helv. 66(1) (1991) 7--78.

\bibitem[Sa]{Saito} S. Saito, \emph{Classifying torsionfree classes of the category of coherent sheaves and their Serre subcategories}, arXiv:2304.06918.

\bibitem[STV]{STV} S. Schroll, H. Treffinger, Y. Valdivieso, \emph{On band modules and $\tau$-tilting finiteness}, Math. Z. 299 (2021), no. 3-4, 2405--2417.

\bibitem[Se]{Sentieri} F. Sentieri, \emph{A brick version of a theorem of Auslander}, arXiv:2011.09253.

\bibitem[Sm]{Smalo} S. O. Smal\o, \emph{Torsion theory and tilting modules}, Bull. Lond. Math. Soc. 16 (1984), 518--522.

%\bibitem[SS]{Simson-Skowronski} D. Simson, A. Skowro\'{n}ski, \emph{Elements of the representation theory of associative algebras. Vol. 3. Representation-infinite tilted algebras}, London Mathematical Society Student Texts, 72. Cambridge University Press, Cambridge, 2007.

\bibitem[SW]{Stanley-Wang} D. Stanley, B. Wang, \emph{Classifying subcategories of finitely generated modules over a Noetherian ring}, J. Pure Appl. Algebra 215 (2011), no. 11, 2684--2693.

\bibitem[T]{Takahashi} R. Takahashi, \emph{Classifying subcategories of modules over a commutative Noetherian ring}, J. Lond. Math. Soc. (2) 78 (2008), no. 3, 767--782.

\bibitem[Y]{Yurikusa} T. Yurikusa, \emph{Wide subcategories are semistable}, Doc. Math. 23 (2018), pp. 35--47.

\end{thebibliography}
\end{document}